\documentclass[letterpaper,reqno,11pt]{amsart}
\usepackage[utf8]{inputenc}
\usepackage[T1]{fontenc}
\usepackage{amsmath}
\usepackage{amssymb}
\usepackage{graphicx}
\usepackage[counterclockwise]{rotating}
\usepackage{mathtools}
\usepackage{wasysym}
\usepackage{amsfonts}
\usepackage{xy}
\usepackage{esint}
\usepackage{amsthm}

\usepackage{xfrac}
\usepackage[dvipsnames]{xcolor}
\usepackage[maxcitenames=5,maxbibnames=5,
backend=bibtex,
style=alphabetic,
]{biblatex}
\AtEveryBibitem{%
	\clearfield{doi}%
	\clearfield{issn}%
	\clearfield{url}
}
\DeclareFieldFormat[article,periodical]{volume}{\mkbibbold{#1}}
\renewbibmacro{in:}{%
	\ifentrytype{article}
	{}
	{\bibstring{in}%
		\printunit{\intitlepunct}}}

\usepackage{xurl}
\usepackage[urlcolor=Magenta, 
linkcolor=red, 
citecolor=Green,
colorlinks=true,
linktocpage=true]{hyperref}
\usepackage{dirtytalk}
\usepackage{enumitem}
\usepackage{tikz}
\usepackage{tikz-cd}
\tikzstyle{lien}=[->,>=stealth,rounded corners=5pt,thick]

\usepackage{geometry}
\usepackage{dirtytalk}

\usepackage{cleveref}

\usepackage{graphicx}
\usepackage{subcaption}
\usepackage{caption}

\crefformat{section}{\S#2#1#3} 
\crefformat{subsection}{\S#2#1#3}
\crefformat{subsubsection}{\S#2#1#3}

\newtheorem{thm}{Theorem}[section]

\newtheorem{conj}[thm]{Conjecture}
\newtheorem{prop}[thm]{Proposition}
\newtheorem{lem}[thm]{Lemma}
\newtheorem{cor}[thm]{Corollary}

\theoremstyle{definition}
\newtheorem{rem}[thm]{Remark}
\newtheorem{defi}[thm]{Definition}
\newtheorem{ex}[thm]{Example}

\numberwithin{equation}{section}

\author{Julien Moy}
\address{Laboratoire de Mathématiques d'Orsay, Universit{\'e} Paris-Saclay, 91405 Orsay cedex, France}
\email{julien.moy@universite-paris-saclay.fr}
\title[Pollicott--Ruelle resonances on Anosov surface covers]{Spectral gap for Pollicott--Ruelle resonances on random coverings of Anosov surfaces}

\begin{document}
	\maketitle
	\begin{abstract}
		Let $(M,g)$ be a closed Riemannian surface with Anosov geodesic flow. We prove the existence of a spectral gap for Pollicott--Ruelle resonances on random finite coverings of $M$ in the limit of large degree, which is expected to be optimal.
		
		The proof combines the recent strong convergence results of Magee, Puder and van Handel for permutation representations of surface groups with an analysis of the spherical mean operator on the universal cover of $M$.
		
	\end{abstract}
	
	\tableofcontents 
	\section{Introduction}
	
	\subsection{Motivation} 
	
	Let $(M,g)$ be a smooth connected Riemannian manifold, without boundary. Let $\mathcal M=SM$ denote the unit tangent bundle of $M$, and let $\varphi_t:\mathcal M\to \mathcal M$ denote the geodesic flow. This flow preserves a natural smooth probability measure on $\mathcal M$, known as the \emph{Liouville measure}, which is the normalized lift of the Riemannian volume to $SM$. The Liouville measure is denoted by $\mu_{\rm L}$. To study the large time behavior of the flow, one introduces the correlation function
	\[\rho_{f,g}(t):=\int_{\mathcal M}(f\circ \varphi_{-t}) g \mathrm{d}\mu_{\rm L},\]
	for $f$ and $g$ sufficiently regular on $\mathcal M$, and $\int g\mathrm{d}\mu_{\rm L}=0$. From now on, we assume that the geodesic flow is \emph{Anosov}, or \emph{uniformly hyperbolic}, which roughly means that it is strongly chaotic, \emph{i.e.} sensitive to perturbations of initial conditions. For example, the geodesic flow is Anosov whenever $(M,g)$ has negative sectional curvatures. When $(\varphi_t)$ is Anosov, it is known that the flow is \emph{mixing} with respect to the Liouville measure, meaning that one has
	\begin{equation} \label{eq: convergence correlations} \rho_{f,g}(t)\underset{t\to +\infty}{\longrightarrow} 0.\end{equation}
	Rather than studying $\rho_{f,g}(t)$ directly, it is useful to consider its Laplace transform
	\[\int_0^{+\infty} \mathrm{e}^{-zt}\rho_{f,g}(t)=\int_{\mathcal M}\int_0^{+\infty} \mathrm{e}^{-zt}(f\circ \varphi_{-t})g\mathrm{d}t\mathrm{d}\mu_{\rm L},\]
	which is well defined and holomorphic in the half-plane $\operatorname{Re}(z)>0$. The heuristic is that the long time behavior of $\rho_{f,g}$ should be governed by the poles of the meromorphic continuation of its Laplace transform in the negative half-plane, whose existence follows from Theorem~\ref{thm: BL FS} below. Note that the meromorphic continuation of the Laplace transform of $\rho_{f,g}$ in a small strip left to the imaginary axis was already obtained by Pollicott \cite{Pollicott85} and Ruelle \cite{Ruelle87Meromorphic} by using symbolic dynamics. The poles are known as \emph{Pollicott--Ruelle resonances}.
	
	We are interested in the rate of decay of correlations; since the presence of resonances close to the imaginary axis obstructs the rapid decay of $\rho_{f,g}$, a natural problem is therefore to establish a \emph{spectral gap}, meaning a resonance-free strip left to the imaginary axis. \medskip
	
	For $f\in C^\infty(\mathcal M)$, the function $f\circ \varphi_{-t}$ is the solution at time $t$ of the following ODE:
	\[\partial_t u(t,x)=-X u(t,x), \qquad u(0,x)=f(x),\]
	where $X$ is the geodesic vector field. Thus, we can write formally $\mathrm{e}^{-tX}f:=f\circ \varphi_{-t}$. This motivates the definition, for $\operatorname{Re}(z)>0$, of the \emph{resolvent operator}
	\begin{equation} \label{eq: def resolvent}(X+z)^{-1}f:=\int_0^{+\infty} \mathrm{e}^{-zt}(f\circ \varphi_{-t})\mathrm{d}t,\end{equation}
	which is well defined as an operator $C^\infty(\mathcal M)\to \mathcal D'(\mathcal M)$. The Laplace transform of $\rho_{f,g}$ can then be expressed as $\langle (X+z)^{-1}f,g\rangle_{\mathcal D'(\mathcal M),C^\infty(\mathcal M)}$.\medskip
	
	A functional analytic approach to the study of the operator $(X+z)$, which is built on the use of anisotropic Banach spaces of distributions that are well adapted to the dynamics, was developed by Liverani \cite{Liverani2004}, Butterley--Liverani \cite{Butterley2007} and Faure--Sj\"ostrand \cite{Faure2011}, following earlier works on Anosov diffeomorphisms (non-exhaustively, see \cite{BKL2002,GouezelLiverani2006,BaladiTsujii2007}). It led to the following result:
	\begin{thm}[Butterley--Liverani, Faure--Sj\"ostrand] \label{thm: BL FS} The family of operators $(X+z)^{-1}:C^\infty(\mathcal M)\to \mathcal D'(\mathcal M)$, initially defined and holomorphic in $\{\operatorname{Re}(z)>0\}$, extends meromorphically to $z\in \mathbb{C}$. Its poles have finite rank and are called \emph{Pollicott--Ruelle resonances}. \end{thm}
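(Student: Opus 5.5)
The plan follows the Faure--Sj\"ostrand microlocal approach. The first step is to construct an escape function adapted to the Anosov splitting. Write $T^*\mathcal M=E_0^*\oplus E_s^*\oplus E_u^*$, the dual decomposition, where $E_0^*$ annihilates $E_s\oplus E_u$ and similarly for the others. The symplectic lift of $\varphi_t$ to $T^*\mathcal M$ has Hamiltonian $p(x,\xi)=\xi(X(x))$, the principal symbol of $-iX$ up to convention. I would build a smooth order function $m(x,\xi)$, homogeneous of degree $0$ for large $|\xi|$, with $m\approx N$ in a conic neighbourhood of $E_s^*$ and $m\approx -N$ near $E_u^*$ (with the sign convention chosen so that high regularity sits where the lifted flow is contracting), and with $H_p m\le 0$ outside a compact set. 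This is possible because every orbit of the lifted flow off $E_0^*$ converges projectively to $E_u^*$ in forward time and to $E_s^*$ in backward time, so one can average a cutoff along the flow. Set $G=m\log\langle\xi\rangle$, quantize $A=\operatorname{Op}(e^{G})$, and define anisotropic Sobolev spaces $\mathcal H^{N}=A^{-1}L^2(\mathcal M)$, which contain $C^\infty$ and embed in $\mathcal D'$.

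Second, I would show that $X+z:\{u\in\mathcal H^N: Xu\in\mathcal H^N\}\to\mathcal H^N$ is Fredholm of index $0$ in the half-plane $\operatorname{Re} z> -cN$ for some $c>0$. To do this, conjugate: $P=A(X+z)A^{-1}=X+z+\operatorname{Op}(H_pG)+\text{lower order}$. The key symbol estimate is $\operatorname{Re}\sigma(P)-\operatorname{Re}z\ge cN$ in conic regions away from $E_0^*$ and outside a compact set. This comes from $H_pG\approx (H_pm)\log\langle\xi\rangle+m\,H_p\log\langle\xi\rangle$ and the expansion or contraction of $|\xi|$ along $E_s^*$ and $E_u^*$. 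Near $E_0^*$ there is no gain, and there I would use radial estimates, or more simply the fact that $X$ is elliptic there since $p\ne0$ on $E_0^*\setminus 0$, so that $X+z$ is elliptic and gives control. Combining a G\aa rding inequality on the nonelliptic region with elliptic estimates yields $\|u\|_{\mathcal H^N}\le C\|(X+z)u\|_{\mathcal H^N}+C\|Ku\|$ with $K$ compact, and similarly for the adjoint on the dual space. This gives the Fredholm property. For $\operatorname{Re} z$ large, $(X+z)^{-1}$ given by \eqref{eq: def resolvent} is bounded on $\mathcal H^N$, using $\|e^{-tX}\|_{\mathcal H^N}\le Ce^{Ct}$, so the index is $0$ and the operator is invertible there.

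Third, analytic Fredholm theory then gives a meromorphic inverse on $\{\operatorname{Re} z>-cN\}$ with finite-rank poles. This inverse agrees with \eqref{eq: def resolvent} on $C^\infty$ for $\operatorname{Re} z>0$ by uniqueness of continuation. The continuations for different $N$ agree on overlaps, because $C^\infty$ is dense and the spaces are nested. Letting $N\to\infty$ gives the continuation to all of $\mathbb C$, and the poles and their ranks are independent of $N$.

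I expect the main obstacle to be the construction of the escape function, together with the G\aa rding-type estimate near $E_0^*$ and in the transition regions. The splitting is only Hölder, so $m$ must be built from smooth conic neighbourhoods rather than from the bundles themselves, and the sign of $H_pm$ must be controlled uniformly. I would first follow Faure--Sj\"ostrand's averaging construction along the lifted flow.
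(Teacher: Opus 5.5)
Your proposal is correct and follows the Faure--Sj\"ostrand argument that the paper relies on. The paper does not reprove this theorem: it recalls the escape function $G_m$ (Proposition~\ref{prop: construction escape}) and the spaces $\mathcal H_h^s$ built from $\operatorname{Op}_h(\mathrm{e}^{sG_m})$, then invokes \cite[Theorem 1.5]{Faure2011}, which is exactly your scheme of conjugation, ellipticity of $X$ near $E_0^*$, and analytic Fredholm theory.

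There are three minor slips to fix:
\begin{itemize}
\item Sign: with $H_p$ generating $\Phi_t$, the conjugation gives $A(X+z)A^{-1}=X+z-\operatorname{Op}(H_pG)+\dots$, not $+\operatorname{Op}(H_pG)$. Your target bound $\operatorname{Re}\sigma(P)\ge \operatorname{Re}z+cN$ is the right one.
\item Dynamics: covectors in $(E_0^*\oplus E_s^*)\setminus E_0^*$ converge to $E_0^*$ in forward time, not to $E_u^*$.
\item Nesting: the spaces for different $N$ are not nested, since they gain regularity near $E_s^*$ but lose it near $E_u^*$. The continuations still agree on overlaps, by density of $C^\infty$ and the identity theorem applied to $\langle R(z)f,g\rangle$ for $f,g\in C^\infty$.
\end{itemize}
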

	
	The set of Pollicott--Ruelle resonances of $M$ is denoted by $\operatorname{Res}(X_{M})$. If $z_0$ is a resonance, one can define a \emph{spectral projector} $\Pi_{z_0}$ of finite rank by integrating the resolvent along a small circle centered at $z_0$. We call \emph{multiplicity} of $z_0$ the rank of this projector.
	
	Mixing properties of $\varphi_t$ usually follow from the existence of a resonance-free region in $\mathbb{C}$, together with resolvent estimates that come with it. The following result was proved by Tsujii \cite{Tsujii_2010}, and by Nonnenmacher--Zworski \cite{nonnenmacher2015decay} in a more general situation (see also Faure--Tsujii \cite{faure2024micro} for a more precise description of the spectrum):
	\begin{thm}[Tsujii, Nonnenmacher--Zworski] \label{thm: tsujii essential gap} Let $\gamma_0$ be the minimal exponential growth rate of the \emph{unstable Jacobian} defined in \eqref{eq: defi gamma 0 higher dim}. Then, for any $\varepsilon>0$, there are finitely many resonances in the half space $\operatorname{Re}(z)>-\frac{\gamma_{0}}{2}+\varepsilon$.\end{thm}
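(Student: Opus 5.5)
The approach is semiclassical, following Nonnenmacher--Zworski and Faure--Sj\"ostrand. We build an escape function on $T^*\mathcal M$ and the associated anisotropic Sobolev spaces $\mathcal H^{s}_h$ of variable order. The order is large and positive near the stable dual bundle $E_s^*$, and large and negative near $E_u^*$. On these spaces $P(z):=X+z$ is Fredholm in a half-plane $\operatorname{Re}(z)>-C_s$, and its poles are the resonances of Theorem~\ref{thm: BL FS}, with $C_s\to\infty$ as the order grows. After semiclassical rescaling, $z=\lambda/h$ with $\operatorname{Im}\lambda\sim 1$, the claim reduces to a statement on a compact window. For $\operatorname{Re}(z)>-\frac{\gamma_0}{2}+\varepsilon$ and $|\operatorname{Im} z|\ge C$, we must show that $hP(z)$ has an inverse bounded by $O(h^{-N})$ on $\mathcal H^s_h$. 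Finitely many resonances then remain in the compact part, since the resolvent is meromorphic there and the poles are discrete.

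The key steps, in order, are as follows. First, away from the trapped set $K=E_0^*$ (the annihilator of $E_s\oplus E_u$ within the energy shell $\{\xi(X)=1\}$), we use radial estimates at the source $E_u^*$ and the sink $E_s^*$, together with propagation of singularities and ellipticity away from the characteristic set. These control $u$ by $hP u$ plus an $O(h^\infty)$ remainder localized in a small neighbourhood of $K$, uniformly for $\operatorname{Re}z>-C_s$.

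Second, $K$ is a normally hyperbolic symplectic submanifold (the trapped set), and near it we quantize the local normal form. Following Nonnenmacher--Zworski, we conjugate by a Fourier integral operator so that microlocally $hP$ becomes $h(X+z)$ acting on the unstable directions, which are transversal to $K$. We then use the propagator $e^{-tX}$ up to an Ehrenfest time $t\sim c\log(1/h)$. The key estimate is that a wavepacket microlocalized near $K$ decays in $L^2$ like $\exp(-t/2\cdot\text{unstable Jacobian})$. This holds because $L^2$-normalized transport by $\varphi_t$ stretches the unstable direction and so spreads the mass. Consequently $\|\Pi_K e^{-tX}\Pi_K\|\lesssim e^{-t(\gamma_0-\varepsilon)/2}$, with $\gamma_0$ from \eqref{eq: defi gamma 0 higher dim}.

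Third, we glue the two estimates. Writing $(hP)^{-1}$ via $\int_0^T e^{-t(X+z)}dt$ near $K$ and the parametrices away from it, we get a resolvent bound $O(h^{-1}\log(1/h))$ whenever $\operatorname{Re}z>-\frac{\gamma_0}{2}+\varepsilon$. This gives invertibility for $|\operatorname{Im}z|$ large.

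The main obstacle is the second step: proving the $e^{-t\gamma_0/2}$ decay up to logarithmic times, where symbolic calculus breaks down. Only anisotropic classes à la Dyatlov--Zahl or Nonnenmacher--Zworski survive there, and $E_u$, $E_s$ are merely Hölder. I would first try the smoothing trick of Faure--Tsujii. One works in a larger space (the Grassmannian extension) in which the bundles become smooth attractors, and then applies the normally hyperbolic estimate there. Alternatively, one can mollify the bundles at scale $h^{1/2-\delta}$ and absorb the errors.
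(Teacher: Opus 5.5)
The paper gives no proof of this statement; it quotes Tsujii and Nonnenmacher--Zworski. Your architecture is the Nonnenmacher--Zworski one: anisotropic spaces, reduction to a polynomial bound on $(X+z)^{-1}$ for $|\operatorname{Im} z|\ge C$, radial and propagation estimates away from $K$, a decay estimate near $K$, and gluing. Three smaller slips first.

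Your source/sink labels contradict your own order function. The radial source estimate needs regularity above a threshold and the sink estimate needs it below, so with order $+s$ at $E_s^*$, $E_s^*$ must be the source. That is indeed the case with the paper's conventions, where $\Phi_t$ contracts $E_s^*$ and expands $E_u^*$.

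Step~1 does not leave an $\mathcal O(h^\infty)$ remainder; it leaves a term $\|Au\|$ with $A$ microlocalized near $K$.

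In the strip $\operatorname{Re} z<0$ you should expect a loss $h^{-N(\varepsilon)}$, coming from $\mathrm{e}^{|\operatorname{Re} z|T}$ with $T\sim\log(1/h)$, rather than $\mathcal O(h^{-1}\log(1/h))$. This is harmless for finiteness.

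The genuine gap is Step~2. It carries the whole content of the theorem, it is asserted rather than proved, and as formulated it fails. Take $\Pi_K$ to be a cutoff to an $h$-independent neighbourhood of $K$. Then $\|\Pi_K\mathrm{e}^{-tX}\Pi_K\|\lesssim \mathrm{e}^{-t(\gamma_0-\varepsilon)/2}$ is false exactly on the range $t\le c\log(1/h)$ you propose. A wave packet centred on $K$ stays centred on $K$ and is only stretched, to width at most $h^{1/2}\mathrm{e}^{\gamma_{\rm max}t}$ in the unstable directions. So the norm stays $\approx 1$ until that width becomes of order one.

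With fixed cutoffs the best you can hope for is a bound like $h^{-(d-1)/2}\mathrm{e}^{-t(\gamma_0-\varepsilon)/2}$. This is useful only for $t\ge M\log(1/h)$ with $M=M(\varepsilon)\to\infty$ as $\varepsilon\to 0$, i.e.\ beyond the Ehrenfest time, where, as you note, the standard calculus fails.

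The missing idea is what makes Nonnenmacher--Zworski work. One localizes near $K$ at scale $h^{1/2}$ transversally, using symbols in the rescaled defining functions of the coisotropic manifolds $\Gamma_\pm$ (here $E_0^*\oplus E_s^*$ and $E_0^*\oplus E_u^*$), in an anisotropic $h^{1/2}$-calculus adapted to them. Coisotropy makes this exotic calculus consistent. It is what allows propagation up to times $M\log(1/h)$ with $M$ arbitrary while keeping the $\mathrm{e}^{-t(\gamma_0-\varepsilon)/2}$ decay of the localized propagator.

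Finally, your Fourier integral operator normal form presupposes a smooth normal splitting along $K$, which an Anosov flow does not have in general. Mollifying $C^\alpha$ bundles $E_u,E_s$ at scale $h^{1/2-\delta}$ produces errors of size $h^{\alpha(1/2-\delta)}\gg h^{1/2}$, larger than the localization scale. So mollification does not by itself give the approximate invariance that $h^{1/2}$-localization needs. A genuinely dynamical substitute, such as a fixed-time-step argument with cones or the Faure--Tsujii extension you mention, has to be carried out rather than invoked.
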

	Since $0$ is the only resonance lying on the imaginary axis---this fact is equivalent to weak mixing---, a byproduct of Theorem~\ref{thm: tsujii essential gap} (together with a resolvent bound) is the following estimate on the decay of correlations, which recovers results of Dolgopyat \cite{Dolgopyat1998} and Liverani \cite{Liverani2004}.
	\begin{cor}[Exponential decay of correlations] There is a constant $\kappa>0$ such that for any $f,g\in C^\infty(\mathcal M)$, with $\int g=0$:
		\[\rho_{f,g}(t)=\mathcal O_{f,g}(\mathrm{e}^{-\kappa t}).\]
	\end{cor}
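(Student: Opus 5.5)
The plan is to obtain the estimate by deforming the contour in the inverse Laplace transform, using Theorem~\ref{thm: BL FS} and Theorem~\ref{thm: tsujii essential gap}. Fix $f,g\in C^\infty(\mathcal M)$ with $\int g\,\mathrm d\mu_{\rm L}=0$. We have $\hat\rho_{f,g}(z)=\langle (X+z)^{-1}f,g\rangle$ for $\operatorname{Re} z>0$, and $\rho_{f,g}$ is bounded and smooth in $t$. So by Laplace inversion
\[\rho_{f,g}(t)=\frac{1}{2\pi \mathrm i}\int_{\operatorname{Re} z=\delta}\mathrm e^{zt}\langle (X+z)^{-1}f,g\rangle\,\mathrm dz,\]
which is understood after integrating by parts in $t$ (replace $f$ by $X^kf$ and use $(X+z)^{-1}X=1-z(X+z)^{-1}$) so that the integrand decays like $|z|^{-2}$ and converges absolutely.

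By Theorem~\ref{thm: tsujii essential gap}, only finitely many resonances lie in $\{\operatorname{Re} z>-\gamma_0/4\}$. The next step is to show that none of them except $0$ lies on the imaginary axis, and that the pole at $0$ is killed by $\int g=0$.
\begin{itemize}
\item For $z=\mathrm i\lambda$, a resonant state $u$ with $Xu=-\mathrm i\lambda u$ lying in the anisotropic space is, by the usual radial-estimate/regularity argument, smooth along the unstable direction.
\item Then $\mathrm i\lambda$ is an eigenvalue of the unitary Koopman group on $L^2(\mu_{\rm L})$, so mixing \eqref{eq: convergence correlations} forces $\lambda=0$ and $u$ constant. (Mixing rules out nonconstant $L^2$ eigenfunctions.)
\item The same unitarity argument shows the pole at $0$ is simple with projector $\Pi_0 f=(\int f)\,1$. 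Hence $\langle\Pi_0 f,g\rangle=0$.
\end{itemize}
Consequently there is $\kappa\in(0,\gamma_0/4)$ such that $z\mapsto\langle (X+z)^{-1}f,g\rangle$ is holomorphic in $\{\operatorname{Re} z\ge-\kappa\}$.

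Next I shift the contour from $\operatorname{Re} z=\delta$ to $\operatorname{Re} z=-\kappa$. The horizontal segments vanish at $|\operatorname{Im} z|\to\infty$, and on the new line $|\mathrm e^{zt}|=\mathrm e^{-\kappa t}$. The result follows once I have a \emph{polynomial resolvent bound} of the form $\|(X+z)^{-1}\|_{\mathcal H\to\mathcal H}\le C\langle z\rangle^N$ on the anisotropic space $\mathcal H$, uniform for $|\operatorname{Im} z|$ large and $\operatorname{Re} z\ge-\kappa$. Such a bound is part of Tsujii's and Nonnenmacher--Zworski's work (the "resolvent bound" mentioned above). Given it, I replace $f$ by $(1+X)^{N+2}$-regularised data, that is, I integrate by parts enough times using $f\in C^\infty$. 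Then the integrand is $O(\langle z\rangle^{-2})$ and integrable, which gives $|\rho_{f,g}(t)|\le C_{f,g}\mathrm e^{-\kappa t}$.

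The main obstacle is this high-frequency resolvent estimate. Meromorphy alone gives no control as $|\operatorname{Im} z|\to\infty$, and Theorem~\ref{thm: tsujii essential gap} as stated only counts resonances. I would not reprove the estimate. I would quote it from \cite{nonnenmacher2015decay} or \cite{Tsujii_2010}, where it comes from the normal-hyperbolicity of the trapped set, and check that the bound there holds in a strip $\operatorname{Re} z\ge-\kappa$ with $\kappa<\gamma_0/2$.
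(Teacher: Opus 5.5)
Your proposal is correct and follows the route the paper only sketches in the sentence before the corollary. That route has four ingredients: finitely many resonances in $\{\operatorname{Re} z>-\gamma_0/2+\varepsilon\}$ by Theorem~\ref{thm: tsujii essential gap}; no resonance on $\mathrm i\mathbb R$ other than a simple pole at $0$; the vanishing of that pole's contribution, $\langle \Pi_0 f,g\rangle=\int f\,\mathrm d\mu_{\rm L}\int g\,\mathrm d\mu_{\rm L}=0$; and a quoted polynomial resolvent bound (the paper's ``together with a resolvent bound'') to shift the contour. One small correction: resonant states at $\mathrm i\lambda$ do not lie in $L^2(\mu_{\rm L})$ because they are smooth along $E_u$, but because $\|(X+z)^{-1}\|_{L^2\to L^2}\le (\operatorname{Re} z)^{-1}$ for $\operatorname{Re} z>0$ forces a simple pole with an $L^2$-bounded residue; this is the ``unitarity argument'' you already use for the pole at $0$, and your second bullet needs it too.
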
 
	One can be more precise and give an expansion of $\rho_{f,g}(t)$ with remainder $\mathcal O(\mathrm{e}^{-(\gamma_0/2-\varepsilon)t})$ that involves resonances of real part $>-\gamma_0/2$, see \emph{e.g.} \cite[Corollary 5]{nonnenmacher2015decay}.
	\subsubsection{Spectral properties of large random surfaces}
		In recent years, growing attention has been directed towards the spectral properties of large random objects. In particular, some effort has been devoted to understanding the spectrum of the Laplace--Beltrami operator on random hyperbolic surfaces of large genus. We highlight three models of interest, see \S\ref{subsec: related work} for references.
	\begin{itemize}
		\item The Weil--Petersson model of random hyperbolic surfaces of genus $g$.
		\item The Brooks--Makover model of random hyperbolic surfaces obtained by gluing $2n$ ideal hyperbolic triangles in a certain way, introduced in \cite{BrooksMakover}.
		\item The model of random covers of degree $n$ of a fixed negatively curved surface, introduced by Magee, Naud and Puder \cite{MNP}.
	\end{itemize}
	A central question for these models concerns the existence of a probabilistic \emph{spectral gap} for the Laplace--Beltrami operator, that is a uniform lower bound on the first nontrivial eigenvalue, that holds asymptotically almost surely in the limit of large genus. Optimal spectral gaps for the Laplacian have now been obtained by Anantharaman--Monk \cite{AnMo3} for the Weil--Petersson model (see also \cite{hideWeilPol}), by Magee--Puder--van Handel \cite{magee2025strongconvergenceuniformlyrandom} for the random cover model, and by Shen--Wu \cite{shen2025nearlyoptimalspectralgaps} for the Brooks--Makover model---see \S \ref{subsec: related work} for prior results. These give some explicit rate of convergence to equilibrium for the heat flow.\medskip
	
	In constant curvature, there is an explicit correspondence between the Laplace and the Pollicott--Ruelle spectra. In particular, a spectral gap for the Laplacian also yields a spectral gap for the Pollicott--Ruelle spectrum, leading to a quantitative rate of decay for correlations of the geodesic flow. However, this correspondence breaks down in variable curvature. It is thus desirable to understand whether one can establish quantitative decay rates for dynamical correlations on a model of random surfaces of variable negative curvature, or satisfying the weaker assumption that the geodesic flow is Anosov.
	
	\subsection{Main results} Let $(M,g)$ be a closed manifold. To state our results, we need to define the topological pressure of a potential $F\in C(SM)$. We will review the various definitions of the pressure in \S \ref{subsec: pressure}. For now, we only mention that if $\psi_u$ is the unstable Jacobian, defined in \eqref{eq: def unstable jac}, and $q\in \mathbb R$, then the pressure of $-q\psi_u$ is given by (see \cite{parrypressure})
	\[\operatorname{Pr}(-q\psi_u)=\underset{T\to +\infty}{\lim} \frac 1T\log \sum_{\ell(\gamma)\in [T,T+1]} \frac{1}{|\det(I-P_\gamma)|^q}, \]
	where the sum runs over periodic orbits $\gamma$ of the geodesic flow, $\ell(\gamma)$ denotes the length of an orbit and $P_\gamma$ denotes the linearized Poincar\'e map of the orbit. Note that $\operatorname{Pr}(0)=h_{\rm top}$ is the topological entropy of the flow, and $\operatorname{Pr}(-\psi^u)=0$. Moreover, when $M$ has nonconstant curvature, the function $q\mapsto \operatorname{Pr}(-q\psi^u)$ is strictly convex. We define
	\[\delta_0:=\frac{\operatorname{Pr}(-2\psi^u)}{2}.\]
	Note that we can bound
	\begin{equation} \label{eq: bound delta_0 nana}-\frac{h_{\rm top}}{2}\le \delta_0\le -\frac{\gamma_0}{2},\end{equation}
	where $\gamma_0$ is the minimal rate of expansion of the flow appearing in Theorem \ref{thm: tsujii essential gap}. Moreover, these inequalities are strict when $(M,g)$ is a surface with nonconstant negative curvature, see Proposition \ref{prop: jaco coho const} in the Appendix for a more general result.

	\subsubsection{Almost sure spectral gap for uniformly random Anosov surface covers} We now describe the model of random covers. Let $M$ be a closed Anosov manifold with fundamental group $\Gamma$. The universal cover of $M$ is denoted by $\widetilde M$. Then $\Gamma$ acts by isometries on $\widetilde M$, so that $M$ identifies with the quotient 
	\[M=\Gamma\backslash \widetilde M.\]
	We denote by $S_n$ the symmetric group over $[n]:=\{1,\ldots,n\}$. Let $\phi_n:\Gamma\to S_n$ be a group homomorphism; then, $\Gamma$ acts on the product $\widetilde M\times [n]$ by $\gamma.(x,i)=(\gamma x,\phi_n[\gamma](i))$. One defines a covering of degree $n$ of $M$ associated with $\phi_n$ by setting 
	\[M_n:=\Gamma\backslash (\widetilde M\times [n]).\]
	Since $\Gamma$ is finitely generated, the set $\operatorname{Hom}(\Gamma,S_n)$ is finite, hence we can endow it with the uniform probability measure. This gives a model of \emph{uniformly random Riemannian coverings} of degree $n$ of $M$. 
	\begin{rem}Two homomorphisms $\Gamma\to S_n$ that are conjugated by an interior automorphism of $S_n$ give rise to isometric Riemannian coverings of $M$; we do not take the quotient of $\operatorname{Hom}(\Gamma,S_n)$ by this relation however.\end{rem}

	\begin{defi}\label{defi: a.a.s} Let $(\Omega_n,\mathbb P_n)$ be a sequence of discrete probability spaces. A sequence of events $\{A_n\}_{n=1}^{+\infty}$ such that $A_n\subset \Omega_n$ occurs \emph{asymptotically almost surely}, or \emph{a.a.s.}, if 
	\[\mathbb P_n(A_n)\underset{n\to +\infty}{\longrightarrow} 1.\]\end{defi}
	
	If $M'\to M$ is a finite Riemannian covering, then any Pollicott--Ruelle resonance of $M$ is also a resonance of $M'$ with greater or equal multiplicity, because of the embedding $C^\infty(SM)\subset C^\infty(SM')$. Thus, we are only interest in \emph{new} resonances.
	\begin{defi}Let $M'\to M$ be a finite Riemannian covering. We say that $z_0\in \mathbb C$ is a \emph{new} resonance if it appears in $\operatorname{Res}(X_{M'})$ with higher multiplicity than in $\operatorname{Res}(X_M)$.
		
	\end{defi}Our main result is as follows. Recall that $\delta_0=\frac 12\operatorname{Pr}(-2\psi^u)$ where $\psi^u$ is the unstable Jacobian.

	\begin{thm}\label{thm: spectral gap} Let $M$ be a closed Anosov surface. Fix a compact set $\mathcal K\subset \{z\in\mathbb{C}~:~ \operatorname{Re}(z)> \delta_0\}$. If $\phi_n\in \operatorname{Hom}(\Gamma,S_n)$ is picked uniformly at random and $M_n\to M$ is the associated covering, then asymptotically almost surely, $X_{M_n}$ has no new resonance in $\mathcal K$.\end{thm}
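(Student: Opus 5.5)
We reduce the question about new resonances on $M_n$ to a question about an operator on $SM$ with values in $\mathbb C^{n}$. Functions on $SM_n$ are identified with $\Gamma$-equivariant functions on $S\widetilde M$ with values in $\mathbb C^n$, equivariance being taken with respect to the permutation representation $\rho_n=\mathrm{std}\circ\phi_n$. The new part of the spectrum then corresponds to the twisted geodesic vector field $X_{\rho_n^0}$ acting on sections of the flat bundle $E_n=\Gamma\backslash(S\widetilde M\times V_n^0)$, where $V_n^0=\{v\in \mathbb C^n:\sum v_i=0\}$. Microlocal tools apply uniformly in $n$ to such flat unitary twists: the anisotropic spaces of Faure--Sjöstrand behind Theorem~\ref{thm: BL FS} and the Tsujii / Nonnenmacher--Zworski estimates behind Theorem~\ref{thm: tsujii essential gap}. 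This reduces Theorem~\ref{thm: spectral gap} to showing that a.a.s. $X_{\rho_n^0}+z$ is invertible on the anisotropic space for all $z\in\mathcal K$.

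To invert it, we use a parametrix built from the propagator at a large but fixed time $T$. Write $(X+z)^{-1}=\int_0^T e^{-zt}e^{-tX}dt+e^{-zT}e^{-TX}(X+z)^{-1}$. The first term is harmless; the point is to show that $e^{-zT}e^{-TX_{\rho_n^0}}$ composed with a suitable microlocal cutoff has norm $<1$. Microlocally, the part of phase space away from the zero section of $T^*SM$ (high frequencies) is handled exactly as in Theorem~\ref{thm: tsujii essential gap}, giving a contraction at rate $e^{-(\gamma_0/2-\varepsilon)T}$, uniformly in $n$ because the twist is flat and unitary. Since $\delta_0\le-\gamma_0/2$ by \eqref{eq: bound delta_0 nana}, the high-frequency part is fine for $\operatorname{Re} z>\delta_0$.

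The remaining low-frequency part is a smoothing operator. After lifting to $\widetilde M$ and averaging over the fibers of $S\widetilde M\to\widetilde M$ (and over directions), it is expressed through the spherical mean operator $A_T$ on $\widetilde M$: $A_Tf(x)$ is the average of $f$ over the sphere of radius $T$ with the Jacobian weights of the geodesic flow. In other words, one obtains an element of the group algebra $\sum_{\gamma\in\Gamma}a_T(\gamma)\,\rho_n^0(\gamma)\otimes K_\gamma$, with smooth kernels $K_\gamma$ that have finite support modulo truncation. Here we invoke the strong convergence theorem of Magee--Puder--van Handel: a.a.s., $\|\sum a_\gamma\otimes\rho_n^0(\gamma)\|\to\|\sum a_\gamma\otimes\lambda(\gamma)\|$ for any fixed finitely supported operator-valued coefficients, where $\lambda$ is the left regular representation. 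The problem therefore reduces to bounding the same operator on $L^2(S\widetilde M)$, that is, to bounding the spherical mean operator $A_T$ on $L^2(\widetilde M)$.

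We expect the main obstacle to be the required estimate $\|e^{-zT}\,(\text{smoothed }e^{-TX})\|_{L^2(S\widetilde M)}\lesssim e^{(\delta_0-\operatorname{Re}z+\varepsilon)T}$. For this we would compute $A_T^*A_T$ and use a $TT^*$ / Schur-test argument on $\widetilde M$. The kernel of $A_T$ at distance $T$ carries weight $e^{-\int\psi^u}$, so its Hilbert–Schmidt-type row sums behave like $\sum_\gamma e^{-2\int_\gamma\psi^u}\approx e^{\operatorname{Pr}(-2\psi^u)T}$ against a volume growth normalisation. Taking square roots (via $L^2$) gives exactly $e^{\delta_0T}$, which explains the threshold $\delta_0=\frac12\operatorname{Pr}(-2\psi^u)$. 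Since strong convergence needs finitely many group elements, we truncate the kernel to a compact fundamental-domain window and let $T$ be fixed before $n\to\infty$. With finitely many $z$ in a net of $\mathcal K$ and continuity in $z$, we then conclude that a.a.s. there is no new resonance in $\mathcal K$.
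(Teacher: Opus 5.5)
\textbf{The high-frequency step is the genuine gap.} There are three problems with it.

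\emph{The inequality runs the wrong way.} A contraction of the high-frequency part at rate $\mathrm{e}^{-(\gamma_0/2-\varepsilon)T}$, multiplied by $|\mathrm{e}^{-zT}|=\mathrm{e}^{-\operatorname{Re}(z)T}$, is small only if $\operatorname{Re}(z)>-\gamma_0/2+\varepsilon$. But $\delta_0\le-\gamma_0/2$, strictly as soon as $\psi^u$ is not cohomologous to a constant (Proposition~\ref{prop: jaco coho const}). So the strip $\delta_0<\operatorname{Re}(z)\le-\gamma_0/2$ is not covered, and this is exactly where the theorem goes beyond Theorem~\ref{thm: tsujii essential gap}.

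\emph{The estimate is not available uniformly in $n$.} A bundle-uniform version of the Tsujii/Nonnenmacher--Zworski analysis is what the paper leaves open as Conjecture~\ref{conj: uniform asymptotic gap}.

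\emph{Your remainder asks for too much.} The remainder $\mathrm{e}^{-zT}\mathrm{e}^{-T\mathbf X_{\mathcal E}}$ has a sharp time cutoff, so nothing damps the high-frequency region near the trapped set $E_0^*$, where the escape function is only non-increasing. Moreover, a bound $\|\mathrm{e}^{-zT}\mathrm{e}^{-T\mathbf X_{\mathcal E}}\|<1$ bounds the spectral radius of $\mathrm{e}^{-T\mathbf X_{\mathcal E}}$. It would therefore exclude new resonances in the whole half-plane $\{\operatorname{Re}(w)\ge\operatorname{Re}(z)\}$ at all imaginary parts, a statement containing Conjecture~\ref{conj: uniform gap}.

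\emph{The missing idea.} For $z$ in a compact set you need no high-frequency gap at all. Use a smooth cutoff $\chi(t/T)$, so that the remainder $\mathbf R(z)=\frac1T\int\chi'(t/T)\mathrm{e}^{-zt}\mathrm{e}^{-t\mathbf X_{\mathcal E}}\mathrm{d}t$ is a smooth time average.
\begin{itemize}
\item Near $E_0^*$ at frequencies $\gtrsim h^{-1}$, the operator $-\mathrm{i}h\mathbf X_{\mathcal E}$ is elliptic and $h|z|\ll1$, so integrating by parts in $t$ gives $\mathcal O(h^\infty)$ uniformly in the bundle (\S\ref{sec: averaging along the flows}).
\item Away from $E_0^*$, the escape function gives a factor $\mathrm{e}^{-C_0s}$ per time step $t_0$, made arbitrarily small by taking $s$ large.
\end{itemize}
Only the low-frequency block then limits the rate. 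The cross terms are handled by the recursion $\mathbf N(n)\le C_1[h^{-d}\mathrm{e}^{(\delta_0+\varepsilon)nt_0}+h^N+\mathrm{e}^{-C_0s}\mathbf N(n-1)]$ with $T=K|\log h|$.

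\textbf{The low-frequency step is also thinner than you suggest.} Your strong-convergence reduction is sound, but two pieces are missing.
\begin{itemize}
\item The cutoff keeps all vertical Fourier modes up to order $h^{-1}$, not only fiber averages. Reducing to $\mathcal L_T$ needs the Cauchy--Schwarz plus pointwise Weyl law argument of Proposition~\ref{prop: fourier flow fourier}, which costs a factor $h^{-(d-1)}$. This is why one needs $T\gtrsim|\log h|$.
\item Your $TT^*$/Schur heuristic cannot produce $\mathrm{e}^{\delta_0T}$. Since $\mathcal L_T$ is self-adjoint and $\mathcal L_T1=1$, the Schur test with constant weights gives only $\|\mathcal L_T\|\le1$. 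The quantity $\int_{S(x,T)}J(x,y)^{-2}\mathrm{d}\sigma(y)=\mathrm{e}^{(2\delta_0+o(1))T}$ is the squared $L^2$ norm of one row of the kernel. It gives pointwise control, not the $L^2\to L^2$ norm on the noncompact $\widetilde M$.
\end{itemize}
The paper obtains the sharp exponent from a Nica-type weighted Schur test. Pairs $(x,y)$ are sorted by $d(o,y)-d(o,x)\in[j,j+1]$ relative to a base point $o$, with weights $a_j=\mathrm{e}^{-j\delta_0}\widetilde J(x_{-j},y)/\widetilde J(x,x_{-j})$. Gromov hyperbolicity of $\widetilde M$ (Lemma~\ref{lem: xj y-j}) and quasi-multiplicativity of $J$ along geodesics then bound each of the $\mathcal O(T)$ pieces by $\mathrm{e}^{(\delta_0+\varepsilon)T}$.
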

	
We discuss the optimality of this result in \S \ref{subsec: optimality}.	
	\subsection{Strong convergence of representations and spectral gaps} Instead of working with coverings of $M$, it is more convenient to change perspective and work with unitary flat bundles over $M$ associated with permutation representations of the fundamental group. Let $M_n\to M$ be a Riemannian covering of degree $n$, associated with a homomorphism $\phi_n:\Gamma\to S_n$. There is a natural splitting
	\[C^\infty(M_n)=C^\infty(M)\oplus C^\infty_{\rm new}(M_n).\]
	The first factor corresponds to lifts of functions from $M$ to $M_n$, and the factor $C^\infty_{\rm new}(M_n)$ consists of functions that have zero average over all the fibers of the projection $M_n\to M$.
	
	Elements of $C^\infty(M_n)$ can be viewed as functions $f:\widetilde M\times [n]\to \mathbb C$ satisfying the invariance property
	\[\forall \gamma\in \Gamma, \quad f(x,i)=f(\gamma x,\phi_n[\gamma](i)).\]
	In turn, such functions can be viewed as functions $s:\widetilde M\to \mathbb C^n$ satisfying the equivariance property $s(\gamma x)=\pi_n(\gamma)s(x)$, where $\pi_n(\gamma)$ is just the permutation matrix associated with $\phi_n[\gamma]$. One defines a flat bundle over $M$ by 
	\[E_{\pi_n}:=\Gamma\backslash (S\widetilde M\times \mathbb C^n), \qquad \gamma.(x,v)=(\gamma x,\pi_n(\gamma)v).\]
	Then we have an identification $C^\infty(M_n)\simeq C^\infty(M,E_{\pi_n})$. Let $(\operatorname{std}_{n-1},V_n^0)$ be the standard irreducible $(n-1)$-dimensional representation of $S_n$, which is the restriction of the natural action to the space of vectors with zero average:
	\[V_n^0=\Big\{(x_1,\ldots,x_n)\in \mathbb C^n~:~\sum x_i=0\Big\}.\]
	The bundle $E_{\pi_n}$ has rank $n$; by removing the trivial part corresponding to functions in $C^\infty(M_n)$ that are constant over the fibers, one obtains a subbundle $E_{\rho_n}$ of rank $n-1$ corresponding to the representation of $\Gamma$ defined by \begin{equation} \label{eq: def rho_n}\rho_n:=\operatorname{std}_{n-1}\circ \phi_n.\end{equation}
	In other words, the subspace $C^\infty_{\rm new}(M_n)\subset C^\infty(M_n)$ identifies with the subspace $C^\infty(M,E_{\rho_n})\subset C^\infty(M,E_{\pi_n})$.
	
	Letting $\pi:SM\to M$ denote the projection, we can lift $E_{\rho_n}$ to a bundle $\mathcal E_{\rho_n}:=\pi^*E_{\rho_n}$ over $SM$. Similarly, we have an identification
	\[C^\infty_{\rm new}(SM_n)\simeq C^\infty(SM,\mathcal E_{\rho_n}).\]

	
	If $\phi_n$ is picked uniformly at random in $\operatorname{Hom}(\Gamma,S_n)$, we obtain a random representation $(\rho_n,V_n^0)$ defined by \eqref{eq: def rho_n}.
	We denote by $\mathbf X_{\mathcal E_{\rho_n}}$ the Lie derivative of $X$ acting on smooth sections of $\mathcal E_{\rho_n}$, and let $\operatorname{Res}(\mathbf X_{\mathcal E_{\rho_n}})$ be the set of Pollicott--Ruelle resonances of the operator $\mathbf X_{\mathcal E_{\rho_n}}$, which are defined as the poles of the meromorphic extension
	\[(\mathbf X_{\mathcal E_{\rho_n}}+z)^{-1}:C^\infty(SM,\mathcal E_{\rho_n})\to \mathcal D'(SM,\mathcal E_{\rho_n}).\]
	Then, one has
	\begin{equation}\operatorname{Res}(X_{M_n})=\operatorname{Res}(X_M)\cup \operatorname{Res}(\mathbf X_{\mathcal E_{\rho_n}}), \end{equation}
	where the union is taken with multiplicities. In this reformulation, one can forget that we are working on coverings $M_n\to M$ and instead work with unitary flat bundles over $M$. In particular, we can work with flat bundles associated with unitary representations of $\Gamma$ that need not be permutation representations of the form \eqref{eq: def rho_n}. \medskip
	
	 We recall the definition of strong convergence of representations.
	  \begin{defi} Let $(\lambda_{\Gamma},\ell^2(\Gamma))$ denote the regular representation of $\Gamma$. A sequence of unitary representations $(\rho_n,V_n)_{n\ge 0}$ of $\Gamma$ \emph{strongly converges} to $(\lambda_{\Gamma},\ell^2(\Gamma))$ if the following holds. For any $w\in\mathbb C[\Gamma]$, for any $\varepsilon>0$, we have for $n$ large enough (depending on $w,\varepsilon$):
	\begin{equation} \label{eq: def rhon z str c}\big| \|\rho_n(w)\|_{V_n}- \|\lambda_{\Gamma}(w)\|_{\ell^2(\Gamma)}\big| <\varepsilon ,\end{equation}
	 where the norms are operator norms. Here we view $w\in \mathbb C[\Gamma]$ as a linear combination of group elements and $\rho_n(w)$ acts on $V_n$ by extending $\rho_n$ by linearity. A sequence of random unitary representations $(\rho_n,V_n)$ is said to strongly converges \emph{asymptotically almost surely} to $(\lambda_\Gamma,\ell^2(\Gamma))$ if for any $w\in \mathbb C[\Gamma]$ and $\varepsilon>0$, \eqref{eq: def rhon z str c} holds a.a.s. in the sense of Definition \ref{defi: a.a.s}. \end{defi}
	 
	 \begin{rem}There is also a notion of \emph{weak convergence}, which replaces convergence of operator norms in \eqref{eq: def rhon z str c} by convergence of the normalized traces. In our setting, strong convergence implies weak convergence \cite[Lemma 6.1]{louder2022strongly}. For a sequence of unitary representations $(\rho_n)$ associated with a sequence of Riemannian coverings $(M_n\to M)$, the weak convergence of $(\rho_n)$ is equivalent to the convergence in the sense of Benjamini--Schramm of $(M_n)$ to the universal cover $\widetilde M$. 
	 \end{rem}
	 
	 The link with the model of random covers is given by a recent result of Magee, Puder and van Handel \cite{magee2025strongconvergenceuniformlyrandom}, establishing a.a.s. strong convergence for uniformly random permutation representations of surface groups.
	
	\begin{thm}[Magee--Puder--van Handel]\label{thm: mpvh strong con} Let $\Gamma$ be a surface group (in genus $\ge 2$). Then, if $\phi_n\in \operatorname{Hom}(\Gamma,S_n)$ is picked uniformly at random, the sequence of random representations $(\rho_n,V_n)$ defined by \eqref{eq: def rho_n} strongly converges a.a.s. to $(\lambda_\Gamma,\ell^2(\Gamma))$. \end{thm}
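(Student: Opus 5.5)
This result is quoted from \cite{magee2025strongconvergenceuniformlyrandom}, and in the paper it will be used as a black box. If I had to prove it, I would follow the \emph{polynomial method} of Chen--Garza-Vargas--Tropp--van Handel, adapted to surface groups.

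\textbf{Reduction to an upper bound.} The lower bound $\|\rho_n(w)\|\ge \|\lambda_\Gamma(w)\|-\varepsilon$ a.a.s.\ should follow from weak convergence. That is, I would show $\frac1{n-1}\mathbb E\operatorname{tr}\rho_n(\gamma)\to 0$ for every $\gamma\ne 1$, together with concentration of these normalized traces. Then $\frac1{n-1}\operatorname{tr}\big((\rho_n(w)^*\rho_n(w))^p\big)\to \tau\big((\lambda_\Gamma(w)^*\lambda_\Gamma(w))^p\big)$ for each fixed $p$, where $\tau$ is the canonical trace. Letting $p\to\infty$ gives the lower bound. By linearization (replacing $w$ by self-adjoint elements of $M_k(\mathbb C)\otimes\mathbb C[\Gamma]$) and by taking positive combinations, it is then enough to prove the following. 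For a self-adjoint $x=w^*w$ supported on words of length $\le L$, a.a.s.
\[\|\rho_n(x)\|\le \|\lambda_\Gamma(x)\|+\varepsilon.\]

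\textbf{Expansion of expected traces.} The key input would be the Magee--Puder theory of $\mathbb E_n[\operatorname{fix}_\gamma]$ for uniformly random $\phi_n\in\operatorname{Hom}(\Gamma,S_n)$. One expects that for $\gamma\in\Gamma$ of word length $q$,
\[\mathbb E\operatorname{tr}\rho_n(\gamma)=\mathbb E[\operatorname{fix}_\gamma]-1\]
admits an expansion $\sum_{j\ge 0} a_j(\gamma)n^{-j}$ valid for $q\le n^{c}$. I would try to establish three properties of this expansion:
\begin{itemize}
\item $a_0(\gamma)$ equals $n\,\mathbf 1_{\gamma=1}$ up to normalization;
\item $a_1$ is controlled by the number of ways $\gamma$ is a proper power;
\item the remaining coefficients grow at most like $(Cq)^{Cj}$.
\end{itemize}
It follows that for any polynomial $h$ of degree $q$, $n\mapsto \frac1{n}\mathbb E\operatorname{tr} h(\rho_n(x))$ is, in the variable $1/n$, a function close to a rational function of degree $O(q)$, with zeroth coefficient $\tau(h(\lambda_\Gamma(x)))$.

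\textbf{Polynomial method.} Using the Markov brothers inequality on the degree-$O(q)$ function of $1/n$, I would bound derivatives and deduce
\[\Big|\tfrac1n\mathbb E\operatorname{tr}h(\rho_n(x))-\tau(h(\lambda_\Gamma(x)))-\tfrac1n\nu_1(h)\Big|\lesssim \frac{q^{C}}{n^2}\|h\|_\infty.\]
Here $\nu_1$ is a first-order correction functional. I would show that $\nu_1$ is supported on $[-\|\lambda_\Gamma(x)\|,\|\lambda_\Gamma(x)\|]$. This is the crux: it needs an explicit description of $a_1$ as a sum over non-primitive elements, and a check that its generating function has no singularities outside the spectrum of $\lambda_\Gamma(x)$. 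Next, choose a smooth $h\ge0$ that vanishes on $[-\|\lambda_\Gamma(x)\|-\varepsilon/2,\ \|\lambda_\Gamma(x)\|+\varepsilon/2]$ and equals $1$ beyond $\|\lambda_\Gamma(x)\|+\varepsilon$. Approximate it by polynomials of degree $q\sim(\log n)^{C}$ via Chebyshev expansion and extend the estimate to this $h$. This gives $\mathbb E\operatorname{tr}h(\rho_n(x))=O(n^{-1+o(1)})$. Markov's inequality then shows that a.a.s.\ no eigenvalue of $\rho_n(x)$ exceeds $\|\lambda_\Gamma(x)\|+\varepsilon$.

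\textbf{Main obstacle.} The hard step is the uniform control of the $1/n$ expansion of $\mathbb E[\operatorname{fix}_\gamma]$ for words of length polynomial in $n$ in the surface group. Unlike the free group, $\operatorname{Hom}(\Gamma,S_n)$ has no product structure. I would attack this through the Mednykh/Frobenius formula, writing the relevant counts as sums over irreducible characters of $S_n$ weighted by $\chi(1)^{2-2g}$, and bound the contributions of representations with small first row or column using character bounds of Larsen--Shalev type. The aim is a rational approximation whose degree and coefficients grow only polynomially in $q$.
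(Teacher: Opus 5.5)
The paper gives no proof of this statement. It is quoted from Magee--Puder--van Handel and used as a black box, so the only meaningful comparison is with their argument. Your outline does follow its architecture: the lower bound comes from weak convergence and faithfulness of $\tau$, and the upper bound comes from the polynomial method of Chen--Garza-Vargas--Tropp--van Handel, fed by a quantitative version of the Magee--Puder $1/n$-expansion of $\mathbb E[\operatorname{fix}_\gamma]$. As a proof, however, your proposal only announces the two substantive inputs: uniform control of expected traces of words of length $q\to\infty$, and $\operatorname{supp}\nu_1\subset[-\|\lambda_\Gamma(x)\|,\|\lambda_\Gamma(x)\|]$. The first of these is also set up with the wrong target.

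For surface groups, $\mathbb E[\operatorname{fix}_\gamma]$ is not rational in $n$. You should not expect a convergent expansion $\sum_{j\ge 0}a_j(\gamma)n^{-j}$ with $|a_j|\le (Cq)^{Cj}$ for all $j$.
\begin{itemize}
\item Already the normalising constant $|\operatorname{Hom}(\Gamma,S_n)|/(n!)^{2g-1}=\sum_{\lambda\vdash n}d_\lambda^{2-2g}$ has a divergent $1/n$-expansion.
\item For fixed $\mu\vdash k$, the hook length formula gives $d_{(n-k,\mu)}=\frac{d_\mu}{k!}\prod_{i\in J_\mu}(n-i)$ with $J_\mu\subset\{0,\dots,2k-1\}$ and $|J_\mu|=k$.
\item So every shape contributes a series in $1/n$ with nonnegative coefficients, and $\mu=(k)$ alone puts $(k!)^{2g-2}$ in front of $n^{-k(2g-2)}$.
\end{itemize}

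What the Markov brothers step actually needs is a finite-order statement with a strong remainder. For $|\gamma|\le q$ and $n\ge q^{C}$, the normalised trace $\frac{1}{n-1}\mathbb E\operatorname{tr}\rho_n(\gamma)$ must agree with a rational function of $1/n$ of degree $O(q)$. The error must be small enough to absorb the total coefficient mass of the words in $h(\rho_n(x))$, which can be as large as $C^{q}\|h\|_\infty$; an error of size $(q^{C}/n)^{cq}$ would do. Only then is $\frac1n\mathbb E\operatorname{tr}h(\rho_n(x))$ uniformly close, on the grid $\{1/n:n\ge q^{C}\}$, to a degree-$O(q)$ function bounded by $\|h\|_\infty$. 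Coefficient bounds alone do not give this. You need the remainder after truncation at order $\asymp q$, uniformly in $\gamma$.

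The natural source of such a remainder is the character expansion itself:
\begin{itemize}
\item Shapes with at most $b\asymp q$ boxes outside the first row, or outside the first column, produce the rational part. For the dimension weights this is the formula above.
\item The error comes from shapes with \emph{both} $\lambda_1$ and $\lambda_1'$ at most $n-b$.
\item It does not come from shapes with small first row \emph{or} small first column. Shapes near the sign representation contribute at leading order, since $d_{\lambda'}=d_\lambda$.
\end{itemize}
Proving that remainder bound for the character weights attached to a general word of length $q$, together with the support property of $\nu_1$, is the actual content of the cited theorem. Your proposal leaves both open.
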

	Thus, Theorem~\ref{thm: spectral gap} is a special case of the following result.
	\begin{thm}\label{thm: extension resolvent}Let $(M,g)$ be a closed Anosov manifold of dimension $\ge 2$, with fundamental group $\Gamma$. Let $\{(\rho_n,V_n)\}_{n=1}^{+\infty}$ be a sequence of random unitary representations of $\Gamma$. Assume that $(\rho_n,V_n)$ strongly converges to $(\lambda_\Gamma,\ell^2(\Gamma))$ a.a.s. Then, for any compact subset $\mathcal K\subset \{\operatorname{Re}(z)>\delta_0\}$, a.a.s., the resolvent 
		\[(\mathbf X_{\mathcal E_{\rho_n}}+z)^{-1}:C^\infty(\mathcal M,\mathcal E_{\rho_n})\to \mathcal D'(\mathcal M,\mathcal E_{\rho_n})\]
		is holomorphic for $z\in \mathcal K$.\end{thm}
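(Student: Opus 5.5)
We would argue by a parametrix built from a long-time propagator, transferring norm bounds from the regular representation to $\rho_n$ via strong convergence. Fix $\mathcal K\subset\{\operatorname{Re} z>\delta_0\}$ compact and pick $\delta_0<\delta_1<\min_{\mathcal K}\operatorname{Re} z$.

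The first step is to choose an anisotropic Sobolev space $\mathcal H^s(\mathcal M,\mathcal E_{\rho_n})$ of order $s$ large, in the style of Faure--Sjöstrand, with escape function pulled back from $\mathcal M$. On this space $\mathbf X_{\mathcal E_{\rho_n}}+z$ is Fredholm for $\operatorname{Re} z>-cs$. Because the bundle is flat and unitary, the radial estimates and all constants are uniform in $n$; they depend only on the geometry of $\mathcal M$. It therefore suffices to show that $\mathbf X_{\mathcal E_{\rho_n}}+z$ is injective on $\mathcal H^s$ for $z\in\mathcal K$, uniformly.

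For this we write, for a large time $T$ and a cutoff $\chi\in C_c^\infty((0,T+1))$ with $\chi'=1$-type normalization,
\[
\int \chi(t)\mathrm e^{-zt}\mathrm e^{-t\mathbf X}\,\mathrm dt\,(\mathbf X+z)=\mathrm{Id}-R_T(z),\qquad R_T(z)=\int\chi'(t)\mathrm e^{-zt}\mathrm e^{-t\mathbf X}\,\mathrm dt.
\]
The remainder $R_T$ is an average of $\mathrm e^{-zt}\mathrm e^{-t\mathbf X}$ over $t\in[T,T+1]$. We then split it as $R_T=R_T^{\rm hi}+R_T^{\rm lo}$ by frequency microlocalization. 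The high-frequency part is small on $\mathcal H^s$ uniformly in $n$: the anisotropic space yields decay of order $\mathrm e^{-(cs-\operatorname{Re} z)T}$ up to a smoothing error. The low-frequency part is, after microlocalizing, an operator with smooth kernel. Lifting to $S\widetilde M$, it becomes a $\Gamma$-equivariant operator of the form
\[
\sum_{\gamma\in\Gamma} K_{T,z}(x,\gamma y)\,\rho_n(\gamma),
\]
where the kernel $K_{T,z}$ is supported on $\gamma$ with $d(x,\gamma y)\le T+C$. This is a finite sum, i.e. an element of $C^\infty(\mathcal F\times\mathcal F)\otimes\mathbb C[\Gamma]$ with $\mathcal F$ a fundamental domain. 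By strong convergence (Theorem~\ref{thm: mpvh strong con} in the surface case, or the hypothesis in general), its norm on $L^2(\mathcal M,\mathcal E_{\rho_n})$ is a.a.s. at most $\epsilon$ plus the norm of the same operator acting on $L^2(S\widetilde M)$ through $\lambda_\Gamma$. The matrix-valued coefficients are handled by the standard extension of strong convergence to $M_k(\mathbb C)\otimes\mathbb C[\Gamma]$, applied after a finite-rank approximation of the kernel in the $\mathcal F$ variables. A compactness argument in $z\in\mathcal K$ together with finitely many $T$ makes the statement a.a.s. uniform.

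The main obstacle is the universal cover estimate: we must show that on $L^2(S\widetilde M)$ the averaged propagator $\int\chi'(t)\mathrm e^{-zt}\mathrm e^{-t X_{\widetilde M}}\,\mathrm dt$, suitably smoothed, has norm at most $C\mathrm e^{(\delta_0-\operatorname{Re}z+\epsilon)T}$. Our plan is to reduce it, after integrating out the flow direction and the smoothing, to the spherical mean operator $A_T f(x)=\int_{S_x}f(\exp_x(Tv))\,\mathrm dv$ on $\widetilde M$. This is where the exponent $\delta_0$ must come from. We would bound $A_T^*A_T$ by a Schur test whose kernel counts pairs of geodesic segments of length $T$, weighted by the inverse unstable Jacobians squared. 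Via the orbit counting / pressure characterization recalled in \S\ref{subsec: pressure}, this gives a growth rate of $\mathrm e^{(\operatorname{Pr}(-2\psi^u)+\epsilon)T}$, hence $\|A_T\|\lesssim \mathrm e^{(\delta_0+\epsilon)T}$. Controlling the Jacobian of $\exp_x$ at radius $T$ uniformly by $\mathrm e^{\int_0^T\psi^u}$ is the delicate geometric input.

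Granting this bound, taking $T$ large makes $\|R_T\|_{\mathcal H^s}<1$ a.a.s. for all $z\in\mathcal K$. Then $\mathbf X+z$ is injective, hence invertible by the Fredholm index zero property, and the resolvent is holomorphic on $\mathcal K$.
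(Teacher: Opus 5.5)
\textbf{There is a genuine gap: the single frequency split.} Your overall architecture matches the paper's: a long-time parametrix, a low-frequency block transferred to $S\widetilde M$ by strong convergence with compact coefficients, and a reduction to the spherical mean operator. The step that glues the pieces together fails as stated. You split $R_T=R_T^{\rm hi}+R_T^{\rm lo}$ once, for the whole propagation time, and claim that $R_T^{\rm hi}$ decays like $\mathrm e^{-csT}$ on $\mathcal H^s$, uniformly in $n$, from the anisotropic space alone.

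No deterministic argument can give this for the part of $\mathrm e^{-T\mathbf X_{\mathcal E}}$ with high-frequency output. Suppose some unitary flat bundle has a resonance $z_0$ with $-cs<\operatorname{Re}z_0<0$ (e.g.\ the trivial bundle over a hyperbolic $M$ with $\lambda_1<\tfrac14$). Its resonant state $u$ is not smooth, and $\Pi_{\rm hi}\mathrm e^{-T\mathbf X}u=\mathrm e^{z_0T}\Pi_{\rm hi}u$. So that part decays no faster than $\mathrm e^{\operatorname{Re}z_0 T}$.

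Dynamically, a trajectory can sit at low frequency for time $T-\mathcal O(1)$ and cross the threshold only at the end, or only at the start. The escape function then gains only $\mathrm e^{-\mathcal O(s)}$, and the long low-frequency stretch can only be controlled by the probabilistic input. Three further problems compound this:
\begin{itemize}
\item Near $E_0^*$ the escape function gives no decay at all.
\item A ``smoothing error'' is not small in norm; this is why the paper uses the semiclassical spaces $\mathcal H_h^s$, whose calculus remainders are $\mathcal O(h)$.
\item Egorov's theorem is unavailable up to times $T=K|\log h|$ with $K$ large. You cannot avoid such times, because the universal-cover bound loses $h^{-(d-1)}$ (the pointwise Weyl count of vertical harmonics below $h^{-1}$).
\end{itemize}

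The paper resolves this by slicing $[T,2T]$ into steps of length $t_0$ and partitioning $1=a_1+a_2+a_3$:
\begin{itemize}
\item $a_1$: low frequency.
\item $a_2$: a conic neighborhood of $E_0^*$. There the smooth time average $\widehat\varphi(-\mathrm i\mathbf X_{\mathcal E})$ is $\mathcal O(h^\infty)$ by ellipticity of $-\mathrm ih\mathbf X_{\mathcal E}$, which beats the growth $\mathrm e^{\nu_sT}$.
\item $a_3$: high frequency away from $E_0^*$, where short-time Egorov gives a gain $\mathrm e^{-C_0s}$ per step.
\end{itemize}
This yields the recursion $\mathbf N(n)\le C_1[h^{-d}\mathrm e^{(\delta_0+\varepsilon)nt_0}+h^N+\mathrm e^{-C_0s}\mathbf N(n-1)]$. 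In it, the full low-to-low block $\mathbf A_{1,1}(n)$ is bounded a.a.s.\ by strong convergence at every intermediate time. Taking $s$ large makes the resulting geometric sum converge.

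\textbf{The universal-cover step is also missing its key mechanism.} A Schur test with trivial weights on $A_T$ or $A_T^*A_T$ only gives norm $\le 1$, since $\mathcal L_T\mathbf 1=\mathbf 1$ and $\mathcal L_T$ is self-adjoint. The decay $\mathrm e^{\delta_0T}$ is a non-amenability effect and needs weights adapted to Gromov hyperbolicity.

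The paper fixes a base point $o$ and splits the pairs $(x,y)$ with $d(x,y)=t$ according to $d(o,y)-d(o,x)\in[j,j+1]$. It uses weights $a_j(x,y)=\alpha_j\,\widetilde J(x_{-j},y)/\widetilde J(x,x_{-j})$, where $x_{-j}\in[ox]$ is essentially the branching point of $[ox]$ and $[oy]$, hence at bounded distance from $[xy]$. Almost multiplicativity of $\widetilde J$ then turns each piece into an integral of $\widetilde J^{-2}$ over an annulus centred at $x_{-j}$ (or at the companion point $y_j$), which is controlled by $\operatorname{Pr}(-2\psi^u)$. Summing over $|j|\le t$ gives $\|\mathcal L_t\|\lesssim_\varepsilon t\,\mathrm e^{(\delta_0+\varepsilon)t}$.

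By contrast, the comparison $J(x,y)\asymp\exp\int_0^t\psi^u$ that you single out as the delicate input is a short consequence of the Anosov splitting.
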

	\begin{rem}Theorem~\ref{thm: extension resolvent} includes the case of deterministic sequences of representations $\{\rho_n\}_{n=1}^{+\infty}$ which strongly converge to the regular representation, by considering probability measures supported on a single point.
	\end{rem}
	
	Theorem \ref{thm: spectral gap} is obtained by combining Theorem \ref{thm: mpvh strong con} with Theorem~\ref{thm: extension resolvent}. It was shown by Magee and Thomas \cite{magee2023stronglyconvergentunitaryrepresentations} that the fundamental group of any hyperbolic $3$-manifold admits at least one strongly converging sequence of unitary representations (which need not be permutation representations), which gives another application of Theorem \ref{thm: extension resolvent}. 
	
	No examples of strongly convergent sequences of unitary representations of $\Gamma$ are known in higher dimension. We chose to state the result in its present form because in the future strong convergence may be obtained for new sequences of random or deterministic unitary representations of $\Gamma$. Moreover, we believe that the results of \S\ref{sec: spherical mean} about the spherical mean operator on $\widetilde M$ are interesting in themselves, thus deserve to be treated in any dimension.

	\subsection{High frequency spectral gap} 	Since the value of $\gamma_0$ in Theorem \ref{thm: tsujii essential gap} is independent of the covering $M_n\to M$, Theorem~\ref{thm: tsujii essential gap} tells us that for any cover $M_n\to M$, there are finitely many resonances in the half-space $\{\operatorname{Re}(z)>-\gamma_0/2+\varepsilon\}$. However Theorem~\ref{thm: tsujii essential gap} does not give the localization of these resonances. In particular, if $M_n\to M$ is a sequence of coverings, it does not rule out the existence of a sequence of resonances $z_n\in \operatorname{Res}(X_{M_n})$ such that $\operatorname{Re}(z_n)>-\gamma_0+\varepsilon$ and $|\operatorname{Im}(z_n)|\to +\infty$. In view of Theorem~\ref{thm: tsujii essential gap} and Theorem~\ref{thm: spectral gap}, it is still natural to formulate the following conjecture.
	\begin{conj}\label{conj: uniform gap} Let $M$ be a closed Anosov surface. For any $\varepsilon>0$, with probability tending to $1$ as $n\to +\infty$, a uniformly random cover $M_n\to M$ has no new resonance in the half space $\{\operatorname{Re}(z)>-\frac{\gamma_0}{2}+\varepsilon\}$.
	\end{conj}
	
	Conjecture~\ref{conj: uniform gap} would follow from the following \emph{deterministic} result (combined with Theorem~\ref{thm: spectral gap}).
	\begin{conj}\label{conj: uniform asymptotic gap} For any $\varepsilon>0$, there is a constant $C_\varepsilon$ such that for any unitary flat bundle $\mathcal E \to SM$ of finite rank, the Ruelle operator $\mathbf X_{\mathcal E}$ has no resonance in 
		\[\mathcal S_\varepsilon:=\Big\{z\in\mathbb{C}, \ |\operatorname{Im}(z)|\ge C_\varepsilon \ \text{and} \ \operatorname{Re}(z)\ge -\frac{\gamma_0}{2}+\varepsilon\Big\}.\]\end{conj}
		\begin{rem}Here we do not assume that $\mathcal E$ is the lift of a bundle over $M$.
		\end{rem}
	Conjecture~\ref{conj: uniform asymptotic gap} will be investigated in future work. It should follow by running again the proof of \cite[Theorem 4]{nonnenmacher2015decay}. As it is already hinted in Appendix~\ref{appendix}, when quantizing scalar functions $a\in C^\infty(T^*\mathcal M)$ into operators $\operatorname{Op}_h^{\mathcal E}(a)$ acting on $C^\infty(\mathcal M,\mathcal E)$, we obtain a calculus with errors that are uniform with respect to the bundle, essentially because the transition maps of the bundle are constant unitary matrices. As a motivation, we point out that an analogue of Conjecture~\ref{conj: uniform asymptotic gap} for resonances of the Laplacian on Schottky surfaces was proved by Magee and Naud (with a nonexplicit uniform gap), see \cite[Theorem 1.10]{magee2020explicit}.
	\begin{rem}A stronger version of Conjecture~\ref{conj: uniform asymptotic gap} should be expected, namely that the resonance free region comes with a uniform resolvent bound with respect to $\mathcal E$, \emph{i.e.} that there is a small $h_0>0$ such that for any unitary flat bundle $\mathcal E\to \mathcal M$:
		\[ \forall z\in \mathcal S_\varepsilon,\qquad \|(\mathbf X_{\mathcal E}+z)^{-1}\|_{\mathcal H_{h_0}^s(\mathcal M,\mathcal E)}\le C_\varepsilon(1+|\operatorname{Im}(z)|^{N_0}).\]
		Here $\mathcal H_h^s(\mathcal M,\mathcal E)$ are the anisotropic Sobolev spaces introduced by Faure--Sj\"ostrand, see \S \ref{sec: aniso}. When $M$ is a closed Anosov surface, this conjecture, combined with Theorem~\ref{thm: extension resolvent}, would imply that there is a uniform constant $\kappa>0$ and an integer $k\ge 0$ such that a.a.s., one has for any $f,g\in H^k(SM_n)$,
		\begin{equation} \label{eq: decay cor strong conv} \int_{SM_n}(f\circ \varphi_t)g\mathrm{dvol}=\int_{SM_n} f\mathrm{dvol}\int_{SM_n}g\mathrm{d}\mu_{\rm L} +\mathcal O(\mathrm{e}^{-\kappa t}\|f\|_{H^k(SM_n)}\|g\|_{H^k(SM_n)}), \end{equation}
		the implied constant in the $\mathcal O(\cdot)$ being independent of $n$. This result might be a little surprising at first sight. Indeed, it is known that uniformly random covers $M_n\to M$ converge in the sense of Benjamini--Schramm to the universal cover $\widetilde M$. In particular, one can find balls of radius $R_n\to +\infty$ in $M_n$ that are diffeomorphic to balls of the same radius in $\widetilde M$, suggesting that no mixing should occur in times $t\le R_n$. The point lies in the fact that correlations measure not only mixing but also dispersion. Mixing occurs when an initial cloud of points of mass $\sim 1$ is uniformly spread over the whole manifold. To measure this, take smooth functions $f,g$ supported in balls of radius $1$ and normalized so that $\int_{SM_n} f\mathrm{dvol}=\int_{SM_n} g\mathrm{dvol}=1$. We may even assume that $f,g$ have uniformly bounded $H^k$ norms. Then, the first term on the right-hand side of \eqref{eq: decay cor strong conv} is of order $\sim n^{-1}$, because
		\[\int_{SM_n} g\mathrm{d}\mu_{\rm L}=\frac{1}{\operatorname{vol}(SM_n)}\int_{SM_n} f\mathrm{dvol}\sim \frac 1n, \]
		while the error is of order $\mathcal O(\mathrm{e}^{-\kappa t})$. Mixing emerges when the error is negligible compared to the leading term, i.e. when
		\[t\gtrsim \frac{\log n}{\kappa}\]
		Thus, we have to reach times of order $\sim \log n$ to observe uniform spreading of a cloud of point of initial size $\sim 1$. This reflects the fact that the diameter of a uniformly random cover $M_n\to M$ grows like $\sim \log n$, a fact that can be deduced from \eqref{eq: decay cor strong conv} but can also be proved using the Cheeger constant to obtain an upper bound on the diameter from a lower bound on the first nontrivial eigenvalue of the Laplacian---which is known by \cite{magee2025strongconvergenceuniformlyrandom,hide2025spectralgap}, see \cite{MageeLetter} for example.
		
		For times $t\lesssim \log n$, the dynamics in regions where the injectivity radius is large can be understood by lifting it to the universal cover. A cloud of initial mass $1$ supported in a ball of radius $1$ spreads over a region of diameter $\sim t$, and puts a mass $\lesssim \mathrm{e}^{-\gamma_0 t}$ in any ball of radius $1$. Integrating against a $H^k$ function then produces a small contribution. This is a regime of \emph{dispersion} but not full mixing.
		\end{rem}

	\subsection{The case of hyperbolic surfaces} \label{subsec: case of hyperbolic surfaces} In the case where $M$ has constant negative curvature $-1$, we know that Conjecture~\ref{conj: uniform gap} holds true. This is due to the correspondence between eigenvalues of the Laplacian and Pollicott--Ruelle resonances, and follows from a recent breakthrough of Magee--Puder--van Handel \cite{magee2025strongconvergenceuniformlyrandom}. Indeed, assume that $M$ is a closed connected hyperbolic surface. Then the Laplace--Beltrami operator $\Delta_M$ has discrete spectrum
	\[\operatorname{Spec}(\Delta_M)=\{0=\lambda_0< \lambda_1\le \ldots \le \lambda_n\to +\infty\}.\]
	By \emph{e.g.} \cite[Theorem 1]{dyatlov2015power}, the Pollicott--Ruelle resonances of $SM$ with real part $>-1$ are exactly given by
	\[\mu_j=-\frac 12\pm \mathrm{i}\sqrt{\lambda_j-\frac 14}\in (-1,0]\cup \Big(-\frac 12+\mathrm{i} \mathbb R\Big),\]
moreover, the multiplicity of $\mu_j$ as a resonance is the multiplicity of $-\mu_j(\mu_j+1)$ as an eigenvalue of the Laplacian. In particular, resonances of real part $>-\frac 12$ correspond to eigenvalues of the Laplacian $\lambda_j\in [0,\frac 14)$. The result of \cite[Theorem 1.2]{magee2025strongconvergenceuniformlyrandom} shows that these small eigenvalues are stable under random Riemannian coverings:
	\begin{thm}[Magee--Puder--van Handel]\label{thm: magee puder van handel} Let $M$ be a closed hyperbolic surface. For any $\varepsilon>0$, asymptotically almost surely, a uniformly random covering $M_n\to M$ of degree $n$ has no new eigenvalue smaller than $\frac 14-\varepsilon$, i.e.
		\[\operatorname{Spec}(\Delta_{M_n})\cap \Big[0,\frac 14-\varepsilon\Big]=\operatorname{Spec}(\Delta_M)\cap \Big[0,\frac 14-\varepsilon\Big],\]
		where the multiplicities coincide on both sides.
	\end{thm}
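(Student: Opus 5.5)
The plan is to deduce the statement from the a.a.s.\ strong convergence of Theorem~\ref{thm: mpvh strong con}, using a compactly supported radial convolution operator as a spectral test function. As in the reformulation above, $L^2(M_n)=L^2(M)\oplus L^2_{\rm new}(M_n)$ with $L^2_{\rm new}(M_n)\simeq L^2(M,E_{\rho_n})$, and $\Delta_{M_n}$ preserves this splitting. So it suffices to show that a.a.s.\ the twisted Laplacian $\Delta_{E_{\rho_n}}$ has no eigenvalue in $[0,\frac14-\varepsilon]$. Absence of such eigenvalues gives the equality of spectra with multiplicities at once.

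\textbf{Choice of test function.} Fix $R>0$ and a smooth, even, nonnegative $g\in C_c^\infty((-R,R))$ with $g\not\equiv 0$. By the Selberg/Harish-Chandra correspondence, $g$ is the Abel transform of a compactly supported, smooth, radial kernel $k_0$ on $\mathbb H^2$, and we put $k=k_0*k_0$. Then $k$ is radial, supported in a ball of radius $2R$, and its spherical transform is $h(r)=\hat g(r)^2$ with $\hat g(r)=\int g(u)\mathrm{e}^{\mathrm{i}ru}\,\mathrm d u$.

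\textbf{Separation of the two regimes.} On the tempered spectrum ($r\in\mathbb R$, i.e.\ $\lambda\ge \frac14$) we have $|h(r)|\le (\int g)^2$. An eigenvalue $\lambda=\frac14+r^2\le \frac14-\varepsilon$ corresponds to $r=\mathrm{i}t$ with $t\in[\sqrt\varepsilon,\frac12]$. There
\[h(\mathrm{i}t)=\Big(\int g(u)\cosh(tu)\,\mathrm d u\Big)^2\ge \Big(\int g(u)\cosh(\sqrt\varepsilon u)\,\mathrm d u\Big)^2=:(1+\eta)\Big(\int g\Big)^2\]
for some $\eta=\eta(\varepsilon,g)>0$, using $g\ge 0$ and the monotonicity of $\cosh$. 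Since $\Delta_{\mathbb H^2}$ has spectrum $[\frac14,\infty)$, the operator $K_\lambda$ of convolution by $k$ on $L^2(\mathbb H^2)$ has norm at most $(\int g)^2$.

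\textbf{Reduction to operator-coefficient strong convergence.} Fix a relatively compact fundamental domain $F\subset\mathbb H^2$ for $\Gamma$. Then $L^2(M,E_\rho)\simeq L^2(F)\otimes V$, and the automorphic kernel of $k$ gives
\[K_\rho=\sum_{\gamma\in S} a_\gamma\otimes\rho(\gamma),\qquad a_\gamma f(x)=\int_F k(x,\gamma y)f(y)\,\mathrm d y .\]
Here $S\subset\Gamma$ is finite because $k$ has compact support, and each $a_\gamma$ is a Hilbert--Schmidt (hence compact) operator on $L^2(F)$. For $\rho=\lambda_\Gamma$, the operator $K_{\lambda_\Gamma}$ is unitarily equivalent to convolution by $k$ on $L^2(\mathbb H^2)$.

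The key step is to show that a.a.s.\ $\|K_{\rho_n}\|\le \|K_{\lambda_\Gamma}\|+\delta$ for any fixed $\delta>0$. To do this, approximate each $a_\gamma$ in operator norm by finite-rank operators, all factoring through a common finite-dimensional subspace of $L^2(F)$. This reduces the claim to elements of $M_N(\mathbb C)\otimes\mathbb C[\Gamma]$. Strong convergence for scalar coefficients extends to matrix coefficients by the standard argument: apply it to $w^*w$ together with a linearization, or equivalently use that the norm on $M_N(\mathbb C)\otimes C^*_{\rm red}(\Gamma)$ is determined by scalar coefficients. The approximation error is uniform in the representation because every $\rho(\gamma)$ is unitary.

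\textbf{Conclusion.} If $\Delta_{E_{\rho_n}}u=\lambda u$ with $\lambda\le\frac14-\varepsilon$, then $K_{\rho_n}u=h(\mathrm{i}t)u$, so
\[\|K_{\rho_n}\|\ge(1+\eta)\Big(\int g\Big)^2.\]
Choosing $\delta<\eta(\int g)^2$ contradicts the previous step, and this holds a.a.s.

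I expect the main obstacle to be the passage from scalar to compact-operator coefficients, with error control uniform in $n$. This requires only finitely many words, fixed once $\varepsilon$, $g$ and $\delta$ are fixed, so a single a.a.s.\ event suffices.
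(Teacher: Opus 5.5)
The paper does not prove this statement; it quotes it from \cite[Theorem 1.2]{magee2025strongconvergenceuniformlyrandom}, where it is derived from the strong convergence result (Theorem~\ref{thm: mpvh strong con}) following Hide--Magee \cite{HM}.

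Your deduction is correct and works in the same spirit. The compactly supported radial kernel with Selberg transform $\hat g^{2}$ separates exceptional eigenvalues $\lambda\le\frac14-\varepsilon$ from the tempered spectrum. Writing the twisted operator as $\sum_{\gamma\in S}a_\gamma\otimes\rho_n(\gamma)$, with finitely many Hilbert--Schmidt coefficients on a fundamental domain, is exactly the mechanism the paper uses in the proof of Proposition~\ref{prop: Mh etX Mh}.

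The one step you only sketch is the passage from scalar to compact-operator coefficients a.a.s. This is precisely Proposition~\ref{prop: amplified}, so you can simply quote it. If you want to prove it yourself, note that the complete-contractivity argument is a statement about deterministic sequences. To land in a single a.a.s.\ event you additionally need the (easy, compactness-type) fact that finitely many scalar words control a given matrix word up to $\delta$.
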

	An immediate consequence of Theorem~\ref{thm: magee puder van handel}, due to the correspondence described above, is the following
	\begin{cor} \label{cor: thm for hyperbolic surface}  Let $M$ be a closed hyperbolic surface. For any $\varepsilon>0$, with probability tending to $1$ as $n\to +\infty$, a uniformly random covering of degree $n$ of $M$ has no new Pollicott--Ruelle resonance in the half space $\{\operatorname{Re}(z)\ge -\frac 12+\varepsilon\}$. \end{cor}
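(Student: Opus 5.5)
The plan is to deduce Corollary~\ref{cor: thm for hyperbolic surface} from Theorem~\ref{thm: magee puder van handel} through the Laplace--resonance correspondence recalled above, which holds on any closed hyperbolic surface and hence on both $M$ and $M_n$.

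\emph{Step 1: fixing the threshold.} Fix $\varepsilon>0$; we may assume $\varepsilon<\frac12$, since otherwise the region is contained in $\{\operatorname{Re}(z)>0\}$ and holds no resonances at all. Since $\frac12+\varepsilon<1$, every resonance $z$ with $\operatorname{Re}(z)\ge-\frac12+\varepsilon$ lies in the range $\operatorname{Re}(z)>-1$ where the correspondence of \cite[Theorem 1]{dyatlov2015power} applies. So on both $M$ and $M_n$, such resonances are exactly the numbers $\mu=-\frac12\pm\mathrm i\sqrt{\lambda-\frac14}$ for $\lambda\in\operatorname{Spec}(\Delta)$, with multiplicity equal to that of $-\mu(\mu+1)=\lambda$.

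\emph{Step 2: which eigenvalues are involved.} If $\lambda\ge\frac14$, then $\operatorname{Re}(\mu)=-\frac12$, which is excluded. So $\lambda<\frac14$ and $\mu=-\frac12\pm\sqrt{\frac14-\lambda}$ is real. The condition $\mu\ge-\frac12+\varepsilon$ forces the $+$ sign and $\sqrt{\frac14-\lambda}\ge\varepsilon$, that is, $\lambda\le\frac14-\varepsilon^2$. Conversely, each such $\lambda$ gives the resonance $\mu_+=-\frac12+\sqrt{\frac14-\lambda}$ in the region. The map $\lambda\mapsto\mu_+$ is a strictly decreasing bijection from $[0,\frac14-\varepsilon^2]$ onto $[-\frac12+\varepsilon,0]$, and it preserves multiplicities. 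Hence the resonances of $M_n$ (resp.\ $M$) in $\{\operatorname{Re}(z)\ge-\frac12+\varepsilon\}$, counted with multiplicity, are in bijection with $\operatorname{Spec}(\Delta_{M_n})\cap[0,\frac14-\varepsilon^2]$ (resp.\ $\operatorname{Spec}(\Delta_M)\cap[0,\frac14-\varepsilon^2]$).

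\emph{Step 3: applying Magee--Puder--van Handel.} Apply Theorem~\ref{thm: magee puder van handel} with $\varepsilon^2$ in place of $\varepsilon$. Asymptotically almost surely, these two spectral sets coincide with multiplicities. By Step 2, the resonances of $M_n$ and of $M$ in the half-space then coincide with multiplicities, so no resonance of $M_n$ there is new.

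The only point needing care, and the main obstacle to watch for, is that multiplicities match. The multiplicity of a resonance must be the full rank of the spectral projector, including possible Jordan blocks at the real resonances. This is guaranteed by the cited correspondence as stated, which gives the multiplicity of $\mu_j$ as the multiplicity of the corresponding Laplace eigenvalue.
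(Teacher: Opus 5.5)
Your argument is correct and is the one the paper intends. The paper calls the corollary an immediate consequence of Theorem~\ref{thm: magee puder van handel} via the Dyatlov--Faure--Guillarmou correspondence, and your Step~2 (resonances in $\{\operatorname{Re}(z)\ge -\frac12+\varepsilon\}$ match, with multiplicities, eigenvalues in $[0,\frac14-\varepsilon^2]$, so one applies the theorem with $\varepsilon^2$) makes that explicit. One small slip: for $\varepsilon=\frac12$ the region $\{\operatorname{Re}(z)\ge 0\}$ does contain the resonance $0$, so justify the reduction to $\varepsilon<\frac12$ by monotonicity (the half-space shrinks as $\varepsilon$ grows) rather than by the absence of resonances.
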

	Note that the value $\frac 14$ in Theorem~\ref{thm: magee puder van handel} is optimal. Indeed, by adapting the proof of \cite[Theorem 9.2]{LS1}, one can show that for all $\varepsilon>0$, there is a constant $c_\varepsilon>0$ such that a.a.s. we have
	\[\#\operatorname{Spec}(\Delta_{M_n}) \cap \Big[\frac 14,\frac 14+\varepsilon\Big]\ge c_\varepsilon n.\]
	Similarly, the $-\frac 12$ result in Corollary~\ref{cor: thm for hyperbolic surface} is optimal.
	
	\medskip
	
	While Theorem~\ref{thm: magee puder van handel} extends to variable curvature surfaces---one has to replace $\frac 14$ by the bottom of the spectrum of the universal cover of $M$, see \cite{hide2025spectralgap}---, the correspondence between Laplacian eigenvalues and Pollicott--Ruelle resonances breaks down, thus one has to tackle the resonance problem directly.
	
	\subsection{Related works} \label{subsec: related work}
	
	\subsubsection{Eigenvalues of random lifts of graphs} Consider a finite graph $G=(V,\vec E)$ with vertices $V$ and directed edges $\vec E$. We always assume that $(y,x)\in \vec E$ for any $(x,y)\in \vec E$. Denote by $A_G$ and $B_G$ the adjacency and nonbacktracking operators, that act respectively on $L^2(G)$ and $L^2(\vec E)$ by the formulae:
	\[A_G \varphi(x)=\sum_{(x,y)\in \vec E} \varphi(y), \qquad B_G \psi(f)=\sum_{e\to f} \psi(e),\]
	where for $e=(x,y)$ and $f=(x',y')\in \vec E$ we write $e\to f$ if $y=x'$ and $y'\neq x$. The following result was obtained by Friedman and Kohler \cite{friedman2014relativizedsecondeigenvalueconjecture}, by refining the method of \cite{Friedman2008}.
	\begin{thm}[Friedman--Kohler] \label{thm: Friedman Kohler} For a uniformly random cover $G_n\to G$ of a $d$-regular graph $G$, one has a.a.s.
		\[\operatorname{Spec}(A_{G_n})\cap [\sqrt{d-1}+\varepsilon, +\infty)=\operatorname{Spec}(A_G)\cap [\sqrt{d-1}+\varepsilon, +\infty),\]
		where the multiplicities coincide on both sides.\end{thm}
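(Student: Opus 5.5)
The plan is to deduce the statement from strong convergence, in the same spirit as Theorem~\ref{thm: extension resolvent}, using the random permutation representations of a free group (Bordenave--Collins) in place of Theorem~\ref{thm: mpvh strong con}. One caveat first: the threshold that comes out of this argument is $2\sqrt{d-1}+\varepsilon$, the spectral radius of the adjacency operator of the $d$-regular tree. I therefore read the statement with that threshold for $A_{G_n}$. The quantity $\sqrt{d-1}$ is the corresponding radius for the nonbacktracking operator $B_{G_n}$, and the same scheme, applied to $B$, would give the analogue with moduli $\sqrt{d-1}+\varepsilon$.

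\emph{Reduction to a twisted operator.} Fix a spanning tree of $G$. Then $\pi_1(G)\simeq F_r$ is free, generated by the non-tree edges $e_1,\dots,e_r$. A degree $n$ cover $G_n\to G$ is determined, up to relabelling, by a homomorphism $\phi_n:F_r\to S_n$, that is, by one permutation $\sigma_j$ per non-tree edge, with the identity on tree edges. A uniformly random cover therefore corresponds to $\sigma_1,\dots,\sigma_r$ independent and uniform in $S_n$.

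As in the splitting $C^\infty(M_n)=C^\infty(M)\oplus C^\infty_{\rm new}(M_n)$, we have $L^2(G_n)=L^2(G)\oplus L^2_{\rm new}(G_n)$. The operator $A_{G_n}$ preserves this decomposition and acts as $A_G$ on the first factor. On the second factor it is unitarily equivalent to
\[
A_{\rho_n}:=\sum_{(x,y)\in\vec E} E_{xy}\otimes \rho_n(g_{xy})\ \in M_{|V|}(\mathbb C)\otimes \operatorname{End}(V_n^0),
\]
where $\rho_n=\operatorname{std}_{n-1}\circ\phi_n$, $E_{xy}$ is the matrix unit, and $g_{xy}\in F_r$ is the generator attached to the edge, or $1$ for a tree edge. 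This operator is self-adjoint because $g_{yx}=g_{xy}^{-1}$. Replacing $\rho_n$ by the regular representation $\lambda_{F_r}$ gives $A_\lambda$, which is unitarily equivalent to the adjacency operator of the universal cover, the $d$-regular tree $T_d$. Hence $\operatorname{Spec}(A_\lambda)=[-2\sqrt{d-1},2\sqrt{d-1}]$.

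It therefore suffices to show that a.a.s.
\[
\operatorname{Spec}(A_{\rho_n})\subset [-2\sqrt{d-1}-\varepsilon,\,2\sqrt{d-1}+\varepsilon],
\]
since then the eigenvalues of $A_{G_n}$ above the threshold are exactly those of $A_G$, with the same multiplicities.

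\emph{Strong convergence.} The key input is the theorem of Bordenave--Collins: $\rho_n$ strongly converges a.a.s. to $\lambda_{F_r}$. By a standard linearization argument, strong convergence for scalar elements $w\in\mathbb C[F_r]$ upgrades to matrix coefficients, i.e. $\|\rho_n(P)\|\to\|\lambda(P)\|$ for every $P\in M_k(\mathbb C)\otimes\mathbb C[F_r]$. It then upgrades further to spectral inclusion for self-adjoint $P$.

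For the last step, fix $\lambda_0$ with $\operatorname{dist}(\lambda_0,\operatorname{Spec}(A_\lambda))>\varepsilon$. Choose a polynomial $p$ with $|p|\le 1/2$ on $[-d,d]\cap\{\operatorname{dist}(\cdot,\operatorname{Spec} A_\lambda)\le\varepsilon/2\}$ and $p(\lambda_0)=1$; more precisely, use a finite cover of $[-d,d]$ by such points and continuity. Then $\|p(A_{\rho_n})\|\to\|p(A_\lambda)\|\le 1/2$ a.a.s., because $p(A_{\rho_n})=\rho_n(p(A))$ is again a matrix-coefficient polynomial. Since $\|A_{\rho_n}\|\le d$ always, this excludes eigenvalues of $A_{\rho_n}$ near $\lambda_0$. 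Compactness of $[-d,d]$ minus the $\varepsilon$-neighbourhood of $\operatorname{Spec}(A_\lambda)$ gives a single a.a.s. event covering all such $\lambda_0$.

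\emph{Main obstacle.} All the difficulty sits in the strong convergence theorem for random permutations, which I would invoke rather than reprove. Its proof goes through a trace method on nonbacktracking walks, or through the polynomial method of Chen--Garza-Vargas--Tropp--van Handel. It must control tangles and path counts up to length $\asymp\log n$, exactly where Friedman's original argument was hardest. Everything else is the soft functional calculus above.
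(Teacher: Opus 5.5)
Your proof is correct, and you are right to change the threshold. The adjacency operator of the $d$-regular tree has spectral radius $2\sqrt{d-1}$ (Kesten), not $\sqrt{d-1}$. So the $\sqrt{d-1}$ in the statement, and in the sentence that follows it, is a slip. As written the statement is false: random lifts converge locally to $T_d$, so the empirical spectral measure of $A_{G_n}$ tends to the Kesten--McKay law on $[-2\sqrt{d-1},2\sqrt{d-1}]$. A positive proportion of new eigenvalues therefore lie above $\sqrt{d-1}+\varepsilon$.

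The paper does not prove this theorem. It quotes it from Friedman--Kohler, whose argument is a direct trace method. It also remarks that for regular $G$ the result follows from Bordenave's nonbacktracking bound, Theorem~\ref{thm: bordenave}, via the Ihara--Bass formula. That formula attaches to each eigenvalue $\lambda$ of $A$ the roots of $\mu^2-\lambda\mu+(d-1)=0$, and for $\lambda>2\sqrt{d-1}$ the larger root exceeds $\sqrt{d-1}$. Hence $|\mu|\le\sqrt{d-1}+\varepsilon'$ for new nonbacktracking eigenvalues forces $\lambda\le 2\sqrt{d-1}+\varepsilon'$ for new adjacency eigenvalues.

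You instead deduce the result from Bordenave--Collins strong convergence. This follows the same template as the paper's main argument: compare the twisted operator with its universal-cover counterpart, then invoke strong convergence. The trade-off is as follows:
\begin{itemize}
\item The Ihara--Bass route needs only a spectral-radius estimate for $B$, but regularity is essential to it.
\item Your route rests on a much deeper theorem, but the deduction is soft. It works verbatim for nonregular $G$, with $[-2\sqrt{d-1},2\sqrt{d-1}]$ replaced by $\operatorname{Spec}(A_{\widetilde G})$; this is why that case had to wait for Bordenave--Collins.
\end{itemize}

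For regular $G$, your polynomial functional-calculus step is unnecessary. Since $\operatorname{Spec}(A_\lambda)=[-\|A_\lambda\|,\|A_\lambda\|]$, the single bound $\|A_{\rho_n}\|\le 2\sqrt{d-1}+\varepsilon/2$ a.a.s. suffices. Bordenave--Collins give this bound directly, because $A_{\rho_n}$ is a linear pencil with matrix coefficients. The functional calculus is only needed when $\operatorname{Spec}(A_{\widetilde G})$ has gaps.

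Some minor points:
\begin{itemize}
\item The passage from scalar to matrix coefficients is the $C^*$-algebraic amplification of Proposition~\ref{prop: amplified}, not linearization. Linearization goes from linear pencils to polynomials.
\item If the random-lift model puts independent uniform permutations on every edge, you should state that gauge-fixing along the spanning tree pushes this forward to independent uniform $\sigma_1,\dots,\sigma_r$.
\item Use $\varepsilon/2$ in the spectral inclusion, so that the closed endpoint $2\sqrt{d-1}+\varepsilon$ cannot be a new eigenvalue.
\end{itemize}
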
Here, $\sqrt{d-1}$ is precisely the spectral radius of the adjacency operator on the universal cover of $G$, which is the $d$-regular tree. Bordenave \cite{bordenave2015} then obtained the following result, that holds for nonregular graphs.
	\begin{thm}[{\cite[Theorem 23]{bordenave2015}}] \label{thm: bordenave} Let $G=(V,\vec E)$ be a finite graph, with universal cover $\widetilde G$. For a uniformly random cover $G_n=(V_n,\vec E_n)\to G$, letting $H_0\subset L^2(\vec E_n)$ denote the space of functions with $0$-average over the fibers of the projection $\vec E_n\to \vec E$, one has a.a.s.
		\[\rho(B_{G_n}|_{H_0})\le \rho(B_{\widetilde G})+\varepsilon,\]
		where $\rho(B)$ denotes the spectral radius of $B$. \end{thm}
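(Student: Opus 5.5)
The plan is to deduce the bound from strong convergence of random permutation representations of free groups, in the same spirit as the reduction of Theorem~\ref{thm: spectral gap} to Theorem~\ref{thm: extension resolvent}. Here strong convergence only has to be used for finitely many fixed powers of a finite matrix-valued operator.

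\textbf{Step 1: realise $B_{G_n}$ as a matrix-valued element of a group algebra.} The fundamental group of the finite graph $G$ is a free group $F$. I fix a spanning tree of $G$; each edge outside the tree then corresponds to a free generator. A uniformly random cover $G_n\to G$ is the same as a uniformly random homomorphism $\phi_n:F\to S_n$, that is, independent uniform permutations $\sigma_1,\ldots,\sigma_r$ attached to the non-tree edges. With this, $L^2(\vec E_n)\simeq L^2(\vec E)\otimes\mathbb C^n$, and
\[B_{G_n}=\sum_{e\to f} E_{fe}\otimes \pi_n(\gamma_{e,f}),\]
where $E_{fe}$ are matrix units on $L^2(\vec E)$ and $\gamma_{e,f}\in\{1,g_i^{\pm1}\}$. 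The subspace $H_0$ is $L^2(\vec E)\otimes V_n^0$, and on it $B_{G_n}$ acts by the same formula with $\pi_n$ replaced by $\rho_n=\operatorname{std}_{n-1}\circ\phi_n$. The same formula with $\lambda_F$ in place of $\rho_n$ acts on $L^2(\vec E)\otimes\ell^2(F)$, and this space is identified with $L^2(\vec E_{\widetilde G})$. Under this identification the formula gives exactly $B_{\widetilde G}$. Thus $B_{G_n}|_{H_0}=(\mathrm{id}\otimes\rho_n)(P)$ and $B_{\widetilde G}=(\mathrm{id}\otimes\lambda_F)(P)$ for one fixed $P\in M_{\vec E}(\mathbb C)\otimes\mathbb C[F]$.

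\textbf{Step 2: fixed powers via strong convergence.} Every power $P^k$ is again a fixed element of $M_{\vec E}(\mathbb C)\otimes\mathbb C[F]$. I would invoke the Bordenave--Collins theorem: uniformly random permutation representations of a free group, restricted to $V_n^0$, converge strongly a.a.s. to $\lambda_F$. Strong convergence upgrades automatically to matrix coefficients of fixed finite size, via the standard $C^*$-algebraic argument that strong convergence is equivalent to norm convergence for $M_d(\mathbb C)\otimes\mathbb C[F]$ for every $d$. Hence, for each fixed $k$ and $\eta>0$, a.a.s.
\[\big\|B_{G_n}^k|_{H_0}\big\|\le \|B_{\widetilde G}^k\|+\eta.\]

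\textbf{Step 3: Gelfand's formula.} Given $\varepsilon>0$, I choose $k$ so large that $\|B_{\widetilde G}^k\|^{1/k}\le\rho(B_{\widetilde G})+\varepsilon/2$. This is possible by the spectral radius formula for the bounded operator $B_{\widetilde G}$. With $\eta$ small enough and $k$ fixed, a.a.s.
\[\rho(B_{G_n}|_{H_0})\le \|B_{G_n}^k|_{H_0}\|^{1/k}\le(\|B_{\widetilde G}^k\|+\eta)^{1/k}\le\rho(B_{\widetilde G})+\varepsilon.\]
The step I expect to be the real obstacle is Step~2. It requires the strong convergence theorem for random permutations of free groups, whose proof is heavy. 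If one wants to avoid it, Bordenave's original route would replace it by a high-trace method: one bounds $\mathbb E\,\|B^{\ell}|_{H_0}\|^{2m}$ with $\ell\sim\log n$ by expanding over tangle-free nonbacktracking paths and treating tangled paths separately. The delicate combinatorics there is exactly the counting of paths with excess.
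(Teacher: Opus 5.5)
Your reduction is correct, but it takes a different route from the one behind this statement. The paper does not prove this theorem; it quotes it from \cite{bordenave2015}, where it is obtained directly by the high-trace method you mention at the end: moments of $\|B^{\ell}|_{H_0}\|$ with $\ell\asymp\log n$, counting of tangle-free paths, and separate control of tangled paths.

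You instead write $B_{G_n}|_{H_0}=(\mathrm{id}\otimes\rho_n)(P)$ and $B_{\widetilde G}=(\mathrm{id}\otimes\lambda_F)(P)$ for a single fixed $P\in M_{\vec E}(\mathbb C)\otimes\mathbb C[F]$. You then apply matrix-amplified strong convergence (Proposition~\ref{prop: amplified}) to the one element $P^k$, and finish with $\rho(A)\le\|A^k\|^{1/k}$ and Gelfand's formula for $B_{\widetilde G}$. Every step is sound. In particular, the spanning-tree normalisation only relabels fibres, which preserves $H_0$ and the spectrum. This is the same template the paper uses for its manifold analogue, Proposition~\ref{prop: Mh etX Mh}: strong convergence is applied to finitely many fixed operators, and the real work moves to the limiting operator on the universal cover.

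Your route buys brevity, at two costs. First, because $k$ is frozen before $n\to\infty$, you only get a fixed $\varepsilon>0$. Working with $\ell\asymp\log n$ is what gives the trace method access to an $n$-dependent error. Second, it is not an independent proof if \cite{bordenave2019eigenvalues} is used as a black box. Bordenave--Collins obtain strong convergence by first proving a matrix-weighted generalisation of exactly this nonbacktracking spectral-radius bound, and then linearising. Your argument becomes an independent, and comparatively elementary, proof if the strong convergence input comes from the polynomial-method argument of \cite{Ch.Ga.Tr.va2024}.
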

	If $G$ is $d$-regular, Theorem~\ref{thm: bordenave} implies Friedman and Kohler's result thanks to the explicit link between the adjacency and nonbacktracking spectra provided by the Ihara--Bass formula in this case. However, this left open the analogue of Theorem~\ref{thm: Friedman Kohler} for nonregular graphs. This was finally settled in a breakthrough of Bordenave--Collins \cite{bordenave2019eigenvalues}. The key technical tool	was to establish \emph{strong convergence} for uniformly random permutation representations of a free group $\mathbb F_r$. A short and elementary new proof of this result was recently obtained by Chen--Garza-Vargas--Tropp--van Handel \cite{Ch.Ga.Tr.va2024}.
	
	\subsubsection{Eigenvalues of the Laplacian on random negatively curved covers} A first spectral gap result was obtained by Magee and Naud for resonances of the Laplacian on Schottky surfaces \cite{magee2020explicit}. For closed hyperbolic surfaces, Magee--Naud--Puder \cite{MNP} obtained a spectral gap $\frac{3}{16}-\varepsilon$ using the Selberg trace formula. 
	
	It was then observed by Hide--Magee \cite{HM} that the strong convergence property could be used to obtain nearly optimal spectral gaps for the Laplacian on random covers of noncompact hyperbolic surfaces. Calder\'on, Magee and Naud  \cite{MCN} went beyond $L^2$-eigenvalues and obtained a low frequency spectral gap for Laplacian resonances on random covers of Schottky surfaces, which is analogous to our Theorem \ref{thm: spectral gap}. Strong convergence techniques made it possible to bypass the use of trace formulae to study directly operator norms. Although the analysis is more intricate on noncompact surfaces, the freeness of their fundamental groups allowed to obtain better results than in the closed case. This limitation was recently overcome by Magee, Puder and van Handel, who established an analogue of the Bordenave--Collins result for surface groups \cite{magee2025strongconvergenceuniformlyrandom} (see  \S\ref{subsec: strong conv} for details), which implies a spectral gap $(\frac 14-\varepsilon)$ for the Laplacian in the closed case. Building on this result, Hide--Macera--Thomas \cite{hide2025spectralgappolynomialrate} obtained a quantitative polynomial improvement $\frac 14-\mathcal O(n^{-b})$ for some positive constant $b$, where $n$ is the degree of the covering.
	
	 It would be interesting to obtain a similar improvement of Theorem \ref{thm: spectral gap} by allowing the compact $\mathcal K$ to approach the line $\operatorname{Re}(z)=\delta_0$ at distance $\varepsilon=\varepsilon(n)$ when $n\to \infty$. There are additional difficulties compared to the case of the Laplacian on hyperbolic surfaces, essentially because we lack explicit formulas and have to consider long time dynamics. By adapting the methods in this paper and using ideas from \cite{hide2025spectralgappolynomialrate}, we believe that one could take $\varepsilon(n)=\frac{c}{\log n}$ for some small constant $c$. Getting a polynomial rate of decay would certainly require new ideas.\medskip
	
	Models of random covers of hyperbolic surfaces extend without modifications to the variable negative curvature setting. Nearly optimal spectral gaps for the Laplacian were obtained in \cite{hide2025spectralgap} in the closed case and in \cite{moy2025spectralgaprandomcovers,ballmann2025spectralstabilityfinitecoverings} in the noncompact case.
	
	Just as there is a simple correspondence between the adjacency and nonbacktracking spectra for regular graphs, a simple correspondence between the Laplace and Pollicott--Ruelle spectra holds for hyperbolic surfaces. These correspondences break down for nonregular graphs and surfaces of variable curvature. To this regard, our results should rather be thought of as analogues of Bordenave's result \cite{bordenave2015}. More precisely, Proposition~\ref{prop: Mh etX Mh}, which is the main probabilistic input in the proof of Theorem~\ref{thm: spectral gap}, may be viewed as a manifold analogue of Theorem~\ref{thm: bordenave}.
	
	\subsubsection{Other models of random surfaces} For the Weil--Petersson model of random hyperbolic surfaces of genus $g$, a first probabilistic spectral gap $\lambda_1(X_g)>0.002$ was obtained by Mirzakhani \cite{Mi2013}. A gap $\lambda_1(X_g)\ge \frac{3}{16}-\varepsilon$ was obtained independently by Wu--Xue \cite{WX} and Lipnowski--Wright \cite{LW}. The optimal gap $\frac 14-\varepsilon$ was finally obtained by Anantharaman and Monk in a series of work \cite{AnMo1,AnMo2,AnMo3}. Very recently, Hide, Macera and Thomas \cite{hideWeilPol} obtained the quantitative lower bound $\lambda_1(X_g)\ge \frac 14-\mathcal O(g^{-b})$ for some $b>0$, by adapting the methods of \cite{magee2025strongconvergenceuniformlyrandom} to the Weil--Petersson model.
	
	For the Brooks--Makover model of random surfaces obtained by gluing $2n$ ideal hyperbolic triangles, a first nonexplicit, probabilistic spectral gap $\lambda_1(X_n)\ge c>0$ was shown in the original article \cite{BrooksMakover}. No further improvement was known until the recent work of Shen and Wu \cite{shen2025nearlyoptimalspectralgaps}, who proved the optimal lower bound $\lambda_1(X_n)\ge \frac 14-\varepsilon$ using methods inspired by \cite{magee2025strongconvergenceuniformlyrandom}.
		
	\subsection{Overview of the proof} 
	
		To prove Theorem~\ref{thm: extension resolvent}, we use the \emph{anisotropic Sobolev spaces} of distributions introduced by Faure--Sj\"ostrand \cite{Faure2011}. For any flat bundle $E\to \mathcal M$, there is a scale of Hilbert spaces $\mathcal H_h^s(\mathcal M,\mathcal E)$ depending on a semiclassical parameter $h$ and a parameter $s\in \mathbb R$, such that
	\[C^\infty(\mathcal M,\mathcal E)\subset \mathcal H_h^s(\mathcal M,\mathcal E)\subset \mathcal D'(\mathcal M,\mathcal E),\]
	and the inclusions are dense and continuous. These spaces are defined in  \S\ref{sec: aniso} using semiclassical quantization on flat bundles, as presented in the Appendix, and the escape function from \cite{Faure2011}. The advantage of these spaces is that the propagator $\mathrm{e}^{-t\mathbf X_{\mathcal E}}$ enjoys better properties when acting on $\mathcal H_h^s(\mathcal M,E)$ than on $L^2(\mathcal M,\mathcal E)$.
	
	\begin{rem} Even though we deal with low-frequency resonances, it is practical to work with semiclassical quantization, because our goal is to produce an exact inverse of the operator $\mathbf X_{\mathcal E}+z$ ---and not only an inverse up to a compact remainder, by using a parametrix construction. Remainder terms in semiclassical calculus are usually small in norm (say $\mathcal O(h)$), as opposed to being only of lower order, in the standard pseudodifferential calculus. \end{rem}
	
	Let $\chi$ be a smooth function, compactly supported on $[-1,2]$, such that $\chi\equiv 1$ on $[0,1]$. Motivated by the expression \eqref{eq: def resolvent}, we introduce an approximate resolvent
\[\mathbf Q(z):=\int_{0}^{+\infty}\chi(t/T)\mathrm{e}^{-zt}\mathrm{e}^{-t\mathbf X_{\mathcal E}}\mathrm{d}t.\]
Then, one has
	\[\mathbf Q(z)(\mathbf X_{\mathcal E}+z)=(\mathbf X_{\mathcal E}+z)\mathbf Q(z)=\operatorname{Id}+\mathbf R(z),\]
	where the remainder is given by
\[\mathbf R(z)=\frac 1T\int_0^{+\infty} \chi'(t/T) \mathrm{e}^{-zt}\mathrm{e}^{-t\mathbf X_{\mathcal E}}\mathrm{d}t.\]
The interest is that $\mathbf R(z)$ only involves propagation for large times $t\in [T,2T]$. If one can show that $\|\mathbf R(z)\|_{\mathcal H_h^s}<\frac 12$, then a Neumann series argument leads to invertibility of $(\mathbf X_{\mathcal E}+z)$.

	Showing $\|\mathbf R(z)\|_{\mathcal H_h^s}\le \frac 12$ requires understanding the action of the propagator $\mathrm{e}^{-t\mathbf X_{\mathcal E}}$ on anisotropic Sobolev spaces $\mathcal H_h^s(\mathcal M,\mathcal E)$. To this end, we introduce a smooth partition $1=a_1+a_2+a_3$ of the phase space $T^*\mathcal M$, which gives rise to pseudodifferential operators $\operatorname{Op}_h^{\mathcal E}(a_i)$ for $i\in \{1,2,3\}$ acting on sections of $\mathcal E$.

	\subsubsection*{Step 1. Probabilistic low frequency estimates} The function $a_1$ is supported on $\{(x,\xi)\in T^*\mathcal M~:~ |\xi|\le 2\}$. We show that if a sequence of random representations $(\rho_n,V_n)$ strongly converges a.a.s, then for any $\varepsilon>0$ we have a.a.s.
	\begin{equation}\label{eq: step 2 low freq}
		\big\|\operatorname{Op}_h^{\mathcal E_{\rho_n}}(a_1) \mathrm{e}^{-t\mathbf X_{\mathcal E_{\rho_n}}}\operatorname{Op}_h^{\mathcal E_{\rho_n}}(a_1)\big\|_{L^2(\mathcal M,\mathcal E_{\rho_n})}\le C_\varepsilon h^{-(d-1)}\mathrm{e}^{(\delta_0+\varepsilon)t},\end{equation}
	where $\delta_0=\operatorname{Pr}(-2\psi^u)/2$ and $d$ is the dimension of $M$. This estimate is only relevant when $t\ge K|\log h|$ for some sufficiently large constant $K$. To show \eqref{eq: step 2 low freq}, we use strong convergence techniques to relate the operator norm in \eqref{eq: step 2 low freq} to that of a limiting operator living on the unit tangent bundle of the universal cover of $M$, known as the \emph{spherical mean operator}.

	\subsubsection*{Step 2. Estimates in the direction of the flow} The function $a_2$ is supported in a small conical neighborhood of the trapped set $E_0^*$ (see \S \ref{sec: aniso} for the definition) which avoids low energies, that is
	\[\operatorname{Supp} a_2\subset \Big\{(x,\xi)\in T^*\mathcal M~:~ \xi(X)\ge \frac{|\xi|}{2}\ge \frac 12\Big\}.\] 
	The contributions from $\operatorname{Supp}(a_2)$ are damped by integration along the flow direction. It essentially relies on the ellipticity of the semiclassical operator $-\mathrm{i}h\mathbf X_{\mathcal E}$ on $\operatorname{Supp}(a_2)$. This is done in  \S\ref{sec: averaging along the flows}.

	\subsubsection*{Step 3. Estimates away from the trapped set} 
	Let $a_3$ be a smooth function supported in a conic subset disjoint of $E_0^*$, on energies $|\xi|\ge 1$. We show that there is an absolute constant $C>0$ such that for any $t_0>0$ fixed, there is a constant $C_0>0$ depending on $t_0$ such that for any flat bundle $\mathcal E\to \mathcal M$, we have	
	\begin{equation} \label{eq: away trapped set intro} \big\|\mathrm{e}^{-t_0\mathbf X_{\mathcal E}}\operatorname{Op}_h^{\mathcal E}(a_3)\big\|_{\mathcal H_h^s}\le C\mathrm{e}^{-C_0 s}, \quad \big\|\operatorname{Op}_h^{\mathcal E}(a_3)\mathrm{e}^{-t_0\mathbf X_{\mathcal E}}\big\|_{\mathcal H_h^s}\le C\mathrm{e}^{-C_0 s}.\end{equation}
	These estimates follow from an Egorov theorem with uniform estimates in $\mathcal E$, and rely on the properties of the escape function from \cite{Faure2011}. 
	
	\subsubsection*{Step 4. Gathering all contributions} We combine the estimates \eqref{eq: step 2 low freq}, 
	\eqref{eq: away trapped set intro}. We first take $\varepsilon>0$ small and $s$ large enough, then $T=K|\log h|$ with $K=K(s,\varepsilon,\mathcal K)$ large enough, then take $h=h(\varepsilon,K,s,\mathcal K)$ small enough, to show that $\|\mathbf R(z)\|_{\mathcal H_h^s}<\frac 12$ for all $z\in \mathcal K$ a.a.s.
	
	\begin{rem}[Relation with high-frequency spectral gap] We work in a regime where $z$ remains on a compact set, and obtain estimates on $\|\mathbf R(z)\|_{\mathcal H_h^s}$ as follows. The low frequency contributions are controlled by probabilistic methods, this is step 1. The high frequency contributions away from the trapped set are controlled in step 3 by the use of the anisotropic spaces, while the contribution from the trapped set $E_0^*$ is tamed by the integration along the flow direction (this is step 2, here it is crucial that $h|z|\ll 1$).
		
		In contrast, to obtain an asymptotic spectral gap, \cite{nonnenmacher2015decay} considers the regime $h|z|\sim 1$. In that setting, the low frequency contributions are damped (deterministically) by the integration along the flow. However, the analysis near the trapped set becomes much more delicate and represents the main difficulty in \cite{nonnenmacher2015decay}. Contributions away from the trapped set, although they become more technical, are controlled as in the present work, with the use of more sophisticated escape functions.
	\end{rem}
	
	\subsection*{Notations}  We write $f\lesssim g$ or $f=\mathcal O(g)$ if there is a uniform constant $C$ such that $f\le Cg$. If $C$ depends on some relevant set of parameters $\bullet$ we may write $f\lesssim_\bullet g$ or $f=\mathcal O_{\bullet}(g)$. We write $f\asymp g$ when $f\lesssim g$ and $g\lesssim f$, i.e. when there is a constant $C>0$ such that $C^{-1}f\le g\le Cf$.
	
	 The norm of a continuous operator $A:H\to H'$ is denoted indifferently $\|A\|_{H\to H'}$, or simply $\|A\|_H$ when $H=H'$. The notation $\mathcal O_{\bullet}(g)_{H\to H'}$ is used to denote a continuous operator $R:H\to H'$ with norm $\|R\|_{H\to H'}=\mathcal O_{\bullet}(g)$.

	If $\chi,\chi'$ are two compactly supported functions, we write $\chi\prec \chi'$ if $\operatorname{Supp} \chi\subset \{\chi'\equiv 1\}$.
	
 We let $d(\cdot,\cdot)$ denote the Riemannian distance. The ball and sphere of radius $r$ about $x$ are denoted by $B(x,r)$ and $S(x,r)$, respectively. Also, we let $A(x,r,c)$ denote the annulus $B(x,r+c)\backslash B(x,r-c)$.
 
 	Flat bundles over $M$ are systematically denoted with the letter $E$ while flat bundles over $SM$ are denoted with the letter $\mathcal E$.

	\subsection*{Acknowledgments} I thank Michael Magee for suggesting this problem. I am grateful to Yann Chaubet for his explanations on Jacobi fields, and to Fr\'ed\'eric Paulin for bringing my attention to the paper \cite{RuggieroGH}. Thanks also to Fr\'ed\'eric Faure and Fr\'ed\'eric Naud for stimulating discussions. Lastly, I thank St\'ephane Nonnenmacher for his guidance during the writing of this paper and his careful feedback on this manuscript.

	\section{Geometric preliminaries} \label{sec: geometry} Let $(M,g)$ be a closed Riemannian manifold. We recall some basic facts about the geometry of $SM$ and some consequences of the Anosov property. An important fact is that the universal cover $\widetilde M\to M$ of a closed Anosov manifold is a \emph{Gromov-hyperbolic} space. This property is used extensively to control the operator norm of the spherical mean operator introduced in the next section.
	
	The material for the next two subsections can be found \cite[\S 13.1]{Lefeuvre}.
		\subsection{Geometry of $SM$} Let $(M,g)$ be a closed manifold of dimension $d$, equipped with the Riemannian volume form. We denote by $\pi:\mathcal M\to M$ the unit tangent bundle of $M$. The geodesic vector field is denoted by $X$ and we write $\mathbb V:=\operatorname{ker}\mathrm{d}\pi$ for the vertical subbundle. The horizontal subbundle $\mathbb H$ is defined as follows. For $(x,v)\in \mathcal M$, let
	\[v^\bot=\{w\in T_xM~:~ g_x(v,w)=0\}.\]
	Given $w\in v^\bot$, let $\gamma_{(x,w)}(t)=\pi(\varphi_t(x,w))$ be the unit-speed geodesic starting at $x$ with speed $w$, and let $\tau_{x,w}(t)$ denote the parallel transport map $T_xM\to T_{\gamma_{x,w}(t)}M$. We set
	\[\mathbb H(x,v)=\Big\{\partial_t \big(\gamma_{x,w}(t),\tau_{x,w}(t)v\big)_{|t=0} ~:~ w\in v^\bot \Big\}.\]
	Then, one has a smooth splitting
	\[T\mathcal M=\mathbb RX\oplus \mathbb H\oplus \mathbb V.\]
	
	\begin{lem}For any $(x,v)\in SM$,
		\[\mathrm{d}\pi \cdot X(x,v)=v, \qquad \mathrm{d}\pi\cdot \mathbb H(x,v)=v^\bot, \qquad \mathrm{d}\pi\cdot \mathbb V(x,v)=\{0\},\]
	\end{lem}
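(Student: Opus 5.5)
The plan is to verify the three identities directly from the definitions of $X$, $\mathbb H$ and $\mathbb V$, by computing $\mathrm{d}\pi$ applied to the tangent vector of a suitable curve in $\mathcal M$ through $(x,v)$. Throughout we use the chain rule: if $c(t)$ is a smooth curve in $\mathcal M$ with $c(0)=(x,v)$, then $\mathrm{d}\pi\cdot \dot c(0)=\frac{\mathrm d}{\mathrm dt}\pi(c(t))|_{t=0}$.

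For the geodesic vector field, $X(x,v)=\partial_t\varphi_t(x,v)|_{t=0}$. By definition $\pi(\varphi_t(x,v))=\gamma_{(x,v)}(t)$ is the unit-speed geodesic with initial velocity $v$. Differentiating at $t=0$ gives $\mathrm{d}\pi\cdot X(x,v)=\dot\gamma_{(x,v)}(0)=v$.

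For the horizontal bundle, take $w\in v^\bot$ and the curve $c(t)=(\gamma_{x,w}(t),\tau_{x,w}(t)v)$. This curve lies in $\mathcal M$ because parallel transport is an isometry, so $|\tau_{x,w}(t)v|=1$. Its projection is $\pi(c(t))=\gamma_{x,w}(t)$, hence $\mathrm{d}\pi\cdot\dot c(0)=\dot\gamma_{x,w}(0)=w$. If $w=0$ the curve is constant, which is consistent with the formula. As $w$ ranges over $v^\bot$, this shows $\mathrm{d}\pi\cdot\mathbb H(x,v)=v^\bot$. The map $w\mapsto \dot c(0)$ is linear, so $\mathbb H(x,v)$ is indeed a linear subspace, and its image is all of $v^\bot$.

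For the vertical bundle, $\mathbb V=\ker\mathrm{d}\pi$ by definition, so $\mathrm{d}\pi\cdot\mathbb V(x,v)=\{0\}$ trivially.

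No step is a genuine obstacle. The only point needing care is that the curve defining $\mathbb H$ stays in the unit sphere bundle, which follows from parallel transport being isometric.
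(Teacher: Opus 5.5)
Your proof is correct. The paper states this lemma without proof, deferring to Lefeuvre's book, and your chain-rule check on the defining curves is exactly the intended direct verification from the definitions.

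The one point worth making explicit concerns the linearity of $w\mapsto\dot c(0)$. You should read $\gamma_{x,w}(t)=\exp_x(tw)$, not the ``unit-speed'' geodesic of the paper's wording, which only makes sense for $|w|=1$. With that reading, $\dot c(0)$ is the Levi-Civita horizontal lift of $w$. In local coordinates it equals $(w,-\Gamma_x(w,v))$, so it is linear in $w$.
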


	\subsubsection{Sasaki metric and Liouville measure} 
	There is a unique metric known as the Sasaki metric, denoted by $G$, such that $\mathbb RX\oplus \mathbb H\oplus \mathbb V$ is an orthogonal decomposition of $T\mathcal M$ at any point, and such that
	\begin{itemize}
 \item $\mathrm{d}\pi:\mathbb RX(x,v)\oplus \mathbb H(x,v)\to T_xM$ is an isometry.
 \item The natural identification $\mathbb V(x,v)\simeq v^\top\subset T_xM$ is an isometry.
\end{itemize}
	For each $x\in M$, the metric $g_x$ induces a volume form on the vector space $T_xM$, which can be contracted by the radial vector field to give a $(d-1)$ form, whose restriction to $S_xM$ yields a measure $\mu_x$. Note that $\mu_x(S_xM)=\operatorname{Vol}(S^{d-1})$. Then, one has (see \cite[Lemma 1.30]{GuillarmouBook}):
	\begin{lem}[Disintegration of the Sasaki measure]
		For any $F(x,v)\in C^\infty_{\rm comp}(SM)$,
		\[\int_{\mathcal M}F(x,v)\mathrm{d}\mu_G(x,v)=\int_M\Big(\int_{S_xM} F(x,v)\mathrm{d}\mu_x(v)\Big)\mathrm{d}\mu_g(x).\]
	\end{lem}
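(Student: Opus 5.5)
The plan is to reduce the statement to the coarea/Fubini decomposition of the Riemannian measure on the total space $TM$, restricted to the unit sphere bundle. Since $F$ is compactly supported and both sides are linear and continuous in $F$, a partition of unity lets us assume $F$ is supported in $\pi^{-1}(U)$ for a coordinate chart $U\subset M$ over which $TM$ is trivialised by a local orthonormal frame $(e_1,\dots,e_d)$. This gives a diffeomorphism $\pi^{-1}(U)\simeq U\times S^{d-1}$, $(x,v)\mapsto (x,\theta)$ with $v=\sum \theta_i e_i(x)$. In this chart the right-hand side is simply $\int_U\int_{S^{d-1}}F\,\mathrm d\sigma(\theta)\,\mathrm d\mu_g(x)$, where $\sigma$ is the round measure. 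This holds because the frame is orthonormal, so $\mu_x$ is the pushforward of $\sigma$ by the linear isometry $\mathbb R^d\to T_xM$.

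The key step is to compute the Sasaki volume density in these coordinates, using the orthogonal splitting $T\mathcal M=\mathbb RX\oplus\mathbb H\oplus\mathbb V$.

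\emph{Horizontal directions.} The differential $\mathrm d\pi$ maps $\mathbb RX\oplus\mathbb H$ isometrically onto $T_xM$. Hence the Sasaki metric restricted to the horizontal part, pushed forward, is exactly $g_x$.

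\emph{Vertical directions.} $\mathbb V(x,v)=\ker \mathrm d\pi$ is the tangent space to the fibre $S_xM$. The Sasaki metric on it is the metric induced from $g_x$ on the sphere $S_xM\subset T_xM$, whose volume form is $\mu_x$.

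Because the splitting is orthogonal, the Riemannian volume of $G$ at $(x,v)$ equals the product of the volume form on $\mathbb RX\oplus\mathbb H$ and the volume form on $\mathbb V$, evaluated on any basis adapted to the splitting. Now take local coordinates $(x,\theta)$. The coordinate vector fields $\partial_\theta$ are vertical. The vectors $\partial_{x_j}$ are not horizontal in general: they differ from horizontal lifts by vertical components, coming from the connection coefficients of the frame. The change of basis from $(\partial_{x},\partial_\theta)$ to (horizontal lifts of $\partial_x$, $\partial_\theta$) is therefore block unipotent, with determinant $1$. Consequently
\[
\mathrm d\mu_G=\sqrt{\det g(x)}\,\mathrm dx\otimes \mathrm d\sigma(\theta)=\mathrm d\mu_g(x)\otimes \mathrm d\mu_x(v)
\]
locally, and Fubini gives the claimed formula on each chart. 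Summing over the partition of unity concludes.

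The main obstacle is the bookkeeping in the step above: checking that the vertical correction terms in $\partial_{x_j}$ do not affect the determinant. This is exactly where the triangular structure of the change of basis is used. One must also identify the horizontal space $\mathbb H$ as defined via parallel transport with the connection-horizontal space. That identification follows from the definition, since $\tau_{x,w}(t)v$ is parallel along the geodesic, so its covariant derivative vanishes and the curve is horizontal for the Levi-Civita connection. Everything else is routine.
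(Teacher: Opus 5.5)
Your argument is correct. The paper gives no proof of this lemma; it simply quotes it as Lemma 1.30 of the cited book. Your proposal therefore supplies a self-contained version of the standard local computation.

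In the adapted basis (Levi-Civita horizontal lifts of $\partial_{x_j}$, vertical $\partial_{\theta_a}$), the Gram matrix of $G$ is block diagonal, with blocks $g_{ij}(x)$ and the round metric. The coordinate basis differs from this adapted basis by a unipotent matrix, so the density is unchanged and Fubini applies.

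Two small points deserve a line each when you write it up.

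First, it is $\mathbb RX\oplus\mathbb H$, not $\mathbb H$ alone, that coincides with the $d$-dimensional Levi-Civita horizontal distribution. Besides your parallel-transport observation for $\mathbb H$, this uses that $X$ itself is horizontal, which is just the geodesic equation $\nabla_{\dot\gamma}\dot\gamma=0$. This is what guarantees that the horizontal lift of $\partial_{x_j}$ lies in $\mathbb RX\oplus\mathbb H$, where $\mathrm d\pi$ is an isometry by definition of $G$.

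Second, the paper defines $\mu_x$ by contracting the volume form of $(T_xM,g_x)$ with the radial vector field. On the unit sphere that field is the unit normal, so $\mu_x$ is the induced Riemannian measure on $S_xM$. This is what lets you match $\mu_x$ with the vertical block of $G$, via the identification $\mathbb V(x,v)\simeq v^\perp$ given by differentiating curves in the fibre.
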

	The normalized Sasaki measure is called the \emph{Liouville measure}, and denoted $\mu_{\rm L}$. The Liouville measure is invariant under the geodesic flow, i.e. $(\varphi_t)_*\mu_{\rm L}=\mu_{\rm L}$ for any $t\in \mathbb{R}$.
	
	\subsection{The Anosov property} 
	\begin{defi}The geodesic flow $\varphi_t:\mathcal M\to \mathcal M$ is \emph{Anosov}, or \emph{uniformly hyperbolic}, if the tangent bundle $T\mathcal M$ admits a continuous flow-invariant decomposition
		\begin{equation} \label{eq: Anosov splitting} T\mathcal M=\mathbb{R}X\oplus E_s\oplus E_u,\end{equation}
	and constants $C,\lambda>0$ such that for all $t\ge 0$, one has
	\[\forall v\in E_s, \ |\mathrm{d}\varphi_t\cdot v|\le C\mathrm{e}^{-\lambda t}|v|,\]
	and
	\[ \forall v\in E_u, \ |\mathrm{d}\varphi_{-t}\cdot v|\le C\mathrm{e}^{-\lambda t}|v|,\]
	where the norms are taken with respect to any continuous metric on $\mathcal M$. We also write $E_0:=\mathbb RX$. The bundles $E_s,E_u$ are called \emph{stable} and \emph{unstable}, respectively. If the geodesic flow is Anosov, we say that $M$ is an \emph{Anosov manifold}. \end{defi} 
	The bundles $E_s,E_u$ are both $(d-1)$-dimensional and $E_s\cap E_u=\{0\}$. Moreover, 
	\[E_s\oplus E_u=\mathbb H\oplus \mathbb V=(\mathbb RX)^\bot,\]
	where $(\mathbb RX)^\bot$ is the orthogonal of $\mathbb RX$ with respect to the Sasaki metric. We can thus define projection maps 
	\[p_{E_u},p_{E_s}:(\mathbb RX)^\bot\to (\mathbb RX)^\bot, \qquad p_{\mathbb H},p_{\mathbb V}:(\mathbb RX)^\bot\to (\mathbb RX)^\bot.\]
	Since $E_s\cap \mathbb V=E_u\cap \mathbb V=\{0\}$, we know that the maps
	\begin{equation} \label{eq: maps are invertible}p_{E_u}:\mathbb V\to E_u, \quad p_{\mathbb H}:E_u\to \mathbb H, \quad p_{E_s}:\mathbb V\to E_s, \quad p_{\mathbb H}:E_s\to \mathbb H\end{equation}
	are invertible. 
	\begin{defi}
		The \emph{minimal rate of expansion of the flow} is defined by
		\begin{equation}\label{eq: def gamma0} \gamma_{\rm min}:=\lim_{t\to +\infty}\inf_{z\in \mathcal M} -\frac 1t\log \big\|\mathrm{d}\varphi_t(z)_{|E_s}\big\|=\lim_{t\to +\infty}\inf_{z\in \mathcal M} -\frac 1t\log \big\|\mathrm{d}\varphi_{-t}(z)_{|E_u}\big\|. \end{equation}
		We also define the \emph{exponential growth rate of the unstable Jacobian}
		\begin{equation} \label{eq: defi gamma 0 higher dim}\gamma_0:=\underset{t\to +\infty}{\lim} \frac1t \inf_{x\in \mathcal M} \log \det \big({\mathrm{d}\varphi_t}|_{E_u(x)\to E_u(\varphi_t(x))}\big),\end{equation}
		where the determinants are computed with respect to any continuous metric.
		\end{defi}
	The Anosov property implies that $\gamma_{0}>0$, but we can give some quantitative estimates on $\gamma_0$ when $M$ has pinched negative curvature. More precisely, if $M$ enjoys two-sided curvature bounds $-\kappa_{\rm max}^2\le -K\le -\kappa_{\rm min}^2$, and $d$ denotes the dimension of $M$, then
	\[(d-1)\kappa_{\rm min}\le \gamma_{0}\le (d-1)\kappa_{\rm max}.\]


	\subsection{Geometry of $\widetilde M$} Assume that $(M,g)$ is an Anosov manifold. Then, by a famous result of Klingenberg \cite{Klingenberg1974}, $M$ has no conjugate points, thus its universal cover $(\widetilde M,\tilde g)$ is simply connected and the exponential map $\exp_x:T_x\widetilde M\to \widetilde M$ is a global diffeomorphism for any $x\in \widetilde M$ (see e.g. \cite[Theorem 13.3.10]{Lefeuvre} and \cite[Lemma 13.2.5]{Lefeuvre}).
	
	When $M$ has curvature $\le -\kappa^2<0$ everywhere, we know by the Cartan--Hadamard theorem that $\widetilde M$ is a $\operatorname{CAT}(-\kappa^2)$-space, in particular it satisfies the standard geometric comparison theorems with the model space of constant curvature $-\kappa^2$. However, Anosov manifolds may contain some regions of positive curvature, so that $\widetilde M$ may even fail to be $\operatorname{CAT}(0)$. Nevertheless, $\widetilde M$ behaves like a negatively curved space on large scales. Indeed, it was first shown by Ruggiero \cite{RuggieroGH} that the universal cover of a closed Anosov manifold is Gromov-hyperbolic. Knieper gave another proof of this result \cite[Theorem 4.8]{Knieper2012}, which relied on the characterization from \cite[Chapter III.H.1, Proposition 1.26]{MNB}. For convenience, we include a quick proof below. We first recall the definition of Gromov-hyperbolicity.\medskip

\noindent
\begin{minipage}[c]{0.65\textwidth}
	\raggedright
	Let $\triangle = xyz$ be a geodesic triangle in $\widetilde M$. There is a unique triplet of nonnegative numbers $(a,b,c)$ such that
	\[ d(x,y)=a+b, \quad d(y,z)=b+c, \quad d(z,x)=c+a.\]
	One can then define an inscribed triangle $\triangle' = x'y'z'$ with $x'$ lying on $[yz]$, etc., such that
	\[ a = d(x,z') = d(x,y'), \quad b = d(y,z') = d(y,x'), \quad \]
	and $c = d(z,y') = d(z,x')$.
\end{minipage}%
\hfill
\begin{minipage}[c]{0.35\textwidth}
	\centering
	\includegraphics[width=\textwidth]{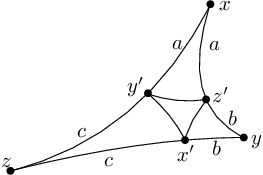}
\end{minipage}

		\begin{defi}\label{defi: GH} The manifold $(\widetilde M,\tilde g)$ is \emph{Gromov-hyperbolic} if there exists a constant $\delta>0$ such that for any geodesic triangle $\triangle=xyz$ in $\widetilde M$, the inscribed triangle $\triangle'=x'y'z'$ defined as above has diameter $\le \delta$. \end{defi} 
	Since the constant $\delta$ in Definition~\ref{defi: GH} may be very large, Gromov-hyperbolicity does not give information about the local geometry of the manifold, thus does not contradict the possible existence of regions with positive curvature.

	\begin{defi}\label{defi: divergence function} A function $e:\mathbb N\to \mathbb{R}$ is a \emph{divergence function} for $\widetilde M$ if for any geodesic lines $c_1,c_2:[0,+\infty)\to \widetilde M$ with $c_1(0)=c_2(0)=x$, for any $R,r\in \mathbb N$ with $d(c_1(R),c_2(R))>e(0)$, any continuous path $\gamma$ connecting $c_1(R+r)$ to $c_2(R+r)$ outside the ball $B(x,R+r)$ satisfies
		\[\operatorname{length}(\gamma)\ge e(r).\]\end{defi}
	Proposition 1.26 in \cite[Chapter III.H.1]{MNB} states that if $\widetilde M$ admits a divergence function $e$ such that $e(n)/n\to +\infty$ then $\widetilde M$ is Gromov-hyperbolic. We are left with showing
	\begin{prop}\label{prop: M has exp div function} Assume that $\widetilde M$ is the universal cover of an Anosov manifold. Then $\widetilde M$ admits an exponential divergence function. In particular, $\widetilde M$ is Gromov-hyperbolic.
	\end{prop}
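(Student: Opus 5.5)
We will produce an exponential divergence function from the uniform exponential divergence of geodesics on $\widetilde M$, which follows from the Anosov property. The plan has three steps.

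\textbf{Step 1: growth of perpendicular Jacobi fields.} Since $M$ has no conjugate points (Klingenberg), $\exp_x$ is a global diffeomorphism of $T_x\widetilde M$ onto $\widetilde M$. For $r\ge0$, let $F_r:S_x\widetilde M\to S(x,r)$, $v\mapsto \exp_x(rv)$. The differential of $F_r$ at $v$ applied to $w\in v^\perp$ is $J(r)$, where $J$ is the Jacobi field along $\gamma_v$ with $J(0)=0$ and $J'(0)=w$. In Sasaki terms, $(J(r),J'(r))$ is $\mathrm{d}\varphi_r$ applied to the vertical vector $w\in\mathbb V(x,v)$.

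We decompose this vertical vector as $w=w_s+w_u$ with $w_s\in E_s$ and $w_u\in E_u$. By \eqref{eq: maps are invertible} and compactness of $SM$, $|w_u|\ge c|w|$ uniformly. The Anosov estimates then give
\[|\mathrm{d}\varphi_r w|\ge C^{-1}c\,\mathrm{e}^{\lambda r}|w|-C|w|.\]
Next we pass from $(J,J')$ to $J$ alone. Since $\mathrm{d}\varphi_r w_u$ lies in $E_u$, and $p_{\mathbb H}:E_u\to\mathbb H$ is uniformly invertible, the horizontal component $J(r)$ of $\mathrm{d}\varphi_r w_u$ is comparable to its full norm. Hence
\[|J(r)|\ge c'\mathrm{e}^{\lambda r}|w|-C'|w|.\]
Equivalently, $\mathrm{d}F_r$ expands vectors at least by $c'\mathrm{e}^{\lambda r}-C'$.

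\textbf{Step 2: length of curves outside the ball.} Let $\gamma$ be a path from $c_1(R+r)$ to $c_2(R+r)$ lying outside $B(x,R+r)$. Composing with the radial projection $P:\widetilde M\setminus\{x\}\to S(x,R+r)$ along geodesics from $x$, we want to show that $P$ does not increase length too much. The level sets $S(x,\rho)$ are the images of the unit sphere under $F_\rho$. By Step 1 and its analogue for the full flow, the ratio $|\mathrm{d}F_{\rho}w|/|\mathrm{d}F_{R+r}w|$ is bounded below uniformly for $\rho\ge R+r$. The reason is that the unstable component dominates after a bounded time, and $\mathrm{d}\varphi_{\rho-(R+r)}$ restricted to $E_u$ only expands up to the constant $C$.

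Moreover, the radial direction is orthogonal to the spheres, by the Gauss lemma. Therefore $\operatorname{length}(P\circ\gamma)\le K\operatorname{length}(\gamma)$ for a uniform $K$. This reduces the problem to lower-bounding the length of curves lying on $S(x,R+r)$.

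\textbf{Step 3: lower bound on the sphere.} A curve on $S(x,R+r)$ joining the two endpoints is $F_{R+r}\circ\beta$ for some curve $\beta$ in $S_x\widetilde M$ joining $\dot c_1(0)$ to $\dot c_2(0)$. The hypothesis $d(c_1(R),c_2(R))>e(0)$ forces the angle $\theta$ between $\dot c_1(0)$ and $\dot c_2(0)$ to satisfy
\[\theta\gtrsim \mathrm{e}^{-\Lambda R}\]
for some $\Lambda$. This comes from the upper bound $|\mathrm{d}F_R|\le C\mathrm{e}^{\Lambda R}$, which holds by compactness of $M$.

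A sharper comparison is more useful. Writing $\mathrm{d}F_{R+r}=\mathrm{d}F_{R+r}\circ(\mathrm{d}F_R)^{-1}\circ\mathrm{d}F_R$ along $\beta$, we want to bound $\|\mathrm{d}F_{R+r}w\|\ge c\,\mathrm{e}^{\lambda r}\|\mathrm{d}F_Rw\|-C$. This uses the unstable dominance at time $R$ when $R$ is large. For small $R$, compactness gives the required lower bound on $\theta$ directly.

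Integrating along $\beta$ then gives
\[\operatorname{length}(F_{R+r}\circ\beta)\ge c\,\mathrm{e}^{\lambda r}\operatorname{length}(F_R\circ\beta)-C\ge c\,\mathrm{e}^{\lambda r}e(0)-C.\]
Taking $e(0)$ large and combining with Step 2 yields $e(r)\asymp\mathrm{e}^{\lambda r}$, which is an exponential divergence function. Gromov-hyperbolicity then follows from \cite[Chapter III.H.1, Proposition 1.26]{MNB}.

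\textbf{Main obstacle.} The expected difficulty is Step 3 together with the radial projection estimate of Step 2: controlling $\mathrm{d}F_{R+r}$ relative to $\mathrm{d}F_R$ uniformly. This requires care because Jacobi fields with $J(0)=0$ need not be exactly unstable. The first approach is to use that $\mathrm{d}\varphi_R(\mathbb V)$ becomes uniformly close to $E_u$ as $R\to\infty$, by the Anosov cone contraction. Small $R$ would then be handled by compactness.
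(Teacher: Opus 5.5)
Your proposal is correct and is essentially the paper's proof. The paper writes $\gamma(t)=\pi(\varphi_{\ell(t)}(x,v(t)))$, discards the radial component by the Gauss lemma, and compares directly with the projected curve $c(t)=\pi(\varphi_R(x,v(t)))$ on $S(x,R)$ by splitting $\mathrm{d}\varphi_R\dot v(t)$ into its $E_u$ and $E_s$ parts; this is exactly your cone-contraction step, and since $\ell(t)-R\ge r$ it covers all radii at once, which makes your Steps~1--2 unnecessary. Note also that the key estimate must be homogeneous, $|\mathrm{d}F_{\rho}w|\ge (c\,\mathrm{e}^{\lambda(\rho-R)}-C)\,|\mathrm{d}F_Rw|$ for $\rho\ge R\ge 1$, rather than carrying an additive $-C$; the homogeneous form also removes the need to take $e(0)$ large.
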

	\begin{proof}
		Let $e(0)>0$ be fixed below. Consider two geodesic lines as in Definition~\ref{defi: divergence function}, and let $R\in \mathbb N$ be such that $d(c_1(R),c_2(R))>e(0)$. 
		Let $\gamma:[0,1]\to \widetilde M$ be a path connecting $c_1(R+r)$ to $c_2(R+r)$ outside the ball $B(x,R+r)$. In the proof, $g$ is the Riemannian metric on $\widetilde M$ and $G$ is the Sasaki metric on $S\widetilde M$. The following idea is due to Yann Chaubet. \medskip	
	
		\noindent\begin{minipage}[c]{0.6\textwidth}
			\raggedright
		 By approximation, we can always assume $\gamma$ to be smooth, and since $\exp_x:T_x\widetilde M\to \widetilde M$ is a diffeomorphism, we can parameterize
		\begin{equation} \label{eq: gamma(t) param}\gamma(t)=\pi\big(\varphi_{\ell(t)}(x,v(t))\big), \quad t\in [0,1], \end{equation}
		for some functions $\ell(t)\ge R+r$, and $v(t)\in S_x\widetilde M$. Let us also consider 
		\[c(t):=\pi\big(\varphi_R(x,v(t))\big).\] 
	
		\end{minipage}%
		\hspace{1em}
		\begin{minipage}[c]{0.35\textwidth}
			\centering
			\includegraphics[width=\textwidth]{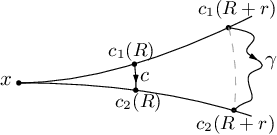}
		\end{minipage} \vspace{0.3em}\\ \medskip 
		Then $c$ is a smooth path from $c_1(R)$ to $c_2(R)$, in particular,
		\[\operatorname{length}(c)\ge d(c_1(R),c_2(R))>e(0).\]
		Note that if $r=0$ then $\gamma$ joins $c_1(R)$ to $c_2(R)$ thus $\operatorname{length}(\gamma)\ge d(c_1(R),c_2(R))>e(0)$, as we wish. Thus from now on we can assume $r\ge 1$. The length of $\gamma$ is given by
		\[\operatorname{length}(\gamma)=\int_0^1 |\dot \gamma(t)|_g\mathrm{d}t.\]
		Differentiating \eqref{eq: gamma(t) param}, we obtain by the chain rule 
		\[\dot \gamma(t)=\mathrm{d}\pi\cdot \big(\dot \ell(t) X(\varphi_{\ell(t)}(x,v(t))+\mathrm{d}\varphi_{\ell(t)}(x,v(t))\cdot \dot v(t)\big).\]
		Since $\mathrm{d}\pi\cdot \mathbb RX$ and $\mathrm{d}\pi\cdot (\mathbb H\oplus \mathbb V)$ are orthogonal subspaces of $TM$, we have
		\[|\dot \gamma(t)|_g\ge |\mathrm{d}\pi\cdot\mathrm{d}\varphi_{\ell(t)}(x,v(t))\cdot \dot v(t)|_g= |p_\mathbb H\cdot\mathrm{d}\varphi_{\ell(t)}(x,v(t))\cdot \dot v(t)|_G.\]
	Similarly,
		\[|\dot c(t)|_g=|p_{\mathbb H}\cdot \mathrm{d}\varphi_R(x,v(t))\dot v(t)|_G.\]
		We write
		\[\mathrm{d}\varphi_{\ell(t)}(x,v(t))\cdot \dot v(t)=\mathrm{d}\varphi_{\ell(t)-R}\cdot (p_{E_u}+p_{E_s}) \mathrm{d}\varphi_R\dot v(t),\]
		and find by the triangle inequality 
		\begin{align*}|\dot \gamma(t)|_g & \ge |p_{\mathbb H}\cdot \mathrm{d}\varphi_{\ell(t)-R}\cdot p_{E_u}\mathrm{d}\varphi_{R}\dot v(t)|-|p_{\mathbb H}\cdot \mathrm{d}\varphi_{\ell(t)-R}\cdot p_{E_s}\mathrm{d}\varphi_{R}\dot v(t)|
		\\ & \ge \|(p_{\mathbb H}\cdot \mathrm{d}\varphi_{\ell(t)-R}\cdot p_{E_u})^{-1}\|^{-1}\cdot \|p_{E_u} \mathrm{d}\varphi_R\dot v(t)\|-\|p_{\mathbb H}\cdot \mathrm{d}\varphi_{\ell(t)-R}\cdot p_{E_s}\|\cdot |\mathrm{d}\varphi_{R}\dot v(t)|_G.\end{align*}
		Now, by the Anosov property of the flow and the fact that the maps in \eqref{eq: maps are invertible} are invertible, there is a constant $C>0$ such that
		\[|p_{E_u}\mathrm{d}\varphi_R\dot v(t)|_G\ge \frac 1C |p_{\mathbb H} \mathrm{d}\varphi_R\dot v(t)|_G=\frac 1C|\dot c(t)|_g,\]
		and
		\[|p_{E_s}\mathrm{d}\varphi_R\dot v(t)|_G\le C|p_{\mathbb H} \mathrm{d}\varphi_R\dot v(t)|_G=C|\dot c(t)|_g.\]
		%
		Similarly, because $\ell(t)-R\ge r$ we have
		\[\|(p_{\mathbb H}\cdot \mathrm{d}\varphi_{\ell(t)-R}\cdot p_{E_u})^{-1}\|^{-1}\ge C_\varepsilon \mathrm{e}^{(\gamma_0-\varepsilon)r}, \quad \|p_{\mathbb H}\cdot \mathrm{d}\varphi_{\ell(t)-R}\cdot p_{E_s}\|\le C_\varepsilon\mathrm{e}^{-(\gamma_0-\varepsilon)r}.\]
		We deduce
		\[|\dot \gamma(t)|_g\ge \big(c_\varepsilon \mathrm{e}^{(\gamma_0-\varepsilon)r}-C_\varepsilon\big)|\dot c(t)|_g.\]
		Integrating over $[0,1]$ leads to
		\[\operatorname{Length}(\gamma)\ge \big(c_\varepsilon \mathrm{e}^{(\gamma_0-\varepsilon)r}-C_\varepsilon\big)\operatorname{Length}(c)\ge e(0)\big(c_\varepsilon \mathrm{e}^{(\gamma_0-\varepsilon)r}-C_\varepsilon\big),\]
		where the last inequality holds for $r\ge r_\varepsilon$ large enough. Thus, we can set $e(0)=1$, $e(r)=0$ for $1\le r\le r_\varepsilon$ and for $r> r_\varepsilon$ set 
		\[e(r)=\big(c_\varepsilon \mathrm{e}^{(\gamma_0-\varepsilon)r}-C_\varepsilon\big).\]
	\end{proof}
	A byproduct of the proof above is the following estimate on the divergence of geodesics, which essentially states that if $x,y,z\in \widetilde M$ and $y$ is close to $z$, then the geodesics joining $x$ to $y$ and $x$ to $z$ remain close to each other (in the unit tangent bundle) for the whole trajectory.
		\begin{lem}	\label{lem: divergence geod} Assume $(M,g)$ is a closed Anosov manifold. For any $\eta>0$, there is a constant $C_\eta>0$ such that if $x\in \widetilde M$ and $v,w\in S_x\widetilde M$ are such that $d_{\widetilde M}(\exp_x(rv),\exp_x(rw))\le \eta$, then for all $0\le t\le r$, one has
		\[d_{S\widetilde M}\big(\varphi_t(x,v),\varphi_t(x,w)\big)\le C_\eta \mathrm{e}^{-c(r-t)}.\]
	\end{lem}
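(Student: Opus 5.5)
We reuse the computation in the proof of Proposition~\ref{prop: M has exp div function}. Set $\gamma_x(s)=\exp_x(s(v\cos\theta+ u\sin\theta))$, where $w=v\cos\theta_0+u\sin\theta_0$, $u\perp v$ and $\theta\in[0,\theta_0]$. This interpolates along the great circle in $S_x\widetilde M$ from $v$ to $w$. Write $v(\theta)$ for the unit vectors along this arc, and for $\tau\in[0,r]$ let $c_\tau(\theta)=\pi(\varphi_\tau(x,v(\theta)))$. The goal is to control the Sasaki length of the curve $\theta\mapsto \varphi_t(x,v(\theta))$, which bounds $d_{S\widetilde M}(\varphi_t(x,v),\varphi_t(x,w))$.

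The first step controls this curve by its horizontal part. Since $\dot v(\theta)\in\mathbb V$, its image $\mathrm d\varphi_t\dot v$ lies in $E_s\oplus E_u$. By invertibility of the maps \eqref{eq: maps are invertible} and uniform transversality (compactness of $M$), for $t\ge 1$ we get $|\mathrm d\varphi_t\dot v|_G\le C|p_{\mathbb H}\mathrm d\varphi_t\dot v|_G=C|\dot c_t|_g$. Precisely, the $E_s$-component of $\mathrm d\varphi_t\dot v$ is controlled by the $E_u$-component up to a factor $e^{-\lambda t}$, since $\dot v\in \mathbb V$ has comparable $E_s$ and $E_u$ parts. For $t\le 1$ we simply use $|\mathrm d\varphi_t\dot v|\le C|\dot v|$. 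Thus $d_{S\widetilde M}(\varphi_t(x,v),\varphi_t(x,w))\le C\,\mathrm{length}(c_t)+C\theta_0\mathbf 1_{t\le1}$.

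The second step compares $\mathrm{length}(c_t)$ with $\mathrm{length}(c_r)$ pointwise in $\theta$, exactly as in the proof above. Writing $\mathrm d\varphi_r\dot v=\mathrm d\varphi_{r-t}(p_{E_u}+p_{E_s})\mathrm d\varphi_t\dot v$, we get
\[|\dot c_r(\theta)|_g\ge \big(c_\varepsilon e^{(\gamma_0-\varepsilon)(r-t)}-C_\varepsilon\big)|\dot c_t(\theta)|_g .\]
Equivalently, the unstable component grows at least like $e^{\lambda (r-t)}$ once $t\ge1$. This is the infinitesimal form of the desired estimate, which reduces the lemma to a bound $\mathrm{length}(c_r)\le C'_\eta$. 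For $r-t$ below a fixed threshold, the bound follows from $\mathrm{length}(c_t)\le C\,\mathrm{length}(c_r)$ on bounded time ranges, using the same splitting with bounded $\mathrm d\varphi$.

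The main obstacle is the bound $\mathrm{length}(c_r)\le C'_\eta$. The hypothesis only says that the endpoints of $c_r$ are $\eta$-close, whereas $c_r$ itself is the image of an arc on the sphere of radius $r$, which a priori could be long. I would argue by a bootstrap on the angle, using the divergence function together with Gromov hyperbolicity. First apply Proposition~\ref{prop: M has exp div function} to the geodesics $c_1,c_2$ through $v,w$. Take $R$ maximal with $d(c_1(R),c_2(R))\le e(0)$. The geodesic $[c_1(r),c_2(r)]$ has length $\le\eta$, so it stays outside $B(x,r-\eta)$. Applying the divergence function at radius $r-\eta$ then gives $\eta\ge e(r-\eta-R)$, so $r-R\le C_\eta$. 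Hence the geodesics are $e(0)$-close up to time $r-C_\eta$, and by bounded geometry they are $O_\eta(1)$-close on all of $[0,r]$. Next apply the same argument to $v$ and each intermediate $v(\theta)$, so that every $c_\tau(\theta)$ stays in a uniformly bounded neighbourhood of the geodesic $c_1$. This uses Gromov hyperbolicity: geodesics from $x$ with nearby endpoints fellow-travel, and a spherical arc between angles whose $c_r$-endpoints are close cannot exit. Combined with the expansion inequality, the bound $|\dot c_r|\ge c\, e^{\lambda(r-\tau)}|\dot c_\tau|$ at $\tau=r-C_\eta$ and a covering of the $C_\eta$-neighbourhood by boundedly many unit balls yields $\mathrm{length}(c_r)\le C'_\eta$. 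If this path argument proves fragile, the fallback is to replace the spherical arc by a curve $\theta\mapsto v(\theta)$ defined by pulling back the $\widetilde M$-geodesic $[c_1(r),c_2(r)]$ through $\exp_x$, as in \eqref{eq: gamma(t) param}. This choice gives $\mathrm{length}(c_r)$ at most roughly $\eta$ directly, at the cost of $\ell(\theta)$ not being constant; the $\dot\ell X$ term is orthogonal and harmless, and a radial comparison handles the variable $\ell(\theta)\in[r-\eta,r+\eta]$.
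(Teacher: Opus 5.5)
\textbf{The gap is in your main line.} The whole argument rests on the bound $\mathrm{length}(c_r)\le C'_\eta$ for the great-circle arc, and your bootstrap does not prove it.
\begin{itemize}
\item Applying the endpoint argument to $v$ and an intermediate $v(\theta)$ presupposes that $c_r(\theta)$ is close to $c_r(0)$, which is exactly what is unknown. Gromov hyperbolicity makes the two geodesics through $v$ and $w$ fellow-travel. It says nothing about the directions along an arc you have chosen in $S_x\widetilde M$.
\item The final step runs in the wrong direction. The inequality $|\dot c_r|\ge c\,\mathrm{e}^{\lambda(r-\tau)}|\dot c_\tau|$ bounds $|\dot c_\tau|$ from above in terms of $|\dot c_r|$, never $|\dot c_r|$ from above.
\item Confining a curve to a bounded region, or covering that region by boundedly many unit balls, does not bound the curve's length.
\end{itemize}
In dimension $\ge 3$, nothing forces the image of the great-circle arc under $u\mapsto \exp_x(ru)$ to be short, or even to stay near $c_1(r)$. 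When $\mathrm{d}_u\exp_x(r\,\cdot)$ expands different directions of $u^\perp$ at very different rates, the slowly expanded directions need not be tangent to great circles. The arc then picks up components in fast directions, even though its endpoints are $\eta$-close.

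\textbf{The fallback is the right argument, and it is how the paper gets the lemma.} The paper treats the lemma as a byproduct of the proof of Proposition~\ref{prop: M has exp div function}. Let $\gamma$ be the minimizing geodesic from $\exp_x(rv)$ to $\exp_x(rw)$. It has length $\le\eta$ and lies outside $B(x,r-\eta)$. Write $\gamma(\theta)=\pi(\varphi_{\ell(\theta)}(x,v(\theta)))$ as in \eqref{eq: gamma(t) param}, with $\ell(\theta)\ge r-\eta$, and set $c_t(\theta)=\pi(\varphi_t(x,v(\theta)))$. The pointwise estimate from that proof, $|\dot\gamma(\theta)|_g\ge \big(c\,\mathrm{e}^{\lambda(\ell(\theta)-t)}-C\big)|\dot c_t(\theta)|_g$, then gives $\mathrm{length}(c_t)\lesssim \eta\,\mathrm{e}^{-\lambda(r-t)}$ directly once $r-t$ is large. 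So you need neither $c_r$ nor a radial comparison. Bounded values of $r-t$ follow from bounded-time distortion of $\mathrm{d}\varphi$.

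\textbf{Finishing.} Your first step converts the horizontal length into the Sasaki distance for $t\ge 1$. For $t\le 1$ you need to justify the term $C\theta_0\mathbf 1_{t\le 1}$. By absence of conjugate points and compactness, $|p_{\mathbb H}\mathrm{d}\varphi_1\dot v|_G\ge c|\dot v|$ uniformly. Hence the angular length of $\theta\mapsto v(\theta)$ is $\lesssim \mathrm{length}(c_1)\lesssim \mathrm{e}^{-\lambda r}$, which controls the curve $\theta\mapsto\varphi_t(x,v(\theta))$ for $t\le 1$. Drop the great circle and use the pulled-back curve from the start.
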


	\subsection{Fourier analysis on the fibers}\label{subsec: Fourier} Each fiber of the unit tangent bundle $SM\to M$ is isometric to the $(d-1)$-sphere $S^{d-1}$. By diagonalizing the Laplacian on the $(d-1)$-sphere, this allows decomposing any function $f\in L^2(SM)$ in so-called spherical harmonics. This decomposition allows us to compute seamlessly some operator norms in the next section.
		
	For the material presented here, see \cite[Chapter 14]{Lefeuvre}. Recall that we have an orthogonal decomposition
	\[T(SM)=\mathbb RX\oplus \mathbb H\oplus \mathbb V.\]
	We can then define the \emph{vertical gradient} $\nabla_{\mathbb V}f$ of a function $f\in C^\infty(SM)$, and a dual operator $\nabla_{\mathbb V}^*:C^\infty(T(SM))\to C^\infty(SM)$. The \emph{vertical Laplacian} is then defined as
	\[\Delta_{\mathbb V}:=\nabla_{\mathbb V}^*\nabla_{\mathbb V}:C^\infty(SM)\to C^\infty(SM).\]
	Given an isometric identification $S_xM\simeq S^{d-1}$, one has
	\[\Delta_{\mathbb V}f(x,v)=\Delta_{S^{d-1}}f(x,\cdot)\big|_v.\]
	
	We turn to the case of flat bundles. Let $E\to M$ be a flat bundle over $M$; it can be lifted to a flat bundle $\mathcal E:=\pi^*E$ over $SM$. If $f(x,v)$ is a smooth section of $\mathcal E\to SM$, then for each $x\in M$, one can view $f(x,\cdot)$ as an element of $C^\infty(S_xM,E_x)$, where $E_x$ is the fiber at $x$ of the projection $E\to M$. As above, by fixing an isometric identification $S_xM\simeq S^{d-1}$, one has:
	\[\Delta_{\mathbb V}^E f(x,v)=\Delta_{S^{d-1}} f(x,\cdot)\big|_v.\]
	The vertical Laplacian $\Delta_{\mathbb V}^E$ is essentially self-adjoint on $L^2(SM,\mathcal E)$, hence one can define its functional calculus.

\subsubsection{Fourier analysis on $S\widetilde M$} \label{subsubsec: Fourier analysis SM} We now discuss the case of the universal cover $\widetilde M$, with the trivial bundle $\mathbb C\to \widetilde M$. Since $\widetilde M$ is simply connected, we have an isomorphism $S\widetilde M\simeq \widetilde M\times S^{d-1}$ such that for each $x$, the restriction $S_x\widetilde M\to S^{d-1}$ is an isometry. We denote by $(\lambda_n)_{n\ge 0}$ the ordered sequence of eigenvalues of $\Delta_{S^{d-1}}$, and consider an associated orthonormal basis $(\omega_n)_{n\ge 0}$ of eigenfunctions of the spherical Laplacian, that is $\|\omega_n\|_{L^2(S^{d-1})}=1$ and $\Delta_{S^{d-1}}\omega_n=\lambda_n\omega_n$. These functions pullback to functions on $S\widetilde M$ that will still be denoted $\omega_n$, and such that
	\[\Delta_{\widetilde{\mathbb V}} \omega_{n}=\lambda_n \omega_{n}.\]
	Then, any $u\in L^2(S\widetilde M)$ can be decomposed as
	\[u(x,v)=\sum_{n\ge 0}u_n(x)\omega_n(x,v),\qquad \|u\|_{L^2(S\widetilde M)}^2=\sum_{n\ge 0} \|u_{n}\|_{L^2(\widetilde M)}^2.\]

	\subsubsection{Fourier projectors} We now introduce Fourier projectors on the fibers and study how they act as semiclassical pseudodifferential operators. Let $g$ be a smooth compactly supported function, equal to $1$ on $[-2,2]$. We need to elucidate how $g(h^2\Delta_{\mathbb V}^E)$ behaves when composed with pseudodifferential operators. We refer to Appendix~\ref{appendix} for the notions of semiclassical calculus used here. We first consider the case of the trivial bundle $\mathbb C\to SM$. We show that $g(h^2\Delta_{\mathbb V})$ acts microlocally like the identity on $\{|\xi|\le 2\}$.	\begin{prop}\label{prop: inserting g(h2lap) high dim}
		
		For any $a\in C^\infty(T^*\mathcal M)$ supported on $|\xi|\le 1$, one has
		\[\operatorname{Op}_h^{\mathcal M}(a)g(h^2\Delta_{\mathbb V})=\operatorname{Op}_h^{\mathcal M}(a)+\mathcal O(h^\infty), \qquad g(h^2\Delta_{\mathbb V})\operatorname{Op}_h^{\mathcal M}(a)=\operatorname{Op}_h^{\mathcal M}(a)+\mathcal O(h^\infty).\]\end{prop}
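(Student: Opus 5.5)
We prove the first identity; the second follows by taking adjoints, since $g(h^2\Delta_{\mathbb V})$ is self-adjoint and $\operatorname{Op}_h^{\mathcal M}(a)^*=\operatorname{Op}_h^{\mathcal M}(\bar a)+\mathcal O(h^\infty)$ microlocally on the same region. The strategy is to treat $h^2\Delta_{\mathbb V}$ as a semiclassical differential operator on $\mathcal M$ whose principal symbol is controlled by $|\xi|^2$, and then to use the semiclassical functional calculus (Helffer--Sjöstrand) to identify $g(h^2\Delta_{\mathbb V})$ as a pseudodifferential operator near the low-energy region.

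\textbf{Step 1: the symbol of $P:=h^2\Delta_{\mathbb V}$.} $P$ is a second order semiclassical differential operator. Its principal symbol is $p(x,v,\xi)=|\xi|_{\mathbb V}|^2$, the squared norm of the restriction of $\xi$ to $\mathbb V$ for the Sasaki metric. Its subprincipal terms are $h$ times lower order symbols. By construction of the Sasaki metric, $p\le |\xi|^2$. Hence, on the support of $a$,
\[
p\le 1<2 \quad \text{and} \quad g(p)\equiv 1 .
\]
Since $g$ equals $1$ on $[-2,2]$, $g$ is in fact $1$ on a neighbourhood $\{p<2\}$ of $\{p\le 1\}$, which leaves room to absorb lower order corrections.

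\textbf{Step 2: functional calculus.} $P$ is self-adjoint and nonnegative. We write
\[
g(P)=\frac{1}{\pi}\int_{\mathbb C}\bar\partial\tilde g(w)\,(P-w)^{-1}\,\mathrm d m(w),
\]
with $\tilde g$ an almost analytic extension of $g$. The point to handle is that $P$ is not elliptic on $T^*\mathcal M$, since it is degenerate in the horizontal and flow directions. The standard result that $g(P)\in\Psi_h^{-\infty}$ with symbol $g(p)+\mathcal O(h)$ therefore does not apply globally. We avoid needing it.

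\textbf{Step 3: microlocal reduction.} Set $\psi:=1-g$, which is smooth and vanishes on $[-2,2]$. It suffices to show
\[
\operatorname{Op}_h(a)\,\psi(P)=\mathcal O(h^\infty).
\]
Choose a cutoff $b$ with $a\prec b$ and $b$ supported in $\{|\xi|\le 3/2\}$. Since $P-w$ is elliptic on the support of $b$ for $w$ in the support of $\psi$ (where $\operatorname{Re}w\ge 2>p$), we construct a local parametrix
\[
\operatorname{Op}_h(b)(P-w)^{-1}=Q(w)+\mathcal O\!\left(h^\infty|\operatorname{Im}w|^{-N}\right),
\]
where $Q(w)=\operatorname{Op}_h\big(b(p-w)^{-1}+\dots\big)$. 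The remainder comes from a commutator argument: $[P,\operatorname{Op}_h(b)]$ is supported where $\nabla b\neq 0$, and we iterate using nested cutoffs. Plugging this into the Helffer--Sjöstrand formula and composing with $\operatorname{Op}_h(a)$, the main term has symbol $a\,\psi(p)+h\cdot(\text{terms supported in }\operatorname{supp}a)$. Each term involves derivatives of $\psi$ evaluated near $p$, and these vanish identically because $\psi\equiv 0$ on $[-2,2]\ni p$. So all terms of the expansion vanish, and the remainder is $\mathcal O(h^\infty)$. The $|\operatorname{Im} w|^{-N}$ losses are absorbed by $\bar\partial\tilde g=\mathcal O(|\operatorname{Im}w|^\infty)$.

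The main obstacle is Step 3: making the parametrix rigorous when $P$ is nonelliptic outside $\operatorname{supp} b$. In particular we must make sure the resolvent $(P-w)^{-1}$ is only ever used as a bounded $L^2$ operator with norm $|\operatorname{Im}w|^{-1}$, with all symbolic analysis done through $\operatorname{Op}_h(b)$. If that proves awkward, the fallback is to use the local fibration structure instead. Locally, $\mathcal M\simeq U\times S^{d-1}$ and $\Delta_{\mathbb V}$ acts only in the sphere variable, so $g(h^2\Delta_{\mathbb V})$ becomes $\operatorname{Id}_U\otimes g(h^2\Delta_{S^{d-1}})$. The functional calculus on the compact sphere (where $\Delta_{S^{d-1}}$ is elliptic) then gives a semiclassical pseudodifferential operator in the fiber with symbol $g(|\eta|^2)+\mathcal O(h^\infty)$ near $|\eta|\le 1$, and composition with $\operatorname{Op}_h(a)$, whose support satisfies $|\eta|\le|\xi|\le 1$, concludes.
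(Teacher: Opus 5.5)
Your main argument is essentially the paper's proof: Helffer--Sj\"ostrand for $g(h^2\Delta_{\mathbb V})$, then a microlocal elliptic parametrix for $w-h^2\Delta_{\mathbb V}$ near $\operatorname{supp}a$, whose $|\operatorname{Im}w|^{-N}$ losses are absorbed by $\bar\partial\tilde g=\mathcal O(|\operatorname{Im}w|^\infty)$, and finally the fact that $g\equiv 1$ near the values $p\le 1+\mathcal O(h)$ taken on $\operatorname{supp}a$, which kills every correction term after integration. Only cosmetic fixes are needed: apply Helffer--Sj\"ostrand to $g$ itself rather than to the non-compactly supported $1-g$, and note that ellipticity of $p-w$ on $\operatorname{supp}b$ comes from $\operatorname{Im}w\neq 0$ rather than from $\operatorname{Re}w\ge 2$ (which fails on part of $\operatorname{supp}\bar\partial\tilde g$); also, the vanishing of $g^{(k)}(p)$ is needed only near $\operatorname{supp}a$, since $p$ can reach $9/4$ on $\operatorname{supp}b$.
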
	
	\begin{proof} 
		
		\textbf{1.} By the Helffer--Sj\"ostrand formula (see \cite[Theorem 14.18]{zworski2012semiclassical}), one can express
		\[g(h^2\Delta_{\mathbb V})=\frac{1}{2\mathrm{i}\pi}\int_{\mathbb{C}} \bar \partial \tilde g(z)(z-h^2\Delta_{\mathbb V})^{-1}\mathrm{d}z\wedge \mathrm{d}\bar z.\]
		where $\tilde g$ is an almost analytic extension of $g$, \emph{i.e.} a compactly supported function on $\mathbb C$ such that $\tilde g_{|\mathbb R}=g$, and 
		\[|\bar \partial \tilde g(z)|=\mathcal O(|\operatorname{Im}(z)|^\infty).\]
		The convergence of the integral in operator norm is ensured by the spectral-theoretical bound
		\[\|(z-h^2\Delta_{\mathbb V})^{-1}\|\le \frac{1}{|\operatorname{Im}(z)|},\]
		which follows from the self-adjointness of $\Delta_{\mathbb V}$.
		
		\textbf{2.} Since $\Delta_{\mathbb V}$ is a differential operator, we know that $h^2\Delta_{\mathbb V}=\operatorname{Op}_h^{\mathcal M}(p)+\mathcal O_{L^2\to L^2}(h^\infty)$, for some smooth, $h$-dependent symbol $p$, satisfying $p=|\xi|_{\mathbb V}^2+hr$ with some remainder $r\in S^1(T^*\mathcal M)$. Note that $p$ depends on the choice of quantization $\operatorname{Op}_h^{\mathcal M}(\cdot)$. Using a parametrix construction, we can build a symbol $b(z)$ supported on $\{(x,\xi)\in T^*(SM)~:~|\xi|\le 2\}$ with controlled estimates such that 
		\begin{equation} \label{eq: local parametrix b(z)} (z-\operatorname{Op}_h^{\mathcal M}(p))\operatorname{Op}_h^{\mathcal M}(b(z))\operatorname{Op}_h^{\mathcal M}(a)=\operatorname{Op}_h^{\mathcal M}(a)+\mathcal O\Big(\frac{h^N}{|\operatorname{Im}(z)|^N}\Big)_{L^2\to L^2},\end{equation}
		with $a$ as in Proposition~\ref{prop: inserting g(h2lap) high dim} independent of $z$. 	More precisely, letting $\chi\equiv 1$ on $\{|\xi|\le 2\}$, one can take $b$ of the form
		\begin{equation} \label{eq: expression b(z)}b(z)=\frac{\chi}{z-p}+\chi\sum_{n=2}^{N}h^n\sum_{2\le k\le 2n+1}\frac{Q_{n,k}}{(z-p)^k}
			.\end{equation}
		Here $ Q_{n,k}$ 
		is a polynomial in the derivatives of $p$ whose precise expression depends on the choice of quantization, but is irrelevant to us. The important fact is that $Q_{n,k}$ is independent of $z$. Similar computations where expression of the form \eqref{eq: expression b(z)} appear can be found e.g. in \cite{NonnenmacherLectures}---beware that our expression differs slightly because we do not want $z$ to appear in the numerator. On manifolds, one can use the calculus developed in the Appendix of \cite{DJN21} to obtain \eqref{eq: local parametrix b(z)}. Although \cite{DJN21} works with the right quantization, their presentation can be adapted to deal with the Weyl quantization.

		Let us go back to the proof. From \eqref{eq: local parametrix b(z)} and the $L^2$-continuity theorem we obtain
		\begin{equation} \label{eq: replace Op(z-b) by lap} (z-h^2\Delta_{\mathbb V})\operatorname{Op}_h^{\mathcal M}(b(z))\operatorname{Op}_h^{\mathcal M}(a)=\operatorname{Op}_h^{\mathcal M}(a)+\mathcal O_{L^2\to L^2}(\frac{h^N}{|\operatorname{Im}(z)|^N}).\end{equation}
		Recall that $\|(z-h^2\Delta_{\mathbb V})^{-1}\|\le |\operatorname{Im}(z)|^{-1}$. By \eqref{eq: replace Op(z-b) by lap}, one obtains
		\begin{equation} \operatorname{Op}_h^{\mathcal M}(b(z))\operatorname{Op}_h^{\mathcal M}(a)=(z-h^2\Delta_{\mathbb V})^{-1}\operatorname{Op}_h^{\mathcal M}(a)+\mathcal O(\frac{h^N}{|\operatorname{Im}(z)|^{N+1}})_{L^2\to L^2}.\end{equation}
		
		\textbf{3.} We now integrate over $\mathbb C$. We use the fact that
		\[\int_{\mathbb C} \frac{\bar \partial \tilde g(z)}{(z-p)^k}\mathrm{d}z\wedge \mathrm{d}\bar z=\frac{2\pi \mathrm{i}}{(k-1)!}g^{(k-1)}(p).\]
		By \eqref{eq: expression b(z)}, we have
		\begin{equation} \label{eq: integrated d bar g b}\int_{\mathbb C} \bar \partial \tilde g(z)b(z)\mathrm{d}z\wedge \mathrm{d}\bar z=\chi\Big(g(p)+\sum_{n=2}^N h^n\sum_{2\le k\le 2n+1} g^{(k-1)}(p)Q_{2n,k}\Big) +\mathcal O(h^N).\end{equation}
		We used the almost analyticity of $\tilde g$ to control the remainder. Since $g\equiv 1$ on $[-2,2]$ we have $g^{(k)}\equiv 0$ on this interval for all $k\ge 1$. Because $\chi$ is supported on $|\xi|\le 1$, we have $|p|\le 1+\mathcal O(h)$ there, thus for $h$ small enough we have $\chi g^{(k)}(p)\equiv 0$. Hence \eqref{eq: integrated d bar g b} simplifies to
		\[\int \bar \partial \tilde g(z)b(z)\mathrm{d}z\wedge \mathrm{d}\bar z= \chi g(p)+\mathcal O(h^N).\]
		Altogether, we obtain
		\[g(h^2\Delta_{\mathbb V}) \operatorname{Op}_h^{\mathcal M}(a)=\operatorname{Op}_h^{\mathcal M}(\chi g(p))\operatorname{Op}_h^{\mathcal M}(a)+\mathcal O(h^\infty).\]
		Since $\chi g(p)\equiv 1$ on the support of $a$, we conclude that
		\[g(h^2\Delta_{\mathbb V}) \operatorname{Op}_h^{\mathcal M}(a)=\operatorname{Op}_h^{\mathcal M}(a)+\mathcal O(h^\infty),\]
		as we wished.\end{proof}
	We now deal with flat bundles $\mathcal E\to \mathcal SM$ obtained by lifting $E\to M$, and provide a generalization of Proposition~\ref{prop: inserting g(h2lap) high dim} to this setting. We refer to Appendix \ref{appendix} for the notations used here.
	\begin{prop}\label{prop: inserting g(h2lap) highdim bundle}Let $\mathcal E\to \mathcal M$ be a unitary flat bundle obtained by lifting $E\to M$. For any $a\in C^\infty(T^*\mathcal M)$ supported on $|\xi|\le 1$, one has
		\[\operatorname{Op}_h^{\mathcal E}(a)g(h^2\Delta_{\mathbb V}^E)=\operatorname{Op}_h^{\mathcal E}(a)+\mathcal O(h^\infty), \qquad g(h^2\Delta_{\mathbb V}^E)\operatorname{Op}_h^{\mathcal E}(a)=\operatorname{Op}_h^{\mathcal E}(a)+\mathcal O(h^\infty).\]
		The errors are uniform in $\mathcal E$.\end{prop}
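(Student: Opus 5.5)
The plan is to rerun the proof of Proposition~\ref{prop: inserting g(h2lap) high dim} verbatim, now in the calculus $\operatorname{Op}_h^{\mathcal E}$ of Appendix~\ref{appendix}, and to check that every constant is independent of $\mathcal E$. The structural input is that $\mathcal E=\pi^*E$ is unitary and flat with constant unitary transition matrices. In local trivializations over charts of $SM$ of the form $U\times S^{d-1}$ (with $U\subset M$), $\Delta_{\mathbb V}^E$ therefore acts as $\Delta_{\mathbb V}\otimes \operatorname{Id}_{\mathbb C^r}$. The reason is that $E$ is pulled back from $M$, so the transition maps do not depend on the fiber variable $v$ and commute with vertical derivatives.

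Consequently, in each such chart $h^2\Delta_{\mathbb V}^E=\operatorname{Op}_h^{\mathcal E}(p)+\mathcal O(h^\infty)$ with the same \emph{scalar} symbol $p=|\xi|^2_{\mathbb V}+hr$ as in the trivial case. Here I use that $\operatorname{Op}_h^{\mathcal E}$ is built from scalar local quantizations tensored with the identity and glued by a partition of unity. The remainders are uniform because they only involve finitely many seminorms of scalar symbols, while the unitary gluing maps have norm one.

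The steps, in order, are as follows.
\begin{enumerate}
\item Use the Helffer--Sj\"ostrand formula for $g(h^2\Delta_{\mathbb V}^E)$. Self-adjointness of $\Delta_{\mathbb V}^E$ on $L^2(SM,\mathcal E)$ gives $\|(z-h^2\Delta_{\mathbb V}^E)^{-1}\|\le|\operatorname{Im} z|^{-1}$, with no dependence on $\mathcal E$.
\item Build the parametrix $b(z)$ of \eqref{eq: expression b(z)}. Since $p$ is scalar and the composition formula in the appendix calculus is the scalar one (the bundle enters only through constant unitary conjugations), the very same scalar symbol $b(z)$ satisfies \eqref{eq: local parametrix b(z)} with $\operatorname{Op}_h^{\mathcal E}$. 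The error is $\mathcal O(h^N|\operatorname{Im} z|^{-N})$, uniformly in $\mathcal E$, by the uniform $L^2$-continuity and composition theorems of the appendix.
\item Invert using the resolvent bound and integrate against $\bar\partial\tilde g$, exactly as before. This gives $g(h^2\Delta_{\mathbb V}^E)\operatorname{Op}_h^{\mathcal E}(a)=\operatorname{Op}_h^{\mathcal E}(\chi g(p))\operatorname{Op}_h^{\mathcal E}(a)+\mathcal O(h^\infty)$, and since $\chi g(p)\equiv1$ on $\operatorname{Supp} a$, the right-hand side equals $\operatorname{Op}_h^{\mathcal E}(a)+\mathcal O(h^\infty)$.
\item Obtain the other ordering either by symmetric arguments, or by taking adjoints: $g$ is real and $\operatorname{Op}_h^{\mathcal E}(a)^*=\operatorname{Op}_h^{\mathcal E}(\bar a)+\mathcal O(h^\infty)$ uniformly.
\end{enumerate}

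The main obstacle I expect is the bookkeeping in step 2: confirming that the composition and continuity estimates for $\operatorname{Op}_h^{\mathcal E}$ hold with constants independent of the rank and holonomy of $\mathcal E$. To handle it, I would reduce everything to scalar operators on charts. If $\chi_j$ is a partition of unity subordinate to trivializing charts, then an operator such as $\sum_j \chi_j\operatorname{Op}_h(b)\tilde\chi_j\otimes \operatorname{Id}$ has norm bounded by the scalar norms. This is because tensoring with $\operatorname{Id}_{\mathbb C^r}$ and conjugating by unitaries preserve operator norms, so every error term inherits the scalar bound.
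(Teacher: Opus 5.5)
Your proposal is correct, but it takes a heavier route than the paper. The paper never reruns the Helffer--Sj\"ostrand/parametrix argument for bundles. Instead it uses the specific choice of trivializing sets $\mathcal U_j=\pi^{-1}(U_j)$ and cutoffs $\chi_j=\pi^*\psi_j$: these are unions of whole fibers, and the cutoffs are constant along them. For this choice the trivializations intertwine the functional calculi \emph{exactly}, $R_j\chi_j\,g(h^2\Delta_{\mathbb V}^E)=\big(g(h^2\Delta_{\mathbb V})\otimes\operatorname{Id}_{\mathbb C^n}\big)R_j\chi_j$. Writing $\operatorname{Op}_h^{\mathcal E}(a)=\sum_j R_j^*\chi_j\operatorname{Op}_h^{\mathcal M}(a)\chi_jR_j$, this turns $\operatorname{Op}_h^{\mathcal E}(a)g(h^2\Delta_{\mathbb V}^E)$ into $\sum_j R_j^*\chi_j\operatorname{Op}_h^{\mathcal M}(a)g(h^2\Delta_{\mathbb V})\chi_jR_j$. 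The scalar Proposition~\ref{prop: inserting g(h2lap) high dim} is then applied term by term. Uniformity in $\mathcal E$ is immediate, because $R_j,R_j^*$ are isometries and a scalar $\mathcal O(h^\infty)$ remainder tensored with the identity keeps its norm.

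Your route instead needs an all-orders, $\mathcal E$-uniform symbolic calculus for $\operatorname{Op}_h^{\mathcal E}$. You use it to build the parametrix modulo $\mathcal O(h^N|\operatorname{Im} z|^{-N})$ and to justify $\operatorname{Op}_h^{\mathcal E}(\chi g(p))\operatorname{Op}_h^{\mathcal E}(a)=\operatorname{Op}_h^{\mathcal E}(a)+\mathcal O(h^\infty)$. The appendix, however, only proves composition modulo $\mathcal O(h)$. That calculus does hold, for the reason you give: the $R_{ij}$ are constant on the connected overlaps, commute with scalar operators, and cancel. So there is no gap, but it is extra work. Also, the symbol $b(z)$ you obtain is bundle-independent rather than literally the one from the scalar proof. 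This is because $\operatorname{Op}_h^{\mathcal E}$ for the trivial bundle is $\sum_j\chi_j\operatorname{Op}_h^{\mathcal M}(\cdot)\chi_j$, not $\operatorname{Op}_h^{\mathcal M}$.

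In exchange, your argument only uses that $\Delta_{\mathbb V}^E$ is locally $\Delta_{\mathbb V}\otimes\operatorname{Id}$ in flat trivializations. It therefore does not depend on being able to choose fiber-constant cutoffs, which is exactly what the paper's short reduction relies on.
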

	\begin{proof}
		We only deal with the first identity, the second one is proved similarly. By definition of $\operatorname{Op}_h^{\mathcal E}(\cdot)$, we have
		\[\operatorname{Op}_h^{\mathcal E}(a)g(h^2\Delta_{\mathbb V}^E)=\sum_j R_j^*\chi_j \operatorname{Op}_h^{\mathcal M}(\eta_j a)R_j \chi_j g(h^2\Delta_{\mathbb V}^E).\]
		With our choice of local trivializations, we have
		\[R_jg(h^2\Delta_{\mathbb V}^E)=g(h^2\Delta_{\mathbb V})R_j,\]
		 This leads to:
		\[\operatorname{Op}_h^{\mathcal E}(a)g(h^2\Delta_{\mathbb V}^E)=\sum_j R_j^*\chi_j \operatorname{Op}_h^{\mathcal M}(\eta_j a)\chi_j g(h^2\Delta_{\mathbb V})R_j\eta_j.\]
		By Proposition~\ref{prop: inserting g(h2lap) high dim} we have for each $j$:
		\[\operatorname{Op}_h^{\mathcal M}(\eta_j a)\chi_jg(h^2\Delta_{\mathbb V})=\operatorname{Op}_h^{\mathcal M}(\eta_j a)\chi_j +\mathcal O(h^\infty).\]
		Finally, using $\chi_j\prec \eta_j$:
		\[\operatorname{Op}_h^{\mathcal E}(a)g(h^2\Delta_{\mathbb V}^E)=\sum_j R_j^*\chi_j \operatorname{Op}_h^{\mathcal M}(\eta_j a)R_j\chi_j+\mathcal O(h^\infty)=\operatorname{Op}_h^{\mathcal E}(a)+\mathcal O(h^\infty),\]
		which concludes the proof.

	\end{proof}

	\section{Low frequency propagation on $\widetilde M$} \label{sec: spherical mean} 	In this section, we study the evolution under the geodesic flow of functions $f\in L^2(S\widetilde M)$ that oscillate on scales $\lesssim h^{-1}$ in the vertical direction. Proposition~\ref{prop: operator norm spherical mean} below gives a quantitative formulation of the following heuristic: under the propagation by the geodesic flow, the $L^2$-mass tends to escape towards high Fourier modes in the fibers. 
	More precisely, we show that if $f$ is concentrated on Fourier modes of order $\lesssim h^{-1}$, then the $L^2$-mass of the projection of $f\circ \varphi_t$ to modes of order $\lesssim h^{-1}$ decays like $h^{-(d-1)}\mathrm{e}^{\delta_0 t}$, where $\delta_0$ is half the pressure of the potential $-2\psi^u$. This estimate is the main analytical input in the proof of Theorem \ref{thm: extension resolvent Hs}, and may be of independent interest.

	\subsection{Spherical mean operator} We first study propagation of functions that do not oscillate in the vertical direction. We have a pullback operator
	\[\pi^*:L^2(\widetilde M)\to L^2(S\widetilde M), \ f\mapsto f\circ \pi.\]
	We also define $\pi_*:L^2(S\widetilde M)\to L^2(\widetilde{M})$ by
	\[\pi_*F(x)=\frac{1}{\operatorname{vol}(S^{d-1})}\int_{S_x{\widetilde M}} F(x,v)\mathrm{d}\mu_x(v).\] 
	\begin{defi}The \emph{spherical mean operator} $\mathcal L_t:L^2(\widetilde M)\to L^2(\widetilde M)$ is defined by
		\begin{equation} \label{def: spherical mean operator} \mathcal L_t f:=\pi_*\mathrm{e}^{-t\widetilde X}\pi^*.\end{equation}
	More explicitly, one has
	\[\mathcal L_t f(x)=\frac{1}{\operatorname{vol}(S^{d-1})}\int_{S_x\widetilde M} f(\pi(\varphi_t(x,v)))\mathrm{d}\mu_x(v).\]
	\end{defi}
	We can give an alternative expression of $\mathcal L_t$. If $y=\exp_x(tv)$ for some $v\in S_x\widetilde M$, we define the volume density at $y$ seen in normal coordinates centered at $x$:
	\begin{equation} \label{eq: def J(x,y)}J(x,y):=\det \mathrm{d}_v\exp_x(t~\! \cdot),\end{equation}
	where the determinant is computed with respect to any orthonormal bases of $T(S_x\widetilde M)$ and $T_yS(x,t)$. Note that $J(x,y)\sim d(x,y)^{d-1}$ when $d(x,y)\to 0$. Also, let $\mathrm{d}\sigma_{S(x,t)}$ be the Riemannian surface measure on $S(x,t)$. Then, one has, for any $f\in C^\infty(\widetilde M)$:
	\begin{equation} \label{eq: alternative L_tf}\mathcal L_t f(x)=\int_{S(x,t)} J(x,y)^{-1}f(y)\mathrm{d}\sigma_{S(x,t)}(y).\end{equation}
	
	\begin{rem} \label{rem: confusion operators} The spherical mean operator as defined in \eqref{def: spherical mean operator} should not be confused with the averaging operator over the Riemannian sphere of radius $t$ with respect to the surface measure, denoted by $\mathcal A_t$, that was studied by Knieper \cite{KnieperSphericalMean}. Indeed, one has to replace the weight $J(x,y)^{-1}$ by $\tfrac{1}{\operatorname{vol}S(x,t)}$ in \eqref{eq: alternative L_tf}. Note however that both operators coincide in constant curvature.
	\end{rem}

	\subsection{Topological Pressure} \label{subsec: pressure} We review the various definitions of the topological pressure of a continuous potential $F\in C^0(SM)$. Any such potential lifts to a $\Gamma$-invariant potential $\widetilde F\in C^0(S\widetilde M)$. Some particularly interesting potentials are given by scalar multiples of the \emph{unstable Jacobian}, which is defined as follows.
	\begin{defi}[Unstable Jacobian] The unstable Jacobian $\psi_u$ is defined by
	\begin{equation} \label{eq: def unstable jac}\psi_u(x,v)=\frac{\mathrm{d}}{\mathrm{d}t}\Big|_{t=0}\log \det (\mathrm{d}\varphi_t)_{|E_u(x,v)\to E_u(\varphi_t(x,v))},\end{equation}
	where the determinant is computed with respect to the Sasaki metric. This is a H\"older continuous function on $SM$.\end{defi}

	 For $x,y\in \widetilde M$ such that $y=\exp_x(tv)$ with $v\in S_x\widetilde M$, we set
	\[\int_x^y \widetilde F:=\int_0^t \widetilde F(\varphi_s(x,v))\mathrm{d}s.\]
	Note that if $F(x,v)=F(x,-v)$ then
	\[\int_x^y \widetilde F=\int_y^x \widetilde F.\]

	\begin{defi}
		A subset $S\subset SM$ is \emph{$(\varepsilon,t)$-separated} if for any $x,y\in S$, there is some $s\in [0,t]$ such that $d(\varphi_s(x),\varphi_s(y))>\varepsilon$. A subset $S\subset SM$ is \emph{$(\varepsilon,t)$-spanning} if for each $y\in SM$ there is some $x\in S$ such that $d(\varphi_s(x),\varphi_s(y))<\varepsilon$ for all $s\in [0,t]$.
	\end{defi}
	Let $F:SM\to \mathbb R$ be a continuous potential. Following Ruelle \cite{RuellePressure}, we define
	\[Z_t^+(F,\varepsilon):=\sup \Big\{\sum_{x\in S}\exp\int_0^t F(\varphi_s(x))\mathrm{d}t~:~ \text{$S$ is $(\varepsilon,t)$-separated}\Big\},\]
	and
	\[Z_t^-(F,\varepsilon):=\inf \Big\{\sum_{x\in S}\exp\int_0^t F(\varphi_s(x))\mathrm{d}t~:~ \text{$S$ is $(\varepsilon,t)$-spanning}\Big\}.\]
	One defines
	\[P^{\pm}(F,\varepsilon)=\underset{t\to +\infty}{\limsup} \frac{1}{t} \log Z^{\pm}_t(F,\varepsilon).\]
	
	\begin{defi} The \emph{topological pressure} $\operatorname{Pr}(F)$ is defined as
		\[\operatorname{Pr}(F)=\lim_{\varepsilon\downarrow 0} P^{\pm}(F,\varepsilon),\]
		where both limits coincide.
	\end{defi}
	For a proof that $\lim P^+(F,\varepsilon)=\lim P^-(F,\varepsilon)$, see Walters \cite{Walters1975}. When $F=0$, one recovers the \emph{topological entropy}  $h_{\rm top}$ of the flow. It was shown by Manning \cite{Manning} that when $(M,g)$ is negatively curved the topological entropy coincides with the volume entropy: for any $x\in \widetilde M$ we have
	\[h_{\rm top}=\underset{r\to +\infty}\lim \frac 1r \log \operatorname{vol} B(x,r).\]
	This result was extended in two directions. First, Ruelle \cite{RuellePressure} extended Manning's result to arbitrary continuous potentials and showed that for any $c>0$, denoting by $A(x,r,c)$ the annulus $\{y~:~ |d(x,y)-r|\le c\}$, one has
	\begin{equation} \label{eq: pressure as lim int over annulus} \operatorname{Pr}(F)=\underset{r\to +\infty}\lim \frac 1r \log \int_{A(x,r,c)} \exp(\int_x^y \widetilde F)\mathrm{dvol(y)}.\end{equation}
	Secondly, Freire and Ma{\~n}{\'e} \cite{FreireMane} showed that Manning's result still holds under the assumption that $(M,g)$ has no conjugate points, in particular when $(M,g)$ is Anosov. Although the proof in \cite{FreireMane} is conceptually different that the one in \cite{Manning}, it was pointed out by the authors that Manning's proof extends seamlessly to the case of Anosov manifolds; in fact this is also the case of Ruelle's proof. Indeed, the nonnegative curvature assumption in \cite{Manning,RuellePressure} is only used to obtain a mild version of Lemma \ref{lem: divergence geod}, which we have proved for Anosov manifolds. The moral of the story is that \eqref{eq: pressure as lim int over annulus} holds under the sole assumption that $(M,g)$ is Anosov. The pressure $\operatorname{Pr}(F)$ is also the \emph{critical exponent} of the Poincar{\'e} series associated with the potential $F$. For any $x,y\in \widetilde M$ we have
	\[\operatorname{Pr}(F)=\inf\Big\{s\in \mathbb R~:~\sum_{\gamma\in \Gamma}\mathrm{e}^{-s d(x,\gamma y)+\int_x^{\gamma y}\widetilde F}<+\infty\Big\}.\]
	
		\subsection{Operator norm of $\mathcal L_t$}  
	The norm of $\mathcal L_t:L^2(\widetilde M)\to L^2(\widetilde M)$ is estimated as follows. Recall that $\psi_u$ denotes the unstable Jacobian.
	
	\begin{prop}\label{prop: operator norm spherical mean} Let $\delta_0=\operatorname{Pr}(-2\psi^u)/2$. Then $\delta_0<0$ and  
		\begin{equation} \label{eq: limsup 1tlog Lt}\underset{t\to +\infty}\lim \frac 1t\log \|\mathcal L_t\|_{L^2(\widetilde M)}= \delta_0.\end{equation}
	\end{prop}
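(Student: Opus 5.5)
The plan is to compare the kernel of $\mathcal L_t$ with the unstable Jacobian, and then to estimate the operator norm from above with a rapid-decay (Haagerup-type) inequality for the hyperbolic group $\Gamma$, and from below with an explicit test function. That $\delta_0<0$ is soft. Since $\psi^u$ is cohomologous to a function with $\int\psi^u\,\mathrm d\mu\ge\gamma_0>0$ for every invariant probability $\mu$, the variational principle gives
\[\operatorname{Pr}(-2\psi^u)=\sup_\mu\Big(h_\mu-2\int\psi^u\,\mathrm d\mu\Big)<\sup_\mu\Big(h_\mu-\int\psi^u\,\mathrm d\mu\Big)=\operatorname{Pr}(-\psi^u)=0.\]
The inequality is strict because the first supremum is attained by an equilibrium state.

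\textbf{Step 1 (kernel asymptotics).} I would first show that, uniformly in $x\in\widetilde M$ and $d(x,y)=t\ge 1$,
\[J(x,y)\asymp \exp\int_x^y\widetilde{\psi^u}.\]
$J(x,y)$ is the determinant of $\mathrm d\pi\circ\mathrm d\varphi_t$ restricted to $\mathbb V(x,v)$. Decompose $\mathbb V=p_{E_u}\mathbb V\oplus p_{E_s}\mathbb V$ using the invertible maps \eqref{eq: maps are invertible}. The stable part decays, so after time $t$ the image of $\mathbb V$ is exponentially close to $E_u(\varphi_t(x,v))$. Since $p_{\mathbb H}:E_u\to\mathbb H$ is uniformly invertible, this gives $J\asymp\det(\mathrm d\varphi_t|_{E_u})$, which is $\exp\int_0^t\psi^u$ by definition. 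The other ingredient is the symmetry $J(x,y)=J(y,x)$. This holds because $M$ has no conjugate points (Klingenberg), so the reciprocity of Jacobi-field Wronskians applies. Consequently $\mathcal L_t$ is self-adjoint, with kernel $K_t(x,y)=J(x,y)^{-1}\,\mathrm d\sigma_{S(x,t)}(y)$.

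\textbf{Step 2 (upper bound).} The sphere kernel is singular, so I would smooth in time: $\mathcal L_t$ and $\int\chi(s-t)\mathcal L_s\,\mathrm ds$ should have comparable norms up to $e^{o(t)}$, or one can use $\mathcal L_t^*\mathcal L_t$, whose kernel is a function. Then fix a fundamental domain $F$, so that $L^2(\widetilde M)\simeq L^2(F)\otimes\ell^2(\Gamma)$. Equivariance writes the operator as
\[\sum_{\gamma}K_\gamma\otimes\lambda_\Gamma(\gamma),\]
where $K_\gamma$ is nonzero only for $|\gamma|\lesssim t$. Gromov hyperbolicity of $\widetilde M$ (Proposition~\ref{prop: M has exp div function}) makes $\Gamma$ a hyperbolic group. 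Haagerup's inequality, in its operator-coefficient form for hyperbolic groups (de la Harpe, Jolissaint), then gives
\[\|\cdot\|\le C(1+t)^{k}\Big(\sum_\gamma\|K_\gamma\|_{HS}^2\Big)^{1/2}.\]
By Step 1 and Ruelle's formula \eqref{eq: pressure as lim int over annulus},
\[\sum_\gamma\|K_\gamma\|_{HS}^2\approx\sup_x\int_{A(x,t,c)}e^{-2\int_x^y\widetilde{\psi^u}}\,\mathrm{dvol}(y)=e^{(\operatorname{Pr}(-2\psi^u)+o(1))t}.\]
This yields $\limsup\frac1t\log\|\mathcal L_t\|\le\delta_0$.

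\textbf{Step 3 (lower bound).} Take $f=\mathbf 1_{B(x_0,1)}$. Because $K_t\ge0$, $\mathcal L_tf(x)$ is at least roughly $J(x_0,x)^{-1}$ for $x$ in the annulus $A(x_0,t,1/2)$. Hence
\[\|\mathcal L_tf\|^2\gtrsim\int_{A(x_0,t,1/2)}e^{-2\int\widetilde{\psi^u}}=e^{(2\delta_0+o(1))t},\]
which gives $\liminf\frac1t\log\|\mathcal L_t\|\ge\delta_0$.

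I expect the main obstacle to be Step 2. The difficulty is justifying a matrix-valued Haagerup inequality in which the coefficients $K_\gamma$ are measured in Hilbert--Schmidt norm, together with handling the singular sphere kernel; for the latter, passing to $\mathcal L_t^*\mathcal L_t$, or using a short time average with Lemma~\ref{lem: divergence geod} to control the perturbation, is what I would try first. Any polynomial loss in $t$ from either step is harmless for the exponential rate.
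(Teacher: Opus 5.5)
Your strategy, a Haagerup (rapid decay) inequality for the hyperbolic group $\Gamma$, is a genuinely different route from the paper's and can be made to work. But Step 2 as written has a real gap. The kernel of $\mathcal L_t$ is the measure $J(x,y)^{-1}\,\mathrm d\sigma_{S(x,t)}(y)$, which lives on a hypersurface. So the blocks $K_\gamma$ are not Hilbert--Schmidt, and your bound $C(1+t)^k(\sum_\gamma\|K_\gamma\|_{HS}^2)^{1/2}$ is $+\infty$. Neither repair you suggest works as stated:
\begin{itemize}
\item \emph{Time averaging.} For $\int\chi(s-t)\mathcal L_s\,\mathrm ds$ you only have the trivial direction, namely that the averaged operator is bounded by $\sup_s\|\mathcal L_s\|$. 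Controlling $\|\mathcal L_t\|$ by the averaged operator is the nontrivial direction. No positivity domination is available, since $\mathcal L_t$ has a singular kernel.
\item \emph{Passing to $\mathcal L_t^*\mathcal L_t=\mathcal L_t^2$.} Its kernel is a function, but nearby spheres of radius $t$ meet at an angle proportional to the distance between their centres. Hence the kernel blows up like $d(x,z)^{-1}$ on the diagonal. In dimension $2$, the case of Theorem~\ref{thm: spectral gap}, this is not locally square integrable. Its coefficient sums would also not follow directly from Ruelle's annulus formula.
\end{itemize}

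The fix is simple and removes what you called the main obstacle: you do not need Hilbert--Schmidt norms at all. Take $\xi\in\ell^2(\Gamma;L^2(F))$ and set $\phi(\gamma)=\|K_\gamma\|_{L^2(F)\to L^2(F)}$ and $|\xi|(g)=\|\xi(g)\|$. Then $\big\|\big(\sum_\gamma K_\gamma\otimes\lambda_\Gamma(\gamma)\big)\xi(\eta)\big\|\le(\lambda_\Gamma(\phi)|\xi|)(\eta)$ pointwise. So the scalar Haagerup inequality already gives $\|\mathcal L_t\|\le C(1+t)^k\|\phi\|_{\ell^2(\Gamma)}$.

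Each $\|K_\gamma\|$ is then bounded by Schur's test, which is fine with the singular kernel. Using Lemma~\ref{lem: swap integral} for the column bound and Lemma~\ref{lem: temperness J}, you get $\|K_\gamma\|\lesssim J(x_0,\gamma x_0)^{-1}\sup_{x,z}\sigma\big(S(x,t)\cap B(z,D)\big)$. The last supremum is $O(1)$ by a volume argument. Let $V=\{v:\exp_x(tv)\in B(z,D)\}$. Since $J(x,\exp_x(sv))\asymp J(x,\exp_x(tv))$ for $s\in[t,t+1]$, you have $\sigma(S(x,t)\cap B(z,D))=\int_VJ(x,\exp_x(tv))\,\mathrm dv\lesssim\int_t^{t+1}\!\int_VJ(x,\exp_x(sv))\,\mathrm dv\,\mathrm ds\le\operatorname{vol}B(z,D+1)$. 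Hence $\|\phi\|_{\ell^2}^2\lesssim\int_{A(x_0,t,C)}J(x_0,y)^{-2}\mathrm{dvol}(y)=\mathrm e^{(2\delta_0+o(1))t}$.

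The same volume argument makes your Step 3 rigorous. Apply it to $B(y_0,\tfrac14)$, where $y_0$ is the point of $[xx_0]$ at distance $t$ from $x$, together with temperedness of $J$. This shows that the normalized measure of directions at $x$ that hit $B(x_0,1)$ at time $t$, which is exactly $\mathcal L_t\mathbf 1_{B(x_0,1)}(x)$, is $\gtrsim J(x,x_0)^{-1}$.

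For comparison, the paper never passes through $\Gamma$ here. Its upper bound is a weighted Schur test directly on $\widetilde M$, in the spirit of Nica's cocycle bound. It sorts pairs by $j=d(o,y)-d(o,x)$ and uses weights $\mathrm e^{-j\delta_0}\widetilde J(x_{-j},y)/\widetilde J(x,x_{-j})$, built from the quasi-midpoints of Lemma~\ref{lem: xj y-j}. Almost multiplicativity of $\widetilde J$ then reduces each piece to an annulus integral of radius $(t\mp j)/2$. This handles the sphere measure directly and loses only a factor $t$; that is what would give the sharper bound $Ct\mathrm e^{\delta_0t}$ under \eqref{eq: better bound integrals}. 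Your route outsources the same hyperbolic geometry to the rapid decay property, at the cost of an unspecified polynomial loss $t^k$, which is harmless for the exponential rate.

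Your lower bound is the adjoint of the paper's test, and the two are equivalent because $\mathcal L_t$ is self-adjoint. Your variational argument for $\delta_0<0$ is correct. In fact it gives $\operatorname{Pr}(-2\psi^u)\le\operatorname{Pr}(-\psi^u)-\gamma_0=-\gamma_0$ directly, consistent with \eqref{eq: bound delta_0 nana}.
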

	Our method allows computing the norm of more general operators. Let $F\in C^\alpha(SM)$ be a H\"older continuous potential satisfying $F(x,v)=F(x,-v)$, and denote by $\widetilde F$ its lift to $S\widetilde M$. Then,
	\[\underset{t\to +\infty}{\lim} \frac 1t\log \big\|\pi_*\mathrm{e}^{-t(\widetilde X+\widetilde F)}\pi^*\big\|_{L^2(\widetilde M)}=\frac 12\operatorname{Pr}(2(F-\psi_u)),\]
	The spherical mean operator $\mathcal L_t$ corresponds to $F\equiv 0$. By taking $F=\psi_u$, one can compute the norm of the averaging operator $\mathcal A_t$ over the Riemannian sphere of radius $t$ studied in \cite{KnieperSphericalMean}. Since $\mathcal A_t$ is well approximated by $\mathrm{e}^{-\operatorname{Pr}(0) t} \pi_* \mathrm{e}^{-t(\widetilde X+\psi^u)}\pi^*$, one finds 
	\[\underset{t\to +\infty}{\lim} \frac 1t\log \|\mathcal A_t\|_{L^2(\widetilde M)}=\frac 12 \operatorname{Pr}(0)-\operatorname{Pr}(0)=-\frac 12 h_{\rm top}.\]
	
	\subsubsection{Related results} Nica studied the spherical averaging operator on nonelementary hyperbolic groups \cite{nica2017operator,nica2024normssphericalaveraging} (here one fixes a word-length metric with respect to a finite subset of generators). Given a hyperbolic group $\Gamma$, with regular representation $(\lambda_{\Gamma},\ell^2(\Gamma))$, he showed that 
	\begin{equation} \label{eq: nica result} \|\lambda_{\Gamma}(\mathbf 1_{S(n)})\|_{\ell^2(\Gamma)\to \ell^2(\Gamma)}\asymp n |S(n)|^{-\frac 12}.\end{equation}
	This mirrors the estimate for the averaging operator over Riemannian spheres of radius $t$. As we mentioned in Remark \ref{rem: confusion operators}, we do not consider the averaging operator over the Riemannian sphere of radius $t$, because different points on the sphere are assigned different weights---specifically, we use weights $J(x,y)^{-1}$ instead of $1/ \operatorname{vol} S(t)$. Nevertheless, because of the particular properties of the function $J(x,y)$ (see \S \ref{subsec: estimates J}), the methods of Nica can be adapted to estimate the norm of some analogous operators on hyperbolic groups, by making use of the \emph{cocycle bound} from \cite[\S 3]{nica2024normssphericalaveraging}. Ultimately, despite the continuous setting demands some additional work, our overall strategy follows the same principles as \cite{nica2024normssphericalaveraging}, which is not surprising as both rely heavily on the Gromov-hyperbolicity property.

	\subsubsection{Strengthening Proposition \ref{prop: operator norm spherical mean}}
	When $M$ is negatively curved, one can show by modifying the proof below that
	\begin{equation}\label{eq: better bound Lt} \|\mathcal L_t\|_{L^2(\widetilde M)}\le C t\mathrm{e}^{\delta_0t}, \end{equation}
	which is more precise than \eqref{eq: limsup 1tlog Lt}. Indeed, in this case we know from \cite{PaulinPollicottSchapira} that for any H\"older potential $F$, one has
	\begin{equation} \label{eq: better bound integrals}\int_{A(x,t,c)} \exp\Big(\int_x^{y}\widetilde F\Big)\mathrm{dvol}(y)\le C \mathrm{e}^{\operatorname{Pr}(F)t},\end{equation}
	which is in turn stronger than \eqref{eq: pressure as lim int over annulus}. Although \eqref{eq: better bound integrals} should be expected to hold when $M$ is merely Anosov, this result does not appear in the literature. The improved estimate \eqref{eq: better bound Lt} would be important in order to obtain an improvement of Theorem \ref{thm: extension resolvent} allowing the compact $K$ to approach the line $\operatorname{Re}(z)=\delta_0$ when $n\to +\infty$. 
	

	\subsection{Preliminary estimates on the function $J$} \label{subsec: estimates J} First, note that we have the symmetry property 
	\[J(x,y)=J(y,x).\]
	The Jacobian $J(x,y)$ might be expressed in terms of the potential $\psi_u$ defined in \eqref{eq: def unstable jac}.
	\begin{lem}\label{lem: comparison of J(x,y) with unstable jac}
	If $\varphi_t(x,v)=(y,w)$ and $t\ge 1$, we have
		\[J(x,y)\asymp \exp\Big(\int_0^t \psi_u(\varphi_s(x,v))\mathrm{d}s\Big)=\det (\mathrm{d}\varphi_t)_{E_u(x,v)\to E_u(y,w)}.\]
	If $d(x,y)\le 1$ then $J(x,y)\asymp d(x,y)^{d-1}$.
	\end{lem}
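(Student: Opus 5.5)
The plan is to write $J(x,y)$ as the determinant of a linearized flow restricted to the vertical bundle, and then to compare vertical with unstable directions using the Anosov splitting.

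\textbf{Setup.} Set $(y,w)=\varphi_t(x,v)$. The map $v'\mapsto \exp_x(tv')$ on $S_x\widetilde M$ is the composition $\pi\circ\varphi_t$ restricted to the fiber $S_x\widetilde M$. Its differential at $v$ is therefore
\[\mathrm{d}\pi\circ \mathrm{d}\varphi_t(x,v)\big|_{\mathbb V(x,v)}.\]
By the Gauss lemma this lands in $w^\bot=T_yS(x,t)$. Hence
\[J(x,y)=\big|\det\big(p_{\mathbb H}\circ\mathrm{d}\varphi_t|_{\mathbb V(x,v)}\big)\big|,\]
computed for the Sasaki metric, because $\mathrm{d}\pi$ restricted to $\mathbb H$ is an isometry onto $w^\bot$.

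\textbf{Case $t\ge1$.} Split $\mathbb V(x,v)$ through the Anosov decomposition: for $\xi\in\mathbb V$, write $\xi=p_{E_u}\xi+p_{E_s}\xi$.
\begin{itemize}
\item The map $p_{E_u}:\mathbb V\to E_u$ is invertible by \eqref{eq: maps are invertible}, with norm and inverse norm bounded uniformly by compactness of $SM$ and $\Gamma$-invariance.
\item This gives
\[p_{\mathbb H}\mathrm{d}\varphi_t|_{\mathbb V}=p_{\mathbb H}|_{E_u(y,w)}\circ \mathrm{d}\varphi_t|_{E_u}\circ p_{E_u}|_{\mathbb V}+p_{\mathbb H}\,\mathrm{d}\varphi_t\, p_{E_s}|_{\mathbb V}.\]
\item In the first term, the outer factors have uniformly bounded determinants, bounded away from $0$, again by \eqref{eq: maps are invertible}. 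The middle factor has determinant $\exp\int_0^t\psi_u$.
\item The second term has norm $O(\mathrm{e}^{-\lambda t})$.
\end{itemize}
Write $A=p_{\mathbb H}|_{E_u}\mathrm{d}\varphi_t|_{E_u}p_{E_u}$. Then
\[\det(A+B)=\det A\,\det(I+A^{-1}B),\]
where $B$ is the second term and $\|A^{-1}\|\lesssim \|\mathrm{d}\varphi_{-t}|_{E_u}\|\lesssim \mathrm{e}^{-\lambda t}$. So $\|A^{-1}B\|\lesssim \mathrm{e}^{-2\lambda t}$, and $\det(I+A^{-1}B)\asymp 1$ once $t\ge t_0$.

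For $1\le t\le t_0$, both sides are continuous and positive, since there are no conjugate points and $\widetilde M$ is the universal cover of the Anosov manifold $M$ (Klingenberg). By compactness modulo $\Gamma$, they are comparable there too. This range is the only delicate point: positivity of $J$ on it is exactly the absence of conjugate points, and the error term must be small relative to the main term, which the exponential contraction gives.

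\textbf{Case $d(x,y)\le1$.} In normal coordinates, $\exp_x$ is a diffeomorphism with $\mathrm{d}\exp_x$ at $tv$ equal to $I+O(t^2)$, uniformly by bounded geometry. Then $\mathrm{d}_v\exp_x(t\,\cdot)=t\,\mathrm{d}\exp_x(tv)|_{v^\bot}$, so
\[J=t^{d-1}\big(1+O(t^2)\big)\asymp d(x,y)^{d-1}.\]
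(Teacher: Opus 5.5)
Your proposal is correct and follows essentially the same route as the paper. You write $J(x,y)$ as the determinant of $\mathrm{d}\pi\circ\mathrm{d}\varphi_t|_{\mathbb V(x,v)}$, split $\mathbb V$ along $E_u\oplus E_s$ using the invertible projections so that the expanding unstable part dominates, and for $d(x,y)\le 1$ compare $\mathrm{d}_{tv}\exp_x$ with the identity.

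Your factorization $\det(A+B)=\det A\,\det(I+A^{-1}B)$ and your positivity-plus-compactness argument (no conjugate points) for bounded $t$ spell out what the paper's ``this leads to'' leaves implicit. One small point: that same compactness argument is also what gives the lower bound $J(x,y)\gtrsim d(x,y)^{d-1}$ on the whole range $0<d(x,y)\le 1$, since the expansion $1+O(t^2)$ alone only controls small $t$.
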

	\begin{proof}\textbf{1.} If $(y,w)=\varphi_t(x,v)$ we can express 
		\[J(x,y)=\det \mathrm{d}\pi\circ \mathrm{d}\varphi_t:\mathbb V(x,v)\to w^\bot\subset T_y\widetilde M.\]
		Recall that $\mathbb V\subset E_u\oplus E_s$ and $\mathbb V\cap E_u=\mathbb V\cap E_s=\{0\}$. Restricted to $\mathbb V$, we can write
		\[\mathrm{d}{\varphi_t}=\mathrm{d}\varphi_t\circ p_{E_u}+\mathrm{d}\varphi_t\circ p_{E_s},\]
		where the maps $p_{E_u}:\mathbb V\to E_u$ and $p_{E_s}:\mathbb V\to E_s$ are both invertible. 
		By the Anosov property, $\mathrm{d}{\varphi_t}_{|E_s}$ is exponentially contracting and $\mathrm{d}{\varphi_t}_{|E_u}$ is exponentially expanding as $r\to +\infty$. Moreover $E_u$ and $E_s$ are preserved by the flow, and both project diffeomorphically onto $(\mathbb Rw)^\bot$ through the map $\mathrm{d}\pi(y,w):T_{(y,w)}(SM)\to T_yM$ (see \cite[Lemma 13.3.8]{Lefeuvre}). This leads to
		\[\det (\mathrm{d}\pi\circ \mathrm{d}\varphi_t)_{\mathbb V\to w^\bot}\asymp \det (\mathrm{d}\varphi_t)_{E_u\to E_u}.\]
		
		\textbf{2.} If $t=d(x,y)\le 1$ then using \eqref{eq: def J(x,y)} we can write
		\[ J(x,y)=t^{d-1}\det \mathrm{d}_{tv} \exp_x(\cdot).\]
		Since $\mathrm{d}_0\exp_x(\cdot)$ is the identity, we deduce $J(x,y)\sim d(x,y)^{d-1}$ as $d(x,y)\to 0$.
	\end{proof}
	A direct consequence of Lemma \ref{lem: comparison of J(x,y) with unstable jac} is the following almost multiplicativity property
	\begin{lem}\label{lem: J multiplicative} Assume $x,y,z$ are placed in this order on a geodesic and $d(x,y),d(y,z)\ge 1$. Then
		\[J(x,z)\asymp J(x,y)J(y,z),\]
		where the implied constants are uniform.
	\end{lem}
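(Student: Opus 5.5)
The plan is to reduce everything to Lemma~\ref{lem: comparison of J(x,y) with unstable jac}, using the multiplicativity of the unstable Jacobian along an orbit, namely the cocycle property of $\det(\mathrm d\varphi_t)_{|E_u}$. Let $x,y,z$ lie in this order on a geodesic, and set $t_1=d(x,y)\ge 1$ and $t_2=d(y,z)\ge 1$. Write $(x,v)$ for the unit vector at $x$ pointing toward $z$. Then $\varphi_{t_1}(x,v)=(y,w)$, and $\varphi_{t_1+t_2}(x,v)=(z,u)$ for the appropriate unit vectors $w,u$. Since $x,y,z$ lie in order on a single geodesic, the geodesic from $y$ to $z$ is the continuation of the one from $x$ to $y$, so the segment from $y$ to $z$ is traced by $\varphi_s(y,w)$ for $s\in[0,t_2]$. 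Here we use that $\exp_x$ is a global diffeomorphism (no conjugate points), so geodesics between two points are unique and the parametrisations match.

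First apply Lemma~\ref{lem: comparison of J(x,y) with unstable jac} three times, which is allowed since $t_1$, $t_2$ and $t_1+t_2$ are all $\ge 1$:
\[J(x,y)\asymp \exp\Big(\int_0^{t_1}\psi_u(\varphi_s(x,v))\mathrm ds\Big),\qquad J(y,z)\asymp \exp\Big(\int_0^{t_2}\psi_u(\varphi_s(y,w))\mathrm ds\Big),\]
and
\[J(x,z)\asymp \exp\Big(\int_0^{t_1+t_2}\psi_u(\varphi_s(x,v))\mathrm ds\Big).\]
Then split the last integral at $s=t_1$ and substitute $\varphi_{s}(x,v)=\varphi_{s-t_1}(y,w)$ for $s\ge t_1$. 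The exponent becomes the sum of the two previous exponents, so $J(x,z)\asymp J(x,y)J(y,z)$. The combined constant is at most a product of three of the uniform constants from that lemma, hence it is uniform.

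The one point to check is that the implied constants in Lemma~\ref{lem: comparison of J(x,y) with unstable jac} are uniform in the base point and in $t\ge1$. This is the main obstacle, though a mild one, and I would argue it by compactness of $SM$ together with $\Gamma$-invariance. The comparison in that lemma rests on the projections $p_{E_u},p_{E_s}$ restricted to $\mathbb V$ and on $\mathrm d\pi$ restricted to $E_u$. These are continuous, $\Gamma$-equivariant, invertible maps on the lift of the compact manifold $SM$, so their norms and the norms of their inverses are bounded uniformly. The $E_s$-component is exponentially contracted, so for $t\ge1$ it only perturbs the determinant by a bounded factor; this follows from the angle between $E_u$ and $E_s$ being uniformly bounded below. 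If one prefers not to rely on that lemma's constants, the determinant identity $\det(\mathrm d\varphi_{t_1+t_2})_{|E_u}=\det(\mathrm d\varphi_{t_2})_{|E_u}\det(\mathrm d\varphi_{t_1})_{|E_u}$ can be used directly. The comparison between $J$ and $\det(\mathrm d\varphi_t)_{|E_u}$ then involves only bounded distortion factors at the endpoints, each uniformly comparable to $1$.
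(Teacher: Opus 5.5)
Your proof is correct and follows the paper's route: the paper states this lemma as a direct consequence of Lemma~\ref{lem: comparison of J(x,y) with unstable jac}, and splitting $\int_0^{t_1+t_2}\psi_u$ at $s=t_1$ along the same orbit is exactly the step left implicit there. Your uniformity remark is slightly loose for $t$ near $1$, where the bound comes from compactness and the absence of conjugate points rather than from contraction; however, this concerns the previous lemma, whose constants the paper also takes to be uniform.
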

	
	We have the following comparison property:
	\begin{lem}\label{lem: temperness J} There is a constant $C>0$ such that for any $x,y,z$ with $d(x,y)\ge 1$ and $d(x,z)\ge 1$:
		\[C^{-1}\mathrm{e}^{-Cd(y,z)}J(x,z)\le  J(x,y)\le C\mathrm{e}^{Cd(y,z)}J(x,z).\]\end{lem}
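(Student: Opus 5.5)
It suffices to compare $J(x,y)$ and $J(x,z)$ when $d(y,z)\le 1$, and then to chain. Indeed, if $d(y,z)=D$, pick points $y=y_0,y_1,\dots,y_N=z$ on the geodesic segment $[yz]$ with $N=\lceil D\rceil$ and $d(y_i,y_{i+1})\le 1$. Then a bound $J(x,y_i)\le C_1 J(x,y_{i+1})$ iterates to $C_1^{N}\le C\mathrm e^{Cd(y,z)}$. There is one catch: the intermediate points must stay at distance $\ge 1$ from $x$ for Lemma~\ref{lem: comparison of J(x,y) with unstable jac} to apply. If some $y_i$ comes close to $x$, I will instead use the second statement of that lemma, $J\asymp d^{d-1}$ on $\{d\le 1\}$. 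Since $J$ is continuous and positive on $\{1/2\le d(x,\cdot)\le 2\}$ uniformly (cocompactness), one can detour around the unit ball. Alternatively, the chain can be rerouted: replace the segment by a path from $y$ to $z$ avoiding $B(x,1)$ of length $\le C(d(y,z)+1)$, e.g. going along the sphere $S(x,1)$, whose diameter is bounded. Both options give a chain of $\lesssim d(y,z)+1$ steps of size $\le 1$, all at distance $\ge 1/2$ from $x$, and the $+1$ is absorbed into the constant $C$.

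The local step is the heart of the matter: for $d(x,y),d(x,z)\ge 1/2$ and $d(y,z)\le 1$, show $J(x,y)\asymp J(x,z)$ uniformly. Write $y=\exp_x(t v)$, $z=\exp_x(t'w)$ with $|t-t'|\le 1$. By Lemma~\ref{lem: comparison of J(x,y) with unstable jac},
\[
J(x,y)\asymp \exp\Big(\int_0^t\psi_u(\varphi_s(x,v))\,\mathrm ds\Big),
\]
and similarly for $z$. The case $t\le 1$ is handled by compactness. The difference $|t-t'|\le 1$ changes the integral by at most $\|\psi_u\|_\infty$. So I need to bound
\[
\Big|\int_0^{t}\psi_u(\varphi_s(x,v))\,\mathrm ds-\int_0^{t}\psi_u(\varphi_s(x,w))\,\mathrm ds\Big|.
\]
Since $d(\exp_x(tv),\exp_x(tw))\le 2$, Lemma~\ref{lem: divergence geod} gives
\[
d_{S\widetilde M}\big(\varphi_s(x,v),\varphi_s(x,w)\big)\le C\mathrm e^{-c(t-s)}.
\]
Because $\psi_u$ is Hölder (lifted $\Gamma$-invariantly, hence uniformly Hölder on $S\widetilde M$), the integrand difference is $\le C\mathrm e^{-c\alpha(t-s)}$. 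This is summable, so the difference of integrals is bounded uniformly in $t$.

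Combining the two steps yields $J(x,y)\le C\mathrm e^{Cd(y,z)}J(x,z)$, and the reverse inequality follows by symmetry in $y,z$.

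The main obstacle is the Hölder-plus-exponential-convergence summation, which needs the uniform Hölder regularity of $\psi_u$ and Lemma~\ref{lem: divergence geod}. The handling of points near $x$ is only a minor technicality.
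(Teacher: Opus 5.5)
Your proposal is correct and is essentially the paper's proof: reduce to $d(y,z)\le 1$, write both Jacobians as exponentials of integrals of $\psi_u$ via Lemma~\ref{lem: comparison of J(x,y) with unstable jac}, and control the difference with Lemma~\ref{lem: divergence geod} plus H\"older continuity of $\psi_u$. You absorb the radial mismatch $|t-t'|\le 1$ with $\|\psi_u\|_\infty$, whereas the paper slides $y$ along its geodesic to radius $t'$, which amounts to the same thing. Your rerouting of the chain along $S(x,1)$ makes explicit a point the paper's one-line reduction passes over: a chain along $[yz]$ may come close to $x$, where $J$ degenerates. Note that it is this detour that resolves the issue; the fallback $J\asymp d^{d-1}$ does not, because the ratio of $J$ at consecutive points is unbounded near $x$.
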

	\begin{proof}It is enough to prove the result for $d(y,z)\le 1$. Further, we can assume $d(x,y),d(x,z)\ge 1$ because otherwise the result follows from the fact that $J(x,y)\sim d(x,y)^{d-1}$ as the distance goes to $0$. The proof relies on Lemma \ref{lem: divergence geod} about the divergence of geodesics.
		
		\textbf{1.} We first assume that $y$ and $z$ both lie on the sphere $S(x,r)$, we write $y=\exp_x(rv)$ and $z=\exp_x(rw)$, with $v,w\in S_x\widetilde M$. Then,
		\[J(x,y)\asymp \exp\Big(\int_0^r \psi^u(\varphi_t(x,v))\mathrm{d}t\Big), \quad J(x,z)\asymp \exp\Big(\int_0^r \psi^u(\varphi_t(x,w))\mathrm{d}t\Big).\]By Lemma \ref{lem: divergence geod}, we have $d(\varphi_t(x,v),\varphi_t(x,w))\le C\mathrm{e}^{-c(r-t)}$, and because $\psi^u$ is H\"older continuous, we have
			\begin{align*}
			\Big|\int_0^r \psi_u(\varphi_t(x,v))\mathrm{d}t-\int_0^t\psi_u(\varphi_t(x,w))\mathrm{d}t\Big|& \lesssim \int_0^r d_{\rm Sas}(c(0,t);c(1,t))^{\alpha}\mathrm{d}t\\ & \lesssim \int_0^r \mathrm{e}^{-c\alpha(r-t)}\mathrm{d}t\lesssim 1.
		\end{align*}
		By taking exponentials we obtain
		\[\frac{J(x,z)}{J(x,y)}\asymp 1.\]
		\textbf{2.} If $y,z$ lie at distances $r,r'$ from $x$ (with $|r-r'|\le 1$), we replace $y=:\exp_x(rv)$ by $y':=\exp_x(r'v)$. Then, we have easily $J(x,y')\asymp J(x,y)$ because $y$ and $y'$ lie on the same geodesic issuing from $x$ and $d(y,y')\le 1$. Now, by the previous case we have $J(x,y')\asymp J(x,z)$ thus $J(x,y)\asymp J(x,z)$. This concludes the proof.
	\end{proof}
	
	It is convenient to introduce a slight modification of $J$ that enjoys better properties.
	\begin{defi}We set
		\[\widetilde J(x,y):=\left\{\begin{array}{ll} 1& \text{if $d(x,y)\le 1$} \\ J(x,y) & \text{otherwise}\end{array}\right..\]
	\end{defi}
	Then the following holds
	\begin{lem}\label{lem: properties tild J}For any $x,y,z$ aligned on a geodesic, we have
		\[\widetilde J(x,z)\asymp \widetilde J(x,y)\widetilde J(y,z).\]
		For any $x,y,z\in \widetilde M$ we have
		\[C^{-1}\mathrm{e}^{-Cd(y,z)}\widetilde J(x,z)\le  \widetilde J(x,y)\le C\mathrm{e}^{Cd(y,z)}\widetilde J(x,z).\]
	\end{lem}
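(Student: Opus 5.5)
The plan is to reduce both claims to Lemma~\ref{lem: J multiplicative} and Lemma~\ref{lem: temperness J}, treating separately the configurations where some of the points are within distance $1$ of each other. The truncation to $\widetilde J\equiv 1$ at distances $\le 1$ is designed for exactly this. It removes the degeneracy $J(x,y)\sim d(x,y)^{d-1}\to 0$ near the diagonal, and by Lemma~\ref{lem: comparison of J(x,y) with unstable jac} and the continuity of $J$, we have $J\asymp 1$ on $\{1\le d(x,y)\le 2\}$, or more generally on any bounded range of distances $\ge 1$.

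\textbf{Almost multiplicativity.} Let $x,y,z$ lie in this order on a geodesic, so that $d(x,z)=d(x,y)+d(y,z)$. If $d(x,y),d(y,z)\ge 1$, the claim is Lemma~\ref{lem: J multiplicative}. If both are $\le 1$, then $d(x,z)\le 2$, so all three quantities $\widetilde J(x,z)$, $\widetilde J(x,y)$, $\widetilde J(y,z)$ are $\asymp 1$. Now suppose $d(x,y)\le 1<d(y,z)$, the other case being symmetric by $J(x,y)=J(y,x)$. We then need $\widetilde J(x,z)\asymp J(y,z)$. If $d(x,z)\le 2$, both sides are $\asymp 1$. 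Otherwise $d(x,z)>2$ and $d(y,z)\ge 1$, and I would apply the second inequality of Lemma~\ref{lem: temperness J} with base point $z$ (using the symmetry of $J$). This gives $J(z,x)\asymp J(z,y)$ up to factors $\mathrm{e}^{\pm C d(x,y)}\le \mathrm{e}^{C}$. The statement also allows the points not to be in that order, say $y$ outside the segment $[x,z]$. In that case I would relabel: for example, if $z$ lies between $x$ and $y$, then $\widetilde J(x,y)\asymp \widetilde J(x,z)\widetilde J(z,y)$, and the desired comparison holds only up to $\widetilde J(y,z)^2$. So the intended reading must be that $y$ lies between $x$ and $z$, as in Lemma~\ref{lem: J multiplicative}, and I will state this explicitly.

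\textbf{Temperedness.} As in the proof of Lemma~\ref{lem: temperness J}, it suffices to treat $d(y,z)\le 1$ and then chain along a geodesic from $y$ to $z$ in unit steps. This produces the exponential factor $\mathrm{e}^{C d(y,z)}$ with a constant $C'$ absorbing the per-step constants; the per-step constant is exactly why the factor grows exponentially. For $d(y,z)\le 1$ there are three cases. If $d(x,y),d(x,z)\ge 1$, Lemma~\ref{lem: temperness J} applies directly. If both are $<1$, then both values of $\widetilde J$ equal $1$. If $d(x,y)<1\le d(x,z)$, then $d(x,z)\le 2$, so $\widetilde J(x,z)=J(x,z)\asymp 1=\widetilde J(x,y)$.

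The only point needing care is the claim that $J\asymp 1$ on $\{1\le d\le 2\}$, uniformly over $\widetilde M$. This follows from $\Gamma$-invariance of $J$, together with compactness of the set of pairs $\{(x,y): 1\le d(x,y)\le 2\}$ modulo $\Gamma$, and the positivity of $J$ there. Positivity holds because $\exp_x$ is a diffeomorphism, as there are no conjugate points.
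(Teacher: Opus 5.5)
Your proposal is correct and follows the route the paper has in mind. The paper states this lemma without proof, as an immediate consequence of Lemma~\ref{lem: J multiplicative} and Lemma~\ref{lem: temperness J}; your near-diagonal case analysis (using $J\asymp 1$ on $\{1\le d\le 2\}$) and the unit-step chaining fill in exactly the omitted details. Your reading that $y$ must lie between $x$ and $z$ is also correct, and it matches how the lemma is used later with $x_{-j}$ near the segment $[xy]$.
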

	\begin{defi}\label{defi: x-j} Fix $o\in \widetilde M$. Let $x=\exp_o(r_x v),y=\exp_o(r_y w)\in \widetilde M$ satisfy $d(x,y)=t$. Let $j\in [-t,t]$ be such that $d(o,y)-d(o,x)\in [j,j+1]$. We define $r_j:=\frac{t-j}{2}$ and
		\begin{equation} \label{eq: defi x-j}x_{-j}:=\exp_o\big((r_x-r_j)v\big), \qquad y_j=\exp_o\big((r_y-r_{-j})w\big).\end{equation}
	\end{defi}
	A consequence of the Gromov-hyperbolicity property is the following Lemma.
	\begin{lem}\label{lem: xj y-j} With the notations of Definition \ref{eq: defi x-j}, we have
		\[d(x_{-j}, y_{+j})=\mathcal O(1).\]
	Moreover, $x_{-j}$ and $y_{j}$ lie at bounded distance from the geodesic segment $[xy]$.
	\end{lem}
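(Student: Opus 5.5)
We use only Gromov-hyperbolicity (Proposition~\ref{prop: M has exp div function}), in the form of thin geodesic triangles with constant $\delta$, together with the fact that $\exp_o$ is a global diffeomorphism. Consider the geodesic triangle $\triangle = oxy$ with sides $[ox]$, $[oy]$, $[xy]$. Its Gromov coordinates are $a,b,c$ with
\[
d(o,x)=a+b,\qquad d(x,y)=b+c=t,\qquad d(o,y)=a+c .
\]
Here $a$ is attached to $o$, $b$ to $x$ and $c$ to $y$. Solving gives
\[
b=\tfrac{t-(r_y-r_x)}{2},\qquad c=\tfrac{t+(r_y-r_x)}{2}.
\]
Since $r_y-r_x\in[j,j+1]$, we get $|b-r_j|\le \tfrac12$ and $|c-r_{-j}|\le \tfrac12$.

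Now $x_{-j}$ lies on $[ox]$ at distance $r_j$ from $x$, so it is within $\tfrac12$ of the inscribed point $z_{ox}\in[ox]$ at distance $b$ from $x$. In the same way, $y_j$ lies on $[oy]$ at distance $r_{-j}$ from $y$, so it is within $\tfrac12$ of the inscribed point $z_{oy}\in[oy]$ at distance $c$ from $y$. Let $z_{xy}\in[xy]$ be the inscribed point at distance $b$ from $x$ (equivalently, $c$ from $y$). By Definition~\ref{defi: GH}, the inscribed triangle $z_{ox}z_{oy}z_{xy}$ has diameter at most $\delta$. Hence
\[
d(x_{-j},y_j)\le 1+\delta,
\]
and both $x_{-j}$ and $y_j$ lie within $\tfrac12+\delta$ of the point $z_{xy}\in[xy]$. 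This is the claim.

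Before this argument applies, two well-definedness points have to be checked.

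First, the points in \eqref{eq: defi x-j} must make sense, which needs $0\le r_j\le r_x$ and $0\le r_{-j}\le r_y$. By the triangle inequality, $b$ and $c$ lie in $[0,\min(t,d(o,x))]$ and $[0,\min(t,d(o,y))]$ respectively. The parameters $r_{\pm j}$ differ from $b,c$ by at most $\tfrac12$, so they might exceed the admissible range by up to $\tfrac12$. In that case we clamp them to the endpoint, which moves the point by at most $\tfrac12$ and only changes the constant.

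Second, geodesic segments between points are unique, because $\widetilde M$ has no conjugate points and is simply connected. So $[ox]=\{\exp_o(sv)\}$ and the triangle is genuinely geodesic.

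I don't expect a real obstacle here. The only subtle point is getting the bookkeeping of which Gromov coordinate belongs to which vertex right, since that is what matches $r_j$ to $b$ and $r_{-j}$ to $c$.
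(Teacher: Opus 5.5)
Your argument is correct and is the reasoning the paper intends; the paper states the lemma as a direct consequence of Gromov-hyperbolicity without writing out details, and your inscribed-triangle bookkeeping for $oxy$ (with $|b-r_j|\le\tfrac12$ and $|c-r_{-j}|\le\tfrac12$) supplies exactly those details, giving $d(x_{-j},y_j)\le 1+\delta$ and distance at most $\tfrac12+\delta$ to $[xy]$. The clamping step is unnecessary: $\exp_o$ is defined on all of $T_o\widetilde M$, so when $r_j>r_x$ the point $x_{-j}$ simply lies on the backward extension of the unit-speed geodesic $s\mapsto\exp_o(sv)$, and it is still within $|b-r_j|\le\tfrac12$ of the inscribed point on $[ox]$.
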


\noindent	\begin{minipage}[c]{0.7\textwidth}
		\raggedright
	When $x$ and $y$ are as in Definition \ref{defi: x-j}, by combining Lemma \ref{lem: properties tild J} with Lemma \ref{lem: xj y-j}, we find 
	\[\widetilde J(x,x_{-j})\asymp \widetilde J(x,y_j), \quad \widetilde J(x_{-j},y)\asymp \widetilde J(y_{j},y).\]
	Moreover, 
	\[\widetilde J(x,y)\asymp \widetilde J(x,x_{-j})\widetilde J(x_{-j},y)\]
	and also
	\[\widetilde J(x,y)\asymp   \widetilde J(x,y_{j})\widetilde J(y_j,y).\]
	
	\end{minipage}%
	\hfill
	\begin{minipage}[c]{0.05\textwidth}\end{minipage}
	\hfill
	\begin{minipage}[c]{0.25\textwidth}
		\centering
		\includegraphics[width=\textwidth]{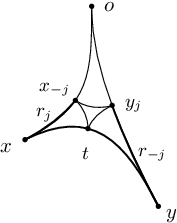}
	\end{minipage}
We end this subsection with the following Lemma.
	\begin{lem}\label{lem: swap integral}
		For any integrable function $h(x,y)$ on $\widetilde M\times \widetilde M$, one has
		\[\int_{\widetilde M}\int_{S(x,t)} h(x,y)\mathrm{d}\sigma_{S(x,t)}(y)\mathrm{dvol}(x)=\int_{\widetilde M}\int_{S(y,t)} h(x,y)\mathrm{d}\sigma_{S(y,t)}(x)\mathrm{dvol}(x).\]
	\end{lem}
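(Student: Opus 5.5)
The plan is to obtain the identity from Fubini's theorem on $\widetilde M\times\widetilde M$, combined with the coarea formula for the distance function applied once in each variable. (In the statement, the outer integration variable on the right-hand side should be $y$, i.e. the outer measure is $\mathrm{dvol}(y)$.) The geometric input is that $\widetilde M$ has no conjugate points, so $\exp_x:T_x\widetilde M\to\widetilde M$ is a global diffeomorphism for every $x$. Hence $y\mapsto d(x,y)$ is smooth on $\widetilde M\setminus\{x\}$ with $|\nabla_y d(x,y)|=1$, and the spheres $S(x,s)=\exp_x(sS_x\widetilde M)$ are smooth hypersurfaces for every $s>0$. By the symmetry $d(x,y)=d(y,x)$, the same holds in the variable $x$ with $y$ fixed.

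First I reduce to $h$ continuous with compact support; the general integrable case then follows by monotone class or density, splitting $h$ into positive and negative parts. Fix a test function $a\in C_c((0,+\infty))$ and set
\[
I(a):=\int_{\widetilde M}\int_{\widetilde M} h(x,y)\,a(d(x,y))\,\mathrm{dvol}(y)\,\mathrm{dvol}(x).
\]
For fixed $x$, I apply the coarea formula to $y\mapsto d(x,y)$, whose gradient has norm $1$. This rewrites the inner integral as $\int_0^\infty a(s)\int_{S(x,s)}h(x,y)\,\mathrm{d}\sigma_{S(x,s)}(y)\,\mathrm{d}s$. Swapping the order of integration by Fubini, which is legitimate since the integrand is compactly supported and continuous away from the diagonal, gives $I(a)=\int_0^\infty a(s)F(s)\,\mathrm{d}s$, where $F(s)$ denotes the left-hand side of the lemma with $t=s$. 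Next I exchange the $x$ and $y$ integrations in $I(a)$ using Fubini on the product, and for fixed $y$ apply the coarea formula to $x\mapsto d(x,y)$. This gives $I(a)=\int_0^\infty a(s)G(s)\,\mathrm{d}s$, where $G(s)$ is the right-hand side of the lemma.

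Since $\int_0^\infty a(s)\bigl(F(s)-G(s)\bigr)\,\mathrm{d}s=0$ for every $a\in C_c((0,\infty))$, we get $F=G$ almost everywhere. To conclude for every $t$, I will show that $F$ and $G$ are continuous in $s$. Parametrizing $S(x,s)$ through the exponential map, the measure satisfies $\mathrm{d}\sigma_{S(x,s)}=J(x,\exp_x(sv))\,\mathrm{d}\mu_x(v)$, with $J$ as in \eqref{eq: def J(x,y)}, and this depends continuously on $s$ and $x$. Dominated convergence over the compact support of $h$ then gives continuity of $F$, and of $G$ by the symmetric argument.

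The only delicate point I expect is this continuity step, which needs the smooth dependence of the sphere measures on the radius. It is exactly where the absence of conjugate points, which makes $\exp_x$ a diffeomorphism, is used. Everything else is standard measure theory.
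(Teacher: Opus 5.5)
Your proof is correct, and you are right that the outer measure on the right-hand side should be $\mathrm{dvol}(y)$. However, it takes a different route from the paper's main argument.

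The paper works at a fixed radius $t$. Parametrizing $S(x,t)$ by $\exp_x(t\,\cdot)$ gives $\mathrm{d}\sigma_{S(x,t)}=J(x,\exp_x(tv))\,\mathrm{d}\mu_x(v)$, so the left-hand side becomes an integral over $S\widetilde M$ of $h(\pi\xi,\pi\varphi_t\xi)\,J(\pi\xi,\pi\varphi_t\xi)$ against the Liouville measure. The paper then substitutes $\eta=\varphi_t\xi$ using flow-invariance of that measure, and uses $\pi\varphi_{-t}(y,w)=\exp_y(-tw)$ together with the flip $w\mapsto -w$. To recognize the density on $S(y,t)$ it needs the symmetry $J(x,y)=J(y,x)$, which the paper asserts without proof.

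You instead integrate against a test function of the radius and apply the coarea formula to $d(x,\cdot)$ and to $d(\cdot,y)$. Your only geometric inputs are that $\exp_x$ is a global diffeomorphism, so $d$ is smooth off the diagonal with unit gradient in each variable by the Gauss lemma, and that $d$ is symmetric. You use neither Liouville invariance nor the symmetry of $J$. In fact, your identity combined with Liouville invariance would yield $J(x,y)=J(y,x)$.

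What you pay for this:
\begin{itemize}
\item The test-function argument only gives equality for almost every radius, so the continuity of $F$ and $G$ in $s$ is a genuinely necessary step. You handle it correctly through the same exponential parametrization.
\item The extension to general $h$ is best phrased as uniqueness of the two Radon measures supported on $\{d(x,y)=t\}$. A monotone class argument works for nonnegative Borel $h$. A density argument in $L^1(\mathrm{dvol}\otimes\mathrm{dvol})$ would not, since that hypersurface is a null set.
\end{itemize}

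Your argument is essentially a rigorous, thickened version of the paper's alternative remark that both sides compute the integral of $h$ over $N=\{d(x,y)=t\}$. The unit-gradient property you use is exactly what gives both projections $N\to\widetilde M$ the same constant normal Jacobian.
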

	\begin{proof}Work in normal coordinates and use the fact that the Liouville measure is preserved by the geodesic flow. Further, exploit the symmetry property $J(x,y)=J(y,x)$. Alternatively, this quantity can be seen as the integral of $h$ over the submanifold $N=\{(x,y)\in \widetilde M\times \widetilde M~:~ d(x,y)=t\}$ with respect to the induced measure, computed in two different ways. This was pointed out to us by Louis-Brahim Beaufort. \end{proof}

	\subsection{Proof of Proposition \ref{prop: operator norm spherical mean}} In the following, we let $A(x,r,d)$ denote the annulus $\{y~:~ |d(x,y)-r|\le d\}$.

	\begin{proof} \textbf{1.} We first show that $\limsup \frac{1}{t}\log \|\mathcal L_t\|\le \delta_0$. Assume $t\ge 1$, and fix $\varepsilon>0$. By definition,
\[\|\mathcal L_t\|=\sup_{\|f\|=\|g\|=1} \langle \mathcal L_tf,g\rangle.\]
Take $f,g\in L^2(\widetilde M)$. Since $t\ge 1$, we integrate over pairs $(x,y)$ satisfying $d(x,y)\ge 1$, in particular $J(x,y)=\widetilde J(x,y)$. Thus, recalling \eqref{eq: alternative L_tf}, one has
		\[\langle \mathcal L_tf,g\rangle=\int_{\widetilde M}\int_{S(x,t)} \widetilde J(x,y)^{-1}g(x)f(y)\mathrm{d}\sigma_{S(x,t)}(y).\]
		Fix a point $o\in \widetilde M$, as in Lemma \ref{lem: xj y-j}. The idea is to separate the pairs $(x,y)$ according to the relative distances of $x$ and $y$ to the point $o$. 
		We define $\chi_j(x,y)=\mathbf 1_{d(y,o)-d(x,o)\in [j,j+1]}$. Then $1=\sum_{|j|\le t} \chi_j(x,y)$ and we can bound the above quantity by
		\[\langle \mathcal L_t f,g\rangle\le \sum_{|j|\le t}\int_{\widetilde M}\int_{S(x,t)} \widetilde J(x,y)^{-1}\chi_j(x,y)g(x)f(y)\mathrm{d}\sigma_{S(x,t)}(y).\]
		For $|j|\le t$, let $a_j(x,y)$ be a smooth positive function on $\widetilde M\times \widetilde M$, that will be fixed below. Using the elementary inequality $uv\le a_j u^2+\frac{1}{a_j}v^2$ and Lemma \ref{lem: swap integral}, we have
		\begin{multline} \label{eq: bound Lt sum over j}\langle \mathcal L_tf,g\rangle \le \sum_{|j|\le t}\int_{\widetilde M}\int_{S(x,t)} \frac{1}{a_j(x,y)}\chi_j(x,y)\widetilde J(x,y)^{-1}g(x)^2\mathrm{d}\sigma_{S(x,t)}(y)\mathrm{dvol}(x) \\ +\int_{\widetilde M}\int_{S(y,t)}a_j(x,y)\chi_j(x,y)\widetilde J(x,y)^{-1}f(y)^2\mathrm{d}\sigma_{S(y,t)}(x)\mathrm{dvol}(y).\end{multline}
		We set 
		\[U(x,j):= \int_{S(x,t)} \frac{1}{a_j(x,y)}\chi_j(x,y)\widetilde J(x,y)^{-1}\mathrm{d}\sigma_{S(x,t)}(y),\]
		and
		\[V(y,j):= \int_{S(y,t)}a_j(x,y)\chi_j(x,y)\widetilde J(x,y)^{-1}\mathrm{d}\sigma_{S(y,t)}(x).\]
		Taking the supremum over $\|f\|=\|g\|=1$ in \eqref{eq: bound Lt sum over j} leads to
		\begin{equation}\label{eq: Lt bound U V}
			\|\mathcal L_t\|\le  \sum_{|j|\le t}\Big( \sup_{x\in \widetilde M} U(x,j) +\sup_{y\in \widetilde M}V(y,j)\Big).
		\end{equation}
		We now choose 
		\[a_j(x,y)=\alpha_j\frac{\widetilde J\big(x_{-j},y\big)  }{\widetilde J\big(x,x_{-j}\big)},\]
		with $\alpha_j=\exp(-j\delta_0)$. We now estimate $U(x,j)$ and $V(x,j)$.
	
	\emph{Estimating $U(x,j)$.} By Lemma \ref{lem: temperness J}, we can replace the integral over a sphere by an integral over an annulus. Letting $\tilde \chi_j(x,y)=\mathbf 1_{d(o,y)-d(o,x)\in [j-1,j+2]}$ we have
	\[U(x,j)\lesssim   \int_{A(x,t,1)}\frac{1}{a_j(x,y)}\tilde \chi_j(x,y)\widetilde J(x,y)^{-1} \mathrm{dvol}(y).\]
	Moreover, by Lemma \ref{lem: xj y-j}, there is a uniform constant $C>0$ such that
		\[A(x,t,1)\cap \{y~:~ d(x,o)-d(x,y)\in [j-1,j+2]\}\subset A(x_{-j},r_j,C),\]
		where we recall $r_j$ and $x_{-j}$ have been defined in \eqref{defi: x-j}. Thus $U(x,j)$ is further bounded by
		\[U(x,j)\lesssim   \int_{A(x_{-j},r_j,C)} \frac{1}{a_j(x,y)} \tilde \chi_j(x,y) \widetilde J(x,y)^{-1}\mathrm{dvol}(y).\]
		By the definition of $a_j(x,y)$, we have
		\[\frac{1}{a_j(x,y)}\widetilde J(x,y)^{-1}=\alpha_j^{-1} \frac{\widetilde J(x,x_{-j})}{\widetilde J(x_{-j},y)\widetilde J(x,y)}.\]
	Since $x_{-j}$ lies at bounded distance from the geodesic segment $[xy]$ (for $y$ in the support of the integrand), Lemma \ref{lem: properties tild J} implies
	\[\widetilde J(x,y)\asymp \widetilde J(x,x_{-j})\widetilde J(x_{-j},y).\]
	We obtain
	\[\frac{1}{a_j(x,y)}\widetilde J(x,y)^{-1}\lesssim \alpha_j^{-1} \frac{1}{\widetilde J(x_{-j},y)^2}.\]
	In turn, by \eqref{eq: pressure as lim int over annulus},
	\[U(x,j)\lesssim \alpha_j^{-1} \int_{A(x_{-j},r_j,C)} \widetilde J(x_{-j},y)^{-2}\mathrm{dvol}(y)\le C_\varepsilon \alpha_j^{-1}\mathrm{e}^{(2\delta_0+\varepsilon)r_j},\]
	where we recall that $2\delta_0=\operatorname{Pr}(-2\psi^u)$. Since $\alpha_j=\mathrm{e}^{-j\delta_0}$ and $r_j=\frac{t-j}{2}$, we have eventually
	\begin{equation} \label{eq: U(x,j) bound} U(x,j)\le C_\varepsilon \mathrm{e}^{(\delta_0+\varepsilon)t}.\end{equation}
		
		\emph{Estimating $V(y,j)$.} This follows the same lines as above, but we use the alternative expression
		\[a_j(x,y)\asymp \alpha_j\frac{\widetilde J(y_{+j},y)}{\widetilde J(x,y_{+j})},\]
		which follows from Lemma \ref{lem: temperness J} and the fact that $d(x_{-j},y_{+j})=\mathcal O(1)$. One finds
		\begin{equation}\label{eq: V(y,j) bound}
			\begin{aligned}
			V(y,j)& \lesssim \alpha_j\int_{S(y_{+j},r_{-j})} \widetilde J(y_{+j},x)^{-2}\mathrm{d}\sigma_{S(y_{+j},r_{-j})}(x) \\ 
				& \lesssim \mathrm{e}^{\delta_0 j} \mathrm{e}^{(2\delta_0+\varepsilon)r_{-j}}\lesssim \mathrm{e}^{(\delta_0+\varepsilon)t}.
\end{aligned}
		\end{equation}
		Injecting \eqref{eq: U(x,j) bound},\eqref{eq: V(y,j) bound} in \eqref{eq: Lt bound U V} leads to
		\[\|\mathcal L_t\|\le C_\varepsilon t\mathrm{e}^{(\delta_0+\varepsilon)t},\]
		for $t\ge 1$. This holds for any $\varepsilon>0$, so
		\begin{equation}\label{eq: limsup Lt} \underset{t\to +\infty}{\limsup} \frac 1t\log \|\mathcal L_t\|\le \delta_0.\end{equation}
		
		\textbf{2.} We turn to the lower bound on $\|\mathcal L_t\|$. It is enough to exhibit a function $f\in L^2(\widetilde M)$ with unit norm such that $\|\mathcal L_t f\|$ is large enough. Fix $x_0\in \widetilde M$, and set
		\[f(x)=\mathbf 1_{d(x_0,x)\in [t-1,t+1]} J(x_0,x)^{-1}.\]
		Then, by \eqref{eq: pressure as lim int over annulus},
	\begin{equation} \label{eq: bound |f|} \|f\|_{L^2}^2=\int \mathbf 1_{d(x_0,x)\in [t-1,t+1]} J(x_0,x)^{-2}\mathrm{dvol}(x)\le C_\varepsilon \mathrm{e}^{(2\delta_0+\varepsilon)t}. \end{equation}
		Moreover, for $x\in B(x_0,1)$, one has
		\[\mathcal L_t f(x)=\int_{S(x,t)}J(x_0,y)^{-1}J(x,y)^{-1}\mathrm{d}\sigma_{S(x,t)}(y).\]
		By Lemma \ref{lem: temperness J}, since $d(x,x_0)\le 1$ we have for all $y\in S(x,t)$,
		\[J(x_0,y)^{-1}\gtrsim J(x,y)^{-1},\]
		so that
		\[\mathcal L_t f(x)\gtrsim \int_{S(x,t)}J(x,y)^{-2}\mathrm{d}\sigma_{S(x,t)}(y)\gtrsim \mathrm{e}^{(2\delta_0-\varepsilon)t}.\]
		This holds for all $x\in B(x_0,1)$, thus for large $t$ we have
		\begin{equation} \label{eq: lw bound Ltf}\|\mathcal L_tf\|_{L^2}\gtrsim \mathrm{e}^{(2\delta_0-\varepsilon)t}.\end{equation}
		Combining \eqref{eq: bound |f|} with \eqref{eq: lw bound Ltf} leads to
		\[\|\mathcal L_t\|\ge \frac{\|\mathcal L_tf \|_{L^2}}{\|f\|_{L^2}}\ge c_\varepsilon\mathrm{e}^{(\delta_0-\varepsilon)t}.\]
		This holds for any $\varepsilon>0$ hence
		\begin{equation} \label{eq: liminf Lt}\underset{t\to +\infty}{\liminf} \frac 1t\log \|\mathcal L_t\|\ge \delta_0.\end{equation}
		Together, \eqref{eq: limsup Lt} and \eqref{eq: liminf Lt} conclude the proof.
		 \end{proof}
		 
		 \subsubsection{The case of the hyperbolic space}
		When $M$ has constant curvature $-1$, its universal cover is the hyperbolic $d$-space, on which there is an explicit link between $\mathcal L_t$ and the half-wave operator. By \cite[Chapter II, Exercises F1-F2]{Helgason1984} (see the solutions of the exercises at the end of the book, in particular eq. (4) p. 540), on the hyperbolic space $\mathbb H^d$, there is a dimensional constant $c_d$ such that
		\begin{equation} \label{eq: L_t hyperbolic} \mathcal L_t=c_d\frac{\sinh(t)}{\sinh^{d-1}(t)}\int_0^t \cos\big(s(\Delta_{\mathbb H^d}-(\tfrac{d-1}{2})^2)^{\frac 12}\big) (\cosh(t)-\cosh(s))^{\frac{d-3}{2}}\mathrm{d}s.\end{equation}
	Since the spectrum of $\Delta_{\mathbb H^d}$ is the half-line $[(\tfrac{d-1}{2})^2,+\infty)$, we get by the spectral theorem:
	\[\|\mathcal L_t\|_{L^2(\mathbb H^d)}=c_d \frac{\sinh(t)}{\sinh^{d-1}(t)}\int_0^t (\cosh(t)-\cosh(s))^{\frac{d-3}{2}}\mathrm{d}s.\]
	Then, a short computation shows that 
	\begin{equation}\|\mathcal L_t\|_{L^2(\mathbb H^d)} \underset{t\to +\infty}\sim c_d t\exp(-(\tfrac{d-1}{2})t), \end{equation}
	for some other dimensional constant $c_d$. This is a continuous analogue of Nica's estimate \eqref{eq: nica result}. It would be interesting to obtain an analogous asymptotic for the operator norm of $\pi_*\mathrm{e}^{-t(\widetilde X+\widetilde F)}\pi^*:L^2(\widetilde M)\to L^2(\widetilde M)$ when $M$ is only assumed to be Anosov and $\widetilde F$ is the lift of some H\"older continuous potential $F\in C^\alpha(SM)$.

	\subsection{Propagation on modes of order $\lesssim h^{-1}$}
 	The operator $\mathcal L_t$ involves averaging functions over the whole fiber $S_x\widetilde M$, in other words projecting function on the space of $0$-modes on the fibers. We now project functions on a space of functions oscillating on scales $\gtrsim h$ on the fibers.
	
	We consider the operator $g(h^2\Delta_{\widetilde{\mathbb V}})\mathrm{e}^{-t\widetilde X}g(h^2\Delta_{\widetilde{\mathbb V}})$ acting on $L^2(S\widetilde M)$, for a compactly supported function $g$. This operator acts by filtering out high Fourier modes (of order $\gtrsim 1/h$) in the fibers, then propagating by the flow, and finally by applying the filter again. This operator is similar in spirit to $\operatorname{Op}_h(\mathbf 1_{|\xi|_{\widetilde{\mathbb V}}\le 1})\mathrm{e}^{-t\widetilde X}\mathrm{Op}_h(\mathbf 1_{|\xi|_{\widetilde{\mathbb V}}\le 1})$, but has the advantage of a simple geometric definition that doesn't resort to semiclassical quantization. Moreover, the spherical mean decomposition of functions in $L^2(S\widetilde M)$ allows to seamlessly estimate the operator norm of $g(h^2\Delta_{\widetilde{\mathbb V}})\mathrm{e}^{-t\widetilde X}g(h^2\Delta_{\widetilde{\mathbb V}})$ in terms of that of the spherical mean operator. 

\subsubsection{Operator norm of $g(h^2\Delta_{\widetilde{\mathbb V}}) \mathrm{e}^{-t\widetilde X} g(h^2\Delta_{\widetilde{\mathbb V}})$} The following proposition makes precise the intuition that Fourier modes escape to infinity during the evolution.
\begin{prop}\label{prop: fourier flow fourier} Let $d=\operatorname{dim}(M)$. Then, for any Schwartz function $g$, we have 
	\[\big\|g(h^2\Delta_{\widetilde{\mathbb V}}) \mathrm{e}^{-t\widetilde X}g(h^2\Delta_{\widetilde{\mathbb V}})\big\|_{L^2(S\widetilde M)}\lesssim (1+h^{-(d-1)})\|\mathcal L_t\|_{L^2(\widetilde M)}.\]
Moreover, by Proposition \ref{prop: operator norm spherical mean}, the right-hand side is smaller than $C_\varepsilon (1+h^{-(d-1)})\mathrm{e}^{(\delta_0+\varepsilon)t}$.
\end{prop}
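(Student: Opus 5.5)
Expand in spherical harmonics on the fibres, using the trivialization $S\widetilde M\simeq \widetilde M\times S^{d-1}$ of \S\ref{subsubsec: Fourier analysis SM}. Write $u=\sum_n u_n(x)\omega_n(v)$. Then $g(h^2\Delta_{\widetilde{\mathbb V}})u=\sum_n g(h^2\lambda_n)u_n\omega_n$. Only modes with $h^2\lambda_n\lesssim 1$ contribute up to a rapidly decaying tail, because $g$ is Schwartz.

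For $f=\sum_n f_n\omega_n$ and $u=\sum_m u_m\omega_m$, the matrix coefficient is
\[\langle \mathrm e^{-t\widetilde X}(f_n\omega_n),u_m\omega_m\rangle=\int_{\widetilde M}\int_{S_x\widetilde M} f_n(\pi\varphi_t(x,v))\,\omega_n(\varphi_t(x,v))\,\overline{u_m(x)\omega_m(x,v)}\,\mathrm d\mu_x(v)\,\mathrm{dvol}(x).\]
Since $|\omega_n|\le \|\omega_n\|_\infty$ pointwise, this is bounded by
\[\|\omega_n\|_\infty\|\omega_m\|_\infty\int_{\widetilde M}\int_{S_x}|f_n|(\pi\varphi_t(x,v))\,|u_m(x)|\,\mathrm d\mu_x\,\mathrm{dvol}=c_d\|\omega_n\|_\infty\|\omega_m\|_\infty\langle \mathcal L_t|f_n|,|u_m|\rangle.\]
Hence it is at most $C\|\omega_n\|_\infty\|\omega_m\|_\infty\|\mathcal L_t\|\,\|f_n\|\,\|u_m\|$.

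Sum over $n,m$ with the weights $g(h^2\lambda_n)$ and $g(h^2\lambda_m)$. By Cauchy--Schwarz,
\[\|g\,\mathrm e^{-t\widetilde X}g\|\le C\|\mathcal L_t\|\sum_n |g(h^2\lambda_n)|\,\|\omega_n\|_\infty^2.\]
This sum is the main estimate. Group the $\omega_n$ by eigenspace $\mathcal H_k$ (eigenvalue $k(k+d-2)$, dimension $\sim k^{d-2}$). Within a fixed eigenspace one can do better than bounding each basis element separately: use the addition theorem, $\sum_{\omega\in\mathcal H_k}|\omega(v)|^2=\dim\mathcal H_k/\operatorname{vol}(S^{d-1})$.

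To exploit this, run the Cauchy--Schwarz per eigenspace rather than per basis element. Writing $F_k(x,v)=\sum_{\omega\in\mathcal H_k} f_\omega(x)\omega(v)$, one has $|F_k(x,v)|^2\le \frac{\dim\mathcal H_k}{\operatorname{vol}}\sum_\omega|f_\omega(x)|^2$, so $|F_k|(x,v)\le C k^{(d-2)/2}\,\phi_k(x)$ with $\|\phi_k\|_{L^2(\widetilde M)}=\|F_k\|$. The same bound holds for $U_k$. The pairing of eigenspaces $k,l$ is then bounded by $Ck^{(d-2)/2}l^{(d-2)/2}\|\mathcal L_t\|\,\|F_k\|\,\|U_l\|$.

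Summing against $|g(h^2k^2)|$ and applying Cauchy--Schwarz in $k$ and in $l$ gives the factor
\[\sum_k |g(h^2k(k+d-2))|\,k^{d-2}\lesssim 1+h^{-(d-1)}.\]
This comes from comparison with $\int_0^\infty |g(h^2s^2)|s^{d-2}\,\mathrm ds=h^{-(d-1)}\int_0^\infty|g(\sigma^2)|\sigma^{d-2}\,\mathrm d\sigma$, and the $k=0$ term accounts for the $1$. The resulting bound is $\lesssim(1+h^{-(d-1)})\|\mathcal L_t\|$, as claimed.

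The second sentence of the statement then follows directly from Proposition~\ref{prop: operator norm spherical mean}. The only delicate point is the grouping by eigenspaces via the addition theorem. Done naively basis element by basis element, the sup norms grow like $k^{(d-2)/2}$ while there are $k^{d-2}$ elements per eigenspace, and the resulting factor $h^{-2(d-1)}$ would be far too lossy. The positivity of $\mathcal L_t$, which allows passing to absolute values, is also used. For $d=2$ everything is explicit, with $\omega_k=e^{ik\theta}$ and $\|\omega_k\|_\infty$ bounded, so the factor is $\sum_k|g(h^2k^2)|\lesssim 1+h^{-1}$.
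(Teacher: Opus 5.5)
Your proof is correct and is essentially the paper's argument. A pointwise Cauchy--Schwarz in the fiber expansion dominates $|g(h^2\Delta_{\widetilde{\mathbb V}})u|$ by a fiber-constant function, positivity of the flow reduces the pairing to $\langle \mathcal L_t\,\cdot,\cdot\rangle$, and the spectral function of $S^{d-1}$ supplies the factor $1+h^{-(d-1)}$. The only difference is bookkeeping: the paper does one Cauchy--Schwarz over all modes and uses Avakumovi\'c's pointwise Weyl law with a summation by parts, whereas you apply the addition theorem (the exact spherical case of that law) eigenspace by eigenspace and then Cauchy--Schwarz in $k$, which yields the same factor.
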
	
	\begin{rem}It is not surprising that we have to reach times $t\gtrsim |\log h|$ for the estimate to be nontrivial. Indeed, if $u$ has little oscillations then $\mathrm{e}^{-tX}u$ concentrates on vertical Fourier modes of order $\lesssim \mathrm{e}^{\gamma_{\rm max}t}$, where $\gamma_{\rm max}$ is the maximal rate of expansion of the flow. Thus we should not observe decay of the norm while $\mathrm{e}^{\gamma_{\rm max} t}\ll h^{-1}$ .
	\end{rem}
	\begin{proof}
The proof relies on the decomposition of $L^2$-functions in spherical harmonics presented in  \S\ref{subsubsec: Fourier analysis SM}, together with pointwise estimates on eigenfunctions. We tacitly view functions on $\widetilde M$ as functions on $S\widetilde M$ that are constant over the fiber of the projection $S\widetilde M\to \widetilde M$, and vice versa. By definition, since $g(h^2\Delta_{\widetilde{\mathbb V}})$ is self-adjoint, we have
\[\big\|g(h^2\Delta_{\widetilde{\mathbb V}}) \mathrm{e}^{-t\widetilde X}g(h^2\Delta_{\widetilde{\mathbb V}})\big\|_{L^2(S\widetilde M)}=\sup_{\|u\|=\|v\|=1} \langle  \mathrm{e}^{-t\widetilde X}g(h^2\Delta_{\widetilde{\mathbb V}})u, g(h^2\Delta_{\widetilde{\mathbb V}})v\rangle.\]
Let $u,w\in L^2(S\widetilde M)$ satisfy $\|u\|_{L^2}=\|w\|_{L^2}=1$. As in \S \ref{subsec: Fourier}, we expand
\[u(x,v)=\sum_{m\ge 0}u_m(x)\omega_m(x,v), \qquad w(x,v)=\sum_{n\ge 0}w_n(x)\omega_n(x,v).\]
Then,
\[g(h^2\Delta_{\widetilde{\mathbb V}}) u=\sum_{m\ge 0} g(h^2\lambda_m)u_{m}\omega_{m}, \qquad g(h^2\Delta_{\widetilde{\mathbb V}})w=\sum_{n\ge 0}g(h^2\lambda_n)w_{n}\omega_{n}.\]
We set
\[U(x):=\Big(\sum_{m\ge 0} |u_m(x)|^2\Big)^{\frac 12}, \qquad W(x):=\Big(\sum_{n\ge 0}|w_n(x)|^2\Big)^{\frac 12}.\]
Note that $\|U\|_{L^2}=\|W\|_{L^2}=1$. By Cauchy--Schwarz inequality, we have
\[\big|g(h^2\Delta_{\widetilde{\mathbb V}}) u(x,v)\big|\le \Big\|\sum_{m\ge 0} g(h^2\lambda_m)^2|\omega_{m}|^2\Big\|_{L^{\infty}(S^{d-1})}^{\frac 12}\cdot U(x),\]
and similarly for $w$. Then,
\[\langle  \mathrm{e}^{-t\widetilde X}g(h^2\Delta_{\widetilde{\mathbb V}})u, g(h^2\Delta_{\widetilde{\mathbb V}})w\rangle\le\Big\|\sum_{m\ge 0} g(h^2\lambda_m)^2|\omega_{m}|^2\Big\|_{L^{\infty}(S^{d-1})} \cdot\langle \mathrm{e}^{-t\widetilde X}U,W\rangle .\]
Because the functions $U,W$ are constant over the fibers of $S\widetilde M\to \widetilde M$ and $\|U\|_{L^2}=\|W\|_{L^2}=1$, we have
\[\langle  \mathrm{e}^{-t\widetilde X} U,W\rangle=\langle \mathcal L_t U,W\rangle\le \|\mathcal L_t\|_{L^2(\widetilde M)}.\]
Eventually,
\[\big\|g(h^2\Delta_{\widetilde{\mathbb V}}) \mathrm{e}^{-t\widetilde X}g(h^2\Delta_{\widetilde{\mathbb V}})\big\|_{L^2(S\widetilde M)}\le \|\mathcal L_t\|\cdot \Big\|\sum_{m\ge 0} g(h^2\lambda_m)^2|\omega_{m}|^2\Big\|_{L^{\infty}(S^{d-1})}. \]
To control the right-hand side, we recall the pointwise Weyl's law of Avakumovi{\'c} \cite{avakumovic}: 
\[\sum_{\lambda_m\le \lambda} |\omega_m(x,v)|^2\lesssim 1+\lambda^{\frac{d-1}2}.\]
A summation by parts then gives
\[\sum_{m\ge 0} g(h^2\lambda_m)^2|\omega_{m}(x,v)|^2=-\int_0^{+\infty}\Big(\sum_{\lambda_m\le \lambda} |\omega_m(x,v)|^2\Big) \partial_\lambda\big[g(h^2\lambda)^2\big] \mathrm{d}\lambda,\]
which can in turn be bounded by a constant times
\[\int_0^{+\infty} (1+\lambda^{\frac{d-1}{2}})  h^2|(g^2)'|(h^2\lambda)\mathrm{d}\lambda\lesssim 1+h^{-(d-1)}.\]
This concludes the proof.
\end{proof}

	\section{Probabilistic low frequency estimates}\label{sec: proba estimates}
	
	\subsection{Strong convergence of representations}\label{subsec: strong conv} We refer to the recent surveys \cite{Magee_survey,vanhandel2025strongconvergencephenomenon} for a smooth introduction to strong convergence, and its application to spectral gaps.
	
	\begin{defi} Let $\Gamma$ be a discrete group. The \emph{regular representation} of $\Gamma$ is the unitary representation $\lambda_{\Gamma}:\Gamma\to U(\ell^2(\Gamma))$ defined by
		\[\lambda_{\Gamma}(\gamma)(x_g)_{g\in \Gamma}=(x_{\gamma^{-1} g})_{g\in \Gamma}.\]
		In other words a function $f:g\mapsto f(g)$ is mapped to $\lambda_{\Gamma}(\gamma)f:g\mapsto f(\gamma^{-1}g)$.
	\end{defi}
	\begin{defi}\label{defi: strong conv} A sequence of unitary representations $(\rho_n,V_n)$ of a discrete group $\Gamma$ \emph{strongly converges} to the regular representation $(\lambda_{\Gamma},\ell^2(\Gamma))$ if for all $w\in \mathbb{C}[\Gamma]$, 
		\begin{equation} \label{eq: def strong conv}\|\rho_n(w)\|_{V_n}\underset{n\to +\infty}{\longrightarrow} \|\lambda_{\Gamma}(w)\|_{\ell^2(\Gamma)},\end{equation}
		where the norms are operator norms. If the sequence $(\rho_n)$ is random, we say that $(\rho_n)$ strongly converges  \emph{asymptotically almost surely} (or \emph{a.a.s.}) if \eqref{eq: def strong conv} holds in probability.
		
	\end{defi}
Equation \eqref{eq: def strong conv} simply means that for any finitely supported map $\gamma\in \Gamma\mapsto a_\gamma\in \mathbb C$, one has
		\begin{equation} \label{eq: alternative def strong conv} \Big\|\sum_{\gamma\in \Gamma} a_\gamma\rho_n(\gamma)\Big\|_{V_n}\underset{n\to +\infty}{\longrightarrow} \Big\|\sum_{\gamma\in \Gamma} a_\gamma\lambda_{\Gamma}(\gamma)\Big\|_{\ell^2(\Gamma)}.\end{equation}
	The following theorem was recently proved by Magee--Puder--van Handel.
	\begin{thm}[{\cite[Theorem 1.1]{magee2025strongconvergenceuniformlyrandom}}]\label{thm: strong con mpvh} Let $\Gamma$ be the fundamental group of a closed surface of genus $\ge 2$. If $\phi_n\in \operatorname{Hom}(\Gamma,S_n)$ is picked uniformly at random, then the sequence of representations $\rho_n:=\operatorname{std}_{n-1}\circ \phi_n$ strongly converges a.a.s. to the regular representation. Here $\operatorname{std}_{n-1}$ is the standard irreducible $(n-1)$-dimensional representation of $S_n$. \end{thm}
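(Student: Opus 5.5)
Since this theorem is only cited from \cite{magee2025strongconvergenceuniformlyrandom}, what follows is my plan for how one would reprove it, modelled on the polynomial method of Chen--Garza-Vargas--Tropp--van Handel \cite{Ch.Ga.Tr.va2024}; I have not checked the details. The first reduction is standard. By a linearization argument it suffices to treat self-adjoint elements
\[
X_n=a_0\otimes \mathrm{Id}+\sum_i a_i\otimes \rho_n(\gamma_i)
\]
with matrix coefficients $a_i$ and a fixed symmetric generating set. The lower bound $\liminf \|\rho_n(w)\|\ge \|\lambda_\Gamma(w)\|$ comes from weak convergence: $\mathbb E\frac{1}{n-1}\operatorname{tr}\rho_n(\gamma)\to \mathbf 1_{\gamma=e}$, which follows from the known fixed-point statistics of words in uniformly random $\phi_n\in\operatorname{Hom}(\Gamma,S_n)$. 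For the upper bound, I would fix a smooth test function $f$ vanishing on a neighbourhood of $\operatorname{Spec}\lambda_\Gamma(X)$, where $X$ is the corresponding element in the regular representation. The goal is then to show
\[
\mathbb E\operatorname{tr} f(X_n)=o(1).
\]
Since $f\ge 0$ and $f=1$ near any eigenvalue escaping the limiting spectrum, this forces no such eigenvalue a.a.s.

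The core input is an asymptotic expansion of $\mathbb E\operatorname{tr}\rho_n(\gamma)$ in powers of $1/n$, uniform over words of length $q\le n^{c}$. For surface groups this is the Magee--Puder theory of word measures on $\operatorname{Hom}(\Gamma,S_n)$. It shows that $\mathbb E[\#\mathrm{fix}\,\phi_n(\gamma)]$ is, up to an explicit error, a rational function of $1/n$, with zeroth-order term $\mathbf 1_{\gamma=e}$ and first-order term governed by which subgroups contain $\gamma$. Expanding $h(X_n)$ for polynomials $h$ of degree $q$ then makes $\mathbb E\frac1n\operatorname{tr} h(X_n)$ a function $\Phi(1/n)$ that is essentially a polynomial of degree $\lesssim q$ in $1/n$. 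This gives
\[
\mathbb E\tfrac1n\operatorname{tr}h(X_n)=\tau(h(X))+\tfrac1n\nu(h)+O\big(q^{C}n^{-2}\big)\|h\|.
\]
To obtain this from the a priori estimate, I would apply the Markov brothers inequality to $\Phi$ on a grid of admissible values $1/n$. This controls the second derivative of $\Phi$, and hence the remainder, with only polynomial loss in $q$. One then approximates $f$ by Chebyshev polynomials of degree $q\sim n^{c}$ and passes to smooth $f$ using that Chebyshev coefficients of smooth functions decay rapidly.

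The main obstacle is the first-order functional $\nu$. One must show it is supported in $\operatorname{Spec}\lambda_\Gamma(X)$, so that $\nu(f)=0$. Then $\mathbb E\operatorname{tr} f(X_n)=n\cdot O(n^{-2})=o(1)$ and Markov's inequality concludes. My first attempt would express $\nu$ as a finite sum of traces over representations of $\Gamma$ induced from subgroups, essentially a sum over conjugacy classes of the cyclic subgroups whose generators contribute at order $1/n$. Each piece is weakly contained in the regular representation because $\Gamma$ is torsion-free hyperbolic and these subgroups are amenable (cyclic). The delicate point is that, unlike for free groups, the uniform measure on $\operatorname{Hom}(\Gamma,S_n)$ is not a product measure, and the expansion is only valid for $q\ll n^{c}$. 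So I would need a quantitative, growth-controlled bound on $|\nu(h)|$ in terms of $\sup_{\operatorname{Spec}\lambda_\Gamma(X)}|h|$ times a polynomial in $q$. For this I would use the rapid decay property of hyperbolic groups together with the exponential growth of $\Gamma$, and check that this polynomial loss is compatible with taking $q\sim n^{c}$.
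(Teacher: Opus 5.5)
The paper does not prove this statement. It is imported from \cite{magee2025strongconvergenceuniformlyrandom} and used as a black box. Your outline has the right architecture: linearization, weak convergence for the lower bound, a $1/n$-expansion controlled by a Markov-type inequality, Chebyshev truncation, and vanishing of the first-order term. This is the polynomial method of \cite{Ch.Ga.Tr.va2024}, on which the cited proof is built. However, at the two places where the real difficulty lies, your proposal either assumes what has to be proved or gives an argument that does not work.

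\emph{The first-order term.} For polynomials $h$ there is a legitimate formula $\nu(h)=\sum_{\gamma\neq e}c(\gamma)\operatorname{tr}\big(h(X)_\gamma\big)$. Here $Y_\gamma$ is the coefficient of $\lambda_\Gamma(\gamma)$ in $Y$, and $c(\gamma)=\#\{k\ge 2:\gamma\text{ is a $k$-th power}\}$.
\begin{itemize}
\item Regrouping by maximal cyclic subgroups $H$ does not give a finite sum. Writing $H^{(k)}=\{h^k:h\in H\}$, it gives a sum over infinitely many conjugacy classes $[H]$ and all $k\ge 2$ of identity-subtracted traces $\frac1k\sum_{x\in\Gamma/H^{(k)}}\big(\langle\lambda_{\Gamma/H^{(k)}}(Y)\delta_x,\delta_x\rangle-\operatorname{tr}Y_e\big)$ on infinite-dimensional quasi-regular representations.
\item For a smooth $f$ vanishing near $\operatorname{Spec}\lambda_\Gamma(X)$, the corresponding formal expression is trivially zero, because already $f(\lambda_\Gamma(X))=0$. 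The weak containment $\lambda_{\Gamma/H}\prec\lambda_\Gamma$ adds nothing; it holds for every amenable $H$, and hyperbolicity plays no role in it.
\item The real issue is that $\nu(f)$ is defined as the continuous extension of $\nu$ from polynomials, i.e.\ as $\lim\nu(h_q)$ for polynomial approximants $h_q\to f$ in $C^m([-K,K])$. The map $h\mapsto\sum_\gamma c(\gamma)\operatorname{tr}(h(X)_\gamma)$ is not continuous in that topology, so showing the limit vanishes is the entire content of the step.
\item For a polynomial of degree $q$, weak containment bounds each diagonal term by $O(\|X\|^q)$, but exponentially many terms contribute.
\item Rapid decay does not repair this. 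It only compares $\|Y\|$ with the $\ell^2$-norm of the coefficients of $Y$, so Cauchy--Schwarz over the exponentially many proper powers in the ball of radius $q$ still loses an exponential factor. Exponential growth is the obstruction here, not a tool.
\end{itemize}
What is actually needed is more modest. After linearization only $\|X_n\|$ matters, so it suffices that $\operatorname{supp}\nu\subseteq[-\|X\|,\|X\|]$, rather than $\operatorname{supp}\nu\subseteq\operatorname{Spec}\lambda_\Gamma(X)$. For a compactly supported distribution this is equivalent to $\limsup_{p}|\nu(x^p)|^{1/p}\le\|X\|$, since the Stieltjes transform is then analytic on $\{|z|>\|X\|\}$. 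This statement involves polynomials only. It says that the coefficients of $X^p$ at proper powers are, on average, exponentially smaller than the trivial bound $\|(X^p)_\gamma\|\le\|X\|^p$. Proving it requires genuine information about how proper powers sit inside $\Gamma$, and your proposal does not supply it.

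\emph{The expansion.} You take as known that $\mathbb{E}[\operatorname{fix}\phi_n(\gamma)]$ is a rational function of $1/n$ of degree $O(q)$, up to a negligible error, uniformly over words of length $q\le n^{c}$. This is not what the Magee--Puder asymptotics give.
\begin{itemize}
\item Those asymptotics are proved for fixed $\gamma$, with $\gamma$-dependent error terms.
\item The expectation is not rational in $n$. It involves $|\operatorname{Hom}(\Gamma,S_n)|=(n!)^{2g-1}\sum_{\lambda\vdash n}(\dim\lambda)^{2-2g}$ and similar sums over all irreducible representations of $S_n$.
\item Constructing a rational approximant of controlled degree, with error uniform in the word length over the whole range your Chebyshev truncation uses, is a substantial part of the work in \cite{magee2025strongconvergenceuniformlyrandom}.
\end{itemize}
Without this approximant, the Markov-inequality step has nothing to act on.

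A minor point: the a.a.s.\ lower bound needs weak convergence in probability, not only in expectation. This follows from Markov's inequality, since $\operatorname{fix}\phi_n(\gamma)\ge 0$ and $\mathbb{E}[\operatorname{fix}\phi_n(\gamma)]=O_\gamma(1)$ for $\gamma\neq e$.
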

	By matrix amplification (see e.g. \cite[\S 3]{Magee_survey}), one can replace the scalar coefficients $a_\gamma$ in \eqref{eq: alternative def strong conv} by matrices, and by approximating compact operators by finite rank operators, one can improve \eqref{eq: alternative def strong conv} to
	\begin{prop}[{\cite[Proposition 3.3]{Magee_survey}}] \label{prop: amplified} Assume that $(\rho_n,V_n)$ strongly converges to the regular representation (a.a.s.). Let $\mathcal H$ be a separable Hilbert space, and denote by $\mathcal K(\mathcal H)$ the space of compact operators on $\mathcal H$. Let $\varepsilon>0$. Then, for any finitely supported map $\gamma\in \Gamma\mapsto a_\gamma\in \mathcal K(\mathcal H)$, one has (a.a.s.)
		\begin{equation} \label{eq: strong conv amplified} \Big\|\sum_{\gamma\in \Gamma} a_\gamma\otimes \rho_n(\gamma)\Big\|_{\mathcal H\otimes V_n}\le \Big\|\sum_{\gamma\in \Gamma} a_\gamma\otimes \lambda_{\Gamma}(\gamma)\Big\|_{\mathcal H\otimes V_n}+\varepsilon.\end{equation}
	\end{prop}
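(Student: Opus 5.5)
The plan is to deduce the amplified inequality from scalar strong convergence in two stages: first matrix coefficients, then compact coefficients by finite-rank approximation. Throughout I read the right-hand side of \eqref{eq: strong conv amplified} as the norm on $\mathcal H\otimes\ell^2(\Gamma)$.

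\textbf{Step 1: matrix coefficients.} Let $\mathcal H=\mathbb C^k$ and $a_\gamma\in M_k(\mathbb C)$, with $S$ the finite support. Put $A_n=\sum_\gamma a_\gamma\otimes\rho_n(\gamma)$ and $A_\infty=\sum_\gamma a_\gamma\otimes\lambda_\Gamma(\gamma)$. The key is the $C^*$-algebraic fact that strong convergence means the map $\lambda_\Gamma(w)\mapsto\rho_n(w)$ is asymptotically isometric on $C^*_{\rm red}(\Gamma)$.

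Concretely, $\|A\|^{2p}=\|(A^*A)^p\|$, and $(A_n^*A_n)^p$ is a matrix whose entries lie in $\mathbb C[\Gamma]$ applied through $\rho_n$. I would like to bound the norm of a $k\times k$ matrix of operators by its entrywise norms, but this loses a factor $k$. That factor is harmless after taking $2p$-th roots with $p$ large, provided $p$ is fixed before $n\to\infty$. The estimate is
\[\|A_n\|^{2p}\le k\max_{i,j}\big\|\rho_n(w^{(p)}_{ij})\big\|,\]
where $w^{(p)}_{ij}\in\mathbb C[\Gamma]$ are the entries of $(A^*A)^p$. By scalar strong convergence, a.a.s.\ the right-hand side is at most $k\max_{ij}\|\lambda_\Gamma(w^{(p)}_{ij})\|+\varepsilon'$. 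Also $\|\lambda_\Gamma(w^{(p)}_{ij})\|\le\|(A_\infty^*A_\infty)^p\|=\|A_\infty\|^{2p}$, since compressing to an entry does not increase the norm. Hence
\[\|A_n\|\le (k\|A_\infty\|^{2p}+\varepsilon')^{1/2p}.\]
Choose $p$ so that $k^{1/2p}\le 1+\varepsilon/(2\|A_\infty\|+1)$, then $\varepsilon'$ small. Only finitely many words are involved, so intersecting finitely many a.a.s.\ events keeps the conclusion a.a.s. A cleaner alternative, if acceptable, is to cite that a unital $*$-homomorphism from $C^*_{\rm red}(\Gamma)$ tensored with $M_k$ is isometric; I would present the $p$-th power trick as the self-contained route.

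\textbf{Step 2: compact coefficients.} Given compact $a_\gamma$ for $\gamma\in S$, pick a finite-rank orthogonal projection $P$ onto a subspace $\mathbb C^k$ with $\|a_\gamma-Pa_\gamma P\|\le\eta$ for all $\gamma\in S$; this is possible since $S$ is finite and each $a_\gamma$ is compact. Both $\rho_n(\gamma)$ and $\lambda_\Gamma(\gamma)$ are unitary, so replacing $a_\gamma$ by $Pa_\gamma P$ changes both norms by at most $|S|\eta$, uniformly in $n$.

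The operator $\sum_\gamma Pa_\gamma P\otimes\rho_n(\gamma)$ is, up to zero blocks, the matrix case of Step 1. Applying Step 1 with $\varepsilon/3$ and taking $|S|\eta\le\varepsilon/3$ gives \eqref{eq: strong conv amplified}.

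\textbf{Main obstacle.} The delicate point is Step 1: the dimension loss $k$ must be removed uniformly, and the order of quantifiers matters. One must fix $k$, then $p$, then the finitely many words, and only then invoke a.a.s.\ scalar strong convergence. Step 2 is routine.
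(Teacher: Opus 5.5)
Your proof is correct and follows the same two-step route the paper indicates before citing \cite[Proposition 3.3]{Magee_survey}: first amplify scalar strong convergence to matrix coefficients, then approximate the compact coefficients by compressions $Pa_\gamma P$ to a finite-rank projection. The paper takes the matrix step from the survey without proving it; the usual argument there is $C^*$-algebraic (a $*$-homomorphism amplified by $M_k(\mathbb C)$ is still contractive), and your $2p$-th power argument, which absorbs the entrywise factor $k$ through $k^{1/2p}\to 1$ with $p$ fixed before $n\to\infty$, is a valid self-contained proof of that ingredient. You are also right to read the right-hand side as a norm on $\mathcal H\otimes\ell^2(\Gamma)$; the $V_n$ there is a typo in the statement.
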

	The interest of Proposition~\ref{prop: amplified} is that in many situations, the limit operator appearing on the right-hand side of \eqref{eq: strong conv amplified} is easier to understand. We will use Proposition~\ref{prop: amplified} to show the following. Recall that
	\[\widehat f(-\mathrm{i}\mathbf X_{\mathcal E})=\int f(t)\mathrm{e}^{-t\mathbf X_{\mathcal E}}\mathrm{d}t.\]
	\begin{prop} \label{prop: Mh etX Mh}Assume that the sequence $(\rho_n,V_n)$ strongly converges a.a.s. Then, for any smooth real compactly supported function $f$ vanishing near $0$ and any $\varepsilon>0$, we have a.a.s., for $\mathcal E=\mathcal E_{\rho_n}$:
		\begin{align*}\label{eq: lim of norm} \big\|g(h^2\Delta_{\mathbb V}^E)\widehat f(-\mathrm{i}\mathbf X_{\mathcal E})g(h^2\Delta_{\mathbb V}^E)\big\|_{L^2(SM,\mathcal E)}\le \big\|g(h^2\Delta_{\widetilde{\mathbb V}}) \widehat f(-\mathrm{i}\widetilde X)g(h^2\Delta_{\widetilde{\mathbb V}})\big\|_{L^2(S\widetilde M)}+\varepsilon.\end{align*} 
	\end{prop}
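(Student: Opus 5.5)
We will write the operator $\mathbf A_n:=g(h^2\Delta_{\mathbb V}^E)\widehat f(-\mathrm{i}\mathbf X_{\mathcal E})g(h^2\Delta_{\mathbb V}^E)$ in the form $\sum_{\gamma} a_\gamma\otimes\rho_n(\gamma)$, with compact coefficients $a_\gamma$ acting on $L^2(\mathcal F)$ for a fundamental domain $\mathcal F\subset\widetilde M$, and then apply Proposition~\ref{prop: amplified}.

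\textbf{Step 1 (kernel representation).} Fix a fundamental domain $\mathcal F\subset \widetilde M$ for $\Gamma$. Identify $L^2(SM,\mathcal E_{\rho_n})\simeq L^2(S\mathcal F)\otimes V_n$ by restricting equivariant sections $s(\gamma x)=\rho_n(\gamma)s(x)$ to $S\mathcal F$. In the same way, $L^2(S\widetilde M)\simeq L^2(S\mathcal F)\otimes\ell^2(\Gamma)$. On $S\widetilde M$ the operator $\widetilde{\mathbf A}:=g(h^2\Delta_{\widetilde{\mathbb V}})\widehat f(-\mathrm{i}\widetilde X)g(h^2\Delta_{\widetilde{\mathbb V}})$ is $\Gamma$-equivariant. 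Both $g(h^2\Delta_{\widetilde{\mathbb V}})$, which acts fiberwise, and $\widehat f(-\mathrm{i}\widetilde X)=\int f(t)\mathrm{e}^{-t\widetilde X}\mathrm{d}t$ commute with the deck action. Hence $\widetilde{\mathbf A}$ has a Schwartz kernel $K(\tilde z,\tilde z')$ with $K(\gamma \tilde z,\gamma\tilde z')=K(\tilde z,\tilde z')$. Set $a_\gamma:=\mathbf 1_{S\mathcal F}K(\cdot,\gamma\,\cdot)\mathbf 1_{S\mathcal F}$, viewed as an operator on $L^2(S\mathcal F)$. Then $\widetilde{\mathbf A}=\sum_\gamma a_\gamma\otimes\lambda_\Gamma(\gamma)$ and $\mathbf A_n=\sum_\gamma a_\gamma\otimes\rho_n(\gamma)$, up to the convention $\gamma\leftrightarrow\gamma^{-1}$. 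The second identity holds because the operators on $SM$ lift to the same local operators. For $g(h^2\Delta^E_{\mathbb V})$ this is the commutation with local trivializations already used in the proof of Proposition~\ref{prop: inserting g(h2lap) highdim bundle}, and for the flow it is the equivariant lift of $\mathrm{e}^{-t\mathbf X_{\mathcal E}}$.

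\textbf{Step 2 (finite support).} Since $f$ is supported in $[-T_0,T_0]$, the flow moves points a distance at most $T_0$, and $g(h^2\Delta_{\widetilde{\mathbb V}})$ preserves the base point. So $K(\tilde z,\tilde z')$ vanishes unless $d(\pi\tilde z,\pi\tilde z')\le T_0$. Only finitely many $\gamma$ then have $a_\gamma\ne0$, by proper discontinuity and compactness of $\overline{\mathcal F}$.

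\textbf{Step 3 (compactness).} This is the main obstacle. The propagator alone is not compact, since it is a transport operator. Composing with $\int f(t)\cdot\,\mathrm{d}t$ averages along the flow direction, and the two fiber filters $g(h^2\Delta_{\mathbb V})$ smooth in the vertical directions. This regularizes only in $d$ directions out of $2d-1$, so direct compactness is unclear.

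My plan is to avoid needing exact compactness. I would approximate $a_\gamma$ in operator norm by compact operators $a_\gamma^\eta$, for example by inserting a mollifier in the horizontal directions, $a_\gamma^\eta=\mathbf 1_{S\mathcal F}(\chi_\eta * \cdot)\,a_\gamma$. The errors should satisfy $\|\sum(a_\gamma-a_\gamma^\eta)\otimes\rho_n(\gamma)\|\le\sum_\gamma\|a_\gamma-a_\gamma^\eta\|$ uniformly in $n$, because the sum over $\gamma$ is finite and $\rho_n$ is unitary. The same bound holds on the $\lambda_\Gamma$ side.

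There is a subtlety: strong convergence in norm for smooth mollification errors needs $\|a_\gamma-a^\eta_\gamma\|\to0$, which fails for non-compact $a_\gamma$. To handle this, I would instead exploit the hypothesis that $f$ vanishes near $0$. This lets me write $\widehat f(-\mathrm{i}\widetilde X)$ through flow times $|t|\ge t_0>0$. For such times, $\mathrm{e}^{-t\widetilde X}$ maps vertical smoothing into smoothing in the transverse directions $\mathrm d\varphi_t(\mathbb V)$, which is transverse to $\mathbb V$ when there are no conjugate points. Then $g\,\mathrm{e}^{-t\widetilde X}g$, averaged in $t$, smooths in all directions, so it has a smooth, compactly supported kernel on $S\mathcal F\times S\mathcal F$. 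That makes each $a_\gamma$ Hilbert--Schmidt, hence compact.

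\textbf{Step 4 (conclusion).} Proposition~\ref{prop: amplified} now applies to the finite family $\{a_\gamma\}$ of compact operators. It gives, a.a.s.,
\[
\|\mathbf A_n\|\le\Big\|\sum_\gamma a_\gamma\otimes\lambda_\Gamma(\gamma)\Big\|+\varepsilon=\|\widetilde{\mathbf A}\|_{L^2(S\widetilde M)}+\varepsilon,
\]
which is the claim.
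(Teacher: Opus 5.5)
Your proposal is correct and follows essentially the same route as the paper: identify the operator with $\sum_\gamma a_\gamma\otimes\rho_n(\gamma)$ through a fundamental domain, use finite propagation speed to get finitely many nonzero $a_\gamma$, show each $a_\gamma$ is Hilbert--Schmidt because $g(h^2\Delta_{\widetilde{\mathbb V}})\widehat f(-\mathrm{i}\widetilde X)g(h^2\Delta_{\widetilde{\mathbb V}})$ has a smooth kernel, and conclude with Proposition~\ref{prop: amplified}. The paper makes your Step~3 heuristic rigorous with a wavefront-set computation, which uses exactly your transversality $\mathbb RX\oplus\mathbb V\oplus\mathrm d\varphi_{-t}\mathbb V=T(S\widetilde M)$ for $t\neq 0$ (from Klingenberg's absence of conjugate points), so the mollification detour is unnecessary.
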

	Proposition \ref{prop: Mh etX Mh} will be proved in \S \ref{subsec: proof prop Mh etx Mh} below after some preliminary work. Using Proposition \ref{prop: fourier flow fourier}, Proposition~\ref{prop: operator norm spherical mean} and Proposition~\ref{prop: fourier flow fourier}, we obtain the following Corollary.
	
	\begin{cor}Assume that the sequence $(\rho_n,V_n)$ strongly converges a.a.s. to the regular representation. Let $f_T$ be compactly supported on $[T,T+1]$, and let $h\le 1$. Then, for any $\varepsilon>0$, we have a.a.s., for $\mathcal E=\mathcal E_{\rho_n}$:
	\end{cor}
	\[\big\|g(h^2\Delta_{\mathbb V}^E)\widehat f_T(-\mathrm{i}\mathbf X_{\mathcal E})g(h^2\Delta_{\mathbb V}^E)\big\|_{L^2(SM,\mathcal E)} \le C_\varepsilon h^{-(d-1)}\mathrm{e}^{(\delta_0+\varepsilon)T} \|f_T\|_{\infty}.\]

	\subsection{Riemannian coverings, permutation representations of $\Gamma$ and unitary flat bundles} \label{subsec: link cover bundle} Let $M_n\to M$ be a finite degree covering. Recall that there is a smooth splitting
	\[C^\infty(M_n)=C^\infty(M)\oplus C^\infty_{\rm new}(M_n).\]
	As explained in the introduction, the covering $M_n$ is associated with a homomorphism $\phi_n\in \operatorname{Hom}(\Gamma,S_n)$. By composing $\phi_n$ with the standard irreducible $(n-1)$-dimensional representation of $S_n$, we obtain a representation $\rho_n:\Gamma\to U(V_n^0)$, where $V_n^0=\mathbf 1^{\perp}\subset \mathbb{C}^n$ is the subspace of vectors with $0$ mean, that is
	\[V_n^0=\Big\{(x_1,\ldots,x_n)\in \mathbf C^n~:~ \sum x_i=0\Big\}.\]
	The point is that the representation $\rho_n$ gives rise to an isomorphism 
	\begin{equation} \label{eq: iso Cinf new and C inf Erho} C^\infty_{\rm new}(M_n)\simeq C^\infty(M,E_{\rho_n}),\end{equation}
	where $E_{\rho_n}$ is the unitary flat bundle over $M$ associated with $\rho_n$, which is defined as the quotient
	\begin{equation} \label{eq: defi bundle associated}E_{\rho_n}:=\Gamma\backslash (\widetilde M\times V_n^0), \qquad \gamma.(x,z)=(\gamma.x, \rho_n(\gamma)z).\end{equation}
	The bundle $E_{\rho_n}\to M$ lifts to a bundle $\mathcal E_{\rho_n}\to SM$, whose smooth sections correspond to functions in $C^\infty(SM_n)$ with $0$-average over the fibers of the projection $SM_n\to SM$. Since $\Gamma$ acts on $S\widetilde M$ by $\gamma.(x,v)=(\gamma x,\mathrm{d}\gamma(x)v)$, we have a natural identification
	\[\mathcal E_{\rho_n}=\Gamma\backslash (S\widetilde M\times V_n^0).\]
	
	\subsection{Sections of flat bundles as equivariant functions on $S\widetilde M$}
	From now on, we work in the following abstract framework: $(\rho_n,V_n)$ is a sequence of random representations of $\Gamma$ that strongly converges to the  regular representation a.a.s., in the sense of Definition~\ref{defi: strong conv}. The main example of interest if provided by Theorem~\ref{thm: strong con mpvh}. For an arbitrary representation $\rho_n$ of $\Gamma$, we denote by $E_{\rho_n}$ the unitary flat bundle over $M$ defined as in \eqref{eq: defi bundle associated}, and by $\mathcal E_{\rho_n}$ the lift of this bundle to $SM$.
	\subsubsection{Function spaces} Here we denote by $\mathbf x=(x,v)$ the points in $SM$. Any smooth section of $\mathcal E_{\rho_n}$ corresponds to a smooth function $f:S\widetilde M\to V_n$ satisfying the equivariance property
	\[f(\gamma \mathbf x)=\rho_n(\gamma)f(\mathbf x).\]
	The space of such smooth functions is denoted by $C^\infty_{\rho_n}(S\widetilde M,V_n)$. Let $F$ be a fundamental domain for the action of $\Gamma$ on $S\widetilde M$ (that may be obtained by lifting a fundamental domain for the action of $\Gamma$ on $\widetilde M$). We define a norm
	\[\|f\|_{L^2_{\rho_n}(S\widetilde M,V_n)}:=\int_F \|f(\mathbf x)\|_{V_n}^2\mathrm{d}\mathbf x,\]
	and let $L^2_{\rho_n}(S\widetilde M,V_n)$ be the completion of $C^\infty_{\rho_n}(S\widetilde M,V_n)$ with respect to this norm. The isomorphism $C^\infty(SM,\mathcal E_{\rho_n})\simeq C^\infty_{\rho_n}(S\widetilde M,V_n)$ extends to an isomorphism of Hilbert spaces
	\begin{equation} \label{eq: iso hilbert spaces} L^2(SM,\mathcal E_{\rho_n})\simeq L^2_{\rho_n}(S\widetilde M,V_n).\end{equation}

	\subsection{Proof of Proposition~\ref{prop: Mh etX Mh}}\label{subsec: proof prop Mh etx Mh} From now on, to ease notations, we drop the $\rho_n$ subscripts at some places.	Under the isomorphism \eqref{eq: iso hilbert spaces} the operator $\mathrm{e}^{-t\mathbf X_{\mathcal E}}$ identifies with $\mathrm{e}^{-t\widetilde X}$. Similarly, $\Delta_{\mathbb V}^E$ identifies with $\Delta_{\widetilde{\mathbb V}}$. It follows that under this isomorphism, the operator 
	\[g(h^2\Delta_{\mathbb V}^E)\widehat f(-\mathrm{i}\mathbf X_{\mathcal E})g(h^2\Delta_{\mathbb V}^E):L^2(SM,E_{\rho_n})\to L^2(SM,E_{\rho_n})\]
	identifies with the operator
	\begin{equation} \label{eq: recall form of the op}g(h^2\Delta_{\widetilde{\mathbb V}}) \widehat f(-\mathrm{i}\widetilde X)g(h^2\Delta_{\widetilde{\mathbb V}}):L^2_{\rho_n}(\widetilde M,V_n)\to L^2_{\rho_n}(\widetilde M,V_n). 
	\end{equation}
	Beware that we are not done because we want an operator acting on $L^2(\widetilde M,\mathbb{C})$. To apply the strong convergence techniques, it is important to deal with compact operators, which will be ensured by the following Lemma.
	\begin{lem}\label{lem: K(w,z) continuous}Assume that $f$ is compactly supported and vanishes near $t=0$. Then the operator \eqref{eq: recall form of the op} has smooth kernel. \end{lem}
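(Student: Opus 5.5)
The operator $g(h^2\Delta_{\widetilde{\mathbb V}})\widehat f(-\mathrm{i}\widetilde X)g(h^2\Delta_{\widetilde{\mathbb V}})$ commutes with the $\Gamma$-action, so it suffices to show that, as an operator on $L^2(S\widetilde M)$, it has a smooth Schwartz kernel $K(\mathbf w,\mathbf z)$. We will also show that $K$ vanishes unless $d(\pi\mathbf w,\pi\mathbf z)\le \sup|\operatorname{Supp} f|$. Then on equivariant functions the operator acts by $\int_{S\widetilde M} K(\mathbf w,\mathbf z)u(\mathbf z)\,\mathrm d\mathbf z=\int_F\sum_{\gamma}K(\mathbf w,\gamma\mathbf z)\rho_n(\gamma)u(\mathbf z)\,\mathrm d\mathbf z$. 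The sum is finite by the support property, so this kernel is smooth as well.

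We write the operator as a composition of three pieces. First, $g(h^2\Delta_{\widetilde{\mathbb V}})$ acts fiberwise: in the trivialization $S\widetilde M\simeq\widetilde M\times S^{d-1}$ its kernel is $\delta_x(x')\,G_h(v,v')$, where
\[G_h(v,v')=\sum_m g(h^2\lambda_m)\omega_m(v)\overline{\omega_m(v')}\]
is smooth on $S^{d-1}\times S^{d-1}$ because $g$ is Schwartz and the eigenfunctions grow polynomially. Second,
\[\widehat f(-\mathrm{i}\widetilde X)u(\mathbf z)=\int f(t)\,u(\varphi_{-t}\mathbf z)\,\mathrm dt.\]
Composing the three gives
\[K(\mathbf w,\mathbf z)=\int f(t)\int_{S_{x}\widetilde M}G_h(v,\cdot)\big|_{\varphi_t(x,\cdot)\ \text{stuff}}\dots\]
More concretely, write $\mathbf w=(x,v)$. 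Apply the first filter in the fiber $S_x$, push by $\varphi_{-t}$, and apply the second filter in the fiber over $\pi\varphi_{-t}(x,v')$. The result is an integral over $(t,v')$ of $f(t)\,G_h(v,v')$ times $G_h$ evaluated at the image point, tested against a delta along the fiber over $\pi(\varphi_{-t}(x,v'))$.

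This is a pushforward problem. The map
\[\Phi_x:(t,v')\in\operatorname{Supp} f\times S_x\widetilde M\longmapsto \pi(\varphi_{-t}(x,v'))\in\widetilde M\]
is the inverse exponential in polar coordinates. Since there are no conjugate points (Klingenberg) and $\exp_x$ is a global diffeomorphism, $\Phi_x$ is a diffeomorphism away from $t=0$. The hypothesis that $f$ vanishes near $0$ is what removes the polar singularity. We change variables $y=\Phi_x(t,v')$ and obtain
\[K((x,v),(y,w))=f(\pm d(x,y))\,J(x,y)^{-1}\,G_h(v,v'(x,y))\,G_h(w',w),\]
where $v'(x,y)$ is the initial direction of the geodesic from $x$ to $y$ and $w'$ is its arrival direction at $y$. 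Both directions depend smoothly on $(x,y)$ off the diagonal, and $J$ is smooth and nonzero there. The factor $f(d(x,y))$ cuts off both a neighborhood of the diagonal and large distances, which gives smoothness and the finite-propagation support property.

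The main obstacle is checking the sign and orientation conventions, and confirming that the smooth dependence of the direction maps and of $J$ holds uniformly on the relevant compact set. Both follow from $\exp$ being a diffeomorphism together with the smooth dependence of the geodesic flow on $(x,y)$ once $d(x,y)$ is bounded below.
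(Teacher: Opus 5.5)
Your argument is correct, but it takes a genuinely different route from the paper.

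\textbf{The paper's route.} The paper never writes the kernel down. It argues with wavefront sets:
\begin{enumerate}
\item the fiberwise filter $g(h^2\Delta_{\widetilde{\mathbb V}})$ only produces singularities at covectors annihilating $\mathbb V$;
\item $\widehat f(-\mathrm{i}\widetilde X)$ only produces singularities at covectors annihilating $X$, and transports them by $\mathrm{d}\varphi_{-t}^{-\top}$ with $t\in\operatorname{Supp} f$;
\item for $t\neq 0$, no nonzero covector can annihilate $\mathbb RX$, $\mathbb V$ and $\mathrm{d}\varphi_{-t}\mathbb V$ at once.
\end{enumerate}
So the composition maps $\mathcal E'(S\widetilde M)$ into $C^\infty(S\widetilde M)$, and hence has a smooth kernel.

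\textbf{Your route.} You compute the kernel explicitly in geodesic polar coordinates about $x$. Both arguments rest on the same input, Klingenberg's absence of conjugate points. The paper uses it infinitesimally, as $\mathbb V\cap\mathrm{d}\varphi_{-t}\mathbb V=\{0\}$. You use it globally, as the fact that $\exp_x$ is a diffeomorphism. That fact makes $J$ smooth and positive, and the direction maps smooth off the diagonal.

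\textbf{What each buys.} The paper's route is shorter and coordinate-free. Yours is elementary and produces an explicit formula. That formula ties in with the expression \eqref{eq: alternative L_tf} of $\mathcal L_t$ through the weight $J(x,y)^{-1}$. It also exhibits directly the support bound $d(\pi\mathbf w,\pi\mathbf z)\le\sup|\operatorname{Supp} f|$. The paper has to invoke this separately, as finite speed of propagation, to see that only finitely many $a_\gamma$ are nonzero.

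\textbf{One bookkeeping fix.} If $\operatorname{Supp} f$ meets both half-lines, then $\Phi_x$ is two-to-one on $\operatorname{Supp} f\times S_x\widetilde M$. Each $y\neq x$ is reached once with $t=d(x,y)$ and once with $t=-d(x,y)$. The kernel is therefore the sum
\[K\big((x,v),(y,w)\big)=\sum_{\pm}f(\pm d(x,y))\,J(x,y)^{-1}\,G_h(v,v'_\pm)\,G_h(w'_\pm,w),\]
not a single term. Each summand is smooth for the same reason, so the conclusion is unaffected. The displayed line containing the placeholder \emph{stuff} should simply be replaced by this formula.
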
 \begin{proof}The proof relies on wave-front set arguments; we refer to \cite[Chapter 4]{Lefeuvre} for an introduction. Note that $h$ is fixed and we work with the classical wave-front set.
		
		In restriction to $S_xM$, the operator $g(h^2\Delta_{\widetilde{\mathbb V}})$ acts as $g(h^2\Delta_{S^{d-1}})$. Since $\Delta_{S^{d-1}}$ is an elliptic operator on $S^{d-1}$ and $g$ is compactly supported, we know that the operator $g(h^2\Delta_{S^{d-1}})$ is smoothing, viewed as an operator acting on $\mathcal D'(S^{d-1})$. This follows for example from a parametrix construction. Thus, schematically, the operator $g(h^2\Delta_{\widetilde{\mathbb V}})$ has kernel
		\[K((x,v);(y,w))=\delta(x-y) \varphi(v,w),\]
		where $\varphi$ is the kernel of $g(h^2\Delta_{S^{d-1}})$, which is a smooth function. By this we mean that for any $u_1,u_2\in C^\infty_{\rm comp}(S\widetilde M)$, we have
		\[\langle g(h^2\Delta_{\widetilde{\mathbb V}})u_1,u_2\rangle=\int_{\widetilde M} \Big(\int_{S_x\widetilde M\times S_x\widetilde M} \varphi(v_1,v_2)u_1(x,v_1)u_2(x,v_2) \mathrm{d}v_1\mathrm{d}v_2\Big)\mathrm{dvol}(x),\]
		where we identify $S_x\widetilde M\simeq S^{d-1}$ to evaluate $\varphi(v_1,v_2)$.  
		It follows that for any $u\in \mathcal E'(S\widetilde M)$ (the space of compactly supported distributions), we have
		\begin{equation} \label{eq: WF A10} \begin{aligned} \operatorname{WF}\big(g(h^2\Delta_{\widetilde{\mathbb V}}) u\big)\subset\big\{(\mathbf x,\xi)\in \operatorname{WF}(u)~:~ \xi(\mathbb V)=\{0\}\big\}\end{aligned}.\end{equation}
		Now, by \cite[Lemma 4.3.4]{Lefeuvre} (see also \cite[Exercise 6.2.10]{Lefeuvre}), we have for any $u\in \mathcal E'(SM)$:
		\begin{equation} \label{eq: WF phi chap X} \operatorname{WF} \big(\widehat f(-\mathrm{i}\widetilde X)u\big)\subset \left\{(\mathbf x,\xi)~:~ \begin{array}{l}\xi(X(\mathbf x))=0, \exists t\in \operatorname{Supp}(f), \\  (\varphi_{-t}(\mathbf x),\mathrm{d}\varphi_{-t}^{-\top}\xi)\in \operatorname{WF}(u) \end{array}\right\}.\end{equation}
		By combining \eqref{eq: WF A10} with \eqref{eq: WF phi chap X}, we obtain that for any $u\in \mathcal E'(S\widetilde M)$, one has
		\[\operatorname{WF} \big(g(h^2\Delta_{\widetilde{\mathbb V}})\widehat f(-\mathrm{i}\widetilde X)g(h^2\Delta_{\widetilde{\mathbb V}}) u\big)\subset \left\{(\mathbf x,\xi)~:~ \begin{array}{l}
			\exists\, t\neq 0,
			\xi(\mathbb V)=\{0\},\\
			\xi(X(\mathbf x))=0,
			\mathrm{d}\varphi_{-t}^{-\top}\xi(\mathbb V)=\{0\}
		\end{array}\right\}.\]
		Now, any $\xi$ that satisfies $\xi(\mathbb V)=\xi(\mathbb RX)=\xi(\mathrm{d}\varphi_{-t}\mathbb V)=\{0\}$ for some $t\neq 0$ must be zero, because 
		\[\mathbb RX\oplus \mathbb V\oplus \mathrm{d}\varphi_{-t}\mathbb V=T(SM).\]
		By dimension considerations, it is enough to show that for $t\neq 0$, $v\in (\mathbb RX\oplus \mathbb V)\cap \mathrm{d}\varphi_{-t}\mathbb V$ implies $v=0$. Consider such a $v$. Since the flow preserves the sum $\mathbb H\oplus \mathbb V$, we have $v\in (\mathbb RX\oplus \mathbb V)\cap (\mathbb H\oplus \mathbb V)$, and then $v\in \mathbb V$ because $\mathbb RX\oplus \mathbb H\oplus \mathbb V=T(SM)$. This shows $v=0$ because $\mathbb V\cap (\mathrm{d}\varphi_{-t}\mathbb V)=\{0\}$ for $t\neq 0$, by Klingenberg's result \cite{Klingenberg1974}.
		It follows that we have a continuous operator
		\[g(h^2\Delta_{\widetilde{\mathbb V}}) \widehat f(-\mathrm{i}\widetilde X)g(h^2\Delta_{\widetilde{\mathbb V}}):\mathcal E'(S\widetilde M)\to C^\infty(S\widetilde M).\]
		By \cite[Lemma 3.2.3]{Lefeuvre}, the kernel of this operator is smooth.\end{proof}

		\begin{proof}[End of the proof of Proposition~\ref{prop: Mh etX Mh}] We denote by $K(\mathbf x,\mathbf x')$ the kernel of the operator~$g(h^2\Delta_{\widetilde{\mathbb V}}) \widehat f(-\mathrm{i}\widetilde X)g(h^2\Delta_{\widetilde{\mathbb V}})$, which is smooth by Lemma~\ref{lem: K(w,z) continuous}. Denote by $a_\gamma$ the operator $L^2(F)\to L^2(F)$ with kernel $a_\gamma(\mathbf x,\mathbf x'):=K(\mathbf x,\gamma \mathbf x')$. Note that there is another isomorphism of Hilbert spaces 
		\begin{equation}\label{eq: deuxieme iso} L^2_{\rho_n}(S\widetilde M,V_n)\simeq L^2(F,V_n)\simeq L^2(F)\otimes V_n,\end{equation}
		where the first isomorphism is the restriction $f\mapsto f_{|F}$ which has inverse 
		\[s\mapsto \sum_{\gamma\in \Gamma} \rho_n(\gamma)s(\gamma^{-1}\bullet)\mathbf 1_{\gamma F}.\]
		Under the isomorphism \eqref{eq: deuxieme iso}, the operator \eqref{eq: recall form of the op} is unitarily conjugated to
		\[\sum_{\gamma\in \Gamma } a_\gamma\otimes \rho_n(\gamma):L^2(F)\otimes V_n\to L^2(F)\otimes V_n.\]
		Since $K(\mathbf x,\mathbf x')$ is continuous and $F$ is bounded, the operator $a_\gamma$ is Hilbert--Schmidt hence compact on $L^2(F)$. Since the geodesic flow has finite speed of propagation, there are finitely many $\gamma\in \Gamma$ such that $a_\gamma\neq 0$. We can thus invoke Proposition~\ref{prop: amplified} to deduce that for any $\varepsilon>0$, one has a.a.s.
		\begin{equation} \label{eq: strong convergence applied}\Big\|\sum_{\gamma\in \Gamma } a_\gamma\otimes \rho_n(\gamma)\Big\|_{L^2(F)\otimes V_n}\le \Big\|\sum_{\gamma\in \Gamma } a_\gamma\otimes \lambda_{\Gamma}(\gamma)\Big\|_{L^2(F)\otimes\ell^2(\Gamma)}+\varepsilon.\end{equation}
		Here $(\lambda_{\Gamma},\ell^2(\Gamma))$ is the regular representation of $\Gamma$. Finally, there is an isomorphism $L^2(F)\otimes \ell^2(\Gamma)\simeq L^2(S\widetilde M)$ that maps $u\otimes \delta_\gamma$ to the function $u(\gamma^{-1}\bullet )\cdot\mathbf 1_{\gamma F}$. Under this isomorphism, the operator on the right-hand side of \eqref{eq: strong convergence applied} identifies with 
		\[g(h^2\Delta_{\widetilde{\mathbb V}}) \widehat f(-\mathrm{i}\widetilde X)g(h^2\Delta_{\widetilde{\mathbb V}}):L^2(S\widetilde M)\to L^2(S\widetilde M).\]
		Thus we can recast \eqref{eq: strong convergence applied} into
		\[\|g(h^2\Delta_{\mathbb V}^E)\widehat f(-\mathrm{i}\mathbf X_{\mathcal E})g(h^2\Delta_{\mathbb V}^E)\|_{L^2(SM,\mathcal E_{\rho_n})}\le \big\|g(h^2\Delta_{\widetilde{\mathbb V}}) \widehat f(-\mathrm{i}\widetilde X)g(h^2\Delta_{\widetilde{\mathbb V}})\big\|_{L^2(S\widetilde M)}+\varepsilon,\]
		asymptotically almost surely. This concludes the proof.
	\end{proof}
	
	\section{Proof of the main Theorem} From now on we only work on $\mathcal M=SM$, where points are denoted with the letter $x$.
	
	\subsection{Anisotropic Sobolev spaces}\label{sec: aniso}
	 We recall the construction of the anisotropic Sobolev spaces introduced by Faure and Sj\"ostrand in \cite{Faure2011}. The starting point is the observation that for $f\in C^\infty(\mathcal M)$, the backward evolution $\mathrm{e}^{-tX}f=f\circ \varphi_{-t}$ produces fast oscillations in the stable direction, and slow oscillations in the unstable direction. This motivates considering an \emph{anisotropic} norm that allows irregularity in the stable direction but penalizes irregularity in the unstable direction.

	Recall the Anosov splitting $T\mathcal M=E_0\oplus E_u\oplus E_s$ from \eqref{eq: Anosov splitting}. Consider the dual distribution
	\[T^*\mathcal M=E_0^*\oplus E_s^*\oplus E_u^*,\]
	defined by $E_0^*(E_s\oplus E_u)=0$, $E_s^*(E_s\oplus E_0)=0$ and $E_u^*(E_u\oplus E_0)=0$. Then, for all $t\ge 0$,
	\[\forall \xi\in E_s^*, \ |(\mathrm{d}\varphi_t)^{-\top} \cdot \xi|\le C \mathrm{e}^{-ct}|\xi|, \qquad \forall \xi\in E_u^*, \ |(\mathrm{d}\varphi_{-t})^{-\top} \cdot \xi|\le C \mathrm{e}^{-ct}|\xi|.\]
	The subbundle $E_0^*$ is known as the \emph{trapped set}. We let $H$ denote the Hamiltonian vector field associated with the Hamiltonian $p(x,\xi)=\xi(X(x))$. Then, $H$ generates the flow
	\[\Phi_t(x,\xi)=\big(\varphi_t(x),(\mathrm{d}\varphi_t(x))^{-\top}\xi\big),\]
	which is the symplectic lift of $\varphi_t$ to $T^*\mathcal M$.
	
	 For the next proposition, we quote \cite{Lefeuvre}. The H\"ormander class of symbols $S_{1^-}^s(T^*\mathcal M)$ is introduced in Appendix~\ref{appendix}.
	
	\begin{prop}[Construction of an escape function, {\cite[Lemma 9.1.9]{Lefeuvre}}]\label{prop: construction escape} Let $N_0$ be a conical neighborhood of the \emph{trapped set} $E_0^*\subset T^*\mathcal M$. There is a smooth metric $g$ on $T^*\mathcal M$, a constant $C>0$, and an order function $m\in C^\infty(T^*\mathcal M,[-1,1])$, such that away from the zero section, $m\equiv 1$ in a conic neighborhood of $E_s^*$, $m\equiv -1$ in a conic neighborhood of $E_u^*$; and such that the function
		\[G_m:=m(x,\xi)\log|\xi|_g,\]
		satisfies $\mathrm{e}^{sG_m}\in S^s_{1^-}(T^*\mathcal M)$ for all $s\in \mathbb R$, and $G_m$ decreases along the flow lines of $\Phi_t$:
		\begin{itemize}
			\item $HG_m(x,\xi)\le 0$ for $|\xi|_g\ge 1$.
			\item $HG_m(x,\xi)\le -C$ for $|\xi|_g\ge 1$ outside of the conical neighborhood $N_0$ of $E_0^*$.
		\end{itemize}
		The function $G_m$ is called an \emph{escape function}.\end{prop}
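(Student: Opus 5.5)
We construct the escape function in the standard Faure--Sjöstrand way: first build the order function $m$ on the cosphere bundle, then extend it homogeneously of degree $0$ in $\xi$.

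\textbf{Step 1: the order function on $S^*\mathcal M$.} Let $\kappa:T^*\mathcal M\setminus 0\to S^*\mathcal M$ be the radial projection. It conjugates $\Phi_t$ to a flow $\tilde\Phi_t$ on $S^*\mathcal M$. The projected sets $\kappa(E_u^*)$, $\kappa(E_s^*)$ and $\kappa(E_0^*)$ are $\tilde\Phi_t$-invariant. By the Anosov estimates on $E_s^*$ and $E_u^*$, the set $\kappa(E_s^*)$ is an attractor and $\kappa(E_u^*)$ is a repeller. Every orbit outside $\kappa(E_0^*\oplus E_u^*)$ converges to $\kappa(E_s^*)$ in forward time. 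Similarly, every orbit outside $\kappa(E_0^*\oplus E_s^*)$ converges to $\kappa(E_u^*)$ in backward time.

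Take small conic neighborhoods $N_s$ of $E_s^*$ and $N_u$ of $E_u^*$. Choose a large time $T_0$ such that $\tilde\Phi_{T_0}$ maps the complement of a neighborhood of $\kappa(E_0^*\oplus E_u^*)$ into $N_s$, and symmetrically for $N_u$. Then set
\[m_0=\tfrac{1}{2T_0}\int_{-T_0}^{T_0}\big(\mathbf 1_{\text{fwd}}-\mathbf 1_{\text{bwd}}\big)\circ\tilde\Phi_t\,\mathrm dt,\]
using smooth cutoffs in place of indicators.

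This averaging gives $m_0\in[-1,1]$, with $m_0\equiv1$ near $\kappa(E_s^*)$ and $m_0\equiv-1$ near $\kappa(E_u^*)$. Its derivative along the flow is
\[\tilde H m_0=\frac{1}{2T_0}\big(\cdot\circ\tilde\Phi_{T_0}-\cdot\circ\tilde\Phi_{-T_0}\big).\]
This is $\ge 0$ everywhere and is $\ge c>0$ outside $\kappa(N_0)$: indeed, any point outside $\kappa(N_0)$ that is not near $E_s^*$ or $E_u^*$ is pushed forward into $N_s$ and backward into $N_u$. Near $E_s^*$ and $E_u^*$, where $m_0$ is constant, we use the next step instead.

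\textbf{Step 2: the escape function and its derivative.} Fix a metric $g$ adapted to the hyperbolicity, so that $H\log|\xi|_g\le -c$ on $N_s$ and $H\log|\xi|_g\ge c$ on $N_u$ for $|\xi|\ge1$. Such a metric exists by the usual adapted (Lyapunov) metric construction. Extend $m_0$ homogeneously of degree $0$, with a cutoff near the zero section, to obtain $m$. Writing $\tilde H$ for the generator of $\tilde\Phi_t$, we compute
\[HG_m=(\tilde H m_0)\log|\xi|_g+m\,H\log|\xi|_g.\]
Here $\tilde H m_0$ should be read as $-\tilde H m_0$ if we want $G_m$ to decrease. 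To get the correct sign, we choose conventions so that $m$ decreases along the flow from $+1$ near $E_s^*$ to $-1$ near $E_u^*$; hence $m_0$ must be built with $\tilde H m_0\le0$.

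We then check the two regions.
\begin{itemize}
\item On $N_s$: $m=1$, so $HG_m=H\log|\xi|\le -c$.
\item On $N_u$: $m=-1$, so $HG_m=-H\log|\xi|\le -c$.
\item In the transition region outside $N_0$: the first term is $\le -c\log|\xi|\le -c$ once $|\xi|\ge e$, and it dominates the bounded second term when $|\xi|$ is large.
\end{itemize}
Inside $N_0$, $m$ is locally constant up to small variation. We arrange $m_0$ to be constant on $\kappa(N_0)$ away from $N_s$ and $N_u$, which is possible after shrinking. There, $HG_m=m\,H\log|\xi|$, and $H\log|\xi|$ is small near $E_0^*$ because $E_0^*$ is invariant with bounded norm growth. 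This is why we only require $\le 0$ there. To make it exactly $\le0$, we choose $g$ so that $|\xi|_g$ is flow-invariant on $E_0^*$ (since $\xi(X)$ is conserved) and use a product structure. The failure for $1\le|\xi|\le e$ is fixed by rescaling the metric.

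\textbf{Step 3: symbol class.} The function $m$ is homogeneous of degree $0$ and smooth, so $\partial_\xi^\beta m=O(|\xi|^{-|\beta|})$. Hence $G_m\in S^{0^+}$ with logarithmic losses. It follows that $e^{sG_m}=|\xi|^{sm}$ lies in $S^{s}_{1^-}$: every derivative loses a factor $\log|\xi|$, which is absorbed by $|\xi|^{\varepsilon}$.

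The main obstacle is the sign bookkeeping near $E_0^*$ and in the intermediate region of Step 2. There, the $\log|\xi|$ factor must dominate the bounded term $m\,H\log|\xi|_g$, which requires the strict monotonicity of $m_0$ on the whole complement of $N_0\cup N_s\cup N_u$.
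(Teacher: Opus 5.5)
The paper gives no proof here. It quotes Lefeuvre's Lemma~9.1.9, which goes back to Faure--Sj\"ostrand, and your outline follows that construction: average cutoffs along the projectivized flow, use an adapted metric, and split $HG_m=(\tilde Hm)\log|\xi|_g+m\,H\log|\xi|_g$. However, Step~1 has the dynamics backwards. The map $(\mathrm d\varphi_t)^{-\top}$ contracts $E_s^*$, expands $E_u^*$ and preserves $\xi(X)$. So on the cosphere bundle, $\kappa(E_u^*)$ is the forward attractor (its basin is the complement of $\kappa(E_0^*\oplus E_s^*)$), $\kappa(E_s^*)$ is the repeller, and $\kappa(E_0^*)$ is a saddle. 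Your Step~2, with $H\log|\xi|_g\le -c$ on $N_s$, matches this picture and contradicts Step~1; ``reading $\tilde Hm_0$ as $-\tilde Hm_0$'' does not repair it. The consistent object is $m=\frac1{2T_0}\int_{-T_0}^{T_0}(\chi_s-\chi_u)\circ\tilde\Phi_t\,\mathrm dt$, with $\chi_s,\chi_u$ cutoffs near $\kappa(E_s^*),\kappa(E_u^*)$ that vanish near $\kappa(E_0^*)$. Also, points of $\kappa(E_0^*\oplus E_s^*)$ outside $N_0$ never enter $N_u$, since they converge to $\kappa(E_0^*)$. So ``pushed forward into one cone and backward into the other'' is false, and for such points the strict sign outside $N_0$ has to come from only one end of the time average.

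The genuine gap is in $N_0$. For \emph{any} smooth metric, $H\log|\xi|_g$ changes sign in every conic neighbourhood of $E_0^*$. Making $|\xi|_g$ invariant on $E_0^*$ only makes it vanish \emph{on} $E_0^*$. At $\xi_0+\eta$ with $\xi_0\in E_0^*$ and $\eta\in\{\xi(X)=0\}$ small, its leading part is either a nonzero linear form in $\eta$ or the quadratic form $\eta\mapsto H|\eta|_g^2$; by hyperbolicity the latter is positive somewhere on $E_u^*$ and negative somewhere on $E_s^*$. So if $m$ is a nonzero constant near $E_0^*$, then $HG_m=m\,H\log|\xi|_g$ is positive somewhere there, and no ``product structure'' fixes this. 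Moreover, $m$ cannot be constant on all of $N_0$ if $\tilde Hm\le -c$ just outside it, so a transition layer inside $N_0$ is unavoidable.

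The missing idea is that $m$ must \emph{vanish} near $E_0^*$. Your averaged $m$ does this automatically on a thinner cone $W_0$, whose orbits stay in $\{\chi_s=\chi_u=0\}$ for $|t|\le T_0$, so $HG_m=0$ on $W_0$. You then need a quantitative estimate on $N_0\setminus W_0$, where $m$ and $\tilde Hm$ are both small. Choose the cutoffs monotone along the flow, $\tilde H\chi_s\le 0\le\tilde H\chi_u$ (one preliminary time average gives this). Shrink $N_0$ so that $\chi_s\circ\tilde\Phi_{T_0}=\chi_u\circ\tilde\Phi_{-T_0}=0$ on $N_0$. Then on $N_0$ you get $|m|\le\chi_s\circ\tilde\Phi_{-T_0}+\chi_u\circ\tilde\Phi_{T_0}=2T_0|\tilde Hm|$, hence
\[HG_m\le|\tilde Hm|\,\big(2T_0\sup|H\log|\xi|_g|-\log|\xi|_g\big)\le 0\qquad\text{for }|\xi|_g\ge R.\]
Finally, no rescaling of $g$ brings the threshold down to $|\xi|_g=1$. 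Rescaling shifts $G_m$ by $-m\log R$, and at $|\xi|_g=1$ one always has $HG_m=m\,H\log|\xi|_g$. This vanishes somewhere on the connected set $\{|\xi|_g=1\}\setminus N_0$, because $H\log|\xi|_g$ changes sign there. The bounds can only hold for $|\xi|_g\ge R$ with $R$ large, and that is the version to prove.
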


	Let $\mathcal E\to \mathcal M$ be a unitary flat bundle. In Appendix~\ref{appendix}, we introduce a semiclassical quantization procedure $\operatorname{Op}_h^{\mathcal E}$ mapping symbols $a\in C^\infty(T^*\mathcal M)$ to operators acting on sections of $\mathcal E$. We refer to the Appendix for the properties of the $\operatorname{Op}_h^{\mathcal E}$-calculus.
	\begin{prop}[Anisotropic Sobolev spaces] For any $s\in \mathbb R$, there is a constant $h_0=h_0(s)>0$, such that for any $0<h<h_0$, and for any unitary flat bundle $\mathcal E\to \mathcal M$, the operator $\operatorname{Op}_h^{\mathcal E}(\mathrm{e}^{sG_m}):C^\infty(\mathcal M,\mathcal E)\to C^\infty(\mathcal M,\mathcal E)$ is invertible. We set
		\[\|u\|_{\mathcal H_h^s}:=\big\|\operatorname{Op}_h^{\mathcal E}(\mathrm{e}^{sG_m}) u\big\|_{L^2(\mathcal M,\mathcal E)}.\]
		The space $\mathcal H_h^s(\mathcal M,\mathcal E)$ is then defined as the closure
		\[\mathcal H_h^s(\mathcal M,\mathcal E):=\overline{C^\infty(\mathcal M,\mathcal E)}^{\|\bullet\|_{\mathcal H_h^s}}.\]
		Observe that $C^\infty(\mathcal M,\mathcal E)\subset \mathcal H_h^s(\mathcal M,\mathcal E)\subset \mathcal D'(\mathcal M,\mathcal E)$, with dense and continuous inclusions.\end{prop}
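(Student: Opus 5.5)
The statement is the existence and invertibility of $\operatorname{Op}_h^{\mathcal E}(\mathrm{e}^{sG_m})$, uniformly in the unitary flat bundle $\mathcal E$. My plan is to build an approximate inverse from the symbol $\mathrm{e}^{-sG_m}$ and correct it by a Neumann series. All remainders are controlled by the uniform-in-$\mathcal E$ calculus of Appendix~\ref{appendix}.

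\textbf{Step 1 (composition).} By Proposition~\ref{prop: construction escape}, both $\mathrm{e}^{sG_m}$ and $\mathrm{e}^{-sG_m}$ lie in the classes $S^{\pm s}_{1^-}(T^*\mathcal M)$. The composition theorem for $\operatorname{Op}_h^{\mathcal E}$ then gives
\[\operatorname{Op}_h^{\mathcal E}(\mathrm{e}^{sG_m})\operatorname{Op}_h^{\mathcal E}(\mathrm{e}^{-sG_m})=\operatorname{Id}+R_1(h),\qquad \operatorname{Op}_h^{\mathcal E}(\mathrm{e}^{-sG_m})\operatorname{Op}_h^{\mathcal E}(\mathrm{e}^{sG_m})=\operatorname{Id}+R_2(h).\]
The principal part of each product is the pointwise product $\mathrm{e}^{sG_m}\mathrm{e}^{-sG_m}=1$. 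Hence $R_i=\operatorname{Op}_h^{\mathcal E}(r_i)+\mathcal O(h^\infty)$ with $r_i\in h^{1-\epsilon}S^{0}_{1^-}$ for any small $\epsilon>0$. The loss $h^{-\epsilon}$ comes from the $1^-$ class: derivatives of $\log|\xi|$ and of $m$ produce only logarithmic losses, multiplied by a factor $s$.

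\textbf{Step 2 (bounding the remainder).} By the $L^2$-boundedness theorem for order-zero symbols, $\|R_i\|_{L^2(\mathcal M,\mathcal E)}\le C_s h^{1-\epsilon}$. Here $C_s$ depends only on finitely many seminorms of $r_i$. The key point is that $C_s$ does not depend on $\mathcal E$. This holds because the transition maps of $\mathcal E$ are constant unitary matrices, so the local pieces $R_j^*\chi_j\operatorname{Op}_h^{\mathcal M}(\cdot)R_j$ commute with the trivializations and behave exactly as in the scalar case. Choose $h_0(s)$ such that $C_s h_0^{1-\epsilon}<\tfrac12$. Then $\operatorname{Id}+R_i$ is invertible on $L^2$ by a Neumann series, with norm at most $2$. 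It follows that $\operatorname{Op}_h^{\mathcal E}(\mathrm{e}^{sG_m})$ has the right inverse $\operatorname{Op}_h^{\mathcal E}(\mathrm{e}^{-sG_m})(\operatorname{Id}+R_1)^{-1}$ and a left inverse of the same form. So it is invertible as an operator between the Sobolev-type spaces it naturally acts on.

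\textbf{Step 3 (invertibility on $C^\infty$).} To get invertibility on $C^\infty(\mathcal M,\mathcal E)$ itself, I would repeat the argument on each semiclassical Sobolev space $H^k_h$, using conjugation by $\operatorname{Op}_h^{\mathcal E}(\langle\xi\rangle^k)$. This shows that $(\operatorname{Id}+R_i)^{-1}$ preserves $H^k_h$ for all $k$. The threshold $h_0$ may shrink with $k$. To avoid that, I would instead argue as follows: $\operatorname{Op}_h^{\mathcal E}(\mathrm{e}^{sG_m})$ is elliptic, so elliptic regularity shows that any $L^2$ solution $u$ of $\operatorname{Op}_h^{\mathcal E}(\mathrm{e}^{sG_m})u=f$ with $f\in C^\infty$ is itself smooth. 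Combined with the $L^2$-invertibility, this gives a bijection on $C^\infty$.

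The density and continuity of the inclusions $C^\infty\subset\mathcal H_h^s\subset\mathcal D'$ then follow in the standard way. The map $\operatorname{Op}_h^{\mathcal E}(\mathrm{e}^{sG_m})^{-1}$ is bounded from $H^{-N}$ to $H^{-N-|s|}$, which makes the continuity of $\mathcal H_h^s\subset\mathcal D'$ clear. Density of $C^\infty$ in $\mathcal H_h^s$ holds by construction, since $\mathcal H_h^s$ is defined as a closure.

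\textbf{Main obstacle.} The main difficulty is Step 1–2, namely the uniformity in $\mathcal E$ of the composition and boundedness estimates in the exotic class $S_{1^-}$. I would handle it by reducing, through the appendix's definition of $\operatorname{Op}_h^{\mathcal E}$, to scalar local estimates tensored with the identity on the fibers.
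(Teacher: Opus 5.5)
Your proposal is correct and follows essentially the paper's route, namely the Appendix corollary on the inverse of $\operatorname{Op}_h^{\mathcal E}(\mathrm{e}^{sG_m})$: the composition formula, uniform in $\mathcal E$, gives $\operatorname{Op}_h^{\mathcal E}(\mathrm{e}^{\mp sG_m})\operatorname{Op}_h^{\mathcal E}(\mathrm{e}^{\pm sG_m})=\operatorname{Id}+\mathcal O_s(h)_{L^2\to L^2}$, a Neumann series inverts this on $L^2$ for $h<h_0(s)$, and smoothness follows from elliptic regularity, which is what the paper's bootstrap (the remainder gains $2\rho-1$ derivatives) amounts to. Using both compositions, as you do, supplies the right inverse needed for surjectivity, which the paper's one-sided argument leaves implicit; your only inaccuracies are cosmetic: the remainder is in fact $\mathcal O(h)$ (the $S_{1^-}$ losses are in powers of $\langle\xi\rangle$, not of $h$), and the right-inverse solution is a priori only in $H^{-|s|-\epsilon}$ rather than $L^2$, which is harmless since elliptic regularity applies to any distribution.
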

	\begin{proof}This follows from Corollary~\ref{cor: inverse op (e^sg)} in Appendix~\ref{appendix}.
	\end{proof}
	The unbounded operator $\mathbf X_{\mathcal E}:\mathcal H_h^s(\mathcal M,\mathcal E)\to \mathcal H_h^s(\mathcal M,\mathcal E)$ has domain 
	\[\mathcal D(\mathbf X_{\mathcal E})=\big\{u\in \mathcal H_h^s(\mathcal M,\mathcal E)~:~ \mathbf X_{\mathcal E}u\in \mathcal H_h^s(\mathcal M,\mathcal E)\big\},\] where $\mathbf X_{\mathcal E}u$ is understood in the sense of distributions.

	\begin{prop}[Strong continuity of $\mathrm{e}^{-t\mathbf X_{\mathcal E}}$] \label{prop: strong continuity group} The propagator $\mathrm{e}^{-t\mathbf X_{\mathcal E}}$ extends to a strongly continuous semigroup on $\mathcal H_h^s(\mathcal M,\mathcal E)$, for $t\ge 0$. There are constants $C_s,\nu_s,h_0(s)>0$ depending only on $s$ such that for $0<h<h_0(s)$, we have
		\[\big\|\mathrm{e}^{-t\mathbf X_{\mathcal E}}\big\|_{\mathcal H_h^s(\mathcal M,\mathcal E)}\le C_s \mathrm{e}^{\nu_s t}, \qquad \forall t\ge 0\]
	\end{prop}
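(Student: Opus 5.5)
The strategy is to conjugate by the escape function and reduce to an $L^2$ bound on an operator whose generator is $\mathbf X_{\mathcal E}$ plus a pseudodifferential perturbation of order $0$, with constants uniform in the flat bundle $\mathcal E$. Write $A_s:=\operatorname{Op}_h^{\mathcal E}(\mathrm{e}^{sG_m})$. By the definition of $\mathcal H_h^s$, bounding $\mathrm{e}^{-t\mathbf X_{\mathcal E}}$ on $\mathcal H_h^s$ is equivalent to bounding $A_s\mathrm{e}^{-t\mathbf X_{\mathcal E}}A_s^{-1}$ on $L^2(\mathcal M,\mathcal E)$; the inverse exists for $h<h_0(s)$ by Corollary~\ref{cor: inverse op (e^sg)}. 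Formally this conjugated propagator is $\mathrm{e}^{-t\mathbf P}$ with
\[\mathbf P:=A_s\mathbf X_{\mathcal E}A_s^{-1}=\mathbf X_{\mathcal E}+[A_s,\mathbf X_{\mathcal E}]A_s^{-1}.\]

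First, I would use the $\operatorname{Op}_h^{\mathcal E}$-calculus of the Appendix to compute $[A_s,\mathbf X_{\mathcal E}]A_s^{-1}$. In local trivializations the transition maps are constant unitaries, so $\mathbf X_{\mathcal E}$ acts as the scalar vector field $X$ tensored with the identity. Hence the symbol calculus reduces to the scalar one, with remainders uniform in $\mathcal E$. Writing $h\mathbf X_{\mathcal E}=\mathrm{i}\operatorname{Op}_h^{\mathcal E}(p)$ with $p=\xi(X)$, the principal symbol of $[A_s,\mathbf X_{\mathcal E}]A_s^{-1}$ is
\[-\{p,\mathrm{e}^{sG_m}\}\mathrm{e}^{-sG_m}=-sHG_m,\]
up to sign convention. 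By Proposition~\ref{prop: construction escape}, $HG_m$ lies in $S^0_{1^-}$ and is $\le 0$ for $|\xi|\ge 1$. The remainder lies in $hS^{-1+}$ and is $\mathcal O_s(h)$ in operator norm, uniformly in $\mathcal E$. The low frequency region $|\xi|\le 1$, where the sign of $HG_m$ is not controlled, contributes a bounded symbol with bound $C|s|$. So $\mathbf P=\mathbf X_{\mathcal E}+\mathbf B$, where $\mathbf B$ is bounded on $L^2$ and its real part $\operatorname{Re}\mathbf B=\frac12(\mathbf B+\mathbf B^*)$ has principal symbol $-sHG_m\ge -C_s$. The sign depends on convention; the point is that the symbol is bounded.

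Next, I would use the sharp G\aa rding inequality in the $\operatorname{Op}_h^{\mathcal E}$ calculus, again uniform in $\mathcal E$ because it is proved chart by chart with scalar symbols. This gives $\operatorname{Re}\langle \mathbf B u,u\rangle\ge -\nu_s\|u\|^2$ for $h<h_0(s)$. Since $\mathbf X_{\mathcal E}$ is skew-adjoint on $L^2(\mathcal M,\mathcal E)$, Liouville being invariant and the bundle unitary flat, the operator $-\mathbf P$ is quasi-dissipative: $\operatorname{Re}\langle -\mathbf Pu,u\rangle\le \nu_s\|u\|^2$ on smooth sections. For $u$ smooth, differentiating $\|\mathrm{e}^{-t\mathbf P}u\|^2$ and applying Gr\"onwall gives $\|\mathrm{e}^{-t\mathbf P}\|_{L^2}\le \mathrm{e}^{\nu_s t}$. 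Equivalently, $\mathbf P$ is a bounded perturbation of the skew-adjoint generator $\mathbf X_{\mathcal E}$, so the bounded perturbation theorem gives a $C_0$-semigroup, and Lumer--Phillips yields the bound $\mathrm{e}^{\nu_s t}$. I would also check that $\mathrm{e}^{-t\mathbf P}$ agrees with $A_s\mathrm{e}^{-t\mathbf X_{\mathcal E}}A_s^{-1}$ on a dense set, namely $A_sC^\infty$. This follows from uniqueness of solutions of $\partial_t v=-\mathbf Pv$ in $C^\infty$, since both sides preserve $C^\infty$. Transporting back then gives the bound in the statement, and strong continuity follows from the perturbation theorem.

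The main obstacle is uniformity in $\mathcal E$ together with the control of $[A_s,\mathbf X_{\mathcal E}]A_s^{-1}$ as a bounded operator. The symbol $\mathrm{e}^{sG_m}$ lies in the exotic class $S_{1^-}$, so the composition remainders are only $\mathcal O(h^{1-})$ after losses. I would first verify that the Appendix calculus gives the commutator expansion with error $\mathcal O_s(h^{1-\epsilon})_{L^2\to L^2}$, uniformly over unitary flat bundles. This holds because each term is a finite sum of chart contributions $R_j^*\chi_j\operatorname{Op}_h^{\mathcal M}(\cdot)R_j$, in which the bundle enters only through unitary factors of norm one.
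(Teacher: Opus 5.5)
Your route (conjugate the generator, then prove a dissipativity estimate) differs from the paper's. The step it rests on is false: $HG_m$ is not in $S^0_{1^-}$, so $\mathbf B=[A_s,\mathbf X_{\mathcal E}]A_s^{-1}$ is not bounded on $L^2$. Proposition~\ref{prop: construction escape} gives only \emph{upper} bounds on $HG_m$, and $HG_m$ is in fact unbounded below. For $|\xi|\ge 1$, $m$ is $0$-homogeneous and $H$ commutes with fibre dilations, so
\[
HG_m=(Hm)\log|\xi|_g+m\,H\big(\log|\xi|_g\big).
\]
The second term is bounded. The first is not: $Hm$ is $0$-homogeneous and must be strictly negative somewhere, because $\Phi_t$ carries directions from the conic neighbourhood of $E_s^*$ where $m\equiv 1$ into that of $E_u^*$ where $m\equiv -1$. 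Along such a ray, $-sHG_m\sim s|Hm|\log|\xi|\to+\infty$, so $\mathbf B$ has order $0^+$, not $0$.

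This has three consequences. First, $\mathbf P$ is not a bounded perturbation of $\mathbf X_{\mathcal E}$, so the Phillips perturbation theorem does not produce $\mathrm e^{-t\mathbf P}$. Second, the strong continuity you draw from that theorem is unsupported. Third, the sign is not a matter of convention: only the one-sided bound $-sHG_m\ge -C_s$ (for $s\ge 0$) holds, and that bound is the entire mechanism.

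Your G\aa rding--Gr\"onwall branch can be rescued, but it needs more than you state and more than the Appendix provides.
\begin{enumerate}
\item You need a G\aa rding inequality $\operatorname{Re}\langle \operatorname{Op}_h^{\mathcal E}(b)u,u\rangle\ge -C\|u\|^2$ for real symbols $b\in S^{\epsilon}_\rho$ with $\epsilon<2\rho-1$ that are bounded below, uniformly in $\mathcal E$. The Appendix composition rule requires total order $\le 0$, so it does not cover this.
\item You must control $\mathbf B-\operatorname{Op}_h^{\mathcal E}(-sHG_m)$ in the quadratic-form sense. The $\mathcal O(h)_{L^2\to L^2}$ error in Corollary~\ref{cor: inverse op (e^sg)} is not enough, because it gets composed with an unbounded operator; you need the remainder to gain $2\rho-1$ derivatives.
\item Apply Gr\"onwall directly to $v(t)=A_s\mathrm e^{-t\mathbf X_{\mathcal E}}A_s^{-1}w$ with $w$ smooth. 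This $v$ is smooth and solves $\partial_t v=-\mathbf Pv$, so you never need to construct $\mathrm e^{-t\mathbf P}$. Strong continuity then follows from the uniform bound together with continuity on $C^\infty$.
\end{enumerate}

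The paper sidesteps all of this. By the semigroup property it only bounds $A_s\mathrm e^{-t\mathbf X_{\mathcal E}}A_s^{-1}$ for $t\in[0,t_0]$. It does so with Proposition~\ref{prop: Egorov theorem}, which is uniform in $\mathcal E$ because for small $t$ the flow commutes with the constant transition matrices. The relevant symbol is then $\mathrm e^{s(G_m\circ\Phi_t-G_m)}$, which is bounded because
\[
G_m\circ\Phi_t-G_m=\int_0^t HG_m\circ\Phi_\tau\,\mathrm d\tau
\]
is bounded \emph{above}, even though it is not bounded below. Iterating gives $C^{\lceil t/t_0\rceil}\le C_s\mathrm e^{\nu_s t}$. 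Working with the propagator needs only $L^2$-continuity and composition in total order $\le 0$. Working with the generator needs a positivity estimate for a logarithmically growing exotic symbol.
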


	\begin{proof}By the semi-group property, it is enough to prove the result for $0\le t\le t_0$ for some $t_0>0$. Since $\mathrm{e}^{t\mathbf X_{\mathcal E}}$ is unitary on $L^2(\mathcal M,\mathcal E)$, we equivalently need to show that there is a constant $C$ such that for all $0\le t\le t_0$,
		\[\|\mathrm{e}^{t\mathbf X_{\mathcal E}}\operatorname{Op}_h^{\mathcal E}(\mathrm{e}^{sG_m})\mathrm{e}^{-t\mathbf X_{\mathcal E}}\operatorname{Op}_h^{\mathcal E}(\mathrm{e}^{sG_m})^{-1}\|_{L^2(\mathcal M,\mathcal E)}\le C.\]
		The operator norm above is bounded by
		\[ \|\mathrm{e}^{t\mathbf X_{\mathcal E}}\operatorname{Op}_h^{\mathcal E}(\mathrm{e}^{sG_m})\mathrm{e}^{-t\mathbf X_{\mathcal E}}\operatorname{Op}_h^{\mathcal E}(\mathrm{e}^{-sG_m})\|\cdot \|\operatorname{Op}_h^{\mathcal E}(\mathrm{e}^{-sG_m})^{-1} \operatorname{Op}_h^{\mathcal E}(\mathrm{e}^{sG_m})^{-1}\|.\]
		The proposition then follows from Corollary~\ref{cor: inverse op (e^sg)} and Proposition~\ref{prop: Egorov theorem}.

	\end{proof}
	
	The anisotropic Sobolev spaces $\mathcal H_h^s(\mathcal M,\mathcal E)$ can be used as in \cite{Faure2011} to obtain the following
	\begin{prop}[Meromorphic continuation of the resolvent, {\cite[Theorem 1.5]{Faure2011}}] Let $\mathcal E\to \mathcal M$ be a unitary flat bundle over $\mathcal M$. Then, the family of operators $(\mathbf X_{\mathcal E}+z)^{-1}:C^\infty(\mathcal M,\mathcal E)\to \mathcal D'(\mathcal M,\mathcal E)$, initially defined for $\operatorname{Re}(z)>0$ by formula \eqref{eq: def resolvent}, continues meromorphically to $\mathbb{C}$. The poles are of finite rank and are called Pollicott--Ruelle resonances.
	\end{prop}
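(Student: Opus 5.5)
The plan is to follow the Faure--Sjöstrand strategy on the anisotropic spaces $\mathcal H_h^s(\mathcal M,\mathcal E)$, using the bounds established so far: the semigroup bound of Proposition~\ref{prop: strong continuity group}, the escape function of Proposition~\ref{prop: construction escape}, and the calculus of the Appendix, which is uniform in the bundle. Fix $s>0$ and $h<h_0(s)$. For $\operatorname{Re}(z)>\nu_s$, the Laplace-transform integral $\int_0^\infty \mathrm{e}^{-zt}\mathrm{e}^{-t\mathbf X_{\mathcal E}}\,\mathrm{d}t$ converges in $\mathcal L(\mathcal H_h^s)$. Since $\mathbf X_{\mathcal E}$ is the generator of the semigroup, this integral equals $(\mathbf X_{\mathcal E}+z)^{-1}$ on $\mathcal H_h^s$. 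Restricted to $C^\infty$, it coincides with the formula \eqref{eq: def resolvent} on $\operatorname{Re}(z)>0$, because both are Laplace transforms of the same propagator. This gives holomorphy on a right half-plane.

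The main step is to show that $\mathbf X_{\mathcal E}+z:\mathcal D(\mathbf X_{\mathcal E})\to\mathcal H_h^s$ is Fredholm of index zero on the half-plane $\{\operatorname{Re}(z)>-cs+C\}$, for some constants $c,C>0$ independent of $s$. Conjugating by $\operatorname{Op}_h^{\mathcal E}(\mathrm{e}^{sG_m})$ (Corollary~\ref{cor: inverse op (e^sg)}), this amounts to studying on $L^2$ the operator
\[
P_s(z)=\operatorname{Op}_h^{\mathcal E}(\mathrm{e}^{sG_m})(\mathbf X_{\mathcal E}+z)\operatorname{Op}_h^{\mathcal E}(\mathrm{e}^{sG_m})^{-1}=\mathbf X_{\mathcal E}+z-s\operatorname{Op}_h^{\mathcal E}(HG_m)+\mathcal O(h)_{\Psi^0}.
\]
Add a finite-rank, or compact, absorbing term $\chi_h:=\operatorname{Op}_h^{\mathcal E}(\chi(|\xi|))$ localizing to $|\xi|\le R$. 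Since $\mathcal E$ has finite rank and $\mathcal M$ is compact, $\chi_h$ is compact. Outside $N_0$, $-sHG_m\ge cs$, so the real part of $P_s+\chi_h$ is bounded below by $\operatorname{Re}(z)+cs-C$ by sharp Gårding. Inside $N_0$, where only $HG_m\le0$ holds, use instead the ellipticity of $h\mathbf X_{\mathcal E}$ (the symbol $\xi(X)$ is comparable to $|\xi|$ there) to get invertibility of $P_s+\mathrm{i}M\chi_h$ microlocally. Patching these with a microlocal partition gives a two-sided parametrix with compact error. I expect this patching near $E_0^*$ to be the main obstacle: positivity from the escape function degenerates there, and one must combine a Gårding estimate with the elliptic estimate without losing uniformity. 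My first choice would be to follow the argument of \cite{Faure2011}, replacing scalar symbols by scalar-times-identity symbols on $\mathcal E$.

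Once Fredholmness is known on this half-plane, the conclusion follows from analytic Fredholm theory, since invertibility holds for large $\operatorname{Re}(z)$. The inverse $(\mathbf X_{\mathcal E}+z)^{-1}$ is meromorphic on $\{\operatorname{Re}(z)>-cs+C\}$ with finite-rank poles. The continuations for different $s$ agree on overlaps: they coincide for $\operatorname{Re}(z)$ large as maps $C^\infty\to\mathcal D'$, and $C^\infty$ is dense in each $\mathcal H_h^s$. Letting $s\to\infty$ then yields the meromorphic continuation to all of $\mathbb C$, with finite-rank residues.
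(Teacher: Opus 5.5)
Your proposal is correct and follows the paper's route. The paper gives no independent argument: it simply runs \cite[Theorem 1.5]{Faure2011} on the spaces $\mathcal H_h^s(\mathcal M,\mathcal E)$, with the bundle entering only through constant unitary transition matrices, and that is exactly your scheme (Laplace transform for large $\operatorname{Re}(z)$, conjugation by $\operatorname{Op}_h^{\mathcal E}(\mathrm{e}^{sG_m})$, Fredholmness on $\{\operatorname{Re}(z)>-cs+C\}$ from the escape function away from $E_0^*$ and ellipticity of $\mathbf X_{\mathcal E}$ near $E_0^*$, analytic Fredholm theory, then $s\to+\infty$).

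Two cosmetic points. Use the same real, positive absorbing term $M\chi_h$ in both the G\aa rding and the elliptic steps, since an $\mathrm{i}M\chi_h$ term contributes nothing to the real part. Also, uniformity in $\mathcal E$ is not needed for this statement, because the bundle is fixed.
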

	
	The set of Pollicott--Ruelle resonances of $\mathbf X_{\mathcal E}$ is denoted by $\operatorname{Res}(\mathbf X_{\mathcal E})$. Given a smooth splitting $\mathcal E=\mathcal E_1\oplus \mathcal E_2$, one has
	\[\operatorname{Res}(\mathbf X_{\mathcal E})=\operatorname{Res}(\mathbf X_{\mathcal E_1})\cup \operatorname{Res}(\mathbf X_{\mathcal E_2}),\]
	where the union is taken with multiplicity.

	\begin{rem}Some authors prefer working with the meromorphic continuation of the family of operators $(-\mathbf X_{\mathcal E}+z)^{-1}$, defined for $\operatorname{Re}(z)>0$ by
		\[(-\mathbf X_{\mathcal E}+z)^{-1}=\int_0^{+\infty}\mathrm{e}^{-zt}\mathrm{e}^{t\mathbf X_{\mathcal E}}\mathrm{d}t.\]
		In our case, since $-\mathrm{i}\mathbf X_{\mathcal E}$ is self-adjoint on $L^2(\mathcal M,\mathcal E)$, we have for any $f,g\in C^\infty(\mathcal M,\mathcal E)$:
		\[\big\langle (-\mathbf X_{\mathcal E}+z)^{-1} f,g\big\rangle_{\mathcal D'(\mathcal M,\mathcal E),C^\infty(\mathcal M,\mathcal E)}=\big\langle (\mathbf X_{\mathcal E}+\bar z)^{-1}g,f\big\rangle_{\mathcal D'(\mathcal M,\mathcal E),C^\infty(\mathcal M,\mathcal E)}^*,\]
		where the $*$ denotes complex conjugation. Thus the sets of resonances associated with these two operators are related to each other via complex conjugation. In the case where $\mathcal E\to \mathcal M$ comes from a real representation (\emph{e.g.} when $\mathcal E$ is associated with a covering $M'\to M$), the resonance set is actually invariant under complex conjugation, so this distinction is immaterial.
		
	\end{rem}
	
	Theorem \ref{thm: extension resolvent} follows from the more precise result:
	\begin{thm}\label{thm: extension resolvent Hs} Let $(M,g)$ be a closed Anosov manifold, with fundamental group $\Gamma$. Let $\{\rho_n\}_{n=1}^{+\infty}$ be a sequence of random unitary representations of $\Gamma$, which strongly converges to $(\lambda_\Gamma,\ell^2(\Gamma))$ a.a.s. Let $\mathcal E_n:=\mathcal E_{\rho_n}$ denote the associated unitary flat bundles over $\mathcal M$. Then, for any compact subset $\mathcal K\subset \{\operatorname{Re}(z)>\delta_0\}$, there are constants $h=h(\mathcal K)>0$ and $C(\mathcal K)>0$ such that a.a.s., for all $z\in \mathcal K$, the operator $(\mathbf X_{\mathcal E_n}+z):\mathcal D(\mathbf X_{\mathcal E_n})\subset \mathcal H_h^s(\mathcal M,\mathcal E_n)\to \mathcal H_h^s(\mathcal M,\mathcal E_n)$ is invertible, with bounded inverse:
		\[\forall z\in \mathcal K, \quad \big\|(\mathbf X_{\mathcal E_n}+z)^{-1}\big\|_{ \mathcal H_h^s(\mathcal M,\mathcal E_n)\to \mathcal H_h^s(\mathcal M,\mathcal E_n)}\le C(\mathcal K).\]\end{thm}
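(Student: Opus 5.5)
We follow the strategy of the overview: build an approximate inverse by truncated time integration and show the remainder is small in $\mathcal H_h^s$. Fix $\chi\in C_c^\infty([-1,2])$ with $\chi\equiv1$ on $[0,1]$ and set
\[\mathbf Q(z)=\int_0^{+\infty}\chi(t/T)\mathrm e^{-zt}\mathrm e^{-t\mathbf X_{\mathcal E_n}}\mathrm dt,\qquad \mathbf R(z)=\frac1T\int_0^{+\infty}\chi'(t/T)\mathrm e^{-zt}\mathrm e^{-t\mathbf X_{\mathcal E_n}}\mathrm dt.\]
Then $(\mathbf X_{\mathcal E_n}+z)\mathbf Q(z)=\mathbf Q(z)(\mathbf X_{\mathcal E_n}+z)=\operatorname{Id}+\mathbf R(z)$ on the domain. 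By Proposition~\ref{prop: strong continuity group}, $\mathbf Q(z)$ is bounded on $\mathcal H_h^s$ with norm at most $C_sT\mathrm e^{(\nu_s+|z|)2T}$, which is uniform for $z\in\mathcal K$ once $T$ is fixed. It therefore suffices to prove $\sup_{z\in\mathcal K}\|\mathbf R(z)\|_{\mathcal H_h^s}\le\frac12$ a.a.s. A Neumann series then gives a two-sided inverse $\mathbf Q(z)(\operatorname{Id}+\mathbf R(z))^{-1}$ with norm at most $C(\mathcal K)$.

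\textbf{Splitting.} Write $\operatorname{Id}=\operatorname{Op}_h^{\mathcal E}(a_1)+\operatorname{Op}_h^{\mathcal E}(a_2)+\operatorname{Op}_h^{\mathcal E}(a_3)+\mathcal O(h^\infty)$, uniformly in $\mathcal E$, with $a_1$ supported in $|\xi|\le2$, $a_2$ supported in a thin conic neighborhood $N_0$ of $E_0^*$ at $|\xi|\ge1$, and $a_3$ supported away from $N_0$. Insert the splitting on both sides of $\mathrm e^{-t\mathbf X}$ for $t\in[T,2T]$, factoring $\mathrm e^{-t\mathbf X}=\mathrm e^{-t_0\mathbf X}\mathrm e^{-(t-2t_0)\mathbf X}\mathrm e^{-t_0\mathbf X}$ where convenient.

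\textbf{The three estimates.}

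\emph{Step 3 estimate, terms containing $a_3$.} Using \eqref{eq: away trapped set intro} (an Egorov argument with the escape function, Proposition~\ref{prop: construction escape}), each such term gains $C\mathrm e^{-C_0s}$. Combined with the growth bound $C_s\mathrm e^{\nu_s t}$ this is not yet small. Here I would rather iterate: use $HG_m\le0$ everywhere, so the propagator is essentially a contraction on $\mathcal H_h^s$ up to $\mathcal O(h)$ errors, i.e.\ $\nu_s$ can be taken to be $\mathcal O(h)$ after microlocal bookkeeping. Choosing $s$ large then makes these terms at most $\frac18$.

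\emph{Step 2 estimate, terms involving $a_2$.} Since $\xi(X)\ge|\xi|/2$ there, $-\mathrm ih\mathbf X$ is elliptic of size at least $1/2$. Integrating by parts in $t$,
\[\int\chi'(t/T)\mathrm e^{-zt}\mathrm e^{-t\mathbf X}\operatorname{Op}(a_2)\,\mathrm dt=\int\partial_t\big(\chi'(t/T)\mathrm e^{-zt}\big)\mathrm e^{-t\mathbf X}\mathbf X^{-1}\operatorname{Op}(a_2)\,\mathrm dt\]
with a microlocal parametrix satisfying $\|\mathbf X^{-1}\operatorname{Op}(a_2)\|=\mathcal O(h)$. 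Repeating gains powers of $h|z|$, which is small since $z\in\mathcal K$.

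\emph{Step 1 estimate, the $a_1$--$a_1$ term.} Here $a_1$ is microlocally $g(h^2\Delta_{\mathbb V}^E)$ by Proposition~\ref{prop: inserting g(h2lap) highdim bundle}. At low frequency the $\mathcal H_h^s$ and $L^2$ norms are comparable up to $\mathcal O_s(1)$. Apply Proposition~\ref{prop: Mh etX Mh} with $f(t)=T^{-1}\chi'(t/T)\mathrm e^{-zt}$, then Proposition~\ref{prop: fourier flow fourier} and Proposition~\ref{prop: operator norm spherical mean}. This gives a.a.s.
\[\lesssim C_\varepsilon h^{-(d-1)}\mathrm e^{(\delta_0+\varepsilon-\operatorname{Re}z)T}+\varepsilon',\]
which is small once $\varepsilon<\frac12\operatorname{dist}(\mathcal K,\{\operatorname{Re} z=\delta_0\})$ and $T=K|\log h|$ with $K$ large. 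Uniformity in $z\in\mathcal K$ follows by taking a finite net of $z$ together with Lipschitz dependence in $z$; only finitely many a.a.s.\ events are needed.

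\textbf{Order of parameters and main obstacle.} Choose $\varepsilon$, then $s$, then $K$, then $h$. The main obstacle is the cross terms $a_1\cdot a_3$ and the control of the propagator growth over the long time $T\sim|\log h|$. The constant $\nu_s$ would destroy smallness unless it is shown to be $\mathcal O(h)$ (or compensated by $\operatorname{Re}z$). I would handle this by a sharp G\aa rding argument on $\frac{\mathrm d}{\mathrm dt}\|\mathrm e^{-t\mathbf X}u\|_{\mathcal H_h^s}^2$, which gives growth $\mathrm e^{Cht}$. That growth is harmless when $T=K|\log h|$.
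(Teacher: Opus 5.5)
Your skeleton matches the paper: the truncated-time parametrix $\mathbf Q(z)$, the Neumann series, the finite net in $z$, the low-frequency term via Propositions~\ref{prop: Mh etX Mh} and \ref{prop: fourier flow fourier}, and integration by parts near $E_0^*$. But your treatment of every term containing $a_3$, including the cross terms you flag, has a genuine gap.

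\textbf{Why the $a_3$ estimate fails.} The escape function gives only a \emph{one-time} gain $C\mathrm e^{-C_0s}$, with no decay in $t$. Since $\delta_0<0$, $\mathcal K$ may contain points with $\delta_0<\operatorname{Re}z\le 0$, and these points are the whole content of the theorem. There the weight is large:
\[\frac1T\int_0^{+\infty}|\chi'(t/T)|\,|\mathrm e^{-zt}|\,\mathrm dt\gtrsim \mathrm e^{|\operatorname{Re}z|T}=h^{-K|\operatorname{Re}z|}.\]
Since $s$ is fixed before $K$ and $h$, a bound of the form $C\mathrm e^{-C_0s}\, h^{-K|\operatorname{Re}z|}$ is never $\le\frac18$. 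This fails even if $\mathrm e^{-t\mathbf X_{\mathcal E}}$ were an exact contraction on $\mathcal H_h^s$.

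\textbf{Why $\nu_s=\mathcal O(h)$ is not available.} Proposition~\ref{prop: construction escape} gives $HG_m\le 0$ only on $\{|\xi|_g\ge 1\}$. In your G\aa rding computation, with $v=\operatorname{Op}_h^{\mathcal E}(\mathrm e^{sG_m})\mathrm e^{-t\mathbf X_{\mathcal E}}u$, the leading term of $\frac{\mathrm d}{\mathrm dt}\|v\|^2$ is $2s\operatorname{Re}\langle\operatorname{Op}_h^{\mathcal E}(HG_m)v,v\rangle$. This is an $h^0$ term, so you only get a growth rate of about $2s\sup(HG_m)_+$, coming from the low-frequency region.

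\textbf{Why the cross terms lose the decay.} Bounding $\operatorname{Op}_h^{\mathcal E}(a_1)\mathrm e^{-t\mathbf X_{\mathcal E}}\operatorname{Op}_h^{\mathcal E}(a_3)$ by $\|\mathrm e^{-(t-t_0)\mathbf X_{\mathcal E}}\|\,\|\mathrm e^{-t_0\mathbf X_{\mathcal E}}\operatorname{Op}_h^{\mathcal E}(a_3)\|$ throws away the $\mathrm e^{\delta_0 t}$ decay of low-frequency propagation, which is the only source of decay there. Mass can enter $\{|\xi|\le 2\}$ from $\operatorname{Supp}a_3$ along $E_s^*$, spend most of $[T,2T]$ there, and (for the $(3,3)$ term) leave again along $E_u^*$. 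That decay can only be harvested by re-splitting at intermediate times.

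\textbf{The missing idea: a discrete-time recursion.} Cut time into steps of length $t_0$. Set $\mathbf A_{i,j}(n):=\operatorname{Op}_h^{\mathcal E}(a_i\mathrm e^{sG_m})\widehat\varphi(-\mathrm i\mathbf X_{\mathcal E})\mathrm e^{-nt_0\mathbf X_{\mathcal E}}\operatorname{Op}_h^{\mathcal E}(a_j\mathrm e^{-sG_m})$ and $\mathbf N(n):=\sum_{i,j}\|\mathbf A_{i,j}(n)\|$. Then prove
\[\mathbf N(n)\le C_1\big[h^{-d}\mathrm e^{(\delta_0+\varepsilon)nt_0}+h^N+\mathrm e^{-C_0s}\,\mathbf N(n-1)\big],\]
with $C_1$ independent of $s$. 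Each piece is controlled as follows:
\begin{itemize}
\item The $(1,1)$ term is controlled by strong convergence at \emph{every} intermediate time $nt_0$. These are finitely many a.a.s.\ events, since $h$ and $T$ are fixed before the degree goes to infinity.
\item The $a_2$ terms are $\mathcal O(h^N)$, with $N$ chosen after $K$.
\item Any term containing $a_3$ is traded for $\mathbf N(n-1)$, at the cost of one step and a gain of $\mathrm e^{-C_0s}$.
\end{itemize}

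\textbf{What the recursion yields.} Iterating, the per-step gain compounds into exponential decay at rate about $(C_0s-\log C_1)/t_0$, which beats $|\delta_0|$ once $s$ is large. The recursion is started at $n_0t_0\sim d|\log h|/\varepsilon$, beyond which $h^{-d}\le \mathrm e^{\varepsilon nt_0}$. The crude bound $C_s\mathrm e^{\nu_st}$ is used only up to $n_0$, and this is compensated by taking $K$ large. The result is
\[\|\widehat\varphi(-\mathrm i\mathbf X_{\mathcal E})\mathrm e^{-nt_0\mathbf X_{\mathcal E}}\|_{\mathcal H_h^s}\le \mathrm e^{(\delta_0+\varepsilon)nt_0}\quad\text{for } nt_0\in[T,2T]\]
(Proposition~\ref{prop: long time aniso}). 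This is genuine decay rather than boundedness, which is exactly what is needed against $|\mathrm e^{-zt}|$ when $\operatorname{Re}z\le 0$.
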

	
		\begin{rem}Faure and Tsujii \cite{faure2022fractal,faure2024micro} introduced some different anisotropic Sobolev spaces $\mathcal H_W(\mathcal M,\mathcal E)$ to study the Ruelle spectrum. These depend on some escape function $W\in C^\infty(T^*\mathcal M)$ and are defined using wave-packet transform. These spaces allow for finer results; for example when the bundle $\mathcal E\to \mathcal M$ is fixed, one has some uniform resolvent bounds in a vertical strip left to the imaginary axis---away from the resonances---, as opposed to the polynomial bounds in $|\operatorname{Im}(z)|$ obtained in \cite{nonnenmacher2015decay} using the spaces $\mathcal H_h^s(\mathcal M,E)$. Since Theorem \ref{thm: extension resolvent Hs} is only concerned with the resolvent $(\mathbf X_{\mathcal E}+z)^{-1}$ for $z$ in a compact set this distinction might appear irrelevant, however it might be useful for future applications to obtain the analogue of Theorem \ref{thm: extension resolvent Hs} when working in the spaces $\mathcal H_W(\mathcal M,\mathcal E)$ instead. The result should follow by running again the proof in the present paper with the spaces $\mathcal H_W$, relying on semiclassical analysis with wave-packets instead of Weyl quantization. \end{rem}
	
	\subsection{Approximate resolvent} Let $\chi$ be a smooth function, compactly supported on $[-1,2]$, such that $\chi\equiv 1$ on $[0,1]$. Motivated by the expression \eqref{eq: def resolvent}, we introduce an approximate resolvent
	\[\mathbf Q(z):=\int_{0}^{+\infty}\chi(t/T)\mathrm{e}^{-zt}\mathrm{e}^{-t\mathbf X_{\mathcal E}}\mathrm{d}t,\]
	Then, $\mathbf Q(z)$ is continuous $\mathcal H_h^s(\mathcal M,\mathcal E)\to \mathcal H_h^s(\mathcal M,\mathcal E)$. Indeed, by Proposition~\ref{prop: strong continuity group},
	\begin{equation} \label{eq: estimate Q(z)}\|\mathbf Q(z)\|_{\mathcal H_h^s(\mathcal M,\mathcal E)}\le \int_{0}^{2T} \mathrm{e}^{-\operatorname{Re}(z)t}C_s\mathrm{e}^{\nu_st}\mathrm{d}t\le C_s \mathrm{e}^{2(\nu_s-\operatorname{Re}(z))T}. \end{equation}
	Note that the estimate on $\|\mathbf Q(z)\|$ is uniform with respect to $\mathcal E$. One has
	\[\mathbf Q(z)(\mathbf X_{\mathcal E}+z)=\operatorname{Id}+\mathbf R(z)\quad \text{on $\mathcal D(\mathbf X_{\mathcal E})$},\]
	and
	\[(\mathbf X_{\mathcal E}+z)\mathbf Q(z)=\operatorname{Id}+\mathbf R(z)\quad \text{on $\mathcal H_h^s(\mathcal M,\mathcal E)$},\]
	where $\mathbf R(z):\mathcal H_h^s(\mathcal M,\mathcal E)\to \mathcal H_h^s(\mathcal M,\mathcal E)$ is a holomorphic family of operators given by
	\[\mathbf R(z)=\frac 1T\int_0^{+\infty} \chi'(t/T) \mathrm{e}^{-zt}\mathrm{e}^{-t\mathbf X_{\mathcal E}}\mathrm{d}t.\]
	The gain is that $\chi'(\bullet/T)$ is supported on $[T,2T]$ with $T$ to be taken large, which is crucial to apply the results of  \S\ref{sec: spherical mean} and  \S\ref{sec: proba estimates} on the large time propagation by the flow. If one can prove that $\|\mathbf R(z)\|_{\mathcal H_h^s}\le \frac 12$ for all $z\in \mathcal K$, then the operator $\operatorname{Id}+\mathbf R(z)$ can be inverted using a Neumann series, leading to
	\[(\operatorname{Id}+\mathbf R(z))^{-1}\mathbf Q(z)(\mathbf X_{\mathcal E}+z)=(\mathbf X_{\mathcal E}+z)\mathbf Q(z)(\operatorname{Id}+\mathbf R(z))^{-1}=\operatorname{Id}_{\mathcal H_h^s(\mathcal M,\mathcal E)}.\]
	
	Thus, our main theorem will follow from
	\begin{prop} Given a sequence of random representations $(\rho_n,V_n)$ of $\Gamma$ which strongly converges a.a.s. to the regular representation, we have asymptotically almost surely,
		\[\forall z\in \mathcal K, \quad \|\mathbf R(z)\|_{\mathcal H_h^s(\mathcal M,\mathcal E_{\rho_n})}=\Big\|\frac 1T\int_0^{+\infty} \chi'(t/T) \mathrm{e}^{-zt}\mathrm{e}^{-t\mathbf X_{\mathcal E_{\rho_n}}}\mathrm{d}t\Big\|_{\mathcal H_h^s(\mathcal M,\mathcal E_{\rho_n})}\le \frac 12.\]\end{prop}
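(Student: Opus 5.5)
The plan is to split $\mathbf R(z)$ microlocally and run the parameter choices of Step 4 of the overview. Fix a partition $1=a_1+a_2+a_3$ on $T^*\mathcal M$: $a_1$ supported in $\{|\xi|\le 2\}$, $a_2$ supported in a thin conic neighbourhood $N_0$ of $E_0^*$ intersected with $\{|\xi|\ge 1\}$, and $a_3$ supported away from $N_0$ with $|\xi|\ge 1$. Write $A_i=\operatorname{Op}_h^{\mathcal E}(a_i)$ and, for $T\le t\le 2T$,
\[
\mathrm{e}^{-t\mathbf X_{\mathcal E}}=\mathrm{e}^{-t_0\mathbf X_{\mathcal E}}\,\mathrm{e}^{-(t-2t_0)\mathbf X_{\mathcal E}}\,\mathrm{e}^{-t_0\mathbf X_{\mathcal E}} .
\]
Insert $\sum_i A_i$ on both sides of the middle factor. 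This expands $\mathbf R(z)$ into terms $\frac1T\int\chi'(t/T)\mathrm{e}^{-zt}\,\mathrm{e}^{-t_0\mathbf X}A_i\,\mathrm{e}^{-(t-2t_0)\mathbf X}A_j\,\mathrm{e}^{-t_0\mathbf X}\,\mathrm{d}t$, up to $\mathcal O(h^\infty)$ errors that are uniform in $\mathcal E$ by the Appendix calculus. Each term is then bounded in $\mathcal H_h^s$. Each $\mathrm{e}^{-t\mathbf X}$ costs at most $C_s\mathrm{e}^{\nu_s t}$ by Proposition~\ref{prop: strong continuity group}, and $|\mathrm{e}^{-zt}|\le \mathrm{e}^{-\operatorname{Re}(z)t}$.

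\emph{Terms containing $A_3$.} If $i=3$ or $j=3$, use \eqref{eq: away trapped set intro}, i.e. Egorov plus the escape function of Proposition~\ref{prop: construction escape}, to bound $\|\mathrm{e}^{-t_0\mathbf X}A_3\|_{\mathcal H_h^s}$ or $\|A_3\mathrm{e}^{-t_0\mathbf X}\|_{\mathcal H_h^s}$ by $C\mathrm{e}^{-C_0s}$. Applying this at every step of length $t_0$ through $[T,2T]$, the same escape argument gives a factor like $\mathrm{e}^{-(C_0s-\nu)t}$, where $\nu$ depends only on the $a_2$-part. Choosing $s$ large compared with $\sup_{\mathcal K}|\operatorname{Re} z|$ and with $\delta_0$ makes these terms $\le 1/8$.

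\emph{Terms containing $A_2$.} Here $-\mathrm{i}h\mathbf X_{\mathcal E}$ is elliptic (symbol $\xi(X)\ge |\xi|/2\ge 1/2$) while $h|z|\ll1$. Integrate by parts in $t$, writing $\mathrm{e}^{-t\mathbf X}=-\mathbf X^{-1}\partial_t\mathrm{e}^{-t\mathbf X}$ microlocally on $\operatorname{Supp}a_2$ via a parametrix $B$ with $B\,h\mathbf X A_2=A_2+\mathcal O(h^\infty)$. Each integration by parts puts a derivative on $\frac1T\chi'(t/T)\mathrm{e}^{-zt}$ and gains a factor $h(|z|+T^{-1})$. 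After $N$ steps the term is $\mathcal O\big((h\langle z\rangle)^N\mathrm{e}^{C_sT}\big)$, which is small once $h\le \mathrm{e}^{-CT}$.

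\emph{The term $A_1(\cdots)A_1$: the main step.} On $\{|\xi|\le 2\}$ the anisotropic weight $\mathrm{e}^{sG_m}$ is bounded above and below by $\mathcal O_s(1)$, so the $\mathcal H_h^s$ norm is comparable to the $L^2$ norm. By Proposition~\ref{prop: inserting g(h2lap) highdim bundle} (rescaling $h$ so that $a_1$ lives in $|\xi|\le1$), $A_1=A_1g(h^2\Delta_{\mathbb V}^E)+\mathcal O(h^\infty)$, and similarly on the left. Hence the term is bounded by
\[
C_s\big\|g(h^2\Delta^E_{\mathbb V})\,\widehat{f_z}(-\mathrm{i}\mathbf X_{\mathcal E})\,g(h^2\Delta^E_{\mathbb V})\big\|_{L^2}\,\mathrm{e}^{2\nu_s t_0},
\qquad
f_z(t)=\tfrac1T\chi'(t/T)\mathrm{e}^{-zt}.
\]
I would chop $f_z$ into pieces supported on unit intervals in $[T,2T]$ and apply the Corollary after Proposition~\ref{prop: Mh etX Mh} to each, uniformly over a finite net of $z\in\mathcal K$, with continuity in $z$ handled by \eqref{eq: estimate Q(z)}-type bounds. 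This gives a.a.s.
\[
\lesssim C_\varepsilon h^{-(d-1)}\mathrm{e}^{(\delta_0+\varepsilon-\operatorname{Re}z)T}.
\]
Since $\operatorname{Re}z\ge\delta_0+2\varepsilon$ on $\mathcal K$ for small $\varepsilon$, taking $T=K|\log h|$ with $K\varepsilon>d$ makes this $\le 1/8$.

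The main obstacle is making the order of parameters consistent. $s$ is fixed first, then $\varepsilon$, then $K$; but the $A_2$ terms need $h$ small relative to $\mathrm{e}^{-C_sT}=h^{C_sK}$, which is circular if one only gains finitely many powers of $h$. I would therefore arrange the $A_2$ estimate to gain $h^\infty$, using an $\mathcal O(h^\infty)$ parametrix and integrating by parts $N\gg C_sK$ times. I would also keep every constant uniform in $\mathcal E$ via the Appendix, so that only the $A_1$ term uses probability. Finally, the deterministic choice of $h$ is made before $n\to\infty$, and the finitely many a.a.s. events (finite $z$-net, finite time pieces) are intersected.
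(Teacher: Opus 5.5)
There is a genuine gap in your treatment of the terms containing $A_3$. The one-step bound $\|\mathrm{e}^{-t_0\mathbf X_{\mathcal E}}A_3\|_{\mathcal H_h^s}\le C\mathrm{e}^{-C_0 s}$ gives you only a single factor $\mathrm{e}^{-C_0 s}$. That factor cannot beat the remaining $\|\mathrm{e}^{-(t-2t_0)\mathbf X_{\mathcal E}}\|_{\mathcal H_h^s}\le C_s\mathrm{e}^{\nu_s T}$, since $T=K|\log h|$.

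Your claim that ``applying this at every step'' upgrades the factor to $\mathrm{e}^{-(C_0 s-\nu)t}$, with $\nu$ coming only from the $a_2$-part, overlooks the low-frequency region. At each intermediate step you must re-split with $a_1+a_2+a_3$, and only the $a_3$-piece is damped. Under $\Phi_t$, covectors in $\operatorname{Supp}a_3$ with dominant $E_s^*$-component shrink into $\{|\xi|\le 2\}$. Conversely, low-frequency covectors grow into the $a_3$-region near $E_u^*$. So terms such as $A_3\,\mathrm{e}^{-(t-2t_0)\mathbf X_{\mathcal E}}A_1$, $A_1\,\mathrm{e}^{-(t-2t_0)\mathbf X_{\mathcal E}}A_3$, or $A_3\cdots A_3$ with a long low-frequency stretch in between, are governed by low-frequency propagation.

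These terms therefore cannot be bounded uniformly in $\mathcal E$ by anything better than that propagation. For instance, suppose $\mathcal E$ has a resonance $z_0$ with $\operatorname{Re}z_0>\delta_0$ whose resonant state has nontrivial high-frequency content near $E_u^*$. Then such terms are roughly of size $\mathrm{e}^{\operatorname{Re}(z_0)t}$, far larger than $\mathrm{e}^{-(C_0s-\nu)t}$ for large $s$. So the $A_3$-terms are not a deterministic, $\mathcal E$-uniform contribution: they need the probabilistic input as well, at every intermediate time.

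The missing idea is the paper's one-step recursion on an aggregate quantity. Keep the time-average inside every piece by setting
\[\mathbf A_{i,j}(n)=\operatorname{Op}_h^{\mathcal E}(a_i\mathrm{e}^{sG_m})\,\widehat\varphi(-\mathrm{i}\mathbf X_{\mathcal E})\,\mathrm{e}^{-nt_0\mathbf X_{\mathcal E}}\operatorname{Op}_h^{\mathcal E}(a_j\mathrm{e}^{-sG_m}),\qquad \mathbf N(n)=\sum_{i,j}\|\mathbf A_{i,j}(n)\|.\]
The pieces are then controlled as follows.
\begin{itemize}
\item Since $\widehat\varphi(-\mathrm{i}\mathbf X_{\mathcal E})$ commutes with the propagator, the averaging-along-the-flow estimate makes $\mathbf A_{i,2}(n)$ and $\mathbf A_{2,i}(n)$ of size $\mathcal O(h^N)$ at every $n$, not only at one cut time.
\item $\mathbf A_{1,1}(n)$ is bounded a.a.s.\ by $h^{-d}\mathrm{e}^{(\delta_0+\varepsilon)nt_0}$.
\item Whenever $a_3$ sits at an end, you peel off one step $t_0$ and get $C_1\mathrm{e}^{-C_0s}\mathbf N(n-1)$.
\end{itemize}
This gives
\[\mathbf N(n)\le C_1\bigl[h^{-d}\mathrm{e}^{(\delta_0+\varepsilon)nt_0}+h^N+\mathrm{e}^{-C_0 s}\mathbf N(n-1)\bigr].\]
You iterate this from $n_0\approx\frac{d}{\varepsilon t_0}|\log h|$. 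At $n_0$ you use the crude semigroup bound, which is absorbed into $\mathrm{e}^{2\varepsilon T}$ once $K$ is large. You choose $s$ so that $C_1\mathrm{e}^{-C_0s}\mathrm{e}^{-(\delta_0+2\varepsilon)t_0}<\frac12$.

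The a.a.s.\ events are then the bounds on $\mathbf A_{1,1}(m)$ for all $m\in[n_0,2T/t_0]$, together with your finite $z$-net. These are finitely many once $h$ is fixed, so your final bookkeeping remark still applies. Your $A_1A_1$ argument and your $h^\infty$ integration by parts for $A_2$ match the corresponding lemmas in the paper. The step that fails is the single middle cut combined with treating the $A_3$-terms deterministically.
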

	
	\subsection{Reduction to a finite number of $z$.} The strong convergence techniques only allow dealing with a \emph{finite} number of operators. We show that it is enough to prove that $\|\mathbf R(z)\|_{\mathcal H_h^s}\le \frac 14$ for a finite number of $z\in \mathcal K$ to obtain $\|\mathbf R(z)\|<\frac 12$ for all $z\in \mathcal K$. This procedure is analogous to those performed in \cite{HM,MCN}.
	\begin{lem}There is a constant $C_s$ such that for any $z,z'$ with $\operatorname{Re}(z),\operatorname{Re}(z')\ge \delta_0$,
		\[\|\mathbf R(z)-\mathbf R(z')\|_{\mathcal H_h^s}\le |z-z'|\mathrm{e}^{C_sT}.\]
	\end{lem}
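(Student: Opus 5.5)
The plan is to write the difference directly from the definition of $\mathbf R$ and estimate it with the semigroup bound of Proposition~\ref{prop: strong continuity group}. We have
\[\mathbf R(z)-\mathbf R(z')=\frac 1T\int_0^{+\infty}\chi'(t/T)\big(\mathrm{e}^{-zt}-\mathrm{e}^{-z't}\big)\mathrm{e}^{-t\mathbf X_{\mathcal E}}\mathrm{d}t,\]
and since $\chi'(\cdot/T)$ is supported in $[T,2T]$ (recall $\chi\equiv 1$ on $[0,1]$ and $\chi$ is supported in $[-1,2]$, and we integrate over $t\ge 0$), only $t\in[T,2T]$ contributes. The scalar factor is controlled by the mean value theorem along the segment $[z,z']$:
\[|\mathrm{e}^{-zt}-\mathrm{e}^{-z't}|\le t|z-z'|\sup_{w\in[z,z']}\mathrm{e}^{-\operatorname{Re}(w)t}\le t|z-z'|\mathrm{e}^{-\delta_0 t},\]
using that $\operatorname{Re}(w)\ge\delta_0$ on the segment.

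Then I combine this with $\|\mathrm{e}^{-t\mathbf X_{\mathcal E}}\|_{\mathcal H_h^s}\le C_s\mathrm{e}^{\nu_s t}$ for $0<h<h_0(s)$, uniformly in the bundle $\mathcal E$, to get
\[\|\mathbf R(z)-\mathbf R(z')\|_{\mathcal H_h^s}\le \frac{\|\chi'\|_\infty}{T}\,|z-z'|\int_T^{2T} C_s\, t\,\mathrm{e}^{(\nu_s-\delta_0)t}\mathrm{d}t\le C\|\chi'\|_\infty C_s\, T\,\mathrm{e}^{2(\nu_s+|\delta_0|)T}|z-z'|.\]
To absorb the polynomial factor and the constants, I use $T\le \mathrm{e}^{T}$ and, assuming $T\ge 1$, increase the constant in the exponent; this yields the stated bound $|z-z'|\mathrm{e}^{C_sT}$ with a new $C_s$ depending only on $s$ (and on the fixed cutoff $\chi$). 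The estimate is uniform in $\mathcal E$ and in $h<h_0(s)$.

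There is no genuine obstacle here. The only point needing care is that the constant be independent of $\mathcal E$ and $h$, which is exactly what Proposition~\ref{prop: strong continuity group} provides. I would also check that the integral defining $\mathbf R(z)$ converges in operator norm on $\mathcal H_h^s$ (it converges, since the integrand is strongly continuous and compactly supported in $t$), so that the norm estimate can be taken inside the integral. If one prefers not to assume $T\ge1$, replace $T$ by $\max(T,1)$ in the absorption step.
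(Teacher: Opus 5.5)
Your proof is correct and follows the paper's argument: the triangle inequality inside the integral (supported in $[T,2T]$), a Lipschitz bound for $z\mapsto \mathrm{e}^{-zt}$ on $\{\operatorname{Re} z\ge \delta_0\}$, and the $\mathcal E$-uniform semigroup bound of Proposition~\ref{prop: strong continuity group}. You are in fact slightly more careful than the paper: its bound $|\mathrm{e}^{-zt}-\mathrm{e}^{-z't}|\le|z-z'|\mathrm{e}^{-\delta_0 t}$ omits the factor $t$ and leaves a prefactor $C_s$, whereas you keep $t$ and absorb everything into $\mathrm{e}^{C_sT}$ using $T\ge 1$. (One small correction: the integral only converges strongly rather than in operator norm, but a strong integral is already enough to take norms inside.)
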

	\begin{proof}
		Since $|\mathrm{e}^{-zt}-\mathrm{e}^{-z't}|\le |z-z'|\mathrm{e}^{-\delta_0t}$, it follows from the triangle inequality that
		\[\|\mathbf R(z)-\mathbf R(z')\|_{\mathcal H_h^s}\le |z-z'| \frac 1T\int |\chi'(t/T)| \mathrm{e}^{-\delta_0t} \|\mathrm{e}^{-t\mathbf X_{\mathcal E}}\|\mathrm{d}t.\]
		Since $\|\mathrm{e}^{-t\mathbf X_{\mathcal E}}\|_{\mathcal H_h^s}\le C_s\mathrm{e}^{\nu_s t}$ (recall Proposition~\ref{prop: strong continuity group}) and $\chi$ is supported in $[1,2]$ we infer
		\[\|\mathbf R(z)-\mathbf R(z')\|_{\mathcal H_h^s}\le C_s|z-z'| \mathrm{e}^{2(\nu_s-\delta_0)T}.\]
	\end{proof}
	Consequently, if $\|\mathbf R(z)\|_{ \mathcal H_h^s(\mathcal M,\mathcal E)}\le \frac 14$ then $\|\mathbf R(z')\|_{ \mathcal H_h^s(\mathcal M,\mathcal E)}\le \frac 12$ for $z'$ in a small neighborhood of $z$, which is independent of the bundle $\mathcal E$, but depends on $T$ and $s$. Since $\mathcal K$ is compact, it follows that we only have to show that $\|\mathbf R(z)\|\le \frac 14$ for $z$ in a \emph{finite} subset $\mathcal I\subset \mathcal K$.
	
	So, we are left with showing:
	\begin{prop}\label{prop: R(z) for finite number of z} There is a choice of $(T,s,h)$ such that for any finite subset $\mathcal I\subset \mathcal K$, one has a.a.s.
		\[\forall z\in \mathcal I, \ \|\mathbf R(z)\|_{\mathcal H_h^s}\le \frac 14.\] \end{prop}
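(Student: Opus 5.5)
We follow the four-step strategy outlined in the introduction. Fix $\mathcal K$, and let $\sigma>0$ with $\operatorname{Re}(z)\ge \delta_0+2\sigma$ on $\mathcal K$; take $\varepsilon=\sigma/2$. Choose a partition $1=a_1+a_2+a_3$ on $T^*\mathcal M$, where $a_1$ is supported in $\{|\xi|\le 2\}$ (in fact in $\{|\xi|\le 1\}$ after rescaling, so that Proposition~\ref{prop: inserting g(h2lap) highdim bundle} applies), $a_2$ in a conic neighbourhood $N_0$ of $E_0^*$ intersected with $\{|\xi|\ge 1\}$, and $a_3$ away from $E_0^*$ at energies $|\xi|\ge1$. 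We write $\mathbf R(z)=\frac1T\int\chi'(t/T)\mathrm e^{-zt}\mathrm e^{-t\mathbf X_{\mathcal E}}\mathrm dt$ and split the time interval $[T,2T]$ as $t=t_0+t'+t_0$, with $t_0$ fixed. We then insert the partition on both sides, $\mathrm e^{-t\mathbf X}=\sum_{i,j}\operatorname{Op}_h^{\mathcal E}(a_i)\mathrm e^{-t_0\mathbf X}\mathrm e^{-t'\mathbf X}\mathrm e^{-t_0\mathbf X}\operatorname{Op}_h^{\mathcal E}(a_j)$, after conjugating by $\operatorname{Op}_h^{\mathcal E}(\mathrm e^{sG_m})$ to work on $L^2$.

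\emph{Terms involving $a_3$.} By the estimate \eqref{eq: away trapped set intro} (Egorov plus the escape function of Proposition~\ref{prop: construction escape}), each such term is bounded in $\mathcal H_h^s$ by $C\mathrm e^{-C_0s}\cdot C_s\mathrm e^{\nu_s t}$. This is too crude over a time of length $T$. We therefore place the $a_3$ cutoff at the extremity of the propagation and use the decay $HG_m\le -C$ off $N_0$ repeatedly along $[T,2T]$. Alternatively, we split the propagation into steps of length $t_0$ and apply the partition at each step, so that any chain that passes through $a_3$ gains $\mathrm e^{-C_0s}$ and never grows. Making this bookkeeping uniform is one delicate point.

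\emph{Terms involving $a_2$.} Near $E_0^*$ with $|\xi|\ge1$, the operator $-\mathrm ih\mathbf X_{\mathcal E}$ is elliptic of size $\gtrsim 1$. We integrate by parts in $t$: $\mathrm e^{-t\mathbf X}=-\partial_t(\mathrm e^{-t\mathbf X})\mathbf X^{-1}$ microlocally. Each integration gains $h\,(1+|z|)/T$ from the derivative falling on $\chi'(t/T)\mathrm e^{-zt}$, which has size $\lesssim 1+|z|$. Repeating $N$ times gives $\mathcal O((h(1+|z|))^N\mathrm e^{C_sT})$. With $T=K|\log h|$ and $N$ large, this is $\le 1/100$.

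\emph{Low-frequency term $a_1\cdots a_1$.} By Proposition~\ref{prop: inserting g(h2lap) highdim bundle} we replace $\operatorname{Op}_h^{\mathcal E}(a_1)$ by $\operatorname{Op}_h^{\mathcal E}(a_1)g(h^2\Delta^E_{\mathbb V})$, up to $\mathcal O(h^\infty)$ errors uniform in $\mathcal E$. On $\operatorname{Supp}a_1$ the weight $\mathrm e^{sG_m}$ is bounded by $C^{|s|}$, so the anisotropic norm is comparable to $L^2$. We cut $\frac1T\chi'(t/T)\mathrm e^{-zt}$ into finitely many pieces $f_k$ supported in $[T+k,T+k+1]$, $k\le T$. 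Applying the Corollary after Proposition~\ref{prop: Mh etX Mh} to each piece (finitely many $z\in\mathcal I$ and $k$, hence finitely many applications of strong convergence, all holding a.a.s. simultaneously) gives
\[
\Big\|g\,\widehat{f}(-\mathrm i\mathbf X_{\mathcal E})\,g\Big\|\lesssim C_\varepsilon h^{-(d-1)}\sum_k \mathrm e^{(\delta_0+\varepsilon-\operatorname{Re}z)(T+k)}\lesssim h^{-(d-1)}\mathrm e^{-\sigma T/2}.
\]
With $T=K|\log h|$ and $K\sigma/2>d-1+1$, this is $\le h$, hence $\le 1/100$ for small $h$.

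\emph{Ordering of parameters.} First choose $s$ large so that the $a_3$ gain $C\mathrm e^{-C_0s}$ beats the $\mathcal O(1)$ constants. Then choose $K$, depending on $s,\varepsilon,\mathcal K$. Finally choose $h$ small, so that $h^{N}\mathrm e^{C_sK|\log h|}$ and the $h^\infty$ errors are small. Note that $T$ depends on $h$ but the number of strong-convergence events is finite for fixed $h$, which is what permits the a.a.s. statement. We expect the main obstacle to be the cross terms: controlling $a_1$–$a_2$ and $a_3$ chains over time $T\sim|\log h|$ without losing the $\mathrm e^{\nu_sT}$ growth from Proposition~\ref{prop: strong continuity group}. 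We would handle this by showing that the escape function makes $\mathrm e^{-t\mathbf X}$ almost contractive on $\mathcal H^s_h$, with norm $\le 1+\mathcal O(h)$ per unit time, away from low frequencies.
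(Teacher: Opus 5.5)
Your $(1,1)$ low-frequency step and your treatment of blocks with $a_2$ at an endpoint are fine. Integrating by parts in $t$ over the whole window against a microlocal inverse of $-\mathrm{i}h\mathbf X_{\mathcal E}$ near $E_0^*$ is a legitimate alternative to the paper's short-window chart argument, provided that parametrix is built uniformly in $\mathcal E$ and is bounded on $\mathcal H_h^s$.

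The genuine gap is the $a_3$ part, which you leave open, and neither suggested fix works.
\begin{itemize}
\item \emph{One step only.} A step of length $t_0$ next to $\operatorname{Op}_h^{\mathcal E}(a_3)$ gains $C_1\mathrm{e}^{-C_0s}$ once. The remaining propagation is then only bounded by $C_s\mathrm{e}^{\nu_sT}$, and the near-contractivity you hope for would merely make it $\mathcal O(1)$. That is not enough. Since $\delta_0<0$, $\mathcal K$ may contain $z$ with $\operatorname{Re}(z)<0$, so the weight $\frac1T\chi'(t/T)\mathrm{e}^{-zt}$ has size at least $\mathrm{e}^{|\operatorname{Re}z|T}$ on $[T,2T]$. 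The $a_3$ blocks must therefore themselves decay like $\mathrm{e}^{(\delta_0+\varepsilon)t}$.
\item \emph{Repeated gain along one trajectory.} This fails because covectors leave $\operatorname{Supp}a_3$. They drift toward $E_0^*$, where only $HG_m\le 0$ holds, or, near $E_s^*$, they contract into $\{|\xi|\le 2\}$.
\item \emph{Partition at every step.} This creates $3^{T/t_0}=h^{-K\log 3/t_0}$ chains, a loss that must be offset. Chains with interior $a_1$ cutoffs are also not what the strong-convergence corollary controls.
\end{itemize}
In short, mass starting in $\operatorname{Supp}a_3$ must be re-estimated by the trapped-set and probabilistic low-frequency bounds at \emph{intermediate} times. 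You use strong convergence only for the $(1,1)$ block of $\mathbf R(z)$ on $[T,2T]$.

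The missing idea is the paper's recursion. Write $\mathbf R(z)=\sum_n\mathrm{e}^{-znt_0}\widehat\varphi_{n,T}(-\mathrm{i}\mathbf X_{\mathcal E})\mathrm{e}^{-nt_0\mathbf X_{\mathcal E}}$ with $\varphi_{n,T}$ supported in $[-2t_0,2t_0]$. For a fixed such $\varphi$, set $\mathbf A_{i,j}(n)=\operatorname{Op}_h^{\mathcal E}(a_i\mathrm{e}^{sG_m})\widehat\varphi(-\mathrm{i}\mathbf X_{\mathcal E})\mathrm{e}^{-nt_0\mathbf X_{\mathcal E}}\operatorname{Op}_h^{\mathcal E}(a_j\mathrm{e}^{-sG_m})$ and $\mathbf N(n)=\sum_{i,j}\|\mathbf A_{i,j}(n)\|$.

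The semigroup property, $\operatorname{Op}_h^{\mathcal E}(\mathrm{e}^{sG_m})^{-1}=\operatorname{Op}_h^{\mathcal E}(\mathrm{e}^{-sG_m})(\operatorname{Id}+\mathcal O(h))$ and Egorov give $\|\mathbf A_{i,3}(n)\|,\|\mathbf A_{3,i}(n)\|\le C_1\mathrm{e}^{-C_0s}\sum_j\|\mathbf A_{i,j}(n-1)\|$. So the $a_3$ blocks at step $n$ are controlled by all nine blocks at step $n-1$, whence
\[\mathbf N(n)\le C_1\big[h^{-d}\mathrm{e}^{(\delta_0+\varepsilon)nt_0}+h^N+\mathrm{e}^{-C_0s}\mathbf N(n-1)\big].\]
The first term is the strong-convergence bound. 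It is now needed for every $n$ from $n_0\approx\frac{d}{\varepsilon t_0}|\log h|$ (so that $h^{-d}$ is absorbed into $\mathrm{e}^{\varepsilon nt_0}$) up to $2T/t_0$; for fixed $h$ these are still finitely many a.a.s.\ events.

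Iterating down to $n_0$ with the crude bound $\mathbf N(n_0)\le C_3\mathrm{e}^{C_3n_0}$ fixes the parameters as follows.
\begin{itemize}
\item Choose $s$ so large that $C_1\mathrm{e}^{-C_0s}<\frac12\mathrm{e}^{(\delta_0+2\varepsilon)t_0}$. The per-step gain must beat the target rate $\mathrm{e}^{\delta_0t_0}<1$, not merely $\mathcal O(1)$ constants as in your parameter ordering.
\item Then choose $K$ large, depending on $s$ and $\varepsilon$, so that the transient from $n_0\lesssim_\varepsilon|\log h|$ is absorbed into $\mathrm{e}^{\varepsilon T}$.
\end{itemize}
This yields $\mathbf N(n)\lesssim\mathrm{e}^{(\delta_0+3\varepsilon)nt_0}$ for $nt_0\in[T,2T]$. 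That beats $\mathrm{e}^{-\operatorname{Re}(z)nt_0}$ once $\varepsilon$ is small relative to $\operatorname{Re}(z)-\delta_0$ on $\mathcal K$, giving $\|\mathbf R(z)\|_{\mathcal H_h^s}\lesssim\frac{T}{t_0}\mathrm{e}^{-\varepsilon T}$.
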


	\subsection{Averaging along the flow} \label{sec: averaging along the flows} Before turning to the proof of Proposition \ref{prop: R(z) for finite number of z}, we show that integrating along the flow lines damps functions that oscillate on scales $\lesssim h$ in the flow direction; this relies on the microlocal ellipticity of the operator $-\mathrm{i}h\mathbf X_{\mathcal E}$ in a conical neighborhood of the flow direction. This property will be used to control the contributions to the resolvent from points in a neighborhood of the trapped set. 
\begin{prop}\label{prop: averaging along $X$} Let $f$ be a smooth function compactly supported on a sufficiently small neighborhood of $0$. Let $a,b\in S^{m_1}(T^*\mathcal M)$ and $b\in S^{m_2}(T^*\mathcal M)$ with $a$ or $b$ supported on the cone $\{\xi~:~\xi(X)\ge \frac{|\xi|}{2}\ge \frac 12\}$. Then for any unitary flat bundle $\mathcal E\to \mathcal M$, one has
	\[\operatorname{Op}_h^{\mathcal E}(a) \widehat f(-\mathrm{i}\mathbf X_{\mathcal E})\operatorname{Op}_h^{\mathcal E}(b)=\mathcal O_{a,b}(h^\infty)_{L^2\to L^2}.\]
	The error is uniform with respect to $\mathcal E$.
\end{prop}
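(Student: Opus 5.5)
The idea is that $\widehat f(-\mathrm{i}\mathbf X_{\mathcal E})=\int f(t)\mathrm{e}^{-t\mathbf X_{\mathcal E}}\mathrm dt$ is a function of the self-adjoint operator $P:=-\mathrm{i}\mathbf X_{\mathcal E}$. Writing $\widehat f(P)=\widehat f(h^{-1}\cdot hP)$, the symbol $\widehat f(\lambda/h)$ is $\mathcal O(h^\infty\langle\lambda/h\rangle^{-\infty})$ wherever $|\lambda|\ge c>0$, since $f$ is smooth and compactly supported. On the cone $\{\xi(X)\ge |\xi|/2\ge 1/2\}$ the principal symbol $\xi(X)$ of $hP$ is bounded below by $\max(1/2,|\xi|/2)$, so $hP$ is elliptic there. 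I would therefore aim to show that $\operatorname{Op}_h^{\mathcal E}(a)\,\psi(hP)=\mathcal O(h^\infty)$ for a suitable $\psi$ vanishing on the cone, and then exploit the rapid decay of $\widehat f(\cdot/h)$ away from $0$.

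\textbf{Step 1 (parametrix).} Assume $a$ is the symbol supported on the cone; the case where it is $b$ follows by taking adjoints, since $\widehat f(P)^*=\widehat{\bar f}(P)$ up to reflecting $t$. Choose $\chi\in C_c^\infty(\mathbb R)$ equal to $1$ on $[-1/8,1/8]$ and supported in $[-1/4,1/4]$, and split
\[\widehat f(P)=\chi(hP)\widehat f(P)+(1-\chi)(hP)\widehat f(P).\]
For the second term, the functional calculus gives
\[\big\|(1-\chi)(hP)\widehat f(P)\big\|_{L^2}\le\sup_{|\lambda|\ge 1/(8h)}|\widehat f(\lambda)|=\mathcal O(h^\infty).\]
This bound is uniform in $\mathcal E$, because it uses only self-adjointness; multiplying by $\operatorname{Op}_h^{\mathcal E}(a)$ and $\operatorname{Op}_h^{\mathcal E}(b)$ costs finitely many powers of $h^{-1}$, since both operators are bounded between $h$-Sobolev spaces. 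For the first term, it suffices to prove
\[\operatorname{Op}_h^{\mathcal E}(a)\chi(hP)=\mathcal O(h^\infty).\]
I would do this exactly as in Proposition~\ref{prop: inserting g(h2lap) high dim}, via the Helffer--Sj\"ostrand formula. One builds a symbol $b(z)$ with $\operatorname{Op}(a)\operatorname{Op}(b(z))(z-hP)=\operatorname{Op}(a)+\mathcal O(h^N|\operatorname{Im}z|^{-N})$, where $b(z)$ has the form $\chi_1/(z-p)+\sum h^nQ_{n,k}/(z-p)^k$ and $p=\xi(X)$. After integrating against $\bar\partial\tilde\chi$, each term carries a factor $\chi^{(k)}(p)$, and these all vanish on the cone because there $p\ge 1/2>1/4$.

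\textbf{Step 2 (uniformity in $\mathcal E$).} I would reduce to the scalar case through the local trivializations of the Appendix, as in Proposition~\ref{prop: inserting g(h2lap) highdim bundle}. Since the transition maps are constant unitaries, $R_j\mathbf X_{\mathcal E}=X R_j$ locally. Hence $R_j(z-hP)^{-1}$ can be compared with the scalar resolvent, using only the estimate $\|(z-hP)^{-1}\|\le|\operatorname{Im}z|^{-1}$, which holds for every $\mathcal E$. This makes all the errors in Step 1 independent of the bundle.

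\textbf{Main obstacle.} The delicate point is Step 1. The symbol class is unbounded in $\xi$ (the symbols are only in $S^{m_1}$), so the parametrix must be carried out with weights $\langle\xi\rangle^{m}$. More importantly, the cutoff must be commuted through the global functional calculus of $P$ on a bundle, where the commutator terms have to be controlled uniformly. If the Helffer--Sj\"ostrand route becomes unwieldy, the fallback is a direct argument that uses the smallness of $\operatorname{Supp}f$. For $|t|$ small, an Egorov argument shows that $\mathrm{e}^{-t\mathbf X_{\mathcal E}}\operatorname{Op}_h^{\mathcal E}(b)$ has an expansion in $t$ with symbols $b\circ\Phi_{-t}$ multiplied by $\mathrm{e}^{\mathrm{i}t\xi(X)/h}$. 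Repeated integration by parts in $t$, using $\xi(X)\ge 1/2$ on the support, then gives the $\mathcal O(h^\infty)$ bound. The smallness of $\operatorname{Supp} f$ guarantees that $\Phi_{-t}$ keeps the support inside the slightly larger cone $\{\xi(X)\ge|\xi|/3\}$.
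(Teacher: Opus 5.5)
There is a genuine gap in your main route, the split $\widehat f(P)=\chi(hP)\widehat f(P)+(1-\chi)(hP)\widehat f(P)$ combined with Helffer--Sj\"ostrand. The spectral theorem only gives $L^2\to L^2$ bounds, but the statement is for symbols of arbitrary orders $m_1,m_2$. Positive orders are exactly what is needed later: Proposition~\ref{prop: average along X long time} applies the statement to $a\,\mathrm{e}^{sG_m}$ and $b\,\mathrm{e}^{-sG_m}$, which lie in $S^{s+}$ with $s$ large. When $m_1>0$ or $m_2>0$, the operators $\operatorname{Op}_h^{\mathcal E}(a)$ and $\operatorname{Op}_h^{\mathcal E}(b)$ are unbounded on $L^2$. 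That is a loss of derivatives, not of powers of $h$. So sandwiching does not cost ``finitely many powers of $h^{-1}$''; you would need $(1-\chi)(hP)\widehat f(P)=\mathcal O(h^\infty)$ as a map $H_h^{-m_2}\to H_h^{m_1}$.

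A function of $P=-\mathrm{i}\mathbf X_{\mathcal E}$ gives no regularity transverse to $X$. For $X=\partial_{x_1}$ on a torus, $\psi(P)$ just multiplies $\mathrm{e}^{\mathrm{i}k\cdot x}$ by $\psi(k_1)$. The bound $\sup_{|\lambda|\ge 1/(8h)}|\widehat f(\lambda)|$ also discards the localization of $a$ or $b$ on the cone, which is the only reason the statement holds. On the cone, $\widehat f(P)$ behaves like $\widehat f(\xi(X)/h)=\mathcal O(h^\infty\langle\xi\rangle^{-\infty})$, and it is the decay in $\langle\xi\rangle$ that absorbs the orders. The same objection applies to your first term when $m_2>0$: $\operatorname{Op}_h^{\mathcal E}(a)\chi(hP)=\mathcal O(h^\infty)_{L^2\to L^2}$ does not control it, because $\widehat f(P)\operatorname{Op}_h^{\mathcal E}(b)$ only lands in $H_h^{-m_2}$.

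The missing idea is to turn the ellipticity of $\xi(X)$ on the cone into decay in $\xi$ by integrating by parts in $t$. This is your fallback, and it is the paper's proof. The paper reduces to the scalar quantization and to flow-box charts where $X=\partial_{x_1}$; this is where the small support of $f$ is used. There $\mathrm{e}^{-t\partial_{x_1}}$ is an exact translation, so the Weyl kernel of $\widehat f(-\mathrm{i}\partial_{x_1})\operatorname{Op}_h^{\rm w}(b)$ carries the phase $\mathrm{e}^{-\mathrm{i}t\xi_1/h}$. Integrating by parts $N$ times produces the symbols $b_{N,k}=\xi_1^{-N}\partial_{x_1}^kb\in S^{m_2-N}$; this uses $\xi_1\ge|\xi|/2$, not merely $\xi_1\ge\frac12$. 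One then composes with $\operatorname{Op}_h^{\rm w}(a)$ and takes $N>m_1+m_2$.

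Your description of this step is inaccurate. Egorov's theorem conjugates symbols by $\Phi_t$ but produces no factor $\mathrm{e}^{\mathrm{i}t\xi(X)/h}$; that phase comes from the exact form of the propagator in flow-box coordinates.

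A chart-free version also works:
\begin{enumerate}
\item By ellipticity on the cone, write $\operatorname{Op}_h^{\mathcal E}(a)=\operatorname{Op}_h^{\mathcal E}(\tilde a_N)(hP)^N$ modulo $\operatorname{Op}_h^{\mathcal E}(r_M)$, with $\tilde a_N\in S^{m_1-N}$ and $r_M\in h^MS^{m_1-M}$.
\item Use $(hP)^N\widehat f(P)=h^N\widehat{f_N}(P)$ with $f_N=(-\mathrm{i}\partial_t)^Nf$.
\item The operator $\widehat{f_N}(P)=\int f_N(t)\mathrm{e}^{-t\mathbf X_{\mathcal E}}\mathrm{d}t$ is bounded on every $H^k_h$, uniformly in $\mathcal E$, because only bounded times enter.
\end{enumerate}
Either way, the $\chi(hP)$ splitting and the Helffer--Sj\"ostrand parametrix are unnecessary.
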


\begin{proof}We perform a reduction similar to that in the proof of Proposition~\ref{prop: inserting g(h2lap) highdim bundle} to reduce the proof to the case of the $\operatorname{Op}_h^{\mathcal M}$ quantization. In turn, we can work in local charts where the geodesic vector field is given by $\partial_{x_1}$, and all we have to show is that for any $a\in S^{m_1}(T^*\mathbb{R}^d)$ and $b\in S^{m_2}(T^*\mathbb{R}^d)$ with either $a$ or $b$ supported in the cone $\{\xi~:~ \xi_1 \ge \frac{|\xi|}{2}\ge \frac 12\}$, one has
	\[\operatorname{Op}_h^{\rm w}(a) \widehat f(-\mathrm{i}\partial_{x_1})\operatorname{Op}_h^{\rm w}(b)=\mathcal O(h^\infty)_{L^2\to L^2}.\]
	Assume \emph{e.g.} that $b$ satisfies the support condition above (the case of $a$ is treated similarly). Then,
	\[ \widehat f(-\mathrm{i}\partial_{x_1})\operatorname{Op}_h^{\rm w}(b)u(x)=\frac{1}{(2\pi h)^n}\int f(t) \mathrm{e}^{\frac{\mathrm{i}}{h}\langle x-te_1-y,\xi\rangle} b\big(\frac{x-te_1+y}{2},\xi\big) u(y)\mathrm{d}y\mathrm{d}\xi\mathrm{d}t.\]
	We can perform some integrations by parts in the $t$ variable, since $(-\mathrm{i}\partial_t) \mathrm{e}^{-\frac{\mathrm{i}}{h} t\xi_1}=-\frac{\xi_1}{h}$ we infer that for any $N$,
	\[\int f(t) \mathrm{e}^{-\frac{\mathrm{i}}{h} t\xi_1} b\big(\frac{x-te_1+y}{2},\xi\big)\mathrm{d}t=\int \mathrm{e}^{-\frac{\mathrm{i}}{h}t\xi_1} (-\frac{h}{\xi_1})^N (-\mathrm{i}\partial_t)^N\Big[f(t)b\big(\frac{x-te_1+y}{2},\xi\big)\Big]\mathrm{d}t.\]
	This rewrites by Leibniz formula:
	\[(\frac{ \mathrm{i}h}{\xi_1})^N\int \mathrm{e}^{-\frac{\mathrm{i}}{h}t\xi_1} \sum_{k=0}^N\binom{N}{k}\frac{(-1)^k}{2^k} f^{(N-k)}(t) (\partial_{x_1}^kb)\big(\frac{x-te_1+y}{2},\xi\big)\mathrm{d}t.\]
	Let us introduce
	\[b_{N,k}:=\xi_1^{-N}\partial_{x_1}^kb, \qquad b_{N,k}^t:=b_{N,k}(x-te_1,\xi).\]
	Observe that $b_{N,k}\in S^{m_2-N}(T^*\mathbb{R}^d)$ due to the support condition on $b$. Since 
	\[\mathrm{e}^{-t\partial_{x_1}}\operatorname{Op}_h^{\rm w}(b_{N,k})=\operatorname{Op}_h^{\rm w}(b_{N,k}^t)\mathrm{e}^{-t\partial_{x_1}},\] we deduce from the composition formula for Weyl quantization that
	\[\operatorname{Op}_h^{\rm w}(a)\operatorname{Op}_h^{\rm w}(b_{N,k}^t)=\operatorname{Op}_h^{\rm w}(a\# b_{N,k}^t),\]
	with $a\# b_{N,k}^t\in S^{m_1+m_2-N}(T^*\mathbb{R}^d)$, in particular for $N$ large enough, by the $L^2$-continuity theorem we have
	\[\big\|\operatorname{Op}_h^{\rm w}(a\# b_{N,k}^t)\big\|_{L^2\to L^2}=\mathcal O_{a,b,N,k}(1).\]
	Eventually
	\[\operatorname{Op}_h^{\rm w}(a) \widehat f(-\mathrm{i}\partial_{x_1})\operatorname{Op}_h^{\rm w}(b)=\sum_{k=0}^N c_{N,k}\int f^{(N-k)}(t)\operatorname{Op}_h^{\rm w}(a\# b_{N,k}^t)\mathrm{e}^{-t\partial_{x_1}}\mathrm{d}t,\]
	where $c_{N,k}$ are numerical coefficients. Since $\mathrm{e}^{-t\partial_{x_1}}$ is unitary on $L^2$ we obtain that for $N>m_1+m_2$ the operator norm is
	\[\Big\|\operatorname{Op}_h^{\rm w}(a) \widehat f(-\mathrm{i}\partial_{x_1})\operatorname{Op}_h^{\rm w}(b)\Big\| \le C_{N,a,b} \|f\|_{C^N} h^N .\]
	The constant $C_{N,a,b}$ depends on a finite number (depending on $N$) of seminorms of $a$ and $b$.
\end{proof}

We deduce the following result
\begin{prop}\label{prop: average along X long time} Fix $K\ge 1$, $C_0>0$, $h\le 1$ and let $T\le K|\log h|$. Let $f$ be a smooth function supported on $[-T,T]$, with $\|f\|_{C^N}=\mathcal O_N(\mathrm{e}^{C_0T})$. Then, if $a,b\in S^0(T^*\mathcal M)$, with either $a$ or $b$ supported on the cone $\{\xi~:~\xi(X)\ge \frac{|\xi|}{2}\ge \frac 12\}$, we have
	\[\operatorname{Op}_h^{\mathcal E}(a \mathrm{e}^{sG_m})\widehat f(-\mathrm{i}\mathbf X_{\mathcal E})\operatorname{Op}_h^{\mathcal E}(b\mathrm{e}^{-sG_m})=\mathcal O_{K,C_0,s}(h^\infty)_{L^2\to L^2}.\]
	The error is uniform in $f$ if the implied constants in the estimate $\|f\|_{C^N}=\mathcal O_N(\mathrm{e}^{C_0T})$ remain bounded. \end{prop}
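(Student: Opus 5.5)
Partition $[-T,T]$ into unit-length windows. The short-time estimate of Proposition~\ref{prop: averaging along $X$} will be applied on each window, and the long-time propagation will be moved onto the symbols by Egorov. Fix a smooth partition of unity $1=\sum_{k\in\mathbb Z}\theta(t-k\tau)$, with $\theta$ supported in a small interval $[-\tau,\tau]$. Here $\tau$ is small enough that Proposition~\ref{prop: averaging along $X$} applies to functions supported in $[-\tau,\tau]$. Writing $f_k(t)=f(t+k\tau)\theta(t)$, we get
\[\widehat f(-\mathrm{i}\mathbf X_{\mathcal E})=\sum_{|k|\lesssim T/\tau}\mathrm{e}^{-k\tau\mathbf X_{\mathcal E}}\widehat{f_k}(-\mathrm{i}\mathbf X_{\mathcal E}),\]
with $\|f_k\|_{C^N}=\mathcal O_N(\mathrm{e}^{C_0T})$. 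There are only $\mathcal O(T)=\mathcal O(K|\log h|)$ terms, so it suffices to show that each term is $\mathcal O(h^\infty)$, uniformly in $k$ and in $\mathcal E$.

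Assume $b$ carries the cone support condition; the case where $a$ does is symmetric, with the propagator placed on the other side. For each $k$, write
\[\operatorname{Op}_h^{\mathcal E}(a\mathrm{e}^{sG_m})\mathrm{e}^{-k\tau\mathbf X_{\mathcal E}}\widehat{f_k}(-\mathrm{i}\mathbf X_{\mathcal E})\operatorname{Op}_h^{\mathcal E}(b\mathrm{e}^{-sG_m}).\]
Insert a cutoff $\operatorname{Op}_h^{\mathcal E}(\tilde b)$, where $\tilde b\in S^0$ is supported in a slightly larger cone $\{\xi(X)\ge |\xi|/3\ge 1/3\}$ and $\tilde b\equiv1$ near $\operatorname{Supp} b$. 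Then decompose
\[\widehat{f_k}\operatorname{Op}_h^{\mathcal E}(b\mathrm{e}^{-sG_m})=\operatorname{Op}_h^{\mathcal E}(\tilde b)\widehat{f_k}\operatorname{Op}_h^{\mathcal E}(b\mathrm{e}^{-sG_m})+\operatorname{Op}_h^{\mathcal E}(1-\tilde b)\widehat{f_k}\operatorname{Op}_h^{\mathcal E}(b\mathrm{e}^{-sG_m}).\]
The second term is $\mathcal O(h^\infty)\|f_k\|_{C^N}$ by Proposition~\ref{prop: averaging along $X$}, since $b\mathrm{e}^{-sG_m}\in S^{|s|}_{1^-}$ is supported in the cone; the proof there uses only finitely many seminorms and $\|f\|_{C^N}$. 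The first term is handled the same way, because $\operatorname{Op}(\tilde b)$ is also cone-supported; we may even drop the splitting and apply the proposition directly to the pair $(1,\,b\mathrm{e}^{-sG_m})$. Either way $\widehat{f_k}(-\mathrm{i}\mathbf X_{\mathcal E})\operatorname{Op}_h^{\mathcal E}(b\mathrm{e}^{-sG_m})=\mathcal O(h^{N}\mathrm{e}^{C_0T})_{L^2\to H^{N'}_h}$, and the $H^{N'}_h$ gain is available because the integration by parts in $t$ gives decay in $|\xi|$ on the cone.

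What remains is to bound $\operatorname{Op}_h^{\mathcal E}(a\mathrm{e}^{sG_m})\mathrm{e}^{-k\tau\mathbf X_{\mathcal E}}$ from $H_h^{N'}$ to $L^2$ by $\mathrm{e}^{C_sT}$, polynomially in $h^{-1}$. This follows from Proposition~\ref{prop: strong continuity group}-type bounds: since $\mathrm{e}^{sG_m}\in S^{|s|}$, we have $\|\operatorname{Op}_h^{\mathcal E}(a\mathrm{e}^{sG_m})\|_{H_h^{|s|}\to L^2}=\mathcal O(1)$. The propagator $\mathrm{e}^{-t\mathbf X_{\mathcal E}}$ is bounded on $H^{N'}_h$ with norm $\le C\mathrm{e}^{C_{N'}t}$, uniformly in $\mathcal E$, since the transition maps are constant unitaries and the derivatives of $\varphi_t$ grow at most exponentially. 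Since $T\le K|\log h|$, the product is
\[\mathcal O\big(h^{N}\mathrm{e}^{(C_0+C_{N'}+C_s)K|\log h|}\big)=\mathcal O\big(h^{N-K(C_0+C_{N'}+C_s)}\big).\]
Here $N'$ (and hence $C_{N'}$) is fixed in terms of $s$ only, while $N$ is arbitrary. Summing over the $\mathcal O(|\log h|)$ values of $k$ then gives $\mathcal O_{K,C_0,s}(h^\infty)$.

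The main obstacle is making sure the growth constant does not depend on $N$, which would ruin the $h^\infty$ gain. I therefore fix the target space $H^{N'}_h$ with $N'=\lceil|s|\rceil$ before choosing $N$, and let $N\to\infty$ only in the number of integrations by parts. This produces extra decay $|\xi|^{-N}h^N$ in symbol order but needs no further smoothness of the propagated object. A further technical point is to check that the remainders in Proposition~\ref{prop: averaging along $X$} are genuinely $\mathcal O(h^N)$ as maps $L^2\to H^{N'}_h$, and not just on $L^2$. This holds because $b_{N,k}\in S^{m_2-N}$ and we can compose on the left with $\operatorname{Op}_h(\langle\xi\rangle^{N'})$.
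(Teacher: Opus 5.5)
Your argument is correct and shares the paper's skeleton. You cut the time support into short windows, extract $h^N\|f\|_{C^N}$ from the short-time averaging proposition on each window, and absorb the $\mathrm{e}^{CT}\le h^{-CK}$ cost of long-time propagation, where $C$ depends on $s$ but not on $N$.

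Where you differ is the space that carries the propagator. The paper writes each window term, with $f_n(t)=f(nt_0+t)\omega(t/t_0)$, as $\big[\operatorname{Op}_h^{\mathcal E}(a\mathrm{e}^{sG_m})\mathrm{e}^{-nt_0\mathbf X_{\mathcal E}}\operatorname{Op}_h^{\mathcal E}(\mathrm{e}^{sG_m})^{-1}\big]\circ\big[\operatorname{Op}_h^{\mathcal E}(\mathrm{e}^{sG_m})\widehat{f_n}(-\mathrm{i}\mathbf X_{\mathcal E})\operatorname{Op}_h^{\mathcal E}(b\mathrm{e}^{-sG_m})\big]$. So it measures the propagator in the anisotropic norm via Proposition~\ref{prop: strong continuity group}, and applies the short-time lemma to the pair $(\mathrm{e}^{sG_m},b\mathrm{e}^{-sG_m})$; nothing new has to be built. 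You instead route through the isotropic space $H^{N'}_h$ with $N'\ge |s|$, and apply the short-time lemma to $(\langle\xi\rangle^{N'},b\mathrm{e}^{-sG_m})$. This requires setting up $H^{N'}_h$ on flat bundles with bundle-uniform propagator bounds. That is routine here, since the transition maps are constant unitaries and Egorov with a constant order function applies.

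What your route buys is control in both time directions, which matters for the statement as written. Proposition~\ref{prop: strong continuity group} is only stated for $t\ge 0$. Moreover, the backward propagator is not bounded on $\mathcal H^s_h$ for $s>0$: along backward flow lines the weight $\mathrm{e}^{s(G_m\circ\Phi_{-t}-G_m)}$ grows like a positive power of $|\xi|$ wherever $Hm\neq 0$. So the paper's proof, read literally, only covers the windows with $n\ge 0$. This is harmless in the paper's application, where $f$ lives at positive times of order $T$, but your version genuinely handles $\operatorname{Supp} f\subset[-T,T]$.

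Two minor points. For $k<0$ your growth bound should read $C\mathrm{e}^{C_{N'}|t|}$. The splitting with $\tilde b$ is unnecessary, as you note yourself.
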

\begin{proof}Assume for example that $b$ satisfies the support condition. Let $\omega\in C^\infty_{\rm comp}(\mathbb{R})$ satisfy $\sum \omega(t-n)\equiv 1$. Fix a small $t_0>0$. The operator under consideration can be expressed as
	\[\sum_n\operatorname{Op}_h^{\mathcal E}(a \mathrm{e}^{sG_m})\Big[\int f(nt_0+t)\omega(\frac t{t_0})\mathrm{e}^{-(nt_0+t)\mathbf X_{\mathcal E}}\mathrm{d}t\Big]\operatorname{Op}_h^{\mathcal E}(b \mathrm{e}^{-sG_m}).\]
	Each summand can be written as
	\[\big[\operatorname{Op}_h^{\mathcal E}(a \mathrm{e}^{sG_m})\mathrm{e}^{-nt_0\mathbf X_{\mathcal E}}\operatorname{Op}_h^{\mathcal E}(\mathrm{e}^{sG_m})^{-1}\big]\circ \operatorname{Op}_h^{\mathcal E}(\mathrm{e}^{sG_m})\int f(nt_0+t)\omega(\frac{t}{t_0})\mathrm{e}^{-t\mathbf X_{\mathcal E}}\mathrm{d}t\operatorname{Op}_h^{\mathcal E}(b \mathrm{e}^{-sG_m}).\]
	By Proposition~\ref{prop: strong continuity group} and Corollary~\ref{cor: inverse op (e^sg)}, the norm of the first factor is controlled by $\mathcal O(\mathrm{e}^{\nu_s nt_0})$, and by Proposition~\ref{prop: averaging along $X$} the norm of the second factor is $\mathcal O_{t_0,s,m}(\|f\|_{C^N}h^N)$ (here we use the fact that $\mathrm{e}^{\pm sG_m}\in S^{s+}(T^*\mathcal M)$). Summing over all $n$ with $|nt_0|\le T$, one finds
	\[\big\|\operatorname{Op}_h^{\mathcal E}(a \mathrm{e}^{sG_m})\widehat f(-\mathrm{i}\mathbf X_{\mathcal E})\operatorname{Op}_h^{\mathcal E}(b \mathrm{e}^{-sG_m})\big\|_{L^2\to L^2}\le C_{N,s,t_0} \|f\|_{C^N} T\mathrm{e}^{\nu_s T} h^{N}.\]
	By the assumptions on $f$ and $T$ we have $\|f\|_{C^N}T\mathrm{e}^{\nu_s T}=\mathcal O_{N,s}(h^{-C_{K,s}})$, which allows to conclude by taking $N$ large enough.
\end{proof}

	\subsection{Proof of Proposition~\ref{prop: R(z) for finite number of z}} We begin by dividing the time interval $[T,2T]$ into subintervals of small size. Fix $t_0>0$ small enough and let $1=\sum_{n\in \mathbf Z} \omega(t-n)$ be a smooth partition of unity with $\omega\in C^\infty_{\rm comp}([-2,2],[0,1])$. Define 
	\begin{equation} \label{eq: def phi n t}\varphi_{n,T}(t)=\omega(t/t_0)\chi'((t+nt_0)/T) \mathrm{e}^{zt}.\end{equation}
	Then we can write
	\begin{equation} \label{eq: R(z) as split sum} \mathbf R(z)=\sum_{n\in \mathbb N}\mathrm{e}^{-znt_0}\int_{\mathbb R} \varphi_{n,T}(t)\mathrm{e}^{-(t+nt_0)\mathbf X_{\mathcal E}}\mathrm{d}t.\end{equation}
	The functions $\varphi_{n,T}$ are supported in $[-2t_0,2t_0]$, and are uniformly bounded in $C^\ell$ norm for all $\ell$ (independently of $n,T\ge 1$), provided $z$ remains in a compact set. We can therefore forget the dependence in $n,T$ and we will show

	\begin{prop}\label{prop: long time aniso} Consider a subset $\mathcal F\subset C^\infty_{\rm comp}((-1,1),\mathbb R)$ of functions with uniformly bounded derivatives, i.e.
		\[\forall \ell\ge 0, \quad \sup_{\varphi\in \mathcal F} \|\varphi\|_{C^\ell}<+\infty.\]
		Let $\varepsilon>0$ be small and $s>0$ be large enough. One can successively chose $K=K(s,\mathcal F,\varepsilon)$ large enough and $h=h(s,\mathcal F,K,\varepsilon)$ small enough such that the following holds. Let $T=K|\log h|$, and assume $T\le nt_0\le 2T$. Let $\varphi\in \mathcal F$. Then, asymptotically almost surely as $k\to +\infty$, we have, for $\mathcal E=\mathcal E_{\rho_k}$:
		\[\big\|\widehat \varphi(-\mathrm{i}\mathbf X_{\mathcal E})\mathrm{e}^{-nt_0\mathbf X_{\mathcal E}}\big\|_{\mathcal H_h^s(\mathcal E_{\rho_k})}\le \mathrm{e}^{(\delta_0+\varepsilon)nt_0}.\]
	\end{prop}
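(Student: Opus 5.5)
We will conjugate to $L^2$ and split phase space with the partition $1=a_1+a_2+a_3$ from the overview. Write $A:=\operatorname{Op}_h^{\mathcal E}(\mathrm{e}^{sG_m})$. The goal is then to bound
\[
\big\|A\,\widehat\varphi(-\mathrm{i}\mathbf X_{\mathcal E})\,\mathrm{e}^{-nt_0\mathbf X_{\mathcal E}}A^{-1}\big\|_{L^2}.
\]
The partition $a_1+a_2+a_3=1$ is fixed on $T^*\mathcal M$: $a_1$ lives on $\{|\xi|\le 2\}$, $a_2$ on a conic neighbourhood $N_0$ of $E_0^*$ inside $\{\xi(X)\ge|\xi|/2\ge 1/2\}$, and $a_3$ on the complement, at $|\xi|\ge1$ and away from $E_0^*$. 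By Corollary~\ref{cor: inverse op (e^sg)} and the $\operatorname{Op}_h^{\mathcal E}$-calculus, which is uniform in $\mathcal E$, we may write $A\,\operatorname{Op}_h^{\mathcal E}(a_i)=\operatorname{Op}_h^{\mathcal E}(a_i\mathrm{e}^{sG_m})A+\mathcal O(h)$. Cutting the propagator on both sides then produces nine terms $\operatorname{Op}(a_i)\,\widehat\varphi\,\mathrm{e}^{-nt_0\mathbf X}\operatorname{Op}(a_j)$, each conjugated by $A$.

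\textbf{Step 1: the long propagation.} Split $\mathrm{e}^{-nt_0\mathbf X}$ as a product of $n$ short-time propagators $\mathrm{e}^{-t_0\mathbf X}$ and insert the partition between consecutive factors. A path of cutoffs that ever visits $a_3$ is handled by \eqref{eq: away trapped set intro}: the Egorov theorem together with $HG_m\le -C$ off $N_0$ gives a gain $\mathrm{e}^{-C_0s}$ at each such step, while the remaining factors are bounded by Proposition~\ref{prop: strong continuity group}. Here I would rather use the flow directly. Points of $\operatorname{Supp}a_3$ flow, under $\Phi_t$, towards $E_s^*$, where $m=1$, or come from $E_u^*$. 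This yields a factor $\mathrm{e}^{-cs}$ per unit time spent outside $N_0$. Since $G_m$ is monotone, any trajectory that is not in $\{|\xi|\le 2\}$ at both endpoints and not in $N_0$ throughout must spend time of order $T$ in the escaping region. That time is $\gtrsim T$ at high frequency: a point of size $|\xi|\ge 1$ at time $0$ with $\xi\notin E_0^*$ has $|\xi|$ growing like $\mathrm{e}^{\gamma t}$. Choosing $s$ large then makes these contributions $\le \mathrm{e}^{-cs T}\ll\mathrm{e}^{(\delta_0+\varepsilon)T}$.

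\textbf{Step 2: the trapped region.} Terms whose trajectories stay in $N_0$, i.e.\ with $a_2$ at the end adjacent to $\widehat\varphi$, are $\mathcal O(h^\infty)$ by Proposition~\ref{prop: average along X long time}. This uses $T=K|\log h|$ and $\|\varphi\|_{C^N}$ uniformly bounded, and after commuting $\widehat\varphi$ past $\mathrm{e}^{-nt_0\mathbf X}$ it applies on either side.

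\textbf{Step 3: the low-frequency block.} The term $a_1\,\widehat\varphi\,\mathrm{e}^{-nt_0\mathbf X}\,a_1$ is where randomness enters. On $\{|\xi|\le 2\}$ the symbol $\mathrm{e}^{\pm sG_m}$ is bounded, so the conjugation by $A$ costs only $\mathcal O_s(1)$. By Proposition~\ref{prop: inserting g(h2lap) highdim bundle}, the cutoff $\operatorname{Op}(a_1)$ can be replaced by $g(h^2\Delta^E_{\mathbb V})$ on each side, modulo $\mathcal O(h^\infty)$ times $\|\mathrm{e}^{-nt_0\mathbf X}\|_{L^2}=1$. The function $t\mapsto\varphi(t-nt_0)$ is compactly supported away from $0$, so Proposition~\ref{prop: Mh etX Mh} applies: the bound holds a.a.s., for the finitely many relevant $n$ and $\varphi$ (taking $\mathcal F$ finite, or using an $\varepsilon$-net), with $h$ fixed. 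Combined with Proposition~\ref{prop: fourier flow fourier} and Proposition~\ref{prop: operator norm spherical mean}, this gives a bound of order $C_\varepsilon h^{-(d-1)}\mathrm{e}^{(\delta_0+\varepsilon/2)nt_0}$.

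\textbf{Step 4: assembling the bounds.} Since $nt_0\ge K|\log h|$, the factor $h^{-(d-1)}$ is at most $\mathrm{e}^{(d-1)nt_0/K}$. Choosing $K$ large, it is absorbed into $\mathrm{e}^{\varepsilon nt_0/4}$, and $h$ small then beats the constants.

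The main obstacle is Step 1. One must show that every mixed term, in which the trajectory passes between $\{|\xi|\le2\}$, $N_0$ and the escaping region, is controlled uniformly in $\mathcal E$ by something much smaller than $\mathrm{e}^{\delta_0 T}$. The difficulty is that $\delta_0<0$, so the crude bound $\mathrm{e}^{\nu_sT}$ is useless. I would first try to organise the dyadic insertions as a sum over words in $\{1,2,3\}^n$, with each transition $1\to 3$ or $3\to 1$ costing $h^\infty$ by disjointness of supports after time $t_0$. The $N_0$ pieces are to be absorbed by Step 2 only after the time integral is attached to them. Making that rearrangement rigorous is the delicate point.
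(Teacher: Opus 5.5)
Your Steps 2 and 3 are correct and match the paper. Terms with $a_2$ at either end are $\mathcal O(h^N)$ by Proposition~\ref{prop: average along X long time}, and the $(1,1)$ block is handled by Propositions~\ref{prop: inserting g(h2lap) highdim bundle}, \ref{prop: Mh etX Mh} and \ref{prop: fourier flow fourier}. The gap is Step 1, which you leave open yourself, and the word expansion you sketch for it would fail for four reasons.
\begin{enumerate}
\item Inserting the partition at intermediate times destroys the low-frequency decay. The factor $\mathrm{e}^{\delta_0 t}$ is a long-time effect: it comes from $\|\mathcal L_t\|$, via strong convergence applied to the whole sandwiched operator. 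Each short factor $\operatorname{Op}(a_1)\mathrm{e}^{-t_0\mathbf X}\operatorname{Op}(a_1)$ has $L^2$ norm of order $1$, so products of such factors never decay.
\item Proposition~\ref{prop: average along X long time} needs the time average $\widehat\varphi$ adjacent to the $a_2$ cutoff. Intermediate $a_2$ insertions are not adjacent to it.
\item Transitions $1\to3$ do not cost $\mathcal O(h^\infty)$, because $\operatorname{Supp}a_1$ and $\operatorname{Supp}a_3$ overlap on $\{1\le|\xi|\le2\}$.
\item Your heuristic that non-trapped covectors grow like $\mathrm{e}^{\gamma t}$ is false near $E_s^*$. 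There $\Phi_t$ contracts them into the low-frequency region after a time that need not be $\gtrsim T$.
\end{enumerate}
On top of this, there are $3^n=h^{-cK}$ words to sum.

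The missing idea is a one-step recursion that never breaks the long propagation. Set
\[\mathbf A_{i,j}(n)=\operatorname{Op}_h^{\mathcal E}(a_i\mathrm{e}^{sG_m})\widehat\varphi(-\mathrm{i}\mathbf X_{\mathcal E})\mathrm{e}^{-nt_0\mathbf X_{\mathcal E}}\operatorname{Op}_h^{\mathcal E}(a_j\mathrm{e}^{-sG_m}),\qquad \mathbf N(n)=\sum_{i,j}\|\mathbf A_{i,j}(n)\|.\]
This splitting is exact by linearity, and $\operatorname{Op}_h^{\mathcal E}(\mathrm{e}^{sG_m})^{-1}=\operatorname{Op}_h^{\mathcal E}(\mathrm{e}^{-sG_m})(1+\mathcal O(h))$ only costs a multiplicative error. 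By contrast, additive $\mathcal O(h)$ commutator errors like those in your first paragraph would be fatal against the growth $\mathrm{e}^{\nu_s T}=h^{-\nu_sK}$.

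For a term with $a_3$ at one end, peel off a single step $\mathrm{e}^{-t_0\mathbf X_{\mathcal E}}$ on that side and insert $\operatorname{Op}(\mathrm{e}^{sG_m})^{-1}\operatorname{Op}(\mathrm{e}^{sG_m})$. The short step gives a gain $C_1\mathrm{e}^{-C_0 s}$ by Proposition~\ref{prop: Egorov theorem} and the escape function, and the remaining factor is at most $(1+\mathcal O(h))\mathbf N(n-1)$. Combined with your Steps 2 and 3, this gives
\[\mathbf N(n)\le C_1\big[h^{-d}\mathrm{e}^{(\delta_0+\varepsilon)nt_0}+h^N+\mathrm{e}^{-C_0s}\mathbf N(n-1)\big].\]
Here the probabilistic bound on $\mathbf A_{1,1}(m)$ is used for every $m$ in the finite range below, not only at the final time.

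Iterate this from $n_0\simeq\frac{d}{\varepsilon t_0}|\log h|$, where the factor $h^{-d}$ is absorbed, starting from the crude bound $\mathbf N(n_0)\le C_3\mathrm{e}^{C_3 n_0}$. Choose $s$ so that $C_1\mathrm{e}^{-C_0s}\mathrm{e}^{-(\delta_0+2\varepsilon)t_0}<\frac12$, which makes the accumulated sum geometric. Then take $K$ large, so that $(C_1\mathrm{e}^{-C_0s})^{n-n_0}C_3\mathrm{e}^{C_3n_0}$ is negligible against $\mathrm{e}^{(\delta_0+2\varepsilon)nt_0}$.

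Only one step's escape gain is used per peeled step. No trajectory needs to spend time $\gtrsim T$ in the escaping region.
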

	We introduce a smooth partition $1=a_1+a_2+a_3$ of the phase space $T^*\mathcal M$, where:
	\begin{enumerate}
		\item $a_1$ is supported on $\{|\xi|\le 2\}$ (low frequency region).
		\item $a_2$ is supported on a conical neighborhood of $\{|\xi|\ge 1\}\cap E_0^*$ (high frequency, near the trapped set), say
		\[\operatorname{Supp}a_2\subset \Big\{(x,\xi)\in T^*\mathcal M~:~ |\xi(X(x))|\ge \frac{|\xi|}{2}\ge \frac 12\Big\}.\] 
		\item $a_3$  is supported on a conical subset of $\{|\xi|\ge 1\}$ away from the trapped set (high frequency, away from the trapped set), say
		\[\operatorname{Supp}a_3\subset \Big\{(x,\xi)\in T^*\mathcal M~:~ |\xi(X(x))|\le \frac{|\xi|}{4}, ~ |\xi|\ge 1\Big\}.\]
	\end{enumerate}
	Moreover, we take $a_2,a_3$ to be $0$-homogeneous on $\{|\xi|\ge 2\}$. For each $i,j\in\{1,2,3\}$, we define operators
	\[\mathbf A_{i,j}(n):=\operatorname{Op}_h^{\mathcal E}(a_i\mathrm{e}^{sG_m})\widehat \varphi(-\mathrm{i}\mathbf X_{\mathcal E})\mathrm{e}^{-nt_0\mathbf X_{\mathcal E}}\operatorname{Op}_h^{\mathcal E}(a_j\mathrm{e}^{-sG_m}).\]
	Next, we define $\mathbf N(n):= \sum_{(i,j)}\|\mathbf A_{i,j}(n)\|_{L^2\to L^2}$, so that
	\[\big\|\widehat \varphi(-\mathrm{i}\mathbf X_{\mathcal E})\mathrm{e}^{-nt_0\mathbf X_{\mathcal E}}\big\|_{\mathcal H_h^s}\le 3\mathbf N(n).\]
	We will obtain estimates on $\mathbf N(n)$ from estimates on $\mathbf N(n-1)$. This relies on different Lemmata. The first one is a probabilistic estimate for the low frequency contributions.
	\begin{lem}There is a function $(s,\varepsilon,K)\mapsto h_0(s,\varepsilon,K)>0$ such that for any $\varepsilon>0$ and $T\le K|\log h|$, and any $n,h,s,z$  with $h<h_0(s,\varepsilon,K)$ we have a.a.s.
		\[\|\mathbf A_{1,1}(n)\|_{L^2\to L^2}\le h^{-d}\mathrm{e}^{(\delta_0+\varepsilon) nt_0},\]
		uniformly in $T\le nt_0\le 2T$.\end{lem}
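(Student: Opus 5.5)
The plan is to sandwich the Fourier projectors $g(h^2\Delta_{\mathbb V}^E)$ around the propagator, using the fact that $a_1$ is supported at low frequency. Then the probabilistic estimate of Proposition~\ref{prop: Mh etX Mh} and the deterministic bound of Proposition~\ref{prop: fourier flow fourier} do the work.

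Since $a_1$ is supported in $\{|\xi|\le 2\}$, the weight $\mathrm{e}^{\pm sG_m}$ is bounded there, and $a_1\mathrm{e}^{\pm sG_m}$ are compactly supported symbols. After rescaling $h$ by a fixed factor (replacing $g$ by $g(\cdot/4)$, which equals $1$ on $[-8,8]$), Proposition~\ref{prop: inserting g(h2lap) highdim bundle} gives, uniformly in $\mathcal E$,
\[\operatorname{Op}_h^{\mathcal E}(a_1\mathrm{e}^{\pm sG_m}) g(h^2\Delta_{\mathbb V}^E)=\operatorname{Op}_h^{\mathcal E}(a_1\mathrm{e}^{\pm sG_m})+\mathcal O_s(h^\infty),\]
together with the analogous identity with $g$ on the left. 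We then write
\[\mathbf A_{1,1}(n)=\operatorname{Op}_h^{\mathcal E}(a_1\mathrm{e}^{sG_m})\,\mathbf B\,\operatorname{Op}_h^{\mathcal E}(a_1\mathrm{e}^{-sG_m})+\mathbf{Err},\qquad \mathbf B:=g(h^2\Delta_{\mathbb V}^E)\widehat\varphi(-\mathrm i\mathbf X_{\mathcal E})\mathrm e^{-nt_0\mathbf X_{\mathcal E}}g(h^2\Delta_{\mathbb V}^E).\]
The outer operators have $L^2$ norm $\mathcal O_s(1)$ uniformly in $\mathcal E$, by the $L^2$-continuity theorem of the appendix. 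The error has norm at most $\mathcal O_s(h^\infty)\cdot\|\widehat\varphi(-\mathrm i\mathbf X_{\mathcal E})\mathrm e^{-nt_0\mathbf X_{\mathcal E}}\|_{L^2}$, and this last norm is $\mathcal O(1)$ because the propagator is unitary on $L^2$. So $\mathbf{Err}=\mathcal O_s(h^\infty)$, which is negligible against $h^{-d}\mathrm e^{(\delta_0+\varepsilon)nt_0}\ge h^{-d+2K|\delta_0|}$ once $h$ is small. There is one subtlety: $\delta_0<0$ and $nt_0\le 2K|\log h|$, so the target bound is only $h^{C_K}$. Hence we need a $\mathcal O(h^N)$ error with $N=N(K)$, which is available since the errors are $\mathcal O(h^\infty)$.

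Next we write $\mathbf B=g\,\widehat f(-\mathrm i\mathbf X_{\mathcal E})\,g$ with $f(t)=\varphi(t-nt_0)$. This $f$ is smooth, compactly supported in $[nt_0-1,nt_0+1]$ and vanishes near $0$ since $nt_0\ge T\ge 1$. Proposition~\ref{prop: Mh etX Mh} then gives, a.a.s.,
\[\|\mathbf B\|\le \big\|g(h^2\Delta_{\widetilde{\mathbb V}})\widehat f(-\mathrm i\widetilde X)g(h^2\Delta_{\widetilde{\mathbb V}})\big\|_{L^2(S\widetilde M)}+\eta.\]
Writing $\widehat f=\int\varphi(t)\mathrm e^{-(t+nt_0)\widetilde X}\mathrm dt$ and applying Proposition~\ref{prop: fourier flow fourier} pointwise in $t$ (recall $\|\mathcal L_t\|\le C_\varepsilon\mathrm e^{(\delta_0+\varepsilon/2)t}$) bounds the limit operator by $C_{\varepsilon}\|\varphi\|_{L^1}h^{-(d-1)}\mathrm e^{(\delta_0+\varepsilon/2)nt_0}$.

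The main obstacle is quantifier order. Proposition~\ref{prop: Mh etX Mh} applies to a fixed operator, whereas we need all $n$ with $T\le nt_0\le 2T$, and $\eta$ must beat $h^{-(d-1)}\mathrm e^{(\delta_0+\varepsilon)nt_0}$, which is of size $h^{C_K}$. To handle this, we fix $s,\varepsilon,K$ and then $h$ first; these determine $T$ and the finite set of relevant $n$, about $T/t_0$ of them. For each such $n$ we apply Proposition~\ref{prop: Mh etX Mh} with the deterministic tolerance $\eta=h^{-(d-1)}\mathrm e^{(\delta_0+\varepsilon)2T}$. A finite intersection of a.a.s.\ events is a.a.s., and the same holds over $\varphi$ in a finite set (in particular the finitely many $z\in\mathcal I$). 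Collecting terms gives $\|\mathbf A_{1,1}(n)\|\le C_{s,\varepsilon}h^{-(d-1)}\mathrm e^{(\delta_0+\varepsilon/2)nt_0}+\mathcal O(h^{N})$. The spare factor $h^{-1}\mathrm e^{\varepsilon nt_0/2}$ absorbs the constants once $h<h_0(s,\varepsilon,K)$.
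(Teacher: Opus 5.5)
Your proposal is correct and follows the paper's proof: insert $g(h^2\Delta_{\mathbb V}^E)$ on both sides via Proposition~\ref{prop: inserting g(h2lap) highdim bundle}, remove the outer operators by $L^2$-continuity, and bound the middle factor a.a.s.\ via Proposition~\ref{prop: Mh etX Mh} combined with Propositions~\ref{prop: fourier flow fourier} and~\ref{prop: operator norm spherical mean}. You are more explicit than the paper on two points it leaves implicit: the enlarged cutoff needed because $a_1$ lives on $\{|\xi|\le 2\}$, and fixing $h$ (hence $T$ and the finitely many $n$, $\varphi$) before the degree tends to infinity, so that the additive strong-convergence tolerance can be taken as small as $h^{-(d-1)}\mathrm{e}^{(\delta_0+\varepsilon)2T}$. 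One small bookkeeping slip: that tolerance term is only bounded by $h^{-(d-1)}\mathrm{e}^{(\delta_0+\varepsilon)nt_0}$, not by $\mathrm{e}^{(\delta_0+\varepsilon/2)nt_0}$ as your collected bound suggests, but the spare factor $h^{-1}$ absorbs it anyway.
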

	\begin{proof}By Proposition~\ref{prop: inserting g(h2lap) highdim bundle}, and because $\mathrm{e}^{-t\mathbf X_{\mathcal E}}$ is unitary on $L^2(\mathcal M,\mathcal E)$, we have
		\[\mathbf A_{1,1}(n)=\operatorname{Op}_h^{\mathcal E}(\mathrm{e}^{sG_m}a_1) g(h^2\Delta_{\mathbb V}^E)\widehat \varphi(-\mathrm{i}\mathbf X_{\mathcal E})\mathrm{e}^{-nt_0\mathbf X_{\mathcal E}} g(h^2\Delta_{\mathbb V}^E)\operatorname{Op}_h^{\mathcal E}(a_1\mathrm{e}^{-sG_m})+\mathcal O_{s}(h^\infty).\]
		Here we recall that $g\equiv 1$ near $0$, so that $g(h^2\Delta_{\mathbb V}^E)$ localizes on Fourier modes of order $\lesssim h^{-1}$ in the vertical fibers. In turn, since $\mathrm{e}^{\pm sG_m}a_1\in C^\infty_{\rm comp}(T^*\mathcal M)\subset S^0_1(T^*\mathcal M)$, we obtain from Proposition~\ref{prop: L2 continuity}:
		\[\|\mathbf A_{1,1}(n)\|\le C_{s} \big\|g(h^2\Delta_{\mathbb V}^E)\widehat \varphi(-\mathrm{i}\mathbf X_{\mathcal E})\mathrm{e}^{-nt_0\mathbf X_{\mathcal E}}g(h^2\Delta_{\mathbb V}^E)\big\|_{L^2\to L^2}+\mathcal O_{s}(h^\infty).\]
		We now invoke Proposition~\ref{prop: Mh etX Mh}. For any $\varepsilon>0$ we have a.a.s:
		\[\big\|g(h^2\Delta_{\mathbb V}^E)\widehat \varphi(-\mathrm{i}\mathbf X_{\mathcal E})\mathrm{e}^{-nt_0\mathbf X_{\mathcal E}}g(h^2\Delta_{\mathbb V}^E)\big\|_{L^2\to L^2}\le C_{s,\varepsilon} h^{-(d-1)}\mathrm{e}^{(\delta_0+\varepsilon)nt_0}.\]
		It remains to take $h$ small enough to ensure $C_{s,\varepsilon}\le h^{-1}$ and $\mathcal O_s(h^\infty)\le h^{-d}\mathrm{e}^{(\delta_0+\varepsilon)nt_0}$.
		
	\end{proof}
	
	The next two Lemma are deterministic and hold for any unitary flat bundle over $\mathcal M$.
	\begin{lem} There is a function $h\mapsto h_0(s,K,N,\mathcal F)$ such that if $T=K|\log h|$ and $0<h<h_0(s,K,N)$, one has, uniformly in $n\le 2T/t_0$:
		\[\|\mathbf A_{i,2}(n)\|\le h^N,\qquad \|\mathbf A_{2,i}(n)\|\le h^N.\]
	\end{lem}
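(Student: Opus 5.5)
The plan is to reduce both estimates to Proposition~\ref{prop: average along X long time}. Write $\widehat\varphi(-\mathrm{i}\mathbf X_{\mathcal E})\mathrm{e}^{-nt_0\mathbf X_{\mathcal E}}=\widehat{f_n}(-\mathrm{i}\mathbf X_{\mathcal E})$ with $f_n(t):=\varphi(t-nt_0)$. This function is supported in $[nt_0-1,nt_0+1]\subset[-3T,3T]$ for $h$ small, since $nt_0\le 2T$. Its $C^N$ norms are bounded uniformly over $\varphi\in\mathcal F$, so in particular $\|f_n\|_{C^N}=\mathcal O_N(\mathrm{e}^{C_0T})$ holds with any $C_0$. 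Then
\[\mathbf A_{i,j}(n)=\operatorname{Op}_h^{\mathcal E}(a_i\mathrm{e}^{sG_m})\widehat{f_n}(-\mathrm{i}\mathbf X_{\mathcal E})\operatorname{Op}_h^{\mathcal E}(a_j\mathrm{e}^{-sG_m}),\]
with $a_i,a_j\in S^0(T^*\mathcal M)$, using that $a_2,a_3$ are $0$-homogeneous for $|\xi|\ge2$ and $a_1$ is compactly supported. Proposition~\ref{prop: average along X long time} is stated for $T\le K|\log h|$ and support in $[-T,T]$, so I apply it with $T$ replaced by $3T$ and $K$ by $3K$. This yields $\mathcal O_{K,s}(h^\infty)$ uniformly in $n$ and in $\varphi\in\mathcal F$. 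Choosing $h_0$ so that the constant in front of $h^{N+1}$ is at most $h^{-1}$ gives $\|\mathbf A_{i,2}\|,\|\mathbf A_{2,i}\|\le h^N$.

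The only discrepancy is the support condition. In Proposition~\ref{prop: average along X long time} it reads $\xi(X)\ge |\xi|/2\ge 1/2$, whereas $a_2$ is supported where $|\xi(X)|\ge |\xi|/2\ge 1/2$, which contains both signs of $\xi(X)$. To handle this I split $a_2=a_2^++a_2^-$ according to the sign of $\xi(X)$; the two cones are disjoint away from $\xi=0$, so this is a smooth splitting. For $a_2^-$ I rerun the proof of Proposition~\ref{prop: averaging along $X$} verbatim. That argument only used that $|\xi_1|\gtrsim|\xi|$ on the support, through the ellipticity of $\xi_1$ when integrating by parts in $t$, and this holds equally for $\xi_1\le -|\xi|/2$. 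So both propositions remain valid with the cone $\{|\xi(X)|\ge|\xi|/2\ge1/2\}$.

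For $\mathbf A_{2,i}$ the localized symbol $a=a_2$ sits on the left, and for $\mathbf A_{i,2}$ on the right. Both cases are covered, since the proposition allows either $a$ or $b$ to satisfy the support condition. Its proof splits time into pieces of length $t_0$ and uses Proposition~\ref{prop: strong continuity group} to conjugate $\mathrm{e}^{-nt_0\mathbf X_{\mathcal E}}$ through $\operatorname{Op}_h^{\mathcal E}(\mathrm{e}^{sG_m})$, at a cost of $\mathrm{e}^{\nu_s T}=h^{-K\nu_s}$, which is absorbed by $h^N$ with $N$ large. The case where the localized factor is on the left requires conjugating from the other side: one writes $\mathrm{e}^{-nt_0\mathbf X}$ on the right and uses the same Egorov-type bounds. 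This is the step I expect to need the most care, namely checking that all constants are uniform in $\mathcal E$, in $n\le 2T/t_0$ and in $\varphi\in\mathcal F$. That uniformity follows from the uniformity statements in Propositions~\ref{prop: averaging along $X$} and \ref{prop: average along X long time} and from the Appendix calculus.
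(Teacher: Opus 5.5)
Your proposal is correct and follows the paper's route: the paper's proof is a one-line appeal to Proposition~\ref{prop: average along X long time}, applied as you do to $\mathbf A_{i,j}(n)=\operatorname{Op}_h^{\mathcal E}(a_i\mathrm{e}^{sG_m})\widehat{f_n}(-\mathrm{i}\mathbf X_{\mathcal E})\operatorname{Op}_h^{\mathcal E}(a_j\mathrm{e}^{-sG_m})$ with $f_n=\varphi(\cdot-nt_0)$. Your extra steps are correct and supply points the paper leaves implicit: enlarging $T$ to $3T$ to cover the shifted support, and splitting $a_2$ according to the sign of $\xi(X)$, since the proposition is stated for a one-sided cone while its integration by parts only needs $|\xi(X)|\gtrsim|\xi|$.
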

	\begin{proof} This follows directly from Proposition~\ref{prop: average along X long time}. \end{proof}

	\begin{lem} There is a constant $C_1>0$ which is independent of $s$ such that for all $i\in \{1,2,3\}$ and $0<h<h_0(s)$:
		\[\max\big(\|\mathbf A_{i,3}(n)\|,\|\mathbf A_{3,i}(n)\|\big)\le C_1 \mathrm{e}^{-C_0s}\cdot \sum_{j=1}^3\|\mathbf A_{i,j}(n-1)\|.\]
	\end{lem}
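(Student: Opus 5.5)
We need to bound $\mathbf A_{i,3}(n)$ and $\mathbf A_{3,i}(n)$ by $C_1\mathrm{e}^{-C_0 s}$ times the previous-step quantities. The idea is to peel off one time step $t_0$ of propagation, and let the escape function produce the gain on $\operatorname{Supp} a_3$. This gain is available because $a_3$ lives away from the trapped set, so $HG_m\le -C$ there.

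Take $\mathbf A_{i,3}(n)$ first. Since $\mathrm{e}^{-nt_0\mathbf X_{\mathcal E}}=\mathrm{e}^{-(n-1)t_0\mathbf X_{\mathcal E}}\mathrm{e}^{-t_0\mathbf X_{\mathcal E}}$ and $\widehat\varphi(-\mathrm{i}\mathbf X_{\mathcal E})$ commutes with the flow, we write
\[\mathbf A_{i,3}(n)=\operatorname{Op}_h^{\mathcal E}(a_i\mathrm{e}^{sG_m})\widehat \varphi(-\mathrm{i}\mathbf X_{\mathcal E})\mathrm{e}^{-(n-1)t_0\mathbf X_{\mathcal E}}\operatorname{Op}_h^{\mathcal E}(\mathrm{e}^{-sG_m})^{-1}\cdot \mathbf B,\]
where
\[\mathbf B:=\operatorname{Op}_h^{\mathcal E}(\mathrm{e}^{-sG_m})\mathrm{e}^{-t_0\mathbf X_{\mathcal E}}\operatorname{Op}_h^{\mathcal E}(a_3\mathrm{e}^{-sG_m}).\]
By Corollary~\ref{cor: inverse op (e^sg)}, $\operatorname{Op}_h^{\mathcal E}(\mathrm{e}^{-sG_m})^{-1}=\operatorname{Op}_h^{\mathcal E}(\mathrm{e}^{sG_m})+\mathcal O(h)$ in the appropriate sense. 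We then insert $1=a_1+a_2+a_3$ to the right of $\widehat\varphi(-\mathrm{i}\mathbf X_{\mathcal E})\mathrm{e}^{-(n-1)t_0\mathbf X_{\mathcal E}}$. More precisely, we write $\operatorname{Op}_h^{\mathcal E}(\mathrm{e}^{-sG_m})^{-1}\approx \sum_j \operatorname{Op}_h^{\mathcal E}(a_j\mathrm{e}^{-sG_m})\operatorname{Op}_h^{\mathcal E}(\mathrm{e}^{2sG_m})$ up to symbolic calculus errors. This turns the left factor into $\sum_j\mathbf A_{i,j}(n-1)$ composed with a bounded operator. We have thus reduced to proving the one-step estimate
\[\big\|\operatorname{Op}_h^{\mathcal E}(\mathrm{e}^{sG_m})\mathrm{e}^{-t_0\mathbf X_{\mathcal E}}\operatorname{Op}_h^{\mathcal E}(a_3\mathrm{e}^{-sG_m})\big\|_{L^2}\le C_1\mathrm{e}^{-C_0s}+\mathcal O_s(h),\]
with $C_1$ independent of $s$. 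The $\mathcal O_s(h)$ term is absorbed by taking $h<h_0(s)$.

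The one-step estimate comes from the uniform Egorov theorem (Proposition~\ref{prop: Egorov theorem}). Conjugating, we have $\mathrm{e}^{t_0\mathbf X_{\mathcal E}}\operatorname{Op}_h^{\mathcal E}(\mathrm{e}^{sG_m})\mathrm{e}^{-t_0\mathbf X_{\mathcal E}}=\operatorname{Op}_h^{\mathcal E}(\mathrm{e}^{sG_m\circ\Phi_{t_0}})+$ lower order, with errors uniform in $\mathcal E$. The composition with $\operatorname{Op}_h^{\mathcal E}(a_3\mathrm{e}^{-sG_m})$ then has principal symbol
\[a_3\exp\big(s(G_m\circ\Phi_{t_0}-G_m)\big)=a_3\exp\Big(s\int_0^{t_0}HG_m\circ\Phi_\tau\,\mathrm d\tau\Big).\]
We must check that for $t_0$ small, $\Phi_\tau(\operatorname{Supp}a_3)$, $\tau\in[0,t_0]$, stays outside the conic neighbourhood $N_0$ of $E_0^*$ used in Proposition~\ref{prop: construction escape}. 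This holds if $N_0$ is chosen inside $\{|\xi(X)|>|\xi|/3\}$, since $\xi(X)$ is preserved by $\Phi_\tau$ and $|\xi|$ changes by a bounded factor. Then $HG_m\le -C$ there, and the symbol is bounded by $\mathrm{e}^{-Cst_0}$, so $C_0=Ct_0$. On the low-frequency part $|\xi|\le 1$ of $\operatorname{Supp}a_3$ (actually empty by construction) nothing is needed. The $L^2$-continuity theorem (Proposition~\ref{prop: L2 continuity}) gives norm $\le C_1\sup|{\rm symbol}|+\mathcal O_s(h)$, where $C_1$ depends only on finitely many seminorms of $a_3$, not on $s$, because in the $S_{1^-}$ calculus the leading constant is the sup norm. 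The bound for $\mathbf A_{3,i}(n)$ is symmetric: peel off $\mathrm{e}^{-t_0\mathbf X_{\mathcal E}}$ on the left and use $\operatorname{Op}_h^{\mathcal E}(a_3\mathrm{e}^{sG_m})\mathrm{e}^{-t_0\mathbf X_{\mathcal E}}\operatorname{Op}_h^{\mathcal E}(\mathrm{e}^{sG_m})^{-1}$, with $G_m\circ\Phi_{-t_0}$ instead.

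The main obstacle is the claim that $C_1$ is independent of $s$. The symbols $\mathrm{e}^{\pm sG_m}$ lie in $S^{\pm s}_{1^-}$ with seminorms growing in $s$, and the exact remainders in Egorov and composition depend on $s$. I would handle this by keeping the exact principal symbol bound $\sup|a_3|\mathrm{e}^{-Cst_0}$. All $s$-dependent seminorms then enter only through $h$-small remainders, and these are killed by choosing $h<h_0(s)$. This requires the sharp form of $L^2$ continuity, namely norm $\le \sup|b|+\mathcal O(h^{1^-})$ with the error depending on seminorms. This form is uniform in $\mathcal E$ by the construction of the quantization in the Appendix.
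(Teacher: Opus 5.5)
Your overall plan is the paper's: peel off one step $\mathrm{e}^{-t_0\mathbf X_{\mathcal E}}$, handle the inverse weight with Corollary~\ref{cor: inverse op (e^sg)}, and get $\mathrm{e}^{-C_0 s}$ from the uniform Egorov theorem on $\operatorname{Supp}a_3$. Your treatment of $\mathbf A_{3,i}(n)$ and of the one-step estimate is fine. Your observation that $\xi(X)$ is $\Phi_\tau$-invariant, so that $\Phi_\tau(\operatorname{Supp}a_3)$ avoids $N_0$, is a detail the paper leaves implicit.

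The problem is the factorization of $\mathbf A_{i,3}(n)$. You insert $\operatorname{Op}_h^{\mathcal E}(\mathrm{e}^{-sG_m})^{-1}\operatorname{Op}_h^{\mathcal E}(\mathrm{e}^{-sG_m})$, which is the wrong weight. Your $\mathbf B$ then has principal symbol $a_3\,\mathrm{e}^{-s(G_m\circ\Phi_{t_0}+G_m)}$. This grows like $|\xi|^{2s}$ in the conic neighbourhood of $E_u^*$ where $m\equiv-1$, and $E_u^*\subset\{\xi(X)=0\}$ lies where $a_3\equiv 1$ at high frequency. So $\mathbf B$ is unbounded on $L^2$, and bounding $\|\mathbf A_{i,3}(n)\|$ by a product of norms is vacuous.

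Your repair $\operatorname{Op}_h^{\mathcal E}(\mathrm{e}^{-sG_m})^{-1}\approx\sum_j\operatorname{Op}_h^{\mathcal E}(a_j\mathrm{e}^{-sG_m})\operatorname{Op}_h^{\mathcal E}(\mathrm{e}^{2sG_m})$ does not close the gap, for two reasons:
\begin{enumerate}
\item $\operatorname{Op}_h^{\mathcal E}(\mathrm{e}^{2sG_m})$ is not the ``bounded operator'' you claim.
\item You implicitly recover $\operatorname{Op}_h^{\mathcal E}(\mathrm{e}^{sG_m})$ from the compositions $\operatorname{Op}_h^{\mathcal E}(\mathrm{e}^{-sG_m})\operatorname{Op}_h^{\mathcal E}(\mathrm{e}^{2sG_m})$ and $\operatorname{Op}_h^{\mathcal E}(\mathrm{e}^{2sG_m})\operatorname{Op}_h^{\mathcal E}(\mathrm{e}^{-sG_m})$. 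These have total order $sm$, which equals $s>0$ near $E_s^*$. Proposition~\ref{prop: product formula quantization bundle} needs total order $\le 0$, so the discarded ``symbolic calculus errors'' are not $\mathcal O(h)_{L^2\to L^2}$.
\end{enumerate}

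The fix is immediate and gives exactly the paper's proof. Insert $\operatorname{Op}_h^{\mathcal E}(\mathrm{e}^{sG_m})^{-1}\operatorname{Op}_h^{\mathcal E}(\mathrm{e}^{sG_m})$ instead, as you already did for $\mathbf A_{3,i}(n)$. The right factor is then precisely the one-step operator $\operatorname{Op}_h^{\mathcal E}(\mathrm{e}^{sG_m})\mathrm{e}^{-t_0\mathbf X_{\mathcal E}}\operatorname{Op}_h^{\mathcal E}(a_3\mathrm{e}^{-sG_m})$ that you go on to estimate. The left factor needs no approximation. Corollary~\ref{cor: inverse op (e^sg)} gives $\operatorname{Op}_h^{\mathcal E}(\mathrm{e}^{sG_m})^{-1}=\operatorname{Op}_h^{\mathcal E}(\mathrm{e}^{-sG_m})(\operatorname{Id}+\mathcal O_s(h))$ with an $L^2$-bounded remainder. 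By linearity of the quantization, $\operatorname{Op}_h^{\mathcal E}(\mathrm{e}^{-sG_m})=\sum_j\operatorname{Op}_h^{\mathcal E}(a_j\mathrm{e}^{-sG_m})$ exactly. So the left factor equals $\sum_j\mathbf A_{i,j}(n-1)(\operatorname{Id}+\mathcal O_s(h))$, with norm at most $(1+\mathcal O_s(h))\sum_j\|\mathbf A_{i,j}(n-1)\|$.

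A minor point: for $\mathbf A_{3,i}(n)$, peeling on the left produces $\sum_j\|\mathbf A_{j,i}(n-1)\|$ rather than $\sum_j\|\mathbf A_{i,j}(n-1)\|$. This is harmless, since only $\mathbf N(n-1)$ is used afterwards.
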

	\begin{proof}Let us deal with $\mathbf A_{i,3}(n)$, the case of $\mathbf A_{3,i}(n)$ is analogous. Using the semigroup property, we have
		\[\|\mathbf A_{i,3}(n)\|\le \Big\|\operatorname{Op}_h^{\mathcal E}(\mathrm{e}^{sG_m}a_i)\widehat \varphi(-\mathrm{i}\mathbf X_{\mathcal E})\mathrm{e}^{-(n-1)t_0\mathbf X_{\mathcal E}} \operatorname{Op}_h^{\mathcal E}(\mathrm{e}^{sG_m})^{-1}\Big\|\]\[\times \big\|\operatorname{Op}_h^{\mathcal E}(\mathrm{e}^{sG_m})\mathrm{e}^{-t_0\mathbf X_{\mathcal E}}\operatorname{Op}_h^{\mathcal E}(a_3\mathrm{e}^{-sG_m})\big\|.\]
		By Corollary~\ref{cor: inverse op (e^sg)}, the first norm is controlled by
		\[\Big\|\operatorname{Op}_h^{\mathcal E}(\mathrm{e}^{sG_m}a_i)\widehat \varphi(-\mathrm{i}\mathbf X_{\mathcal E})\mathrm{e}^{-(n-1)t_0\mathbf X_{\mathcal E}} \operatorname{Op}_h^{\mathcal E}(\mathrm{e}^{sG_m})^{-1}\Big\|\le (1+\mathcal O_s(h))\sum_{j=1}^3 \|\mathbf A_{i,j}(n-1)\|\]
		The function $G_m$ is strictly decreasing along the flow lines for times $\le t_0$ when starting in the support of $a_3$, by Proposition \ref{prop: construction escape}. By Proposition~\ref{prop: Egorov theorem}, the second norm is controlled by
		\begin{align*}
			\big\|\operatorname{Op}_h^{\mathcal E}(\mathrm{e}^{sG_m})\mathrm{e}^{-t_0\mathbf X_{\mathcal E}}\operatorname{Op}_h^{\mathcal E}(a_3\mathrm{e}^{-sG_m})\big\| & \lesssim \|\mathrm{e}^{s(G_m\circ \Phi_t-G_m)}\|_{L^{\infty}(\operatorname{Supp}(a_2))}+\mathcal O_s(h)\\ & \le C_1(1+\mathcal O_s(h))\mathrm{e}^{-C_0s}.
		\end{align*}
		It remains to take $h$ small enough depending on $s$ to ensure that the $\mathcal O_s(h)$ term is $\le 1$.
	\end{proof}
	
	\begin{proof}[Proof of Proposition \ref{prop: long time aniso}]
		By combining the above Lemmata, we find that for all $(s,N,K,\varepsilon)$, provided $T\le K|\log h|$ and $0<h<h_0(s,N,K,\varepsilon)$, we have  
	\[\mathbf N(n)\le C_1\big[ h^{-d}\mathrm{e}^{(\delta_0+\varepsilon)nt_0}+h^N+\mathrm{e}^{-C_0 s} \mathbf N(n-1)\big].\]
	Fix $\varepsilon>0$, and define
	\[n_0:=\Big\lceil \frac{d}{\varepsilon t_0}|\log h|\Big\rceil,\]
	so that
	\[h^{-d}\mathrm{e}^{(\delta_0+\varepsilon)nt_0} \le \mathrm{e}^{(\delta_0+2\varepsilon)nt_0}, \qquad \forall n\ge n_0.\]
	We take $N\gg 1$ large enough depending on $K$ to ensure $h^N\le \mathrm{e}^{(\delta_0+\varepsilon)2T}$ (recall that $T=K|\log h|$). It follows that
	\begin{equation} \label{eq: induction N(n)}\mathbf N(n)\le C_1\big[\mathrm{e}^{(\delta_0+2\varepsilon)nt_0}+\mathrm{e}^{-C_0 s}\mathbf N(n-1)\big], \qquad \forall n\in \Big[n_0, \frac{2T}{t_0}\Big], \end{equation}
	where $C_1$ may have doubled. Iterating \eqref{eq: induction N(n)} down to $n-k_0=n_0$ yields
	\begin{equation} \label{eq: after iteratingdown}\mathbf N(n)\le \Big(\sum_{k=0}^{k_0} C_1 \mathrm{e}^{(\delta_0+2\varepsilon)(n-k)t_0} (C_1 \mathrm{e}^{-C_0 s})^{k}\Big) +(C_1\mathrm{e}^{-C_0 s})^{n-n_0} \mathbf N(n_0). \end{equation}
	Recall from Proposition~\ref{prop: strong continuity group} that $\mathbf N(n_0)\le C_3 \mathrm{e}^{{C_3n_0}}$, with $C_3=C_3(s,\mathcal F)$. By \eqref{eq: after iteratingdown} we obtain
	\[\mathbf N(n)\le C_1\mathrm{e}^{(\delta_0+2\varepsilon)n t_0}\Big(\sum_{k=0}^{k_1} (\mathrm{e}^{-(\delta_0+2\varepsilon)t_0}C_1 \mathrm{e}^{-C_0 s})^{k}\Big) +(C_1\mathrm{e}^{-C_0 s})^{n} \cdot (C_1 \mathrm{e}^{-C_0 s})^{-n_0} C_3\mathrm{e}^{C_3n_0}.\]
	We take $s$ large enough to ensure $(\mathrm{e}^{-(\delta_0+2\varepsilon)t_0}C_1 \mathrm{e}^{-C_0 s})<\frac 12$ which ensures that the series is convergent, and we get
	\[\mathbf N(n)\le C_1\mathrm{e}^{(\delta_0+2\varepsilon)n t_0}\big[1+C_4\mathrm{e}^{C_4 n_0}\big],\]
	where $C_4=\max(C_3+C_0s,C_1C_3)$ depends on $s,\mathcal F$ again. Since $n_0\lesssim_\varepsilon |\log h|$, by taking $T=K|\log h|$ large enough depending on $s,\varepsilon$, we may assume $C_4 \mathrm{e}^{C_4n_0}\le \mathrm{e}^{2\varepsilon T}$, in particular, for any $n\in [T/t_0,2T/t_0]$, we obtain
	\[\mathbf N(n)\le C_1\mathrm{e}^{(\delta_0+3\varepsilon)n t_0}.\]
	This concludes the proof of Proposition \ref{prop: long time aniso}.\end{proof}
	
\begin{proof}[End of the proof of Proposition \ref{prop: R(z) for finite number of z}] Choose $\varepsilon>0$ such that $\mathcal K\subset \{z\in \mathbb C~:~\operatorname{Re}(z)>\delta_0+2\varepsilon\}$. Consider the set of functions 
	\[\mathcal F=\{\varphi_{n',T'}~:~ n'\ge 1, T'\ge 1\},\]
	where $\varphi_{n',T'}$ has been defined in \eqref{eq: def phi n t}. Let $s,K,h$ be as in Proposition \ref{prop: long time aniso}. Then, for any $\varphi\in \mathcal F$, by Proposition \ref{prop: long time aniso}, we have a.a.s., with $T=K|\log h|$:
	\[\forall n\in \Big[\frac{T}{t_0},\frac{2T}{t_0}\Big],\quad \|\widehat \varphi(-\mathrm{i}\mathbf X_{\mathcal E})\mathrm{e}^{-nt_0\mathbf X_{\mathcal E}}\|_{\mathcal H_h^s}\le \mathrm{e}^{(\delta_0+\varepsilon)nt_0}.\]
	By taking $\varphi=\varphi_{n',T'}$ with $(n',T')=(n,T)$ we obtain that a.a.s.
	\[\forall n\in \Big[\frac{T}{t_0},\frac{2T}{t_0}\Big],\quad \|\widehat \varphi_{n,T}(-\mathrm{i}\mathbf X_{\mathcal E})\mathrm{e}^{-nt_0\mathbf X_{\mathcal E}}\|_{\mathcal H_h^s}\le \mathrm{e}^{(\delta_0+\varepsilon)nt_0}.\]
	Recalling \eqref{eq: R(z) as split sum}, we deduce
	\begin{equation} \label{eq: bound R(z) in terms of N} \|\mathbf R(z)\|_{\mathcal H_h^s\to \mathcal H_h^s}\lesssim \sum_{T/t_0\le n\le 2T/t_0} \mathrm{e}^{-\operatorname{Re}(z)nt_0}\mathrm{e}^{(\delta_0+\varepsilon)nt_0}\lesssim \sum_{T/t_0\le n\le 2T/t_0} \mathrm{e}^{-\varepsilon nt_0}\lesssim  \frac{T}{t_0}\mathrm{e}^{-\varepsilon T}.\end{equation}
	For $T$ large enough depending on $\varepsilon$, we obtain $\|\mathbf R(z)\|<\frac 14$, as we wished.
\end{proof}

	\begin{rem}[A note on the order of choice of constants]
		
	We choose a compact set $\mathcal K\subset \{z\in \mathbb C~:~\operatorname{Re}(z)\ge \delta_0+2\varepsilon\}$ for some $\varepsilon>0$. We take $s\gg 1$, next $K=K(s,\mathcal K,\varepsilon)$ and $N=N(K,s,\varepsilon)$ are successively taken large enough, and finally $h=h(\mathcal K,\varepsilon,s,K,N)$ is chosen small enough depending on all the previous parameters.\end{rem}
	\subsection{Finishing the proof of Theorem~\ref{thm: extension resolvent Hs}} It remains to show that a.a.s., for $\mathcal E=\mathcal E_{\rho_n}$, we have
	\[\big\|(\mathbf X_{\mathcal E}+z)^{-1}\big\|_{\mathcal H_h^s(\mathcal M,\mathcal E)\to \mathcal H_h^s(\mathcal M,\mathcal E)}\le C(\mathcal K).\]
	This follows from the estimate \eqref{eq: estimate Q(z)} on the approximate inverse $\|\mathbf Q (z)\|$, because a.a.s., for all $z\in \mathcal K$, we have
	\[\big\|(\mathbf X_{\mathcal E}+z)^{-1}\big\|=\|\mathbf Q(z)(\operatorname{Id}+\mathbf R(z))^{-1}\|\le 2\|\mathbf Q(z)\|\lesssim \mathrm{e}^{\nu_sT}=:C(\mathcal K).\]
	Note that the constant $C(\mathcal K)$ that we obtain depends on $T=K|\log h|$, and we need to work in a regime $h|z|\ll 1$ so we could certainly take $C(\mathcal K)=C_\varepsilon(1+\sup_{z\in \mathcal K} |z|^{N_0})$ for some large constant $N_0$.

	\subsection{About the optimality of the result} \label{subsec: optimality} It was conjectured by Faure--Weich \cite[Conjecture C.6]{faureweich} and Dolgopyat--Pollicott \cite{DolgopyatPollicottAddendum}) that the value $\frac 12\operatorname{Pr}(-2\psi^u)$ is the essential spectral gap for the Pollicott--Ruelle resonances of Anosov geodesic flows. More precisely, the conjecture states that for a fixed Anosov manifold $M$, we have
	\[\frac{1}{2}\operatorname{Pr}(-2\psi^u)=\inf\Big\{\kappa\in \mathbb R~:~ \begin{array}{l}\text{there are finitely many resonances}\\ \text{in the half-space $\operatorname{Re}(z)>\kappa$} \end{array}\Big\}.\]
	An analogous conjecture for resonances of the Laplacian on Schottky surfaces was proposed by Jakobson and Naud \cite{JakobsonNaud2012}. 
	
	The conjectural value for the essential spectral gap coincides with the probabilistic low-frequency spectral gap obtained in Theorem~\ref{thm: extension resolvent}. While we are not able to prove that the constant $\delta_0=\frac 12\operatorname{Pr}(-2\psi^u)$ is optimal in Theorem~\ref{thm: extension resolvent}, we can show that it is the best possible constant in the full version of Conjecture~\ref{conj: uniform gap}, where we assume uniform polynomial resolvent bounds. In fact, the gap $\delta_0$ is optimal under the mild assumption that the covers possess some regions where the injectivity radius is increasingly large. Indeed, mixing---or rather dispersion---cannot occur faster on $M_n$ than it does on the universal cover $\widetilde M$, for times shorter than the maximal injectivity radius.

	\begin{prop}\label{prop: optimality} Let $(M_n\to M)$ be a sequence of finite degree Riemannian coverings of $M$, such that for any $R>0$, we have
		\begin{equation} \label{eq: vol inj}\operatorname{vol}\big(\{x\in M_n~:~ \operatorname{inj}(x)\ge R\}\big)\underset{n\to+\infty}{\longrightarrow} +\infty, \end{equation}
		where $\operatorname{inj}(x)$ denotes the injectivity radius of $x$. Let $(\mathcal E_n):=(\mathcal E_{\rho_n})$ be the associated sequence of vector bundles over $SM$. Assume that there is some $s>0$ such that for all $n\ge n_0$, the resolvent $(\mathbf X_{\mathcal E_n}+z)^{-1}$ extends analytically to the half-plane $\{\operatorname{Re}(z)>\kappa\}$, and that we have uniform polynomial resolvent bounds
		\begin{equation} \label{eq: assum resolvent bounds}\big\|(\mathbf X_{\mathcal E_n}+z)^{-1}\big\|_{\mathcal H^s(\mathcal M,\mathcal E_n)}\le C_\kappa (1+|\operatorname{Im}(z)|)^{N_\kappa}, \end{equation}
	for some constants $C_\kappa,N_\kappa$ that are independent of $n$. Then $\kappa\ge \delta_0$.\end{prop}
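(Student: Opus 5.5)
Assume by contradiction that $\kappa<\delta_0$. The plan is to turn the uniform resolvent bounds into uniform exponential decay of correlations at a rate beating $\delta_0$. Then we test this against functions supported in a ball of large injectivity radius, where the dynamics is that of $\widetilde M$. There the lower bound from Proposition~\ref{prop: operator norm spherical mean} forbids decay faster than $\mathrm{e}^{\delta_0 t}$.

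\textbf{Step 1: decay of correlations from resolvent bounds.} Fix $\kappa<\kappa'<\delta_0$. For smooth sections $f,g$ of $\mathcal E_n$ and a smooth cutoff $\psi$ in time, write $\int \psi(t)\langle \mathrm{e}^{-t\mathbf X_{\mathcal E_n}}f,g\rangle\,\mathrm{d}t$ via inverse Laplace transform as a contour integral of $\hat\psi$ against $\langle(\mathbf X_{\mathcal E_n}+z)^{-1}f,g\rangle$. We shift the contour to $\operatorname{Re}(z)=\kappa'$. The polynomial bound \eqref{eq: assum resolvent bounds} together with enough regularity (integrating by parts in $t$, or applying the resolvent identity to $(\mathbf X+1)^{-k}f$) makes the integral converge. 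This yields
\[|\langle \mathrm{e}^{-t\mathbf X_{\mathcal E_n}}f,g\rangle|\le C\,\mathrm{e}^{\kappa' t}\|f\|_{H^k}\|g\|_{H^{k}},\]
with $C,k$ independent of $n$. Here we use that $H^k\subset\mathcal H^s\subset H^{-k}$ continuously, uniformly in the bundle, by the uniform calculus of the Appendix. Choosing $\psi$ localized near a time $t$ is enough, since the flow is unitary on $L^2$.

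\textbf{Step 2: test functions on large-injectivity balls.} Pick $x_n\in M_n$ with $\operatorname{inj}(x_n)\ge 2R$ and $R\ge t+2$. Lifts of $B(x_n,R)$ are isometric to balls in $\widetilde M$. Take $f$ and $g$ as in the lower-bound part of the proof of Proposition~\ref{prop: operator norm spherical mean}: $g$ a bump near $x_n$, and $f=\mathbf 1_{d(x_n,\cdot)\in[t-1,t+1]}J(x_n,\cdot)^{-1}$, smoothed and lifted to $SM_n$ as functions of the base point. The computation of that proof then gives
\[\langle \mathrm{e}^{-t X}f,g\rangle\gtrsim \mathrm{e}^{(2\delta_0-\varepsilon)t},\qquad \|f\|_{L^2}\lesssim \mathrm{e}^{(\delta_0+\varepsilon)t}.\]
We project onto new modes by subtracting fiber averages; this changes the pairing by at most $\|f\|_{L^1}\|g\|_{L^1}/\operatorname{vol}(M)\cdot n^{-1}$ roughly. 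The hypothesis \eqref{eq: vol inj} is used so that this error becomes negligible as $n\to\infty$ at fixed $t$, and also so that we may take many disjoint such balls, with the total volume of the region ensuring the averages are small.

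\textbf{Step 3: comparing norms.} The main obstacle is that the $H^k$ norm of the smoothed $f$ is not controlled by its $L^2$ norm. $J^{-1}$ varies only on unit scale, by Lemma~\ref{lem: temperness J}, so derivatives in the base cost a constant. Concretely, we replace $f$ by a convolution at unit scale and use Lemma~\ref{lem: properties tild J} to get $\|f\|_{H^k}\lesssim_k \mathrm{e}^{(\delta_0+\varepsilon)t}$. Combining with Step~1 gives
\[\mathrm{e}^{(2\delta_0-\varepsilon)t}\lesssim \mathrm{e}^{\kappa' t}\mathrm{e}^{(\delta_0+\varepsilon)t}\]
for all large $t$, that is, $\delta_0\le\kappa'+2\varepsilon$. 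Letting $\varepsilon\to0$ and $\kappa'\to\kappa$ contradicts $\kappa<\delta_0$. If the smoothing loses too much, the fallback is to keep $f$ rough and instead smooth the propagator in time and in the fiber, using Proposition~\ref{prop: inserting g(h2lap) highdim bundle} at a fixed $h$.
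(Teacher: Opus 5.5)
Your argument is correct. Its skeleton is the paper's: uniform resolvent bounds give uniform decay of correlations as in \cite{nonnenmacher2015decay}; the test function is $J(x_0,\cdot)^{-1}$ on an annulus of radius $t$ inside an embedded copy of a ball of $\widetilde M$, paired with a bump; unit-scale smoothing controls $H^k$ norms. Your convolution smoothing is interchangeable with the paper's lattice sum $\sum_\gamma J(x_0,\gamma x_0)^{-1}\chi(\gamma^{-1}x)$, since both rest on Lemma~\ref{lem: temperness J}.

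The genuine difference is how you make the test functions new.

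\emph{The paper's route.} It finds two disjoint, far-apart embedded copies of the same lifted ball $B(x_0,R)$ in $M_n$. It puts $+\tilde f,+\chi$ on one copy and $-\tilde f,-\chi$ on the other. These functions lie exactly in $C^\infty_{\rm new}(M_n)$, and the correlation equals $2\langle \mathcal L_t\tilde f,\chi\rangle$ with no error term. Finding two such lifts over the same base point is where the volume hypothesis is really used.

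\emph{Your route.} You subtract fiber averages. The old/new splitting is orthogonal, flow-invariant and commutes with lifted differential operators, so $\|f^{\rm new}\|_{H^k}\le\|f\|_{H^k}$. Write $p:M_n\to M$ and let $p_*$ denote summation over its fibers. The discarded old part of the pairing equals $\deg(p)^{-1}\langle \mathrm{e}^{-tX}\pi^*p_*f,\pi^*p_*g\rangle_{SM}$. This is $\lesssim \|f\|_{L^1}\|p_*g\|_{L^\infty}/\deg(p)=\mathcal O(1/\deg p)$, because $\int_{S(x_0,r)}J(x_0,\cdot)^{-1}\mathrm{d}\sigma$ is constant, so $\|f\|_{L^1}\lesssim 1$, and $g$ is a unit bump.

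Your route needs only one point of large injectivity radius and $\deg(p)\to\infty$; both follow from the hypothesis, since $\operatorname{vol}(M_n)=\deg(p)\operatorname{vol}(M)$. The ``many disjoint balls'' remark is therefore superfluous. The price is that you must send $n\to\infty$ at fixed $t$ before letting $t\to\infty$. This is harmless because the correlation constants are uniform in $n$.

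One small imprecision in Step~1: unitarity alone does not turn a time-averaged correlation bound into a pointwise one. You don't need it to. Either use the $(\mathbf X+1)^{-k}$ resolvent-identity route you mention, or average the lower bound over $s$ near $t$ as well, after thickening the annulus slightly.
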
 
	\begin{rem}The assumption \eqref{eq: vol inj} is equivalent to the fact that there is a sequence of points $x_n\in M_n$ such that $\operatorname{inj}(x_n)\to +\infty$. Note that it is much less restrictive than Benjamini--Schramm convergence to $\widetilde M$, which requires 
		\[\forall R\ge 0, \qquad \frac{\operatorname{vol}\big(\{x\in M_n~:~ \operatorname{inj}(x)\ge R\}\big)}{\operatorname{vol}(M_n)}\underset{n\to+\infty}{\longrightarrow} 1.\]
	\end{rem}
	
	\begin{proof}\textbf{1.} Recall from the proof of Proposition \ref{prop: operator norm spherical mean} that for $x_0\in \widetilde M$, the function $f:x\mapsto \mathbf 1_{t-1\le d(x_0,x)\le t+1}J(x_0,x)^{-1}$ satisfies 
	\[\langle \mathcal L_t f,\mathbf 1_{B(x_0,1)}\rangle_{L^2(\widetilde M)} \ge c_\varepsilon\mathrm{e}^{(2\delta_0-\varepsilon) t}, \qquad \|f\|_{L^2(\widetilde M)}\le C_\varepsilon \mathrm{e}^{(\delta_0+\varepsilon)t}.\]
	We introduce a smoothed version of $f$ that enjoys some better regularity properties. Let $D$ denote the diameter of $M$, and let $\chi$ be a smooth cutoff function that is equal to $1$ on $B(x_0,D+1)\subset \widetilde M$. Define
	\begin{equation} \label{eq: tilde f better reg} \tilde f(x):=\sum_{t-D\le d(x_0,\gamma x_0)\le t+D} J(x_0,\gamma x_0)^{-1}\chi(\gamma^{-1}x).\end{equation}
	Then, using the properties of the function $J$ (namely Lemma \ref{lem: temperness J}), one can show
	\begin{equation} \label{eq: Lt tilde f,chi}\langle \mathcal L_t \tilde f,\chi\rangle_{L^2(\widetilde M)} \gtrsim \mathrm{e}^{2(\delta_0-\varepsilon)t},\end{equation}
	while 
	\[\|\tilde f\|_{H^k(\widetilde M)}\lesssim \mathrm{e}^{(\delta_0+\varepsilon) t}, \qquad \|\chi\|_{H^k(\widetilde M)}\lesssim 1, \qquad \forall k\ge 0.\]
	
	\textbf{2.} Now, let $(M_n\to M)$ be a sequence of coverings satisfying the assumptions of Proposition \ref{prop: optimality}. Arguing as in \cite[\S9]{nonnenmacher2015decay}, the resolvent bounds \eqref{eq: assum resolvent bounds} imply that there is some integer $k$ depending on $\kappa$ such that for all $f,g\in C^\infty_{\rm new}(SM_n)$, one has
	\begin{equation} \label{eq: implies mixing} \int_{SM_n} (f\circ \varphi_t)g=\mathcal O_\kappa(\|f\|_{H^k(SM_n)}\|g\|_{H^k(SM_n)}\mathrm{e}^{\kappa t}).\end{equation}We now make a convenient choice for $f,g$. Fix some $R>0$. Then, for $n$ large enough, we can find $x_0\in \widetilde M$ such that $M_n$ contains two disjoint embedded copies of $B(x_0,R)\subset \widetilde M$ that are distance $\ge R$ from each other. For $t\le R$, we can use the function $\tilde f$ from \eqref{eq: tilde f better reg} to build a function $f_n\in C^\infty_{\rm new}(M_n)$ such that $f_n\equiv \tilde f$ in the first copy of $B(x_0,R)$ and $f_n\equiv -\tilde f$ in the second copy. Similarly, we build $\chi_n\in C^\infty_{\rm new}(M_n)$ such that $\chi_n\equiv \chi$ in the first copy of $B(x_0,R)$ and $\chi_n\equiv -\chi$ in the second copy. Let $\pi:SM_n\to M_n$ denote the projection; the functions $f_n,\chi_n$ lift to functions $\pi^*f_n,\pi^*\chi_n\in C^\infty_{\rm new}(SM_n)$. Moreover, by \eqref{eq: Lt tilde f,chi}, for all $t\le R$ we have
	\begin{equation} \label{eq: Ltfn chin}\langle \mathrm{e}^{-tX}(\pi^*f_n),\pi^*\chi_n\rangle_{L^2(SM_n)}=\langle \mathcal L_t f_n,\chi_n\rangle_{L^2(M_n)}=2\langle \mathcal L_t\tilde f,\chi\rangle_{L^2(\widetilde M)} \gtrsim \mathrm{e}^{(2\delta_0-\varepsilon)t},\end{equation}
	and $\|\pi^*f_n\|_{H^k(SM_n)}\lesssim \mathrm{e}^{(\delta_0+\varepsilon)t}$, $\|\pi^*\chi_n\|_{H^k}\lesssim 1$. Combined with \eqref{eq: implies mixing} this implies
	\[\mathrm{e}^{2(\delta_0-\varepsilon)t}\lesssim \mathrm{e}^{(\delta_0+\varepsilon)t} \mathrm{e}^{\kappa t}.\]
	Letting $t=R$ go to $+\infty$, one obtains $\delta_0\le \kappa+3\varepsilon$. Since this holds for any $\varepsilon>0$ we infer $\kappa\ge \delta_0$.

	\end{proof}

	\appendix 
	\section{A short proof that $\delta_0<-\gamma_0$.} We show that the low-frequency gap $\delta_0/2=\operatorname{Pr}(-2\psi^u)/2$ obtained in Theorem \ref{thm: spectral gap} is a strict improvement over the asymptotic gap $-\gamma_0/2$ from Theorem \ref{thm: tsujii essential gap}, whenever the unstable Jacobian $\psi^u$ is not cohomologous to a constant. This applies in particular when $(M,g)$ is a closed surface with nonconstant negative curvature. We recall that a H\"older continuous potential $F\in C(SM)$ is cohomologous to a constant if there exists a function $u$ and a constant $c$ such that for any $x\in SM$ and $t\in \mathbb R$, one has
	\[\int_0^t F(\varphi_s(x))\mathrm{d}s=u(\varphi_t(x))-u(x)+ct.\]
	\begin{prop}\label{prop: jaco coho const} Let $(M,g)$ be a closed Anosov manifold. Assume that the unstable Jacobian is not cohomologous to a constant. Then,
		\[\forall q>1, \ \operatorname{Pr}(-q\psi^u)<-(q-1)\gamma_0.\]
	\end{prop}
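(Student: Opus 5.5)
We will use the variational principle for the pressure, $\operatorname{Pr}(F)=\sup_{\mu}\big(h_\mu+\int F\,\mathrm d\mu\big)$, where the supremum runs over flow-invariant probability measures. We combine it with two standard facts about Anosov flows. The first is the Ruelle (Pesin) inequality $h_\mu\le \int\psi^u\,\mathrm d\mu$, with equality iff $\mu=\mu_{\rm L}$; equivalently, $\operatorname{Pr}(-\psi^u)=0$ and $\mu_{\rm L}$ is its unique equilibrium state. The second is the bound $\int\psi^u\,\mathrm d\mu\ge\gamma_0$ for every invariant $\mu$. This follows from the definition \eqref{eq: defi gamma 0 higher dim}: by Birkhoff's theorem, $\int\psi^u\,\mathrm d\mu=\lim\frac1t\int\log\det(\mathrm d\varphi_t|_{E_u})\,\mathrm d\mu\ge\gamma_0$.

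Fix $q>1$ and an invariant measure $\mu$. Writing $-q\psi^u=-\psi^u-(q-1)\psi^u$, we get
\[h_\mu-q\int\psi^u\,\mathrm d\mu=\Big(h_\mu-\int\psi^u\,\mathrm d\mu\Big)-(q-1)\int\psi^u\,\mathrm d\mu\le -(q-1)\gamma_0 .\]
Taking the supremum over $\mu$ gives the non-strict inequality $\operatorname{Pr}(-q\psi^u)\le-(q-1)\gamma_0$.

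For strictness, $-q\psi^u$ is Hölder, so it has a unique equilibrium state $\mu_q$; it is a Gibbs measure and in particular ergodic and attains the supremum. Suppose equality held. Then both terms above would be equalities for $\mu_q$: we would have $h_{\mu_q}=\int\psi^u\,\mathrm d\mu_q$, forcing $\mu_q=\mu_{\rm L}$, and $\int\psi^u\,\mathrm d\mu_{\rm L}=\gamma_0$. The key step, and the main obstacle, is to show that $\int\psi^u\,\mathrm d\mu_{\rm L}=\gamma_0$ forces $\psi^u$ to be cohomologous to a constant.

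To handle this, note first that $\gamma_0=\inf_\mu\int\psi^u\,\mathrm d\mu$: the infimum over invariant measures of Birkhoff averages equals the limit of the infima of finite-time averages, by the standard subadditive/compactness argument. Since $\mu_{\rm L}$ has full support and minimizes $\int\psi^u\,\mathrm d\mu$, every periodic orbit measure $\mu_\gamma$ then satisfies $\int\psi^u\,\mathrm d\mu_\gamma\ge\int\psi^u\,\mathrm d\mu_{\rm L}$.

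To obtain equality on periodic orbits, we use the Gibbs property. Periodic orbit measures can be chosen weak-$*$ close to $\mu_{\rm L}$ while having Birkhoff averages arbitrarily close to any value in the convex set $\{\int\psi^u\,\mathrm d\mu\}$, so one route is to perturb. A cleaner route is to argue via the pressure function $p(s)=\operatorname{Pr}(-\psi^u+s(\psi^u-\gamma_0))$. It is real-analytic and convex, with $p(0)=0$ and $p'(0)=\int(\psi^u-\gamma_0)\,\mathrm d\mu_{\rm L}=0$. Moreover $p(s)\ge0$ for $s\le0$, because $h_{\mu_{\rm L}}-\int\psi^u\,\mathrm d\mu_{\rm L}=0$ and $\int(\psi^u-\gamma_0)\,\mathrm d\mu_{\rm L}=0$.

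Now evaluate $p(s)$ for $s<0$ against any invariant $\mu$:
\[h_\mu-\int\psi^u\,\mathrm d\mu+s\Big(\int\psi^u\,\mathrm d\mu-\gamma_0\Big)\le 0 .\]
Each term is $\le0$: the first by Ruelle's inequality, and the second because $s<0$ and $\int\psi^u\,\mathrm d\mu\ge\gamma_0$. Hence $p\equiv0$ on $(-\infty,0]$. By analyticity $p\equiv0$ everywhere, so $p''(0)=0$. The variance of $\psi^u$ with respect to $\mu_{\rm L}$ therefore vanishes, and by the classical Livšic/Ratner characterization $\psi^u$ is cohomologous to a constant. This contradicts the hypothesis, and so the inequality is strict.
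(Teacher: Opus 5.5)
Your proof is correct, but it follows a different route from the paper.

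\textbf{The paper's route.} The paper works only with the pressure curve $\beta(q)=\operatorname{Pr}(-q\psi^u)$. Suppose $\beta'(q_0)\ge-\gamma_0$. Strict convexity (Ratner, Parry--Pollicott) then gives a supporting line of slope $>-\gamma_0+\varepsilon$. On the other hand, the periodic-orbit formula for the pressure, the bound $|\det(I-P_\gamma)|\gtrsim_\delta \mathrm{e}^{(\gamma_0-\delta)\ell(\gamma)}$ and the exponential growth of closed orbits give $\beta(q)\le(-\gamma_0+\delta)q+C$. Letting $q\to+\infty$ yields $\beta'<-\gamma_0$ on all of $\mathbb R$, and one integrates from $\beta(1)=0$.

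\textbf{Your route and what each buys.} You obtain the non-strict bound measure by measure, from the variational principle, Ruelle's inequality and $\int\psi^u\,\mathrm d\mu\ge\gamma_0$. Strictness then comes from analysing the equality case. This makes the inequality transparent: it is just Ruelle's inequality plus the minimal expansion rate, and it shows exactly what equality would force. The price is heavier input: uniqueness of equilibrium states, and the Bowen--Ruelle fact that $\mu_{\rm L}$ is the only invariant measure with $h_\mu=\int\psi^u\,\mathrm d\mu$. The paper needs only $\operatorname{Pr}(-\psi^u)=0$, the periodic-orbit formula and strict convexity, and it gets the stronger statement $\beta'<-\gamma_0$ as a by-product.

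\textbf{Shortening your ending.} Since $p(s)=\beta(1-s)-s\gamma_0$, the statement $p\equiv0$ on $(-\infty,0]$ says that $\beta$ is affine on $[1,+\infty)$. This contradicts strict convexity of $\beta$ directly, with no analytic continuation or variance needed. Alternatively, $\mu_q=\mu_{\rm L}$ already means that $-q\psi^u$ and $-\psi^u$ share an equilibrium state, which forces $(q-1)\psi^u$ to be cohomologous to a constant.

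\textbf{The abandoned paragraph.} Drop the paragraph on periodic-orbit measures. It is not used in your final argument. Its claim that measures weak-$*$ close to $\mu_{\rm L}$ can have $\psi^u$-averages near any prescribed value is also false, since $\psi^u$ is continuous.
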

	It is known that $-\psi^u$ is cohomologous to a constant when $(M,g)$ is a locally symmetric space of negative curvature. Katok's conjecture asserts that it is the only case where this happens. The conjecture was proved for negatively curved surfaces in the seminal paper \cite{katok1982} and is open in higher dimensions. 
	
	The following result can be found in \cite{ratner1973,Parrypol}.
		\begin{lem}[Ratner, Parry--Pollicott]\label{lem: ratner pp} Let $F$ be a H\"older continuous potential on $SM$. Then the pressure $t\mapsto \operatorname{Pr}(tF)$ is a convex real analytic function. Moreover, it is strictly convex unless $F$ is cohomologous to a constant.
	\end{lem}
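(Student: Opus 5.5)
We would not re-derive the thermodynamic formalism from scratch, but reduce to the symbolic setting where the Ruelle transfer operator is available. Since the geodesic flow of a closed Anosov manifold is a transitive Anosov flow, Bowen and Ratner's Markov sections give a suspension model. There is a two-sided subshift of finite type $(\Sigma,\sigma)$, which we then reduce to a one-sided shift $\Sigma^+$. There is a H\"older roof function $r>0$, and a bounded-to-one H\"older semiconjugacy from the suspension flow $\Sigma_r$ onto $(SM,\varphi_t)$, which is injective off a set not charged by equilibrium states of H\"older potentials. A H\"older potential $F$ induces $f(x)=\int_0^{r(x)}F(x,s)\,\mathrm{d}s$ on $\Sigma$. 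After adding a coboundary (Sinai's lemma), $f$ depends only on future coordinates. The first step is the Bowen--Ruelle characterization: $\operatorname{Pr}(tF)$ is the unique real $p$ such that $P_\sigma(tf-pr)=0$, where $P_\sigma$ is the pressure of the shift.

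The second step is analyticity. By the Ruelle--Perron--Frobenius theorem, the transfer operator $\mathcal L_{g}$ acting on H\"older functions on $\Sigma^+$ has a simple, isolated, maximal eigenvalue $\mathrm{e}^{P_\sigma(g)}$. The map $g\mapsto \mathcal L_g$ is analytic, so analytic perturbation theory (Kato) makes $(t,p)\mapsto P_\sigma(tf-pr)$ real analytic. Moreover $\partial_p P_\sigma(tf-pr)=-\int r\,\mathrm{d}\mu_{tf-pr}<0$, where $\mu_{tf-pr}$ is the equilibrium state. The implicit function theorem then shows that $t\mapsto\operatorname{Pr}(tF)$ is real analytic.

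Convexity follows from the variational principle: $\operatorname{Pr}(tF)=\sup_{\mu}\big(h_\mu+t\int F\,\mathrm{d}\mu\big)$ over flow-invariant probability measures. This is a supremum of affine functions of $t$, hence convex. Differentiating twice, we expect $\frac{\mathrm{d}^2}{\mathrm{d}t^2}\operatorname{Pr}(tF)=\sigma^2_{\mu_{tF}}(F)$, the asymptotic variance of $F$ with respect to the equilibrium state $\mu_{tF}$:
\[\sigma^2_{\mu}(F)=\lim_{T\to\infty}\frac1T\int\Big(\int_0^T \big(F\circ\varphi_s-\textstyle\int F\,\mathrm{d}\mu\big)\,\mathrm{d}s\Big)^2\mathrm{d}\mu.\]
On the shift we would compute this by differentiating the implicit relation twice; the resulting second derivative of $P_\sigma$ is the variance of $tf-pr$, renormalized by $\int r$.

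The last step is strict convexity. Suppose $t\mapsto \operatorname{Pr}(tF)$ is not strictly convex. It is then affine on some interval, so the second derivative vanishes at some $t_0$, i.e.\ $\sigma^2_{\mu_{tF}}(F)=0$ at $t=t_0$. (By analyticity it then vanishes identically, although we only need one point.) We must show that zero variance forces $F-c$ to be a coboundary, where $c=\int F\,\mathrm{d}\mu_{t_0F}$. On the shift, the standard argument shows that zero variance gives $f-cr=u\circ\sigma-u$ in $L^2(\mu)$. Because $\mu$ is a Gibbs measure with full support, this forces $\int_\gamma (F-c)=0$ on every periodic orbit $\gamma$. Livsic's theorem then yields a H\"older $u$ with $\int_0^t F\circ\varphi_s\,\mathrm{d}s=u\circ\varphi_t-u+ct$.

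We expect this zero-variance $\Rightarrow$ coboundary step to be the main obstacle. Upgrading an $L^2$ cohomology to vanishing periods requires care; we would use the Gibbs property together with the Anosov closing lemma. We also have to handle the boundary of the Markov partition in the coding, but this is standard since equilibrium states do not charge it.
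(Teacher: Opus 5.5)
Your proposal is correct and reproduces the standard argument of the references the paper simply cites for this lemma (\cite{ratner1973,Parrypol}; the paper gives no proof of its own): Markov coding, the Bowen--Ruelle relation $P_\sigma(tf-\operatorname{Pr}(tF)\,r)=0$, analytic perturbation of the Ruelle operator combined with the implicit function theorem, the second derivative as an asymptotic variance, and Liv\v{s}ic's theorem. The step you flag as the main obstacle is lighter than you fear: take $h=f-cr$ made one-sided, so that $\int h\,\mathrm{d}\mu=0$, and let $\mathcal L$ be the normalized transfer operator of $\mu$; then $w=\sum_{n\ge 1}\mathcal L^n h$ is H\"older, $\tilde h=h+w-w\circ\sigma$ satisfies $\mathcal L\tilde h=0$, and hence $\sigma^2_\mu(h)=\int \tilde h^2\,\mathrm{d}\mu$, so zero variance and full support of $\mu$ give a H\"older coboundary directly, with no $L^2$-to-periods upgrade needed (also, the variance in $\frac{\mathrm{d}^2}{\mathrm{d}t^2}\operatorname{Pr}(tF)$ is that of $f-\operatorname{Pr}'(tF)\,r$, not of $tf-pr$).
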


Proposition \ref{prop: jaco coho const} will follow from this next Lemma.
	\begin{lem}Define $\beta(q):=\operatorname{Pr}(-q\psi^u)$. Assume $\psi^u$ is not cohomologous to a constant. Then $\beta'(q)<-\gamma_0$ for all $q\in \mathbb R$.
	\end{lem}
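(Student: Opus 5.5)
We use the variational principle together with the convexity from Lemma~\ref{lem: ratner pp}. Recall $\operatorname{Pr}(F)=\sup_\mu\big(h_\mu+\int F\,\mathrm{d}\mu\big)$, the supremum being over flow-invariant probability measures. For H\"older $F$ the supremum is attained at a unique equilibrium state $\mu_F$. Analyticity of $\beta$ (Lemma~\ref{lem: ratner pp}) and the standard derivative formula for pressure give
\[\beta'(q)=-\int\psi^u\,\mathrm{d}\mu_q,\]
where $\mu_q$ is the equilibrium state of $-q\psi^u$. The lemma is therefore equivalent to the strict inequality $\int\psi^u\,\mathrm{d}\mu_q>\gamma_0$ for every $q$.

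\textbf{Non-strict bound.} For every invariant $\mu$ we have $\int\psi^u\,\mathrm{d}\mu\ge\gamma_0$. Indeed, by invariance and the cocycle property, $\int\psi^u\,\mathrm{d}\mu=\frac1t\int\log\det(\mathrm{d}\varphi_t|_{E_u})\,\mathrm{d}\mu$ for each $t$. The integrand is bounded below by $\inf_x\log\det(\cdot)$, and dividing by $t$ and letting $t\to\infty$ gives $\gamma_0$ by \eqref{eq: defi gamma 0 higher dim}. Thus $\inf_\mu\int\psi^u\,\mathrm{d}\mu\ge\gamma_0$. Moreover, by compactness of the set of invariant measures and a weak-$*$ limit of empirical measures along orbit segments realizing the infimum, this infimum equals $\gamma_0$ and is attained by some invariant measure $\nu$.

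\textbf{Strictness.} This is the main step. Suppose $\int\psi^u\,\mathrm{d}\mu_q=\gamma_0$ for some $q$. Equilibrium states of H\"older potentials have full support and are Gibbs measures. On the other hand, $\psi^u-\gamma_0$ has nonnegative integral against every invariant measure, and zero integral against the fully supported ergodic measure $\mu_q$. The aim is to conclude that $\psi^u$ is cohomologous to $\gamma_0$, contradicting the hypothesis. The route I would take is through Livsic-type periodic data. Every periodic orbit $\gamma$ satisfies $\int_\gamma\psi^u\ge\gamma_0\ell(\gamma)$. By the Gibbs property and specification, $\mu_q$ is approximated by periodic orbit measures with weights comparable to $\mu_q$. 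If some periodic orbit had $\int_\gamma\psi^u>\gamma_0\ell(\gamma)+\eta$, shadowing orbits that spend a positive proportion of time near $\gamma$ would carry definite $\mu_q$-mass. These orbits would then push $\int\psi^u\,\mathrm{d}\mu_q$ strictly above $\gamma_0$, unless other orbits compensate by lying below $\gamma_0$, which is impossible.

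A cleaner variant avoids shadowing. The function $c\mapsto\operatorname{Pr}(-q\psi^u+c\,(\psi^u-\gamma_0))$ has derivative $\int(\psi^u-\gamma_0)\,\mathrm{d}\mu_q=0$ at $c=0$. That is, $\beta'(q)=-\gamma_0$, and combining with $\beta'(r)\le-\gamma_0$ for all $r$ (from the non-strict step) gives $\beta'(q)=\max_r\beta'(r)$. Strict convexity (Lemma~\ref{lem: ratner pp}) makes $\beta'$ strictly increasing, so $\beta'(r)>\beta'(q)=-\gamma_0$ for any $r>q$, contradicting the non-strict bound. This gives the strict inequality for all $q$ at once, and I would present this argument.
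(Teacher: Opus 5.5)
Your argument is correct, and its skeleton matches the paper's: an a priori bound $\beta'\le-\gamma_0$, combined with the strict monotonicity of $\beta'$ from Lemma~\ref{lem: ratner pp}, rules out equality at any $q$.

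The difference is in how the non-strict bound is obtained. The paper never identifies $\beta'(q)$. It argues by contradiction, using convexity to get a linear lower bound $\beta(q)\ge(-\gamma_0+\varepsilon)q+C_\varepsilon$. For the matching upper bound $\beta(q)\le(-\gamma_0+\delta)q+C$ it uses three ingredients:
\begin{enumerate}
\item the periodic-orbit formula for $\operatorname{Pr}(-q\psi^u)$;
\item the pointwise bound $|\det(I-P_\gamma)|\gtrsim\mathrm{e}^{(\gamma_0-\delta)\ell(\gamma)}$;
\item exponential growth of the number of closed orbits.
\end{enumerate}
So in the paper the non-strict bound only appears asymptotically, as $q\to+\infty$.

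You instead obtain it pointwise, from the derivative formula $\beta'(q)=-\int\psi^u\,\mathrm{d}\mu_q$ and the fact that $\int\psi^u\,\mathrm{d}\mu\ge\gamma_0$ for every invariant $\mu$. Your invariance and superadditivity argument for that fact is fine.

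What each route costs and buys:
\begin{itemize}
\item Your route needs existence and uniqueness of equilibrium states and the first-derivative formula for the pressure. These are standard for H\"older potentials over transitive Anosov flows. In return it gives a clean interpretation of the gap, $-\gamma_0-\beta'(q)=\int\psi^u\,\mathrm{d}\mu_q-\gamma_0$.
\item The paper's route only uses the periodic-orbit characterization of the pressure already quoted in the introduction.
\end{itemize}

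Two parts of your write-up are unnecessary. The shadowing sketch in your strictness paragraph is not needed, and it is not rigorous as written. The claim that the infimum $\gamma_0$ is attained by an invariant measure is also not needed. You are right to present the second (convexity) variant instead.
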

	\begin{proof}Assume that there is some $q_0\in \mathbb R$ such that $\beta'(q_0)\ge -\gamma_0$. Since $\psi^u$ is not cohomologous to a constant, we know by Lemma \ref{lem: ratner pp} that $\beta'$ is strictly increasing, hence we can find $\varepsilon>0$ and $q_1>q_0$ such that $\beta'(q_1)>-\gamma_0+\varepsilon$. It follows that there is a constant $C_\varepsilon>0$ such that
		\begin{equation} \label{eq: lower bound beta(q)}\beta(q)\ge (-\gamma_0+\varepsilon)q+C_\varepsilon,\end{equation}
		uniformly in $q\ge 1$. 	Let us now recall that for any fixed $q$, we have
		\[\lim_{T\to +\infty}\frac 1T\log \sum_{\ell(\gamma)\in [T,T+1]} \frac{1}{|I-P_\gamma|^q}=\beta(q),\]
		where $P_\gamma$ is the linearized Poincar\'e map of the periodic orbit and $|I-P_\gamma|:=|\det(I-P_\gamma)|$. By definition of $\gamma_0$, for any $\delta>0$ there is a constant $C_{\delta}>0$ such that for any closed geodesic of length $\ell(\gamma)$ large enough, we have
		\[|I-P_\gamma|\ge C_\delta \mathrm{e}^{(\gamma_0-\delta)\ell(\gamma)}.\]
		Since the number of periodic orbits of length $\le T$ grows at most exponentially, we have
		\begin{equation} \label{eq: upper bound sum per or}\sum_{\ell(\gamma)\in [T,T+1]} \frac{1}{|I-P_\gamma|^q}\le C_{\delta,q} \exp\big(T((-\gamma_0+\delta)q+C)),\end{equation}
		for some constant $C>0$. Taking the logarithms then dividing by $T$ on both sides, we obtain by letting $T\to +\infty$:
		\begin{equation} \label{eq: upper bound beta(q)} \beta(q)\le (-\gamma_0+\delta)q+C.\end{equation}
		Combining \eqref{eq: lower bound beta(q)} with \eqref{eq: upper bound beta(q)} and letting $q\to +\infty$, we deduce
		\[-\gamma_0+\varepsilon\le -\gamma_0+\delta.\]
		Since $\delta$ has been chosen $>0$ independently of $\varepsilon$, we reach a contradiction by taking $\delta=\varepsilon/2$. \end{proof}
	\begin{proof}[Proof of Proposition \ref{prop: jaco coho const}] By the Lemma, we have $\beta'(q)<-\gamma_0$ for all $q\ge 1$. Since $\beta(1)=\operatorname{Pr}(-\psi^u)=0$, we deduce that for any $q>1$:
		\[\operatorname{Pr}(-q\psi^u)=\beta(q)=\int_1^q \beta'(q')\mathrm{d}q'<-\gamma_0(q-1).\] 
		\end{proof}
		
	\section{Semiclassical quantization on unitary flat bundles} \label{appendix} Let $\mathcal M$ be a closed Riemannian manifold of dimension $d$ and $\mathcal E\to \mathcal M$ be a unitary flat vector bundle over $\mathcal M$. We define a semiclassical quantization procedure mapping symbols $a\in C^\infty(T^*\mathcal M)$ to linear operators $\operatorname{Op}_h^{\mathcal E}(a):C^\infty(\mathcal M)\to \mathcal D'(\mathcal M,\mathcal E)$. We do not aim at being exhaustive nor proving optimal results, but focus on the tools needed for our purposes. This is not the first instance where one has to use a semiclassical quantization procedure to deal with an infinite number of bundles at once, see \cite{MaMa,cekić2024semiclassicalanalysisprincipalbundles}. We use the notation $\langle \xi\rangle:=(1+|\xi|^2)^{1/2}$.
	
	\subsection{Order functions and spaces of symbols}	\begin{defi} In this Appendix, a bounded function $m\in S^0(T^*\mathcal M)$ that satisfies $m(x,\lambda \xi)=m(x,\xi)$ for all $\lambda>1$ and $|\xi|\ge 1$ is called an \emph{order function}. \end{defi}
	
	\begin{defi}Let $m$ is an order function and $\rho\in (1/2,1]$. A function $a\in C^\infty(T^*\mathcal M)$ belongs to the exotic symbol class $S^{m(\bullet)}_\rho(T^*\mathcal M)$ if for any multiindices $\alpha,\beta\in \mathbb N^d$, there exists a constant $C_{\alpha,\beta}>0$ such that
		\[\forall (x,\xi)\in T^*\mathcal M, \quad |\partial_\xi^{\alpha}\partial_x^{\beta} a|\le C_{\alpha,\beta} \langle \xi\rangle^{m(x,\xi)-\rho|\alpha|+(1-\rho)|\beta|}.\]
		where the inequalities are understood in charts.
		This is a Fr\'echet space with seminorms
		\[p_{\alpha,\beta}(a):=\sup_{(x,\xi)\in T^*\mathcal M} \big|\langle \xi\rangle^{-m(x,\xi)+\rho|\alpha|-(1-\rho)|\beta|} \partial_\xi^\alpha\partial_x^\beta a\big|.\]
		When $m$ is constant we recover the standard H\"ormander class of symbols which is simply denoted $S^m_\rho$.
	\end{defi}
	\begin{ex}If $m$ is an order function, then the symbol $\langle \xi\rangle^{m(x,\xi)}$ belongs to $S^{m(\bullet)}_\rho(T^*\mathcal M)$ for any $\rho<1$. When $m$ is nonconstant, the symbol $\langle \xi\rangle^m$ falls short of the nicer class $S_1^{m(\bullet)}(T^*\mathcal M)$ due to the appearance of logarithmic terms in $\langle \xi\rangle$ when performing differentiations. \end{ex}
	\begin{defi}If $m$ is an order function, we set
		\[S^{m(\bullet)}_{1^-}:=\bigcap_{\rho<1} S_\rho^{m(\bullet)}.\]\end{defi}
	\begin{lem}For any $s\in \mathbb R$ and $\varepsilon>0$, the inclusion $S_{1^-}^{s}\subset S^{s+\varepsilon}_1$ is continuous.
	\end{lem}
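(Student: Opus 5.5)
The plan is to check the seminorm estimates directly from the definitions, working chart by chart as the definition of $S^{m(\bullet)}_\rho$ prescribes. Here $m\equiv s$ is constant, so the defining weights are ordinary powers of $\langle \xi\rangle$. Fix $s\in\mathbb R$ and $\varepsilon>0$. I will choose a single exponent $\rho<1$, depending only on $\varepsilon$ and on the derivative orders involved, and bound each seminorm of $S^{s+\varepsilon}_1$ by a seminorm of $S^s_\rho$. Since $S^s_{1^-}=\bigcap_{\rho<1}S^s_\rho$ carries the projective (Fréchet) topology generated by the seminorms of all the $S^s_\rho$, $\rho<1$, a seminorm-by-seminorm estimate of this type gives both the inclusion and its continuity.

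For multiindices $\alpha,\beta$ and $a\in S^s_\rho$, the definition gives
\[|\partial_\xi^\alpha\partial_x^\beta a|\le p^{(\rho)}_{\alpha,\beta}(a)\,\langle\xi\rangle^{s-\rho|\alpha|+(1-\rho)|\beta|}.\]
I want to dominate this by $\langle\xi\rangle^{s+\varepsilon-|\alpha|}$, which is the $S^{s+\varepsilon}_1$ weight (that class allows no loss in $x$-derivatives). Comparing exponents, this requires
\[(1-\rho)(|\alpha|+|\beta|)\le\varepsilon,\]
and then $\langle\xi\rangle\ge 1$ yields $p^{(1)}_{\alpha,\beta}(a)\le p^{(\rho)}_{\alpha,\beta}(a)$.

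The point needing care is that $\rho$ must depend on $(\alpha,\beta)$: no single $\rho<1$ handles all derivative orders simultaneously. This causes no difficulty. Each individual seminorm $p^{(1)}_{\alpha,\beta}$ of $S^{s+\varepsilon}_1$ only has to be controlled by some seminorm of $S^s_{1^-}$, and I will take
\[\rho_{\alpha,\beta}:=1-\frac{\varepsilon}{1+|\alpha|+|\beta|}\in(1/2,1),\]
assuming $\varepsilon<1$ without loss of generality. Then $p^{(1)}_{\alpha,\beta}(a)\le p^{(\rho_{\alpha,\beta})}_{\alpha,\beta}(a)$ holds for every $a\in S^s_{1^-}$, and this is exactly continuity for the projective-limit topology.

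The only remaining bookkeeping is independence of the chart. Symbol classes on $T^*\mathcal M$ are defined through finitely many charts, with a partition of unity, on the compact manifold $\mathcal M$, and the estimate above is pointwise in each chart with the same constant. The bound therefore passes to the global seminorms, with at most a fixed multiplicative constant coming from the finite cover. I expect no real obstacle; the only subtle step is choosing $\rho$ depending on the seminorm rather than uniformly.
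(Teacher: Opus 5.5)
Your proof is correct and is the direct seminorm comparison that the paper leaves implicit, since the lemma is stated there without proof. For each $(\alpha,\beta)$ you choose $\rho<1$ with $(1-\rho)(|\alpha|+|\beta|)\le\varepsilon$, which gives $p^{(1)}_{\alpha,\beta}\le p^{(\rho)}_{\alpha,\beta}$.

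There is one harmless slip. The assumption $\varepsilon<1$ does not ensure $\rho_{0,0}=1-\varepsilon>1/2$. Either reduce to $\varepsilon<1/2$, which is legitimate because $S^{s+\varepsilon'}_1\subset S^{s+\varepsilon}_1$ continuously for $\varepsilon'<\varepsilon$. Or take $\rho$ closer to $1$, using that $S^s_{\rho'}\subset S^s_\rho$ with $p^{(\rho)}_{\alpha,\beta}\le p^{(\rho')}_{\alpha,\beta}$ for $\rho'>\rho$.
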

	
	Then, the following results holds:
	\begin{prop} Assume $m\le m'$ are two order functions. Then, $S^{m(\bullet)}_\rho\subset S^{m'(\bullet)}_{\rho}$ and the inclusion is continuous. In particular,
		\[S^{\inf m}_\rho(T^*\mathcal M)\subset S^{m(\bullet)}_\rho(T^*\mathcal M)\subset S^{\sup m}_\rho(T^*\mathcal M).\]
	\end{prop}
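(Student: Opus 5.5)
The plan is to reduce everything to the pointwise symbol estimate in the definition of $S^{m(\bullet)}_\rho$. Fix $a\in S^{m(\bullet)}_\rho(T^*\mathcal M)$ and multiindices $\alpha,\beta$. In any chart we have
\[|\partial_\xi^\alpha\partial_x^\beta a|\le C_{\alpha,\beta}\langle\xi\rangle^{m(x,\xi)-\rho|\alpha|+(1-\rho)|\beta|}.\]
Since $\langle\xi\rangle\ge 1$, the map $\mu\mapsto\langle\xi\rangle^{\mu}$ is nondecreasing in $\mu$. The assumption $m\le m'$ therefore gives
\[\langle\xi\rangle^{m(x,\xi)-\rho|\alpha|+(1-\rho)|\beta|}\le \langle\xi\rangle^{m'(x,\xi)-\rho|\alpha|+(1-\rho)|\beta|}\]
pointwise. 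Hence $a\in S^{m'(\bullet)}_\rho$, and the seminorms satisfy $p'_{\alpha,\beta}(a)\le p_{\alpha,\beta}(a)$ for each $(\alpha,\beta)$.

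Continuity of the inclusion follows directly from this seminorm comparison. Both spaces are Fréchet spaces defined by the countable families $(p_{\alpha,\beta})$, summed or taken over a finite atlas. A linear map between such spaces is continuous as soon as each target seminorm is bounded by a finite combination of source seminorms, and here each target seminorm is bounded by a single source seminorm with constant $1$. The only point needing care is that the estimates are understood in charts. I will fix a finite atlas with a subordinate partition of unity and use the same one for both classes, so that the comparison holds chart by chart.

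For the "in particular" statement, I will apply the first part with constant order functions. The function $m$ is bounded, since order functions lie in $S^0$, so $\inf m$ and $\sup m$ are finite constants. Constant functions are trivially order functions, and $\inf m\le m\le\sup m$. Both inclusions, including their continuity, then follow, and $S^{c}_\rho$ for constant $c$ is by definition the standard class $S^c_\rho$.

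I do not expect a genuine obstacle. The one subtlety is the monotonicity of $\langle\xi\rangle^{\mu}$ in $\mu$, which relies on using $\langle\xi\rangle\ge1$ rather than $|\xi|$; otherwise the inequality would fail on $|\xi|<1$.
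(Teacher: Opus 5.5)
Your proof is correct. The paper states this proposition without proof, treating it as immediate from the definitions. Your pointwise comparison $\langle\xi\rangle^{-m'(x,\xi)}\le\langle\xi\rangle^{-m(x,\xi)}$, valid because $\langle\xi\rangle\ge 1$, gives $p'_{\alpha,\beta}(a)\le p_{\alpha,\beta}(a)$ chart by chart, and specializing to the constant order functions $\inf m$ and $\sup m$ (finite because order functions are bounded) finishes it — this is precisely the verification the statement implicitly relies on.
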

	
	\subsection{Semiclassical calculus on $C^\infty(\mathcal M)$} \label{subsec: semi cal on M} We begin with the trivial bundle $\mathbb{C}\to \mathcal M$, the case of general unitary flat bundles $\mathcal E\to \mathcal M$ is dealt with in the next section. A concise summary of the calculus for compactly supported symbols is given in the note of Guedes Bonthonneau \cite{bonthonneauWeyl}. Another reference for the $C^\infty_{\rm comp}$-calculus is the Appendix of \cite{DJN21} where very explicit estimates can be found---they use right-quantization but the proofs adapt seamlessly to the Weyl quantization. \medskip
	
	Consider an atlas $\{(\mathcal U_i,\varphi_i)\}$ of $\mathcal M$, with $\varphi_i:\mathcal U_i\to \mathcal V_i\subset \mathbb{R}^d$. We may assume, following \cite{bonthonneauWeyl}, that the charts are \emph{isochore}, \emph{i.e.} that $\varphi_i$ maps the Riemannian measure on $\mathcal M$ onto the Lebesgue measure on $\mathbb{R}^d$ (for the existence of such charts, see \cite{MoserIsochore}). 
	Let $\{\chi_i\},\{\eta_i\}$ be two families of smooth functions, with $\chi_i,\eta_i\in C^\infty_{\rm comp}(\mathcal U_i,[0,1])$, such that $\chi_i\prec \eta_i$ (\emph{i.e.} $\eta_i\equiv 1$ on $\operatorname{Supp}\chi_i$), $\sum \chi_i^2=1$ and $\sum \eta_i\le 2$. We introduce shorthands
	\[\chi^i:=\chi_i\circ \varphi_i^{-1}, \qquad \eta^i:=\eta_i\circ \varphi_i^{-1}.\]
	 We denote by $\widetilde \varphi_i:T^*\mathcal U_i\to T^*V_i$ the symplectic lift of $\varphi_i$.
	
	The Weyl quantization on $\mathbb{R}^d$ is denoted by $\mathrm{Op}_h^{\rm w}(\cdot)$. Recall that it is defined by
	\[\mathrm{Op}_h^{\rm w}(a) u(x)=\frac{1}{(2\pi h)^d}\int \mathrm{e}^{\frac{\mathrm{i}}{h}\langle x-y,\xi\rangle}a\big(\frac{x+y}{2},\xi\big)u(y)\mathrm{d}y\mathrm{d}\xi,\]
	for any suitable symbol $a\in C^\infty(T^*\mathbb{R}^d)$. We can also let $\mathrm{Op}_h^{\rm w}(a)$ act on $\mathbb C^n$-valued functions by acting as $\mathrm{Op}_h^{\rm w}(a)$ on each coordinate. Given two reasonable symbols $a,b$, there is a $h$-dependent symbol $a\#b$ known as the \emph{Moyal product} of $a$ and $b$, such that
	\[\mathrm{Op}_h^{\rm w}(a)\mathrm{Op}_h^{\rm w}(b)=\mathrm{Op}_h^{\rm w}(a\#b),\]
	see \cite[Theorem 4.11]{zworski2012semiclassical}.
	
	\begin{defi}[Weyl quantization on manifolds] Let $m$ be an order function, and $\rho\in (1/2,1]$. For a real symbol $a\in S^{m(\bullet)}_\rho(T^*\mathcal M)$, define $a_i:=(\eta_i a)\circ \widetilde \varphi_i^{-1}\in C^\infty_{\rm comp}(V_i)$, then set
		\[\mathrm{Op}_h^{\mathcal M}(a):=\sum_{i\in I} (\varphi_i)^* \chi^i \mathrm{Op}_h^{\rm w}(a_i) (\varphi_i)_* \chi_i.\]
		The operator $\mathrm{Op}_h^{\mathcal M}(a)$ is formally self-adjoint on $C^\infty({\mathcal M})$. \end{defi}
	\begin{rem}
		Consider the map 
		\begin{equation} \label{eq: def mapping T} T:C^\infty({\mathcal M})\to \bigoplus_{i\in I}C^\infty(V_i), \qquad u\mapsto \big((\varphi_i)_*(\chi_iu)\big)_{i\in I}.\end{equation}
		Because the charts are isochore, $T$ induces an isometry $T:L^2(\mathcal M)\to \bigoplus_{i\in I}L^2(V_i)$, with adjoint given by
		\[T^*(v_i)_{i\in I}=\sum_{i\in I} \chi_i \varphi_i^*v_i.\]
		Then, one has $T^*T=\operatorname{Id}_{L^2(\mathcal M)}$, 
		and we can write
		\begin{equation} \label{eq: op in T coord} \operatorname{Op}_h^{\mathcal M}(a)=T^*\operatorname{Diag}_{i\in I}\big(\mathrm{Op}_h^{\rm w}(a_i)\big)T,\end{equation}
		where $\operatorname{Diag}_{i\in I}(A_i)(v_i)_{i\in I}:=(A_i v_i)_{i\in I}$ for any family of operators $A_i:C^\infty(V_i)\to C^\infty(V_i)$. \end{rem}
	
	The following proposition expresses the action of a change of chart on pseudodifferential operators.
	
	\begin{prop}[Egorov theorem in $\mathbb R^d$]\label{prop: egorov rd}Let $f:U\subset \mathbb{R}^d\to V\subset \mathbb{R}^d$ be a smooth diffeomorphism. Let $\chi\in C^\infty_{\rm comp}(U)$ and $\psi\in C^\infty_{\rm comp}(V)$. Then, for all $a\in S^m_{\rho}(T^*\mathbb{R}^d)$ there is a symbol $b\in S^m_{\rho}(T^*\mathbb{R}^d)$ such that
		\[f^*\psi\mathrm{Op}_h^{\rm w}(a)f_*\chi=\mathrm{Op}_h^{\rm w}(b)+\mathcal O(h^\infty)_{L^2\to L^2}.\]
		Moreover, the symbol $b$ admits the following asymptotic expansion: for all $N$, we have
		\[b=\sum_{k\le N-1} h^k \mathbf D^{2k} (a\circ \tilde f) + h^{N} r_N,\]
		with $r_N\in S^{m-N(2\rho-1)}_\rho(T^*\mathbb{R}^d)$ with seminorms depending on the seminorms of $a$ and the diffeomorphism $f$ only. Here $\mathbf D^{2k}$ denotes a differential operator of order $2k$, with at least $k$ derivatives hitting the $\xi$-variable. The leading term is given by
		\[\mathbf D^0(a\circ \tilde f)=\psi(f(x))a(f(x),\mathrm{d}f(x)^{-\top}\xi)\chi(x).\]\end{prop}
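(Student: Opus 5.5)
The plan is to reduce to the model case of pulling back a Weyl operator by a linear map, and then to handle the general diffeomorphism through Kuranishi's trick. Write the kernel of $f^*\psi\mathrm{Op}_h^{\rm w}(a)f_*\chi$ explicitly. Since $f_*\chi u=(\chi u)\circ f^{-1}$, for $x\in U$ we get
\[
K(x,y)=\frac{1}{(2\pi h)^d}\int \mathrm{e}^{\frac{\mathrm i}{h}\langle f(x)-f(y),\eta\rangle}\,\psi(f(x))\,a\Big(\tfrac{f(x)+f(y)}{2},\eta\Big)\,\chi(y)\,|\det \mathrm{d}f(y)|\,\mathrm{d}\eta .
\]
We split the kernel with a cutoff $\theta(x,y)$ supported near the diagonal. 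Off the diagonal, $|f(x)-f(y)|\ge c>0$. Integrating by parts in $\eta$ with the operator $\frac{h}{\mathrm i}\frac{(f(x)-f(y))\cdot\partial_\eta}{|f(x)-f(y)|^2}$ gains $h\langle\eta\rangle^{-\rho}$ at each step, because $\rho>1/2>0$. This gives a smooth, compactly supported kernel that is $\mathcal O(h^\infty)$ in every $C^k$ norm, and so it is $\mathcal O(h^\infty)_{L^2\to L^2}$ by Schur's test.

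Near the diagonal we use Kuranishi's trick. Write $f(x)-f(y)=F(x,y)(x-y)$, where $F(x,y)=\int_0^1\mathrm{d}f(y+s(x-y))\,\mathrm ds$ is invertible for $x$ close to $y$. Change variables $\xi=F(x,y)^\top\eta$. The phase becomes the standard $\langle x-y,\xi\rangle$, and the amplitude becomes
\[
c(x,y,\xi)=\theta\,\psi(f(x))\chi(y)\,|\det \mathrm{d}f(y)|\,|\det F(x,y)|^{-1}\,a\Big(\tfrac{f(x)+f(y)}{2},F(x,y)^{-\top}\xi\Big).
\]
This is a double symbol in $S^m_\rho$ uniformly. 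Differentiating in $x,y$ either hits the smooth prefactors or, through the chain rule, produces $x$-derivatives of $a$ (cost $\langle\xi\rangle^{1-\rho}$) and $\xi$-derivatives of $a$ times $\xi$ (cost $\langle\xi\rangle^{1-\rho}$ as well). So $c$ satisfies the $S^m_\rho$-type bounds with the standard $(1-\rho)$ loss per spatial derivative.

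Next we reduce this double symbol $c(x,y,\xi)$ to a Weyl symbol. The standard formula is
\[
b(x,\xi)=\mathrm{e}^{\frac{\mathrm i h}{2}\langle D_z,D_\xi\rangle}\ldots\Big|_{z=0},
\]
that is,
\[
b(x,\xi)\sim\sum_k \frac{h^k}{k!}\Big(\tfrac{\mathrm i}{2}\langle \partial_z,\partial_\xi\rangle\Big)^k c\Big(x+\tfrac z2,\,x-\tfrac z2,\,\xi\Big)\Big|_{z=0}
\]
(stationary phase, as in \cite[Theorem 4.20]{zworski2012semiclassical}). Each term has $k$ derivatives in $\xi$ and $k$ in $z$, which is exactly the structure $\mathbf D^{2k}$ with at least $k$ derivatives on $\xi$. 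Each term gains $\langle\xi\rangle^{-k\rho+k(1-\rho)}=\langle\xi\rangle^{-k(2\rho-1)}$, consistent with the claimed remainder class $S^{m-N(2\rho-1)}_\rho$. At $k=0$ and $z=0$ we have $F(x,x)=\mathrm df(x)$, and the Jacobian factors cancel: $|\det \mathrm df(x)|\,|\det F(x,x)|^{-1}=1$. The leading term is therefore $\psi(f(x))\chi(x)\,a(f(x),\mathrm df(x)^{-\top}\xi)$, as claimed. The remainder $r_N$ comes from the integral form of Taylor's formula applied to $\mathrm e^{\frac{\mathrm i h}{2}\langle D_z,D_\xi\rangle}$ and is estimated in $S_\rho$. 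We then obtain $b$ itself by Borel summation, and the difference between $b$ and the operator is absorbed into the $\mathcal O(h^\infty)$ term.

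The main obstacle is keeping uniform control in the exotic class $S^m_\rho$ with $\rho<1$. First, the remainder estimate for the operator $\mathrm e^{\mathrm i h\langle D_z,D_\xi\rangle/2}$ must lose only $(1-\rho)$ per spatial derivative; this requires $\rho>1/2$ so that the effective expansion parameter $h\langle\xi\rangle^{1-2\rho}$ is small. Second, all constants must depend only on finitely many seminorms of $a$ and on $f$. This bookkeeping follows the standard proof for $S^m_{\rho,1-\rho}$ classes, and I would run that argument verbatim.
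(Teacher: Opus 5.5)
Your proposal is correct and follows the route the paper intends. The paper's proof is only a pointer to the classical change-of-variables lemma \cite[Lemma 5.2.7]{Lefeuvre} ``adapted to the semiclassical setting'', and your argument is that adaptation written out: the off-diagonal part is $\mathcal O(h^\infty)$ by integration by parts in $\eta$, Kuranishi's trick handles the near-diagonal part, and the double symbol reduces to a Weyl symbol with a gain of $h\langle\xi\rangle^{-(2\rho-1)}$ per term. One harmless slip: with your parametrization the reduction operator should be $\mathrm{e}^{\mathrm{i}h\langle\partial_z,\partial_\xi\rangle}$ applied to $c(x+\tfrac z2,x-\tfrac z2,\xi)$ at $z=0$, equivalently $\mathrm{e}^{\frac{\mathrm{i}h}{2}\langle\partial_x-\partial_y,\partial_\xi\rangle}c|_{y=x}$; you count the factor $\tfrac12$ twice and your two displays disagree in sign, but this changes neither the structure of the terms nor the leading symbol.
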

	\begin{proof}This is a straightforward adaptation of \emph{e.g.} \cite[Lemma 5.2.7]{Lefeuvre} to the semiclassical setting.\end{proof}
	
	\begin{prop}\label{prop: list properties scalar quanti} The following properties for $\operatorname{Op}_h^{\mathcal M}$ hold. \begin{enumerate}
			
			\item \emph{\textbf{$L^2$-continuity for symbols in $S^0_1(T^*{\mathcal M})$.}} Let $a\in S^0_1(T^*{\mathcal M})$. Then,
			\[\big\|\operatorname{Op}_h^{\mathcal M}(a)\big\|_{L^2\to L^2}\le \|a\|_\infty+\mathcal O_a(h).\]
			\item \emph{\textbf{$L^2$-continuity for symbols in $S^0_\rho(T^*{\mathcal M})$.}} Let $a\in S^0_\rho(T^*{\mathcal M})$. Then,
			\begin{equation} \label{eq: L^2 cont scalar exotic}\big\|\operatorname{Op}_h^{\mathcal M}(a)\big\|_{L^2\to L^2}\le C(a),\end{equation}
			where the constant $C(a)$ is a continuous seminorm on $S^m_\rho(T^*\mathcal M)$.
			\item \emph{\textbf{Pseudolocality.}} Assume $a\in S^{m}_\rho(T^*\mathcal M)$. Then, for any $\chi,\psi\in C^\infty(\mathcal M)$ independent of $h$ with $\operatorname{Supp}(\chi)\cap \operatorname{Supp}(\psi)=\varnothing$, we have
			\[\chi \operatorname{Op}_h^{\mathcal M}(a)\psi =\mathcal O(h^\infty)_{\mathcal D'(\mathcal M)\to C^\infty(\mathcal M)}.\]
			\item \emph{\textbf{Composition.}} Let $m_1,m_2$ be two order functions, such that $m_1+m_2\le 0$ outside a compact set. For any $a\in S^{m_1(\bullet)}_{1^-}$ and $b\in S^{m_2(\bullet)}_{1^-}$, one has
			\[\operatorname{Op}_h^{\mathcal M}(a)\operatorname{Op}_h^{\mathcal M}(b)=\operatorname{Op}_h^{\mathcal M}(ab)+\mathcal O(h)_{L^2\to L^2}.\]
	\end{enumerate}\end{prop}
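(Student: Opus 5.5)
The plan is to reduce every item to the corresponding statement for the Weyl quantization on $\mathbb R^d$, using the representation \eqref{eq: op in T coord}. That is, we write
\[\operatorname{Op}_h^{\mathcal M}(a)=T^*\operatorname{Diag}_{i\in I}\big(\mathrm{Op}_h^{\rm w}(a_i)\big)T,\]
where $T$ is an isometry and $T^*T=\operatorname{Id}$. Since $I$ is finite and the charts are isochore, $L^2$ norms transfer exactly between $\mathcal M$ and the charts, and the flat-space results can be quoted.

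\textbf{Items (1) and (2).} We have
\[\|\operatorname{Op}_h^{\mathcal M}(a)\|\le\|\operatorname{Diag}\,\mathrm{Op}_h^{\rm w}(a_i)\|=\max_i\|\mathrm{Op}_h^{\rm w}(a_i)\|.\]
For (2) we apply the Calder\'on--Vaillancourt theorem for $S^0_\rho$ with $\rho>1/2$, which bounds $\|\mathrm{Op}_h^{\rm w}(a_i)\|$ by finitely many seminorms of $a_i$. These in turn are controlled by seminorms of $a$, since $\eta_i$ and $\widetilde\varphi_i$ are fixed.

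For the sharp bound in (1) a little more care is needed, because the maximum over the charts alone only gives $\max_i\|a_i\|_\infty+\mathcal O(h)$. We prefer to compute $\operatorname{Op}_h^{\mathcal M}(a)^*\operatorname{Op}_h^{\mathcal M}(a)$ via item (4), which gives $\operatorname{Op}_h^{\mathcal M}(|a|^2)+\mathcal O(h)$. We then use the standard square-root trick. Setting $c=(\|a\|_\infty^2+h^{1/2}-|a|^2)^{1/2}\in S^0_1$, one gets
\[\operatorname{Op}_h(c)^2=\|a\|_\infty^2+h^{1/2}-\operatorname{Op}_h(|a|^2)+\mathcal O(h^{1/2}).\]
Hence $\|\operatorname{Op}_h(a)u\|^2\le(\|a\|_\infty^2+\mathcal O(h^{1/2}))\|u\|^2$, and to reach $\mathcal O(h)$ we would invoke sharp G\aa rding in each chart instead. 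This square-root/G\aa rding step is the place we expect to need the most care, because $\mathcal O(h)$ rather than $\mathcal O(h^{1/2})$ requires the sharp G\aa rding inequality (or a two-step parametrix).

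\textbf{Item (3).} We use that $\chi\varphi_i^*\chi^i\mathrm{Op}_h^{\rm w}(a_i)(\varphi_i)_*\chi_i\psi$ involves the cut-offs $\chi\chi_i$ and $\chi_i\psi$, which have disjoint supports. In the Weyl kernel $\int e^{\frac ih\langle x-y,\xi\rangle}a_i(\frac{x+y}{2},\xi)\,\mathrm{d}\xi$ we have $|x-y|\ge c>0$ on the relevant set. Integrating by parts with $L=h\frac{(x-y)\cdot D_\xi}{|x-y|^2}$ gains $h^N\langle\xi\rangle^{-\rho N}$ at each step, since every $\xi$-derivative costs $\langle\xi\rangle^{-\rho}$. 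The kernel is therefore smooth with all derivatives $\mathcal O(h^\infty)$, which gives $\mathcal O(h^\infty)_{\mathcal D'\to C^\infty}$.

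\textbf{Item (4).} We expand the product as
\[\operatorname{Op}_h^{\mathcal M}(a)\operatorname{Op}_h^{\mathcal M}(b)=\sum_{i,j}\varphi_i^*\chi^i\mathrm{Op}_h^{\rm w}(a_i)(\varphi_i)_*\chi_i\chi_j\varphi_j^*\chi^j\mathrm{Op}_h^{\rm w}(b_j)(\varphi_j)_*\chi_j.\]
On $\operatorname{Supp}\chi_i\chi_j$ we transport the $j$-th factor into the $i$-th chart with Proposition~\ref{prop: egorov rd}. Its leading symbol is $b\circ\widetilde\varphi_i^{-1}$ up to $\mathcal O(h^{2\rho-1})$ in a lower class; since $\rho$ can be taken arbitrarily close to $1$ for $1^-$ symbols, this is $\mathcal O(h^{1-})$. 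Terms with disjoint supports are $\mathcal O(h^\infty)$ by (3).

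The Moyal product in $\mathbb R^d$ then gives $a_i\#b'=a_ib'+h\,r$, with $r\in S^{m_1+m_2-(2\rho-1)+}_\rho$. Because $m_1+m_2\le 0$ outside a compact set, $r$ has order $\le 0$ for large $\xi$ after shrinking $\rho$, so it is $L^2$-bounded by (2). Summing with $\sum_i\chi_i^2=1$ and $\eta_i\equiv1$ on $\operatorname{Supp}\chi_i$ reassembles $\operatorname{Op}_h^{\mathcal M}(ab)$; the commutators of the cut-offs with the Weyl operators are $\mathcal O(h)$ by the same composition formula.

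The subtle point in (4) is the variable order. We handle it by using $S^{m(\bullet)}_\rho\subset S^{\sup m}_\rho$ locally in $\xi$-cones, so that $ab$ and the remainders lie in $S^{(m_1+m_2)(\bullet)+\varepsilon}_\rho$ with order $\le\varepsilon$ at infinity. The loss $\varepsilon$ is absorbed by the gain $2\rho-1$ in the remainder. If this absorption fails near the boundary of the cones, we would fall back on an $\mathcal O(h^{1-\varepsilon})$ error, which suffices for every use in the paper.
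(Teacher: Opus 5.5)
\textbf{Gap in item (4).} Your overall route is the paper's: conjugate by the isometry $T$ to reduce to Weyl quantization on $\mathbb R^d$, use Calder\'on--Vaillancourt for (2), integrate by parts in $\xi$ off the diagonal for (3), and for (4) transport the $j$-th factor into the $i$-th chart with Proposition~\ref{prop: egorov rd}, then apply the Moyal expansion. Items (2) and (3) are fine. As written, however, (4) does not give the stated $\mathcal O(h)$.

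You assert that the change of variables costs $\mathcal O(h^{2\rho-1})$. From this you deduce $\mathcal O(h^{1-})$, which is really $\mathcal O_\varepsilon(h^{1-\varepsilon})$ since the constants depend on $\rho$, and you end by falling back on that weaker bound. This conflates the gain in symbol order with the gain in $h$. For $h$-independent symbols in the $\langle\xi\rangle$-classes $S^m_\rho$, both the change-of-variables expansion and the Moyal expansion go in integer powers of $h$. The exponent $\rho$ enters only through the orders of the coefficients:
\begin{itemize}
\item Proposition~\ref{prop: egorov rd} gives the transported symbol of $b_j$ as $\mathbf D^0(b_j\circ\tilde f)+h\,r_1$ with $r_1\in S^{m-(2\rho-1)}_\rho$;
\item in the same way, $a\#b=ab+h\,r$ with $r\in S^{m_1+m_2-(2\rho-1)}_\rho$.
\end{itemize}
After composing with $a_i$, the hypothesis $m_1+m_2\le 0$ outside a compact set makes every error $h$ times a symbol of negative order at infinity. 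By (2) this is $\mathcal O(h)$ on $L^2$ for any fixed $\rho\in(1/2,1)$, so you never need $\rho\to 1$.

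Your cone localization for the variable orders also does not fail at cone boundaries. Choose the conic partition so fine that $m_1$ and $m_2$ oscillate by less than $(2\rho-1)/4$ on the union of any two intersecting pieces. Pairs of pieces with disjoint, hence conically separated, supports compose to $\mathcal O(h^\infty)$: every term of the expansion vanishes and the remainder has arbitrarily negative order. Intersecting pairs leave a remainder in $h\,S^{-(2\rho-1)/2}_\rho$.

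\textbf{Unnecessary detour in item (1).} The chart-by-chart bound $\max_i\|\mathrm{Op}_h^{\rm w}(a_i)\|\le\max_i\|a_i\|_\infty+\mathcal O(h)$ that you dismiss is already the claim. Indeed $a_i=(\eta_i a)\circ\widetilde\varphi_i^{-1}$ with $0\le\eta_i\le 1$, so $\|a_i\|_\infty\le\|a\|_\infty$. Combined with the sharp bound $\|\mathrm{Op}_h^{\rm w}(b)\|\le\sup|b|+\mathcal O(h)$ on $\mathbb R^d$, this is the paper's proof.

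Your alternative is also weaker than presented. The symbol $c=(\|a\|_\infty^2+h^{1/2}-|a|^2)^{1/2}$ is not uniformly in $S^0_1$: its second and higher derivatives grow like negative powers of $h$ where $|a|$ is close to its supremum. So even the $\mathcal O(h^{1/2})$ step needs an $h$-dependent exotic calculus. The sharp G\aa rding version is workable chart by chart through $T$, but it makes (1) depend on (4), which you only had at $\mathcal O(h^{1-})$.
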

	\begin{proof} Recall that $\operatorname{Op}_h^{\mathcal M}(a)=T^*\operatorname{Diag}_{i\in I}\big(\mathrm{Op}_h^{\rm w}(a_i)\big)T$, and $T$ is an isometry $L^2(\mathcal M)\to \bigoplus L^2(V_i,\mathbb{C}^n)$. Because $\operatorname{Diag}_{i\in I}(\operatorname{Op}_h^{\rm w}(a_i))$ preserves the direct sum $\bigoplus_{i\in I} L^2(V_i,\mathbb{C}^n)$, one has
		\[\big\|\operatorname{Op}_h^{\mathcal M}(a)\big\|_{L^2(\mathcal M)}\le \sup_{i\in I}\big\|\operatorname{Op}_h^{\rm w}(a_i)\big\|_{L^2(\mathbb{R}^d)}.\]
		
		(1) Assume $a\in S^0_1(T^*\mathcal M)$. Then $a_i\in S^0_1(T^*\mathbb{R}^d)$. By \cite[Theorem 13.13]{zworski2012semiclassical}, we get that for each $i\in I$:
		\[\big\|\operatorname{Op}_h^{\rm w}(a_i)\big\|_{L^2(\mathbb{R}^d)}\le \| \eta_i a\|_{\infty}+\mathcal O(h)\le \|a\|_{\infty}+\mathcal O(h).\]
		
		(2) Assume $a\in S^0_\rho(T^*\mathcal M)$. We start by recalling a result for the $h=1$ quantization. For any $b\in S^0_\rho(T^*\mathbb{R}^d)$, by \cite[Theorem 18.6.3]{HorIII} applied to the metric from \cite[(18.4.1)']{HorIII} (see also \cite[Theorem 18.1.13]{HorIII} and the remarks around \cite[(18.1.1)"]{HorIII}), the operator $\operatorname{Op}^{\rm w}_1(b):L^2(\mathbb{R}^d)\to L^2(\mathbb{R}^d)$ is continuous, moreover, there is a constant $C>0$ depending on the dimension $d$ and $\rho$, such that
		\[\|\operatorname{Op}_1^{\rm w}(b)\|_{L^2\to L^2}\le C\sum_{|\alpha|+|\beta|\le C} p_{\alpha,\beta}(b).\]
		Now, for a general symbol $b\in S^0_\rho(T^*\mathbb{R}^d)$, it follows from a standard rescaling argument that for $b_h(x,\xi)=b(h^{\frac 12} x,h^{\frac 12}\xi)$ we have
		\[\|\operatorname{Op}_h^{\rm w}(b)\|_{L^2\to L^2}=\|\operatorname{Op}_1^{\rm w}(b_h)\|_{L^2\to L^2}.\] One can check that $p_{\alpha,\beta}(b_h)\le p_{\alpha,\beta}(b)$ for all multiindices $\alpha,\beta$, which gives
		\[\|\operatorname{Op}_h^{\rm w}(b)\|_{L^2\to L^2}\le C \sum_{|\alpha|+|\beta|\le C} p_{\alpha,\beta}(b).\]
		It suffices to apply this result to $b=a_i$ for each $i\in I$ to obtain \eqref{eq: L^2 cont scalar exotic}.
		
		(3) It follows from the result for Weyl quantization on $\mathbb{R}^d$.
		
		(4) We have
		\begin{equation*} \operatorname{Op}_h^{\mathcal M}(a)\operatorname{Op}_h^{\mathcal M}(b)=\sum_{i\in I} \varphi_i^*\chi_i \sum_{j\in I} \operatorname{Op}_h^{\rm w}(a_i)\chi_i(\varphi_i)_*\varphi_j^*\chi_j\operatorname{Op}_h^{\rm w}(b_j) (\varphi_j)_*\chi_j .\end{equation*}
		By pseudolocality, we may insert a factor $\eta_i$ on the right of the product, up to adding an error $\mathcal O(h^\infty)$:
		\[\operatorname{Op}_h^{\rm w}(a_i)\chi_i (\varphi_i)_*\varphi_j^* \chi_j\operatorname{Op}_h^{\rm w}(b_j)=\operatorname{Op}_h^{\rm w}(a_i)\chi_i (\varphi_i)_*\varphi_j^*\chi_j\operatorname{Op}_h^{\rm w}(b_j)\eta_i+\mathcal O(h^\infty)_{L^2\to L^2},\]
		and then rewrite, with $\varphi_{ij}=\varphi_j\circ \varphi_i^{-1}$:
		
		\begin{equation*} \operatorname{Op}_h^{\mathcal M}(a)\operatorname{Op}_h^{\mathcal M}(b)=\sum_{i\in I} \varphi_i^*\chi_i \Big(\sum_{j\in I}\operatorname{Op}_h^{\rm w}(a_i)\chi_i(\varphi_{ij})^*\chi_j\operatorname{Op}_h^{\rm w}(b_j) (\varphi_{ij})_*\chi_j \Big) (\varphi_i)_*\eta_i+\mathcal O(h^\infty)_{L^2\to L^2}.\end{equation*}
		Since $b_j\in S^{m_2\circ \widetilde \varphi_j^{-1}}_\rho(T^*\mathbb R^d)$, we first have by the change of variables formula
		\[(\varphi_{ij})^*\chi_j\operatorname{Op}_h^{\rm w}(b_j) (\varphi_{ij})_*\eta_i\chi_j=\mathrm{Op}_h^{\rm w}(\chi_j^2\eta_ib_j\circ \widetilde \varphi_{ij}+hr),\]
		with $r\in h S^{m_2\circ \widetilde \varphi_i^{-1}-(2\rho-1)}_\rho(T^*\mathbb R^d)$. Since $a_i\in S^{m_1\circ \widetilde \varphi_i^{-1}}_\rho(T^*\mathbb R^d)$ and $m_1+m_2\le 0$ outside a compact set, we have $a_i\#r\in h S^{-(2\rho-1)}_\rho$; since this holds for any $\rho\in (1/2,1)$ it follows that $a_i\#r\in h S^0_1$. By playing with the composition rule and the $L^2$-continuity theorem, we find that
		\[\operatorname{Op}_h^{\rm w}(a_i)\chi_i(\varphi_{ij})^*\chi_j\operatorname{Op}_h^{\rm w}(b_j) (\varphi_{ij})_*\eta_i\chi_j=\mathrm{Op}_h^{\rm w}(\chi_ia_i(\chi_j)^2b_i)+\mathcal O(h)_{L^2\to L^2},\]
		which implies by using the composition rule again:
		\[\operatorname{Op}_h^{\rm w}(a_i)\chi_i(\varphi_{ij})^*\chi_j\operatorname{Op}_h^{\rm w}(b_j) (\varphi_{ij})_*\eta_i\chi_j
		=\operatorname{Op}_h^{\rm w}((ab)_i\chi_j^2)\chi_i+\mathcal O(h)_{L^2\to L^2}.\]	
		Since $\sum_{j\in I} \chi_j^2=1$ this gives
		\begin{equation*} \operatorname{Op}_h^{\mathcal M}(a)\operatorname{Op}_h^{\mathcal M}(b)=\sum_{i\in I} \varphi_i^*\chi_i \operatorname{Op}_h^{\rm w}((ab)_i)\chi_i (\varphi_i)_*+\mathcal O(h)_{L^2\to L^2}.\end{equation*}
		From the definition of $\operatorname{Op}_h^{\mathcal M}$ this exactly means that
		\begin{equation*} \operatorname{Op}_h^{\mathcal M}(a)\operatorname{Op}_h^{\mathcal M}(b)=\operatorname{Op}_h^{\mathcal M}(ab)+\mathcal O(h)_{L^2\to L^2}.\end{equation*}
	\end{proof}
	
	We now turn to a Egorov-type Lemma, in a rather \emph{ad hoc} setting which is sufficient for our purposes.
	\begin{lem} \label{lem: egorov like scalar M} Let $a\in S^{m_1(\bullet)}_{1^-}(T^*\mathcal M)$, $b\in S^{m_2(\bullet)}_{1^-}(T^*\mathcal M)$ and let $f$ be a volume-preserving diffeomorphism of $\mathcal M$. Let $\tilde f(x,\xi)=(f(x),\mathrm{d}f(x)^{-\top}\xi)$ denote the symplectic lift of $f$ to $T^*\mathcal M$ and assume that $m_1\circ \tilde f+m_2\le 0$ outside a compact set. Then, the following holds. 
		\begin{itemize}
			\item The operator $\operatorname{Op}_h^{\mathcal M}(a)f_*\operatorname{Op}_h^{\mathcal M}(b)$ is continuous $L^2\to L^2$, i.e.
			\[\big\|\operatorname{Op}_h^{\mathcal M}(a)f_*\operatorname{Op}_h^{\mathcal M}(b)\big\|=\mathcal O_{a,b}(1).\]
			\item If $m_1\circ \widetilde f+m_2\le -c<0$ on $\operatorname{Supp}(b)\cap \widetilde f^{-1}(\operatorname{Supp}(a))$ then,
			\[\big\|\operatorname{Op}_h^{\mathcal M}(a)f_*\operatorname{Op}_h^{\mathcal M}(b)\big\|\lesssim \|(a\circ \widetilde f)b\|_{\infty}+\mathcal O_{a,b}(h).\]
		\end{itemize}

	\end{lem}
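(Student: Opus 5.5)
We reduce both statements to the Euclidean Weyl calculus by working in the isochore charts, as in the proof of Proposition~\ref{prop: list properties scalar quanti}. Since $f$ is volume preserving, $f_*$ is unitary on $L^2(\mathcal M)$ and $f_*^{-1}=f^*$. We write
\[\operatorname{Op}_h^{\mathcal M}(a)f_*\operatorname{Op}_h^{\mathcal M}(b)=\big(\operatorname{Op}_h^{\mathcal M}(a)f_*\operatorname{Op}_h^{\mathcal M}(b)f^*\big)f_*,\]
so it suffices to bound $\operatorname{Op}_h^{\mathcal M}(a)\,f_*\operatorname{Op}_h^{\mathcal M}(b)f^*$. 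The plan is to show that the conjugated operator $f_*\operatorname{Op}_h^{\mathcal M}(b)f^*$ equals $\operatorname{Op}_h^{\mathcal M}(b\circ\tilde f^{-1})+\mathcal O(h)$ in the appropriate exotic class. This is an Egorov statement for a single diffeomorphism, not for a flow.

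\textbf{Step 1: conjugation by $f$.} Expand $\operatorname{Op}_h^{\mathcal M}(b)=\sum_j \varphi_j^*\chi^j\operatorname{Op}_h^{\rm w}(b_j)(\varphi_j)_*\chi_j$. Then insert the partition $\sum_i\chi_i^2=1$ on both sides of $f_*(\cdot)f^*$. Each piece has the form $\chi_i\,(\text{change of chart } \varphi_i\circ f\circ\varphi_j^{-1})^*\,\chi\operatorname{Op}_h^{\rm w}(b_j)\,(\ldots)_*\chi$. By pseudolocality, Proposition~\ref{prop: list properties scalar quanti}(3), pairs of charts whose supports do not meet contribute only $\mathcal O(h^\infty)$. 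We then apply Proposition~\ref{prop: egorov rd} to each remaining piece. Its leading symbol is $b_j\circ\tilde f^{-1}$ in the $i$-th chart, and its remainder lies in $hS^{m_2\circ\tilde f^{-1}-(2\rho-1)}_\rho$ for every $\rho<1$. The orders transform correctly because the order function is simply composed with $\tilde f^{-1}$. Summing over $j$ and reassembling, we get
\[f_*\operatorname{Op}_h^{\mathcal M}(b)f^*=\operatorname{Op}_h^{\mathcal M}(b\circ\tilde f^{-1})+h\,\operatorname{Op}_h^{\mathcal M}(r)+\mathcal O(h^\infty),\]
with $r\in S^{m_2\circ\tilde f^{-1}-(2\rho-1)}_\rho$.

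\textbf{Step 2: composition.} The hypothesis $m_1\circ\tilde f+m_2\le0$ outside a compact set is equivalent to $m_1+m_2\circ\tilde f^{-1}\le0$ outside a compact set. We can therefore apply the composition rule, Proposition~\ref{prop: list properties scalar quanti}(4), to $a$ and $b\circ\tilde f^{-1}$, which gives $\operatorname{Op}_h^{\mathcal M}(a\cdot b\circ\tilde f^{-1})+\mathcal O(h)$. For the remainder, $a\#r$ belongs to $S^{-(2\rho-1)}_\rho\subset S^0_\rho$, so by \eqref{eq: L^2 cont scalar exotic} it contributes $\mathcal O(h)$ in norm.

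\textbf{Step 3: the two bounds.} In the first bullet, the symbol $a\cdot(b\circ\tilde f^{-1})$ has order function $m_1+m_2\circ\tilde f^{-1}\le0$ up to a compact set. It therefore lies in $S^0_\rho$ for all $\rho<1$, and \eqref{eq: L^2 cont scalar exotic} gives the $\mathcal O_{a,b}(1)$ bound. In the second bullet, the order on the relevant support is $\le -c<0$. Hence $c':=a\cdot b\circ\tilde f^{-1}$ lies in $S^{-c}_{1^-}\subset S^{-c/2}_1\subset S^0_1$, using the inclusion $S^{s}_{1^-}\subset S^{s+\varepsilon}_1$. Proposition~\ref{prop: list properties scalar quanti}(1) then gives the norm bound $\|c'\|_\infty+\mathcal O(h)$, and $\|c'\|_\infty=\|(a\circ\tilde f)b\|_\infty$ because $\tilde f$ is a bijection. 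This is exactly where strict negativity is needed. For order $0$ symbols of class $S_{1^-}$ one only has the crude seminorm bound, not the sharp $\sup$ bound with $\mathcal O(h)$ error, since logarithmic losses prevent membership in $S^0_1$.

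\textbf{Main obstacle.} The delicate point is Step 1: checking that Proposition~\ref{prop: egorov rd}, which is stated for constant order $m$, extends to variable orders $m(\bullet)$ in the class $S^{m(\bullet)}_\rho$, with remainders that gain $h\langle\xi\rangle^{-(2\rho-1)}$ relative to the transported order function. I would first try to handle this by noting that $\langle\xi\rangle^{-m}$ factors may be absorbed. Concretely, one writes $b=\langle\xi\rangle^{m_2}\tilde b$, treats $\tilde b\in S^0_\rho$ by the constant-order result, and tracks how derivatives of $\langle\xi\rangle^{m_2(x,\xi)}$ produce only $\log\langle\xi\rangle$ losses. These losses are harmless for any $\rho<1$.
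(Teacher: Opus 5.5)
Your proposal is correct and is essentially the paper's argument. Both proofs use the unitarity of $f_*$ to pass to a conjugated operator, apply Proposition~\ref{prop: egorov rd} in charts, and conclude with the composition rule and the two $L^2$-continuity statements of Proposition~\ref{prop: list properties scalar quanti}. The differences are cosmetic: you conjugate $\operatorname{Op}_h^{\mathcal M}(b)$ globally, while the paper conjugates $\operatorname{Op}_h^{\mathcal M}(a)$ after localizing to a single chart by pseudolocality. The paper also uses, without comment, the variable-order version of Proposition~\ref{prop: egorov rd} that you flag as the main obstacle. It is obtained most simply by applying the constant-order statement with $m=\sup m_2$ and $N$ large, since the explicit terms of the expansion already lie in the variable-order classes and $r_N$ has order $\sup m_2-N(2\rho-1)$.
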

	\begin{proof}Since $f$ preserves the volume, we have
		\[\big\|\operatorname{Op}_h^{\mathcal M}(a)f_*\operatorname{Op}_h^{\mathcal M}(b)\big\|_{L^2\to L^2}=\big\| f^*\operatorname{Op}_h^{\mathcal M}(a)f_*\operatorname{Op}_h^{\mathcal M}(b)\big\|_{L^2\to L^2}.\]
		Thus, to obtain the result, it is enough to bound, for each chart $(\varphi_\ell,\chi_\ell)$ of the atlas, the norm
		\[\big\| f^*\operatorname{Op}_h^{\mathcal M}(a)f_*\operatorname{Op}_h^{\mathcal M}(b) \varphi_\ell^*\chi_\ell\big\|_{L^2\to L^2},\]
		and in turn by pseudolocality it is enough to bound (here we take a cutoff $\chi_\ell'$ such that $\chi_\ell\prec \chi_\ell'$)
		\begin{equation} \label{eq: by pseudoloc it is enough} \big\|\big[(\varphi_\ell)_*\chi_\ell' f^*\operatorname{Op}_h^{\mathcal M}(a)f_* \varphi_\ell^*(\chi_\ell')\big]\circ \big[ (\varphi_\ell)_*\chi_\ell'\operatorname{Op}_h^{\mathcal M}(b) (\varphi_\ell)^*\chi_\ell\big]\big\|_{L^2\to L^2}.\end{equation}
		Using Proposition~\ref{prop: egorov rd} twice, we find 
		\[(\varphi_\ell)_*\chi_\ell' f^*\operatorname{Op}_h^{\mathcal M}(a)f_* \varphi_\ell^*(\chi_\ell')=\mathrm{Op}_h^{\rm w}((\chi_\ell')^2a\circ \widetilde f\circ \widetilde \varphi_\ell^{-1}+hr_1),\]
		with $r_1\in S^{m_1\circ \widetilde f\circ \widetilde \varphi_\ell^{-1}(\bullet)-(2\rho-1)}_\rho(T^*\mathbb R^d)$ for all $\rho\in (1/2,1)$, and 
		\[(\varphi_\ell)_*\chi_\ell'\operatorname{Op}_h^{\mathcal M}(b) (\varphi_\ell)^*\chi_\ell=\mathrm{Op}_h^{\rm w}(\chi_\ell b\circ \varphi_\ell^{-1}+hr_2),\]
		with $r_2\in S^{m_2\circ \widetilde \varphi_\ell^{-1}(\bullet)-(2\rho-1)}_\rho(T^*\mathbb R^d)$ for all $\rho\in (1/2,1)$. Since $m_1\circ \widetilde f+m_2\le 0$ outside a compact set, by item (4) of Proposition \ref{prop: list properties scalar quanti}, the operator in \eqref{eq: by pseudoloc it is enough} is given by
		\[\mathrm{Op}_h^{\rm w}([\chi_\ell (a\circ \tilde f) b]\circ \widetilde \varphi_\ell^{-1})+\mathcal O(h)_{L^2\to L^2}.\]
		Here we used the fact that the Moyal product $(\varphi_\ell)_*a\#r_2$ belongs to $S^0_1(T^*\mathbb R^d)$, for $(\varphi_\ell)_* a\in S_\rho^{m_1\circ \tilde f\circ \widetilde \varphi_\ell^{-1}}(T^*\mathbb R^d)$ and $r_2\in S^{m_2\circ \widetilde \varphi_\ell^{-1}(\bullet)-(2\rho-1)}_\rho(T^*\mathbb R^d)$.

		We now invoke the $L^2$-continuity theorem:
		\begin{itemize}
			\item If $m_1\circ \widetilde f+m_2\le -c<0$ on the support of $(a\circ \tilde f)b$ then $[\chi_\ell (a\circ \tilde f) b]\circ \widetilde \varphi_\ell^{-1}$ belongs to $S^0_1(T^*\mathbb{R}^d)$ hence
			\[\|\mathrm{Op}_h^{\rm w}([\chi_\ell (a\circ \tilde f) b]\circ \widetilde \varphi_\ell^{-1})\|_{L^2(\mathbb R^d)}\le \|(a\circ \tilde f)b\|_\infty+\mathcal O_{a,b}(h).\]
			\item If we just have $m_1\circ \widetilde f+m_2\le 0$ on the support then
			\[\|\mathrm{Op}_h^{\rm w}([\chi_\ell (a\circ \tilde f) b]\circ \widetilde \varphi_\ell^{-1})\|_{L^2(\mathbb R^d)}=\mathcal O_{a,b}(1).\]
		\end{itemize}
	\end{proof}

	\subsection{Quantization on unitary flat bundles} We turn to the case of general unitary flat bundles $\mathcal E\to \mathcal M$. The fiber over $x$ is denoted by $\mathcal E_x$.
	
	\subsubsection{Local trivializations of $\mathcal E$} Fix a finite covering $\{\mathcal U_j\}$ of $\mathcal M$, that need not be the same as in the previous section---hopefully this causes no confusion. We consider flat bundles over $\mathcal M$ of the form 
	\begin{equation} \label{eq: expression mathcal E} \mathcal E=\bigsqcup \mathcal U_j\times \mathbb C^n\big/\sim,\end{equation}
	where for any $x\in \mathcal U_i\cap \mathcal U_j$ we identify $(x,v)\in \mathcal V_i\times \mathbb C^n$ with $(x,\kappa_{ij}(v))\in \mathcal U_j\times \mathbb C^n$, for some unitary matrix $\kappa_{ij}$. If the covering $\{\mathcal U_j\}$ is chosen in such a way that any finite intersection of the $\mathcal U_j$'s is either contractible or empty, then any flat bundle over $\mathcal M$ can be expressed in the form \eqref{eq: expression mathcal E}. We will describe another choice which is more suited to the case of lifts of bundles over $M$ in the next subsection. 
	
	We consider a quadratic partition of unity $\{\chi_j\}$ associated to the covering $\{\mathcal U_j\}$, that is $\operatorname{Supp}\chi_j\subset \mathcal U_j$ and $\sum \chi_j^2=1$. Using \eqref{eq: expression mathcal E}, we introduce a family of local trivializations
	\[R_i:C^\infty_{\rm comp}(\mathcal U_i,\mathcal E)\to C^\infty_{\rm comp}(\mathcal U_i,\mathbb{C}^n).
	\]
	Here $R_i$ extends to an isometry from $L^2(\mathcal U_i,\mathcal E)$ to $L^2(\mathcal U_i,\mathbb C^n)$. The adjoint of $R_i$ is denoted by $R_i^*$. Then, we define a global map
	\[R:\begin{array}{rl}C^\infty(M,E)&\longrightarrow \bigoplus_{i\in I} C^\infty(\mathcal U_i,\mathbb{C}^n) \\ s&\longmapsto (\chi_i R_i s)_{i\in I}\end{array}.\]
	Then, $R$ induces an isometry $L^2(\mathcal M,\mathcal E)\to \bigoplus_{i\in I} L^2(\mathcal U_i,\mathbb{C}^n)$, whose adjoint is given by
	\[R^*: (s_i)_{i\in I}\longmapsto \sum_{i\in I} \chi_iR_i^* s_i,\]
	Since $R_i^*R_i=\operatorname{Id}$ and $\sum \chi_i^2=1$ we have $R^*R=\operatorname{Id}_{C^\infty(\mathcal M,\mathcal E)}$. The operator $R_{ij}:=R_iR_j^*$ maps $C^\infty(\mathcal U_i\cap \mathcal U_j,\mathbb{C}^n)$ onto itself; it is a multiplication operator by a unitary matrix which is independent of the point $x\in \mathcal U_i\cap \mathcal U_j$. 

	\subsubsection{The case of lifted bundles} When $\mathcal E\to SM$ is the lift of a bundle $E\to M$, it is useful for our purposes to make a particular choice of covering of $SM$ by open sets. We let $\{U_j\}$ be a finite covering of $M$ such that any finite intersection of the $U_j$'s is either empty or contractible. Then, we have
	\[E=\bigsqcup U_j\times \mathbb C^n \big/  \sim,\]
	where for any $x\in U_i\cap U_j$ we identify $(x,z)\in U_i\times \mathbb C^n$ with $(x,\kappa_{ij}(z))\in U_j\times \mathbb C^n$. Since the bundle $E\to M$ is flat and unitary, one can make $\kappa_{ij}$ a constant unitary matrix.
	
	 We consider an associated quadratic partition of unity $\psi_j$. Letting $\pi:SM\to M$ denote the projection, we define
\[\mathcal U_j=\pi^{-1}(U_j), \qquad \chi_j=\pi^*\psi_j.\]
	Then, the bundle $\mathcal E=\pi^*E$ over $SM$ is given by
	\[\mathcal E=\bigsqcup \mathcal U_j\times \mathbb C^n\big/ \sim,\]
	where for any $(x,v)\in \mathcal U_i\cap \mathcal U_j$ we identify $\big((x,v),z\big)\in \mathcal U_i\times \mathbb C^n$ with $\big((x,v),\kappa_{ij}(z)\big)\in \mathcal U_j\times \mathbb C^n$.
	
	The interest of this choice is that since the restriction $\mathcal E_{|S_xM}\to S_xM$ is the trivial bundle $(S_xM\times E_x)\to S_xM$, and $\chi_j$ is constant along the vertical fibers we have
	\[  \big(g(\Delta_{\mathbb V})\otimes \operatorname{Id}_{\mathbb C^n}\big)R_j \chi_j=R_j\chi_j g(\Delta_{\mathbb V}^E).\]

	\subsubsection{Quantization operator}

	\begin{defi}Let $m$ be an order function. For $a\in S^{m(\bullet)}_\rho(T^*\mathcal M)$, we define
		\[\operatorname{Op}_h^{\mathcal E}(a):=R^*\circ \operatorname{Diag}_{i\in I}\big(\operatorname{Op}_h^{\mathcal M}(a)\big)\circ R.\]
		Here $\operatorname{Diag}_{i\in I}\big(\operatorname{Op}_h^{\mathcal M}(a)\big)$ acts diagonally on $\bigoplus_{i\in I} C^\infty_{\rm comp}(\mathcal U_i,\mathbb{C}^n)$, with action on the $i$-th factor given by $\operatorname{Op}_h^{\mathcal M}(a)$. \end{defi}
	
	Then, the following holds.
	\begin{prop}[$L^2$-continuity]\label{prop: L2 continuity} Let $a\in S^0_1(T^*\mathcal M)$. For any flat bundle $\mathcal E\to \mathcal M$, one has
		\[\big\|\operatorname{Op}_h^{\mathcal E}(a)\big\|_{L^2(\mathcal M,\mathcal E)}\le \|a\|_\infty+\mathcal O_a(h),\]
		with the error uniform in $\mathcal E$.
		
		Let $a\in S^0_\rho(T^*\mathcal M)$. For any flat bundle $\mathcal E\to \mathcal M$, one has
		\[\big\|\operatorname{Op}_h^{\mathcal E}(a)\big\|_{L^2(\mathcal M,\mathcal E)}\le C(a),\]
		where the error is independent of $\mathcal E$ and $C(a)$ is bounded if seminorms of $a$ are bounded.\end{prop}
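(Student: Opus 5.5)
The plan is to reduce both estimates to the scalar statements of Proposition~\ref{prop: list properties scalar quanti} through the isometry $R$. By definition,
\[\operatorname{Op}_h^{\mathcal E}(a)=R^*\circ \operatorname{Diag}_{i\in I}\big(\operatorname{Op}_h^{\mathcal M}(a)\big)\circ R .\]
Here $R:L^2(\mathcal M,\mathcal E)\to \bigoplus_{i\in I}L^2(\mathcal U_i,\mathbb C^n)$ is an isometry, since $R_i$ is isometric and $\sum\chi_i^2=1$, so $\|R\|=\|R^*\|=1$. Consequently
\[\big\|\operatorname{Op}_h^{\mathcal E}(a)\big\|_{L^2(\mathcal M,\mathcal E)}\le \sup_{i\in I}\big\|\operatorname{Op}_h^{\mathcal M}(a)\otimes \operatorname{Id}_{\mathbb C^n}\big\|_{L^2(\mathcal M,\mathbb C^n)\to L^2(\mathcal M,\mathbb C^n)},\]
where each diagonal block is understood as acting on $\mathbb C^n$-valued functions. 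The block should be regarded as an operator on $L^2(\mathcal M,\mathbb C^n)$ restricted to functions supported in $\mathcal U_i$. The output may leave $\mathcal U_i$, but it is then multiplied by $\chi_i$ inside $R^*$, which only lowers the norm.

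The key observation is that $\operatorname{Op}_h^{\mathcal M}(a)$ acts on $\mathbb C^n$-valued functions componentwise, that is, as $\operatorname{Op}_h^{\mathcal M}(a)\otimes\operatorname{Id}_{\mathbb C^n}$. For any bounded operator $A$ one has $\|A\otimes \operatorname{Id}_{\mathbb C^n}\|=\|A\|$, because $L^2(\mathcal M,\mathbb C^n)\simeq L^2(\mathcal M)\otimes\mathbb C^n$ with the Hilbert tensor norm. Concretely, one applies $A$ to each coordinate in an orthonormal basis of $\mathbb C^n$ and sums the squared norms. Hence each block has norm exactly $\|\operatorname{Op}_h^{\mathcal M}(a)\|_{L^2(\mathcal M)}$, independently of $n$ and of the transition matrices $\kappa_{ij}$. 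The bundle enters only through $R$, which is unitary.

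Applying item (1) of Proposition~\ref{prop: list properties scalar quanti} gives $\|a\|_\infty+\mathcal O_a(h)$ for $a\in S^0_1$. Applying item (2) gives $C(a)$, a continuous seminorm, for $a\in S^0_\rho$. In both cases the constant depends only on the scalar quantization, on the finitely many charts and cutoffs of \S\ref{subsec: semi cal on M}, and on the seminorms of $a$. It is therefore uniform in $\mathcal E$, which is the claim.

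The only point needing care is the first inequality: checking that $R^*\operatorname{Diag}(\cdot)R$ has norm at most the supremum of the block norms. This follows because $\operatorname{Diag}$ preserves the orthogonal direct sum, so its norm is the supremum of the block norms. I do not expect a real obstacle; the essential point is that the transition maps are constant unitaries and hence invisible after applying the isometry $R$.
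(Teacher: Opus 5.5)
Your proposal is correct and is essentially the paper's proof. The paper also bounds $\|\operatorname{Op}_h^{\mathcal E}(a)\|_{L^2(\mathcal M,\mathcal E)}$ by $\|\operatorname{Op}_h^{\mathcal M}(a)\|_{L^2(\mathcal M)}$, using that $R$ is an isometry and that the diagonal operator preserves the direct sum, and then invokes items (1)--(2) of Proposition~\ref{prop: list properties scalar quanti}; your remarks on the componentwise action on $\mathbb C^n$-valued functions and on the cutoff $\chi_i$ in $R^*$ only make explicit details the paper leaves implicit.
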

 \begin{proof}
		Since $R$ is an isometry $L^2(\mathcal M,\mathcal E)\to \bigoplus L^2(\mathcal U_i,\mathbb{C}^n)$, and $\operatorname{Diag}(\operatorname{Op}_h^{\mathcal M}(\eta_ia))$ preserves the direct sum $\bigoplus L^2(\mathcal U_i,\mathbb{C}^n)$, we have
		\[\big\|\operatorname{Op}_h^{\mathcal E}(a)\big\|_{L^2(\mathcal M,\mathcal E)}\le \big\|\operatorname{Op}_h^{\mathcal M}(a)\big\|_{L^2(\mathcal M)}.\]
		We conclude by invoking Proposition \ref{prop: list properties scalar quanti}.
	\end{proof}


	\begin{prop}\label{prop: product formula quantization bundle} Let $a,b\in S^{m_1(\bullet)}_{1^-},S^{m_2(\bullet)}_{1^-}$ with $m_1+m_2\le 0$ outside a compact set. Then,
		\[\operatorname{Op}_h^{\mathcal E}(a)\operatorname{Op}_h^{\mathcal E}(b)=\operatorname{Op}_h^{\mathcal E}(ab)+\mathcal O_{a,b}(h)_{L^2\to L^2}.\]
		The remainder is uniform with respect to $\mathcal E$.\end{prop}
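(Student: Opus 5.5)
The plan is to reduce to the scalar composition result, item (4) of Proposition~\ref{prop: list properties scalar quanti}, using that the transition maps of $\mathcal E$ are constant unitary matrices. Write
\[\operatorname{Op}_h^{\mathcal E}(a)\operatorname{Op}_h^{\mathcal E}(b)=R^*\operatorname{Diag}\big(\operatorname{Op}_h^{\mathcal M}(a)\big)RR^*\operatorname{Diag}\big(\operatorname{Op}_h^{\mathcal M}(b)\big)R.\]
Here $RR^*$ is the block matrix with entries $\chi_iR_{ij}\chi_j$, where $R_{ij}=R_iR_j^*$ is multiplication by a constant unitary matrix $\kappa_{ij}$ on $\mathcal U_i\cap\mathcal U_j$. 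So the $(i,j)$ block of the middle product is
\[\operatorname{Op}_h^{\mathcal M}(a)\,\chi_i\kappa_{ij}\chi_j\,\operatorname{Op}_h^{\mathcal M}(b)=\kappa_{ij}\otimes\Big(\operatorname{Op}_h^{\mathcal M}(a)\chi_i\chi_j\operatorname{Op}_h^{\mathcal M}(b)\Big).\]
This holds because a constant matrix acting on $\mathbb C^n$-valued functions commutes with the scalar operators, which act componentwise.

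For each pair $(i,j)$, apply the scalar calculus. Multiplication by $\chi_i\chi_j$ equals $\operatorname{Op}_h^{\mathcal M}(\chi_i\chi_j)+\mathcal O(h)$, with symbol in $S^0_1$. Applying item (4) twice (order functions $m_1+0+m_2\le 0$ outside a compact set), we get
\[\operatorname{Op}_h^{\mathcal M}(a)\chi_i\chi_j\operatorname{Op}_h^{\mathcal M}(b)=\chi_i\chi_j\operatorname{Op}_h^{\mathcal M}(ab)+\mathcal O_{a,b}(h)_{L^2\to L^2}.\]
A little care is needed to move $\chi_i\chi_j$ across $\operatorname{Op}_h^{\mathcal M}(a)$ when $a$ has positive order somewhere. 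I would handle this by commuting with $\operatorname{Op}_h^{\mathcal M}(a)$ via the same composition argument, since the commutator $[\operatorname{Op}_h^{\mathcal M}(a),\chi_i\chi_j]$ composed with $\operatorname{Op}_h^{\mathcal M}(b)$ is $\mathcal O(h)$ by the symbol expansion in the proof of (4). The same cancellation of orders applies there.

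Reassembling, the main term is
\[R^*\big[\kappa_{ij}\chi_i\chi_j\operatorname{Op}_h^{\mathcal M}(ab)\big]_{ij}R=\sum_{i,j}\chi_iR_i^*\,\chi_iR_iR_j^*\chi_j\,\operatorname{Op}_h^{\mathcal M}(ab)\,\chi_jR_j.\]
Using $R_i^*R_i=\operatorname{Id}$ locally and $\sum_i\chi_i^2=1$, this collapses to $\sum_j R_j^*\chi_j\operatorname{Op}_h^{\mathcal M}(ab)\chi_jR_j=\operatorname{Op}_h^{\mathcal E}(ab)$. The one point to check is that $R_i^*\chi_i^2R_i R_j^*$ makes sense on $\operatorname{Supp}\chi_j$; it does, since this only involves the flat identification on overlaps.

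The error is $R^*[\kappa_{ij}\otimes E_{ij}]R$ with $\|E_{ij}\|=\mathcal O_{a,b}(h)$. Since $R$ is an isometry, the $\kappa_{ij}$ are unitary, and the index set $I$ is finite and independent of $\mathcal E$, its norm is at most $|I|^2\max\|E_{ij}\|=\mathcal O_{a,b}(h)$. This bound is independent of the rank $n$ and of $\mathcal E$, which gives the uniformity. The main obstacle is the bookkeeping in the previous step, namely justifying that the cutoffs $\chi_i\chi_j$ commute past exotic-order operators with an $\mathcal O(h)$ error. I expect this to follow from the same chart-wise argument used in the proof of item (4).
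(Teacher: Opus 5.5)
Your proposal is correct and follows essentially the same route as the paper. You commute the constant unitary transition matrices past the scalar operators, apply the scalar composition formula (item (4)) blockwise, and reassemble using $\sum_i\chi_i^2=1$; uniformity in $\mathcal E$ comes from the bundle entering only through isometries and constant unitaries. The only difference is bookkeeping: you keep the cutoff $\chi_i\chi_j$ to the left of $\operatorname{Op}_h^{\mathcal M}(ab)$ so that the reassembly collapses exactly, whereas the paper writes the block as $\operatorname{Op}_h^{\mathcal M}(\chi_j^2ab)\chi_i\eta_j$ and then uses $\sum_j\operatorname{Op}_h^{\mathcal M}(\chi_j^2ab)\eta_j=\operatorname{Op}_h^{\mathcal M}(ab)+\mathcal O(h)$. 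Your commutator remark also makes explicit the positive-order issue that the paper covers by ``playing with the composition formula''.
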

	\begin{proof} We have
		\begin{equation*} \operatorname{Op}_h^{\mathcal E}(a)\operatorname{Op}_h^{\mathcal E}(b) =\sum_{i\in I} R_i^*\chi_i \sum_{j\in I} \operatorname{Op}_h^{\mathcal M}(a)\chi_i\chi_j R_{ij}\operatorname{Op}_h^{\mathcal M}(b_j) \chi_j R_j.\end{equation*}
		Since $R_{ij}$ is a constant unitary matrix we can interchange $\operatorname{Op}_h^{\mathcal M}(a)R_{ij}=R_{ij}\operatorname{Op}_h^{\mathcal M}(a)$ and write
		\begin{equation*} \operatorname{Op}_h^{\mathcal E}(a)\operatorname{Op}_h^{\mathcal E}(b) =\sum_{i\in I} R_i^*\chi_i \sum_{j\in I} R_{ij}\operatorname{Op}_h^{\mathcal M}(a)\chi_i\chi_j\operatorname{Op}_h^{\mathcal M}(b) \chi_j R_j.\end{equation*}
		By playing with the composition formula from Proposition~\ref{prop: list properties scalar quanti} we find
		\[\operatorname{Op}_h^{\mathcal M}(a)\chi_i\chi_j\operatorname{Op}_h^{\mathcal M}(b)\chi_j=\operatorname{Op}_h^{\mathcal M}(\chi_j^2ab)\chi_i \eta_j+\mathcal O(h)_{L^2\to L^2}.\]
		Since the operators $R_i^*$, $R_j$, $R_{ij}$ are isometries, we infer
		\begin{equation*}\operatorname{Op}_h^{\mathcal E}(a)\operatorname{Op}_h^{\mathcal E}(b)=\sum_{i\in I} R_i^*\chi_i \sum_{j\in I} R_{ij}\operatorname{Op}_h^{\mathcal M}(\chi_j^2ab)\eta_j\chi_i R_j+\mathcal O(h)_{L^2\to L^2}.\end{equation*}
		We now use $\eta_j\chi_i R_j=\eta_j\chi_i R_{ji}R_i$ and $R_{ji}R_{ij}=\operatorname{Id}$ to obtain
		\begin{equation}\label{eq: Ope(a)Ope(b)} \operatorname{Op}_h^{\mathcal E}(a)\operatorname{Op}_h^{\mathcal E}(b)=\sum_{i\in I} R_i^*\chi_i \Big(\sum_{j\in I}\operatorname{Op}_h^{\mathcal M}(\chi_j^2ab)\eta_j\Big) \chi_iR_i+\mathcal O(h)_{L^2\to L^2}.\end{equation}
		Since $\sum_{j\in I} \chi_j^2=1$ and $\chi_j\prec \eta_j$, we find 
		\begin{equation} \label{eq: nanana} \sum_j \operatorname{Op}_h^{\mathcal M}(\chi_j^2ab)\eta_j=\operatorname{Op}_h^{\mathcal M}(ab)+\mathcal O(h)_{L^2\to L^2}.\end{equation}
		Injecting \eqref{eq: nanana} into \eqref{eq: Ope(a)Ope(b)} finally gives
		\begin{equation*} \operatorname{Op}_h^{\mathcal E}(a)\operatorname{Op}_h^{\mathcal E}(b)=\operatorname{Op}_h^{\mathcal E}(ab)+\mathcal O(h)_{L^2\to L^2},\end{equation*}
		as we wished.
	\end{proof}
	
	\begin{rem}In fact, one can show that the operator $R=\operatorname{Op}_h^{\mathcal E}(a)\operatorname{Op}_h^{\mathcal E}(b)-\operatorname{Op}_h^{\mathcal E}(ab)$ is continuous $H^{r}\to H^{r-(2\rho-1)}$ for all $r\in \mathbb{R}$, where $H^r=H^r(\mathcal M,\mathcal E)$ denotes the standard Sobolev space of order $r$. \end{rem}
	As a corollary, we get:
	\begin{cor}\label{cor: inverse op (e^sg)} Let $s\in \mathbb R$. For $0<h<h_0(s)$ small enough, for any unitary flat bundle $\mathcal E\to \mathcal M$, the operator $\operatorname{Op}_h^{\mathcal E}(\mathrm{e}^{sG_m}):C^\infty(\mathcal M,\mathcal E)\to C^\infty(\mathcal M,\mathcal E)$ is invertible and
		\[\operatorname{Op}_h^{\mathcal E}(\mathrm{e}^{sG_m})^{-1}=\operatorname{Op}_h^{\mathcal E}(\mathrm{e}^{-sG_m})(\operatorname{Id}+\mathcal O_{s,m}(h)_{L^2\to L^2}).\]
	\end{cor}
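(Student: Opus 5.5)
The plan is to deduce invertibility from the composition formula of Proposition~\ref{prop: product formula quantization bundle} and a Neumann series, with every constant uniform in $\mathcal E$. By Proposition~\ref{prop: construction escape}, $\mathrm{e}^{\pm sG_m}\in S^{\pm s}_{1^-}$. More precisely, $\mathrm{e}^{sG_m}=\langle\xi\rangle^{sm(x,\xi)}$ up to a harmless modification near the zero section, so it lies in $S^{sm(\bullet)}_{1^-}$ and $\mathrm{e}^{-sG_m}\in S^{-sm(\bullet)}_{1^-}$. The two order functions add up to $0$, so the hypothesis $m_1+m_2\le 0$ of Proposition~\ref{prop: product formula quantization bundle} holds. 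Since $\mathrm{e}^{sG_m}\mathrm{e}^{-sG_m}=1$, that proposition gives
\[\operatorname{Op}_h^{\mathcal E}(\mathrm{e}^{sG_m})\operatorname{Op}_h^{\mathcal E}(\mathrm{e}^{-sG_m})=\operatorname{Op}_h^{\mathcal E}(1)+\mathcal O_{s,m}(h)_{L^2\to L^2}.\]
The same identity holds with the factors in the reverse order, and the remainder is uniform in $\mathcal E$.

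Next we identify $\operatorname{Op}_h^{\mathcal E}(1)$. Since $\operatorname{Op}_h^{\rm w}(1)=\operatorname{Id}$ and the charts are isochore, we get $\operatorname{Op}_h^{\mathcal M}(1)=T^*T=\operatorname{Id}$. Because $R_i$ commutes with scalar operators, $\operatorname{Op}_h^{\mathcal E}(1)=R^*R=\operatorname{Id}$. Write $\mathcal R_1$ and $\mathcal R_2$ for the remainders in the two orderings, so that
\[\operatorname{Op}_h^{\mathcal E}(\mathrm{e}^{sG_m})\operatorname{Op}_h^{\mathcal E}(\mathrm{e}^{-sG_m})=\operatorname{Id}+\mathcal R_1,\qquad \operatorname{Op}_h^{\mathcal E}(\mathrm{e}^{-sG_m})\operatorname{Op}_h^{\mathcal E}(\mathrm{e}^{sG_m})=\operatorname{Id}+\mathcal R_2,\]
with $\|\mathcal R_i\|_{L^2\to L^2}\le C_{s,m}h$. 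For $h<h_0(s):=(2C_{s,m})^{-1}$, both $\operatorname{Id}+\mathcal R_i$ are invertible on $L^2$ by a Neumann series, with inverses of the form $\operatorname{Id}+\mathcal O(h)$.

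It follows that $\operatorname{Op}_h^{\mathcal E}(\mathrm{e}^{sG_m})$ has a right inverse $\operatorname{Op}_h^{\mathcal E}(\mathrm{e}^{-sG_m})(\operatorname{Id}+\mathcal R_1)^{-1}$ and a left inverse $(\operatorname{Id}+\mathcal R_2)^{-1}\operatorname{Op}_h^{\mathcal E}(\mathrm{e}^{-sG_m})$. These therefore coincide, and this common operator is the inverse, of the stated form $\operatorname{Op}_h^{\mathcal E}(\mathrm{e}^{-sG_m})(\operatorname{Id}+\mathcal O_{s,m}(h))$.

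The main obstacle is that the argument above takes place in $L^2$, while the statement concerns $C^\infty(\mathcal M,\mathcal E)$; here the operators are unbounded on $L^2$. I would handle this by working on the $H^r$ scale. The remark after Proposition~\ref{prop: product formula quantization bundle} says the remainder maps $H^r\to H^{r-(2\rho-1)}$ for every $r$. Combined with $\mathcal O(h)$ bounds on each $H^r$, obtained by conjugating with $\operatorname{Op}_h^{\mathcal E}(\langle\xi\rangle^r)$ and rerunning the composition argument, this shows the Neumann series also converges on each $H^r$, with $h_0$ possibly depending on $r$. This is only an issue for establishing the mapping $C^\infty\to C^\infty$. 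For that, one can instead use ellipticity. The symbol $\mathrm{e}^{sG_m}$ is elliptic in $S^{sm(\bullet)}_{1^-}$, so a standard parametrix shows that $u\in L^2$ with $\operatorname{Op}_h^{\mathcal E}(\mathrm{e}^{sG_m})u$ smooth is itself smooth (for $h$ fixed, this is classical elliptic regularity in variable order classes). Injectivity on $C^\infty$ follows from the $L^2$ left inverse, and surjectivity onto $C^\infty$ follows from the right inverse together with this regularity.
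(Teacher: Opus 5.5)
Your proof is correct and follows the paper's route: the composition formula of Proposition~\ref{prop: product formula quantization bundle} gives $\operatorname{Id}+\mathcal O(h)$ uniformly in $\mathcal E$, a Neumann series inverts this on $L^2$, and a regularity argument upgrades the result to $C^\infty$. The paper's bootstrap through $u=\operatorname{Op}_h^{\mathcal E}(\mathrm{e}^{-sG_m})v-Ru$ is your parametrix argument in concrete form; for it to work the remainder must gain $2\rho-1$ derivatives, and the minus sign in the remark you quote is a typo.

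Your use of both orderings makes explicit a step the paper leaves implicit. Composing only as $\operatorname{Op}_h^{\mathcal E}(\mathrm{e}^{-sG_m})\operatorname{Op}_h^{\mathcal E}(\mathrm{e}^{sG_m})=\operatorname{Id}+R$ yields the inverse as $(\operatorname{Id}+R)^{-1}\operatorname{Op}_h^{\mathcal E}(\mathrm{e}^{-sG_m})$, and the reverse ordering is needed both to conclude $\operatorname{Op}_h^{\mathcal E}(\mathrm{e}^{sG_m})u=v$ and to reach the stated right-factor form. One small fix: your preimage $\operatorname{Op}_h^{\mathcal E}(\mathrm{e}^{-sG_m})(\operatorname{Id}+\mathcal R_1)^{-1}v$ need not lie in $L^2$. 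So either run elliptic regularity in $\mathcal D'$, or apply the bootstrap to $w=(\operatorname{Id}+\mathcal R_1)^{-1}v$ via $w=v-\mathcal R_1 w$.
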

	\begin{proof} By Proposition~\ref{prop: product formula quantization bundle}, one has
		\[\operatorname{Op}_h^{\mathcal E}(\mathrm{e}^{-sG_m})\operatorname{Op}_h^{\mathcal E}(\mathrm{e}^{sG_m})=\operatorname{Id}+R,\]
		with $\|R\|_{L^2\to L^2}\lesssim h$ uniformly in $\mathcal E$ and $R:H^r\to H^{r-(2\rho-1)}$ for any $r$. Let $v\in C^\infty(\mathcal M,\mathcal E)$, and set
		\[u:=(\operatorname{Id}+R)^{-1}\operatorname{Op}_h^{\mathcal E}(\mathrm{e}^{-sG_m})v\in L^2(\mathcal M,\mathcal E).\]
		Then, $(\operatorname{Id}+R)u=\operatorname{Op}_h^{\mathcal E}(\mathrm{e}^{-sG_m})v\in C^\infty(\mathcal M,\mathcal E)$. Moreover, since $R:H^r\to H^{r-(2\rho-1)}$ is continuous, the identity $u=\operatorname{Op}_h^{\mathcal E}(\mathrm{e}^{-sG_m})v-Ru$ implies
		\begin{equation} \label{eq: gain regularity sobolev} u\in H^k\Longrightarrow u\in H^{k+(2\rho-1)}.\end{equation}
		Since $u\in L^2$, it follows by iterating \eqref{eq: gain regularity sobolev} and using the Sobolev embedding theorem that $u\in C^\infty(\mathcal M,\mathcal E)$, and
		$\operatorname{Op}_h^{\mathcal E}(\mathrm{e}^{sG_m})u=v$.
	\end{proof}
		
	Finally, we extend Lemma~\ref{lem: egorov like scalar M} to the case of unitary flat bundles $\mathcal E\to \mathcal M$, in the case where $f=\varphi_t$ is the geodesic flow, for small $t$.
	\begin{prop}[Egorov theorem]\label{prop: Egorov theorem} Let $m$ be the order function from Proposition~\ref{prop: construction escape}. Let $a,b\in S^{sm(\bullet)}, S^{-sm(\bullet)}$, respectively. Then,
		\[\big\|\operatorname{Op}_h^{\mathcal E}(a)\mathrm{e}^{-t\mathbf X_{\mathcal E}}\operatorname{Op}_h^{\mathcal E}(b)\big\|_{L^2\to L^2}=\mathcal O_{a,b}(1).\]
		If, in addition, $m\circ \widetilde \varphi_t-m\le -c<0$ on $\operatorname{Supp}(b)\cap \widetilde \varphi_t^{-1}(\operatorname{Supp}(a))$, then
		\[\big\|\operatorname{Op}_h^{\mathcal E}(a)\mathrm{e}^{-t\mathbf X_{\mathcal E}}\operatorname{Op}_h^{\mathcal E}(b)\big\|_{L^2\to L^2}\lesssim \|(a\circ \widetilde \varphi_t)b\|_{\infty}+\mathcal O(h).\]
	\end{prop}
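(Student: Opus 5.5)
The plan is to reduce Proposition~\ref{prop: Egorov theorem} to the scalar Lemma~\ref{lem: egorov like scalar M} by working in the local trivializations $R_i$, exactly as in the proofs of Propositions~\ref{prop: L2 continuity} and~\ref{prop: product formula quantization bundle}. We may take $t$ small: the statement is used only for $0\le t\le t_0$ with $t_0$ smaller than a Lebesgue number of the cover $\{\mathcal U_j\}$, so that $\varphi_{-t}(\operatorname{Supp}\chi_j)\subset \mathcal U_j$. Since $\mathbf X_{\mathcal E}$ is $X$ acting componentwise in each trivialization (the transition matrices $\kappa_{ij}$ are constant), we have $R_j\,\mathrm{e}^{-t\mathbf X_{\mathcal E}} = \mathrm{e}^{-tX}R_j$ on sections supported in $\varphi_{-t}(\mathcal U_j)\cap\mathcal U_j$. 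Here $\mathrm{e}^{-tX}=(\varphi_t)_*$ is the pushforward by the volume-preserving diffeomorphism $\varphi_t$.

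First, we expand
\[\operatorname{Op}_h^{\mathcal E}(a)\mathrm{e}^{-t\mathbf X_{\mathcal E}}\operatorname{Op}_h^{\mathcal E}(b)=\sum_{i,j}R_i^*\chi_i\operatorname{Op}_h^{\mathcal M}(a)\chi_i R_i\,\mathrm{e}^{-t\mathbf X_{\mathcal E}}R_j^*\chi_j\operatorname{Op}_h^{\mathcal M}(b)\chi_jR_j.\]
By pseudolocality (item (3) of Proposition~\ref{prop: list properties scalar quanti}), we insert cutoffs $\eta_j\succ\chi_j$ after $\operatorname{Op}_h^{\mathcal M}(b)\chi_j$ at cost $\mathcal O(h^\infty)$. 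Then $\mathrm{e}^{-t\mathbf X_{\mathcal E}}$ acts on a section supported in $\operatorname{Supp}\eta_j$, producing a section supported in $\varphi_t(\operatorname{Supp}\eta_j)$. On overlaps we write $R_i\,\mathrm{e}^{-t\mathbf X_{\mathcal E}}R_j^*=R_{ij}(\varphi_t)_*$, where $R_{ij}$ is a constant unitary matrix. Because $R_{ij}$ commutes with scalar pseudodifferential operators and with $(\varphi_t)_*$, each $(i,j)$ term becomes
\[R_i^*\chi_i R_{ij}\bigl[\operatorname{Op}_h^{\mathcal M}(a)\chi_i(\varphi_t)_*\eta_j\chi_j\operatorname{Op}_h^{\mathcal M}(b)\bigr]\chi_jR_j\]
acting on $\mathbb C^n$-valued functions componentwise. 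The scalar operator in brackets falls under Lemma~\ref{lem: egorov like scalar M} with $f=\varphi_t$, after absorbing the smooth cutoffs into the symbols via the composition rule. The order condition $sm\circ\widetilde\varphi_t - sm\le 0$ outside a compact set holds because $G_m$ decreases along $\Phi_t$ (Proposition~\ref{prop: construction escape}). Applying the scalar lemma gives the $\mathcal O_{a,b}(1)$ bound and, in the strict case, the bound $\|(a\circ\widetilde\varphi_t)b\|_\infty+\mathcal O(h)$. All the unitaries ($R_i$, $R_{ij}$, componentwise action) preserve norms, so these bounds are uniform in $\mathcal E$ and in $n$.

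The main obstacle is that summing the $(i,j)$ terms crudely would lose a factor $|I|^2$ in the second estimate, which cannot be tolerated in front of $\|(a\circ\widetilde\varphi_t)b\|_\infty$ since the paper needs the constant independent of $s$. To avoid this, we recombine as in the proof of Proposition~\ref{prop: product formula quantization bundle}. Using $\chi_iR_j=\chi_iR_{ji}R_i$ and $\sum_j\chi_j^2=1$ modulo $\mathcal O(h)$, the whole operator becomes $R^*\operatorname{Diag}_i(B_i)R+\mathcal O(h)$, where each $B_i$ is a scalar operator of the form $\operatorname{Op}_h^{\mathcal M}(a)(\varphi_t)_*\operatorname{Op}_h^{\mathcal M}(b)$ localized near $\operatorname{Supp}\chi_i$. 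The norm of this expression is then at most $\sup_i\|B_i\|$, to which Lemma~\ref{lem: egorov like scalar M} applies directly, giving the claimed bound with a constant independent of $s$ and of $\mathcal E$.
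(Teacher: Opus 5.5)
Your reduction is essentially the paper's: you trivialize with the $R_i$, use flatness and small $t$ to write $R_i\,\mathrm{e}^{-t\mathbf X_{\mathcal E}}R_j^*$ (on suitably localized sections) as the constant matrix $R_{ij}$ times $(\varphi_t)_*$, commute $R_{ij}$ past the scalar operators, localize by pseudolocality, and apply Lemma~\ref{lem: egorov like scalar M} with $f=\varphi_t$. The paper first conjugates by the unitary $\mathrm{e}^{t\mathbf X_{\mathcal E}}$, which changes nothing.

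Your ``main obstacle'' is not one. The paper simply sums over $(i,j)$. This is harmless because $|I|$ depends only on the fixed cover, not on $s$, $h$ or $\mathcal E$, and Lemma~\ref{lem: egorov like scalar M} already hides a chart-count constant in its own $\lesssim$. So your final recombination step is unnecessary.

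As written, that step is also slightly off. Pseudolocality supplies the cutoff $\eta_i$ rather than $\chi_i$ on the right, so you get $R^*\operatorname{Diag}_i(B_i)\widetilde R$ with $\widetilde R=(\eta_iR_i)_i$, not $R^*\operatorname{Diag}_i(B_i)R$. The operator $\widetilde R$ is not an isometry; its norm is at most $\sup(\sum_i\eta_i^2)^{1/2}$, which is again an $s$-independent constant, so the conclusion is unaffected.
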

	
	\begin{proof} Since $\mathrm{e}^{t\mathbf X_{\mathcal E}}$ is unitary on $L^2$, it is equivalent to study the operator
		\[P:=\mathrm{e}^{t\mathbf X_{\mathcal E}}\operatorname{Op}_h^{\mathcal E}(a)\mathrm{e}^{-t\mathbf X_{\mathcal E}}\operatorname{Op}_h^{\mathcal E}(b)\]
		For $t$ small enough, since the connection is flat, we have:
		\[R_i\chi_i \mathrm{e}^{-t\mathbf X_{\mathcal E}}=\chi_i\mathrm{e}^{-tX} R_i\eta_i, \qquad \mathrm{e}^{t\mathbf X_{\mathcal E}}R_i^*\chi_i=R_i^* \mathrm{e}^{tX}\chi_i.\]
		Thus, 
		\[P=\sum_i R_i^*\mathrm{e}^{tX}\chi_i \operatorname{Op}_h^{\mathcal M}(a) \chi_i \mathrm{e}^{-tX}R_i \eta_i\sum_j R_j^*\chi_j \operatorname{Op}_h^{\mathcal M}(b)R_j \chi_j.\]
		By pseudolocality, provided $t$ is small enough, we can insert a factor $\eta_i$ on the right, up to adding an error $\mathcal O(h^\infty)$, so that
		\[P=\sum_i R_i^*\mathrm{e}^{tX}\chi_i \operatorname{Op}_h^{\mathcal M}(a) \chi_i \mathrm{e}^{-tX}R_i \eta_i\sum_j R_j^*\chi_j \operatorname{Op}_h^{\mathcal M}(b)R_j \chi_j\eta_i+\mathcal O(h^\infty).\]
		Recall that $R_iR_j^*\eta_i\chi_j=R_{ij}\eta_i\chi_j$ with $R_{ij}$ a constant matrix, we can thus interchange $R_{ij}$ with $\operatorname{Op}_h^{\mathcal M}(b)$ and then use $R_{ij}R_j \chi_j\eta_i=R_i\chi_j\eta_i$ to obtain
		\[P=\sum_i R_i^*\Big(\sum_j \mathrm{e}^{tX}\chi_i \operatorname{Op}_h^{\mathcal M}(a) \chi_i \mathrm{e}^{-tX}\chi_j \operatorname{Op}_h^{\mathcal M}(b)\chi_j \Big)R_i \eta_i+\mathcal O(h^\infty)_{L^2\to L^2}.\]
		We now invoke Lemma~\ref{lem: egorov like scalar M} (the presence of the cutoffs $\chi_i,\chi_j$ is harmless) with $f=\varphi_t$ to obtain for each $i,j$:
		\[\big\|\mathrm{e}^{tX}\operatorname{Op}_h^{\mathcal M}(a) \chi_i \mathrm{e}^{-tX}\chi_j \operatorname{Op}_h^{\mathcal M}(b)\big\|_{L^2\to L^2}\le C(a,b),\]
		with $C(a,b)=\mathcal O_{a,b}(1)$ in general, which improves to $C(a,b)\lesssim \|(a\circ \widetilde \varphi_t)b\|_{\infty}+\mathcal O_{a,b}(h)$ if $m\circ \widetilde \varphi_t-m\le -c<0$ on $\operatorname{Supp}(b)\cap \widetilde \varphi_t^{-1}(\operatorname{Supp}(a))$. It remains to sum over $i,j$ to conclude.

	\end{proof}
	
	\printbibliography
	
	\email
	
\end{document}